\numberwithin{equation}{section}
\newtheorem{defn}[equation]{Definition}
\newtheorem{corollary}[equation]{Corollary}
\newtheorem{rem}[equation]{Remark}
\newtheorem{exm}[equation]{Example}
\newtheorem{lemma}[equation]{Lemma}
\newtheorem{theorem}[equation]{Theorem}
\newtheorem{notat}[equation]{Notation}
\newtheorem{newpar}[equation]{}
\newtheorem{xdefn}{Definition.}
\newtheorem{xproposition}{Proposition.}
\newtheorem{xcorollary}{Corollary.}
\newtheorem{xrem}{Remark.}
\newtheorem{xexm}{Example.}
\newtheorem{xlemma}{Lemma.}
\newtheorem{xtheorem}{Theorem.}
\newtheorem{xnotat}{Notation.}
\newtheorem{xnewpar}{\it}
\newtheorem{xproof}{{\it Proof. }}
\newtheorem{xproofof}{{\it Proof}}
\newenvironment{definition}{\begin{defn}\em}{\end{defn}}
\newenvironment{remark}{\begin{rem}\em}{\end{rem}}
\newenvironment{example}{\begin{exm}\em}{\end{exm}}
\newenvironment{proof}{\begin{xproof}\em}{\end{xproof}}
\newenvironment{newparagraph*}[1]{\begin{xnewpar}\hspace*{-1.5mm}{#1}. \rm}{\end{xnewpar}}
\newenvironment{definition*}{\begin{xdefn}\em}{\end{xdefn}}
\newenvironment{remark*}{\begin{xrem}\em}{\end{xrem}}
\newenvironment{example*}{\begin{xexm}\em}{\end{xexm}}
\newenvironment{notation*}{\begin{xnotat}\em}{\end{xnotat}}
\newenvironment{proposition*}{\begin{xproposition}}{\end{xproposition}}
\newenvironment{corollary*}{\begin{xcorollary}}{\end{xcorollary}}
\newenvironment{lemma*}{\begin{xlemma}}{\end{xlemma}}
\newenvironment{theorem*}{\begin{xtheorem}}{\end{xtheorem}}
 \def\qed{\hspace{0.3cm}{\rule{1ex}{2ex}}} \newcommand\V{\bigvee} 
 \newcommand\ie{i.e.} \newcommand\eg{e.g.}   
 \newcommand\st{\mid}
\newcommand\cf{\textrm{cf.}}
\newcommand\ds[1]{\widetilde{#1}}
\newcommand\sections{\mathit{\Gamma}}
\newcommand\opens{\operatorname{\mathcal{O}}}
\newcommand\topology{\operatorname{\Omega}}
\newcommand\groupoid{\operatorname{\mathcal{G}}}
\newcommand\spp{\varsigma}
\newcommand\downsegment{{\downarrow}}
\newcommand\ptimes[1]{\mathop{\times}\limits_{#1}}
\newcommand\rs{\operatorname{R}}
\newcommand\ls{\operatorname{L}}
\newcommand\ident{\mathrm{id}}
\newcommand\ipi{\mathcal I}
\newcommand\lcc{\operatorname{{\mathcal L}^{\vee}}}
\newcommand\ideal{\mathfrak I}
\newcommand\idinc{\iota}
\newcommand\imu{\mathfrak m} 
\newcommand\cover[1]{\widehat{#1}}
\begin{document}

\title{Quantales of open groupoids}
\author{\sc M.\ Clarence Protin and Pedro Resende\thanks{Research supported in part by the Funda\c{c}\~ao para a Ci\^encia e a Tecnologia through the Program POCI 2010/FEDER, namely via the doctoral grant SFRH/BD/17823/2004 (first author) and Centro de An\'{a}lise Matem\'{a}tica, Geometria e Sistemas Din\^{a}micos (second author).}}

\date{~}

\maketitle

\begin{abstract}
It is well known that inverse semigroups are closely related to \'etale groupoids. In particular, it has recently been shown that there is a (non-functorial) equivalence between localic \'etale groupoids, on one hand, and complete and infinitely distributive inverse semigroups (abstract complete pseudogroups), on the other. This correspondence is mediated by a class of quantales, known as inverse quantal frames, that are obtained from the inverse semigroups by a simple join completion that yields an equivalence of categories.
Hence, we can regard abstract complete pseudogroups as being essentially ``the same'' as inverse quantal frames, and in this paper we exploit this fact in order to find a suitable replacement for inverse semigroups in the context of open groupoids that are not necessarily \'etale. The interest of such a generalization lies in the importance and ubiquity of open groupoids in areas such as operator algebras, differential geometry and topos theory, and we achieve it by means of a class of quantales, called open quantal frames, which generalize inverse quantal frames and whose properties we study in detail.
The resulting correspondence between quantales and open groupoids is not a straightforward generalization of the previous results concerning \'etale groupoids, and it depends heavily on the existence of inverse semigroups of local bisections of the quantales involved.
\vspace{0.2cm}\\ \textit{Keywords:}  quantale, topological groupoid, localic grou\-poid, open grou\-poid, Lie groupoid, inverse semigroup, pseudogroup.
\vspace{0.2cm}\\ 2000 \textit{Mathematics Subject
Classification}: 06F07, 22A22 (primary); 06D22, 18B40, 20L05, 20M18, 54B30, 54H10 (secondary).
\end{abstract}

\maketitle

\tableofcontents

\section{Introduction}\label{introduction}

It is well known that groupoids and inverse semigroups are generalizations of groups which, in particular, cater for more general notions of symmetry \cite{Lawson,Weinstein}. Furthermore, the two concepts are closely related in more than one way, a recurring theme being that from certain topological groupoids one obtains inverse semigroups of ``local bisections'', whereas from suitable inverse semigroups one constructs groupoids of ``germs''. This correspondence is well known and widely used, for instance, in differential topology \cite{MoerdijkMrcun} and operator algebra theory \cite{Renault2}. It is not an equivalence, but it restricts to a (non-functorial) equivalence between topological \'etale groupoids over a space $X$ (that is, \'etale groupoids $G$ whose unit space $G_0$ equals $X$) and complete and infinitely distributive inverse semigroups $S$ acting on $X$ in a way that determines an isomorphism between the lattice of open sets of $X$ and the lattice of idempotents of $S$ \cite{MatsnevResende}.

If $X$ is sober (that is, each irreducible closed set is the closure of a unique singleton subset) the action of $S$ on $X$ is uniquely determined by the chosen isomorphism and, even more generally, one may replace sober spaces by locales \cite{Johnstone,pointless} in order to obtain a bijection between localic \'etale groupoids (\ie, internal groupoids in the category of locales) and complete and infinitely distributive inverse semigroups \cite{aim}.

One is often free to choose whether to work with groupoids or with inverse semigroups (see, \eg, Fell bundles on inverse semigroups rather than groupoids in \cite{Exel}), but such freedom of choice always entails that the underlying groupoid must be \'etale; that is, its domain map (and thus also the codomain map) is a local homeomorphism.
However, there are many situations where non-\'etale groupoids arise naturally. Lie groupoids, for instance, such as the holonomy groupoids of foliations, are in general non-\'etale; but the fact that the domain map is required to be a submersion makes them \emph{open groupoids} in the sense that the domain map $d$ is necessarily open. Something similar can be said of locally compact groupoids in the sense of \cite{Paterson}, for which openness is a topological consequence of the existence of Haar measures. In topos theory, too, the fundamental theorem of Joyal and Tierney \cite{JT} states that any Grothendieck topos is equivalent to the category of equivariant sheaves on an open localic groupoid. Furthermore, this is an important example of how groupoids can be regarded as generalized spaces, or, at least, as presentations of generalized spaces, which is also a common motto in the stacks literature (see, \eg, \cite{stacks}) and throughout noncommutative geometry in the sense of Connes \cite{Connes}.

The importance of open groupoids across mathematics leads one to asking the question of whether a useful algebraic counterpart can be found for them in a way that generalizes the role played by inverse semigroups in relation to \'etale groupoids. A way of addressing this, which we shall pursue in the present paper, is based on the observation that from an inverse semigroup $S$ a quantale $\lcc(S)$ is obtained if we complete $S$ by adding the suprema of all the subsets of $S$, with respect to the natural order of $S$. The quantales obtained in this manner are the \emph{inverse quantal frames} \cite{aim} and they form a category which is equivalent to that of complete and infinitely distributive inverse semigroups. Hence, for many practical purposes, in the context of \'etale groupoids it is irrelevant whether one chooses to work with an inverse semigroup $S$ or instead with its quantale completion $\lcc(S)$.

There is also a direct relation between \'etale groupoids and inverse quantal frames which does not require the mediation of inverse semigroups: if $G$ is a localic \'etale groupoid with multiplication map $m:G_2\to G_1$ ($G_2$ is the pullback $G_1\times_{G_0} G_1$ of the domain and range maps), the sup-lattice $G_1$ itself is canonically equipped with a multiplication, given by the following composition in the category of sup-lattices (see \cite{JT}), where $m_!$ is the direct image homomorphism of $m$ (which exists because $m$ is necessarily open, in fact a local homeomorphism):
\[\xymatrix{G_1\otimes G_1\ar@{->>}[r] &G_2\ar[r]^{m_!}& G_1}\;.\]
The resulting quantale is denoted by $\opens(G)$. It is an inverse quantal frame, and it is isomorphic to the quantale completion $\lcc(\sections(G))$ of the inverse semigroup $\sections(G)$ of local bisections of $G$. For topological \'etale groupoids something analogous holds, with the quantale being simply the topology of $G$ with product given by pointwise multiplication of open sets.

Conversely, a localic groupoid $\groupoid(Q)$ can be directly obtained from an inverse quantal frame $Q$ without any reference to germs or inverse semigroups. In order to see this, let $e$ be the multiplicative unit of $Q$; the down segment $Q_0=\downsegment e$ is a locale, $Q$ is a $Q_0$-$Q_0$-bimodule over it, and the quantale multiplication \[Q\otimes Q\to Q\] factors through the quotient $Q\otimes_{Q_0} Q$ via a sup-lattice homomorphism $\mu$:
\[\xymatrix{Q\otimes Q\ar@{->>}[r]&Q\otimes_{Q_0} Q\ar[r]^-\mu& Q}\;.\]
Then we define a localic groupoid $G=\groupoid(Q)$, with $G_1=Q$ and $G_0=Q_0$ (and $G_2=Q\otimes_{Q_0} Q$), whose multiplication $m$ is defined by $m_!=\mu$. This requires the right adjoint $\mu_*$ to preserve joins, which is not a trivial condition but holds for inverse quantal frames. In \cite{aim} this property of inverse quantal frames is referred to as \emph{multiplicativity}. In addition, a topological groupoid can be obtained from any localic groupoid via the spectrum functor of locales, since this functor has a left adjoint and thus it preserves limits. In particular, if $S$ is a complete and infinitely distributive inverse semigroup, the topological groupoid obtained as the spectrum of the localic groupoid $\groupoid(\lcc(S))$ is exactly the groupoid of germs of $S$ in the usual sense.

We can summarize the above facts by stating that the following diagram is commutative up to isomorphisms of the objects of the categories involved, and moreover $\lcc$ defines an equivalence of categories:

\begin{equation*}
\vcenter{\xymatrix{
&&\begin{minipage}{1.7cm}\begin{center}\scriptsize Inverse quantal frames\end{center}\end{minipage}\ar@/_/[ddll]|{\groupoid}
\\
&&&&~\\
\begin{minipage}{2cm}\begin{center}\scriptsize \'{E}tale groupoids\end{center}\end{minipage}\ar@/_/[rrrr]|{\sections}\ar@/_/[rruu]|{\opens}&&&&
\begin{minipage}{2cm}\begin{center}\scriptsize Complete infinitely distributive inverse semigroups\end{center}\end{minipage}\ar@/_/[uull]|{\lcc}
}} \label{trianglediagram}
\end{equation*}

If $G$ is no longer \'etale but merely an open groupoid, both $\sections(G)$ and $\opens(G)$ can still be defined as before; that is, $\sections(G)$ is the set of continuous local bisections of $G$ and $\opens(G)$ is $G_1$ equipped with the direct image of the multiplication map. Of course, $\opens(G)$ is no longer the join-completion of $\sections(G)$, which certainly does not contain enough information to recover the original groupoid. However, as we shall see, $G$ is still determined up to isomorphism by the quantale $\opens(G)$. The argument is similar to that of \'etale groupoids, but there is a big difference as regards the algebraic characterization of the quantales of the form $\opens(G)$, which now is more complicated because, contrary to inverse quantal frames, the multiplicativity condition is no longer a consequence of a handful of more ``elementary'' axioms. The main aim of this paper is precisely to address this question, and in doing so we shall be led into studying properties, both weaker and stronger than multiplicativity, which are interesting in their own right. We remark that our results also provide a new example of how quantales can be models of generalized notions of space (in this case open groupoids), in the spirit of the earlier works that relate quantales to C*-algebras \cite{KR,Mulveyenc,PR}.

We shall begin, in section \ref{sec:groupoidquantales}, by studying thoroughly a set of simple axioms for (not necessarily unital) quantales that we shall refer to as \emph{open quantal frames}. As we shall see, the unital open quantal frames are precisely the same as the inverse quantal frames, and the quantales of the form $\opens(G)$ are precisely the multiplicative open quantal frames. In section \ref{sec:localbisections} we study a notion of local bisection for open quantal frames that generalizes the corresponding notion for groupoids, and in section \ref{section4} we use this notion and a corresponding action of local bisections on quantales in order to define a weak form of multiplicativity which ensures that the set of local bisections of an open quantal frame has the structure of an inverse semigroup. Finally, in section \ref{sec5}, for such a weakly multiplicative quantale $Q$ we study sufficient (but not necessary) conditions that ensure its multiplicativity. These conditions concern the extent to which $Q$ can be embedded into the inverse quantal frame $\lcc(\sections(Q))$ that arises as the completion of the inverse semigroup $\sections(Q)$ of local bisections of $Q$. We finish by studying the groupoids $G$ whose quantales $\opens(G)$ satisfy the embedding conditions, concluding that for any such groupoid there is an epimorphism of groupoids $J:\cover G\to G$ that provides a canonical ``\'etale cover'' of $G$. This is the case, in particular, for Lie groupoids.

Throughout the paper we shall adopt fairly standard terminology and notation for quantales, locales, groupoids, etc., mostly staying close to \cite{aim}. In particular, we shall often adopt (contrary to what we have done above in this introduction) the common convention of writing $\opens(A)$ for a locale $A$ when it is regarded as an object of the category of frames instead of the category of locales. For instance, using this convention we may write
\[\opens(A\times B)=\opens(A)\otimes\opens(B)\;,\]
where $A\times B$ is the product of the locales $A$ and $B$ and $\opens(A)\otimes\opens(B)$ is their coproduct as frames, which coincides with their tensor product as sup-lattices.

\section{Groupoid quantales}\label{sec:groupoidquantales}

This section is dedicated to establishing the correspondence between open localic groupoids
and multiplicative open quantal frames in a way that directly generalizes the correspondence
between \'{e}tale groupoids and inverse quantal frames.

\paragraph{Inverse quantal frames.}

Let us begin with a brief overview of some of the definitions and results of \cite{aim} concerning \'etale groupoids and inverse quantal frames.
As mentioned in section \ref{introduction}, for any localic \'etale groupoid
\begin{equation}\label{groupoiddiagram}
G\ \ =\ \ \xymatrix{
G_1\times_{G_0}G_1\ar[rr]^-{m}&&G_1\ar@(ru,lu)[]_i\ar@<1.2ex>[rr]^r\ar@<-1.2ex>[rr]_d&&G_0\ar[ll]|u
}
\end{equation}
the sup-lattice $\opens(G_1)$ has the structure of a quantale, denoted by $\opens(G)$, whose multiplication is defined by the following composition:
\[\xymatrix{\opens(G_1)\otimes \opens(G_1)\ar@{->>}[r] &\opens(G_1\times_{G_0}G_1)\ar[r]^-{m_!}& \opens(G_1)}\;.\]
This quantale is involutive with the involution defined by $a^*=i_!(a)$, and it is unital with $e=u_!(1_{G_0})$ --- in other words, the ``open subspace'' $G_0$ is the multiplicative unit of $\opens(G)$. In addition, there is a so-called \emph{stable support}
\[\spp=u_!\circ d_!:\opens(G)\to\opens(G)\;,\]
by which is meant a sup-lattice endomorphism of $\opens(G)$ satisfying the following properties:
\begin{eqnarray}
\spp(a) &\le& e\label{spp1}\;,\\
\spp(a) &\le& aa^*\label{spp2}\;,\\
a &\le& \spp(a) a\label{spp3}\;,\\
\spp(ab) &=& \spp(a\spp(b))\;.\label{spp4}
\end{eqnarray}
Conditions (\ref{spp1})--(\ref{spp3}) define a \emph{support}, and the adjective ``stable'' means that (\ref{spp4}) holds.

An important consequence of these properties is that the restriction of $\spp$ to the lattice $\rs(\opens(G))$ of right-sided elements of $\opens(G)$ defines an order isomorphism $\rs(\opens(G))\to\downsegment e$, whose inverse is defined by multiplication by $1=1_{\opens(G)}$ on the right: $b\mapsto b 1$. In particular, both $\downsegment e$ and $\rs(\opens(G))$ are frames, and we obtain the following order isomorphisms:
\begin{equation}\label{isomorphisms}
\opens(G_0)\cong\downsegment e\cong\rs(\opens(G))\;.
\end{equation}

Finally, the elements $s\in\opens(G)$ such that $ss^*\le e$ and $s^*s\le e$ are called \emph{partial units}. They have an obvious correspondence with the local bisections of $G$, which are the local sections $s:U\to G_1$ of $d$, with $U$ an open sublocale of $G_0$, such that $r\circ s:U\to G_0$ is an open regular monomorphism of locales: a partial unit corresponds to the image of $s$, which is an open sublocale of $G_1$ (\cf\ section \ref{sec:localbisections}).
The set of all the partial units of $\opens(G)$ is denoted by $\ipi(\opens(G))$ and it has the structure of a complete and infinitely distributive inverse semigroup, which we abbreviate to \emph{abstract complete pseudogroup (ACP)}, and it covers $G_1$:
\begin{equation}\V\ipi(\opens(G))=1\;.\label{covercondition}
\end{equation}
In other words, $\opens(G)$ is an instance of the following definition:

\begin{definition}(\cite{aim})
By an \emph{inverse quantal frame} $Q$ is meant a frame which is equipped with the additional structure of a unital involutive quantale (\ie, a unital involutive \emph{quantal frame}) such that $\V\ipi(Q)=1$, and for which there is a (necessarily stable and unique) support.
\end{definition}

Every inverse quantal frame is isomorphic to one of the form $\opens(G)$, for a unique (up to isomorphism) \'etale groupoid $G$. Let us briefly describe a specific construction of an \'etale groupoid $G=\groupoid(Q)$ from an inverse quantal frame $Q$. The locale of units $G_0$ is defined by the condition
\begin{equation}\opens(G_0)=\downsegment e\label{defG0etale}
\end{equation}
and, of course, we put
\begin{equation}\opens(G_1)=Q\;.\label{defG1}
\end{equation}
The involution is given by $i_!(a)=i^*(a)=a^*$,
the domain and range maps $d,r:G_1\to G_0$ are defined by the conditions $d_!(a)=\spp(a)$ and $r_!(a)=\spp(a^*)$, or $d^*(b)=b1$ and $r^*(b)=1b$, and the inclusion of units $u:G_0\to G_1$ is defined by $u_!(b)=b$, or $u^*(a)=a\wedge e$.
Most of what is left has already been described in section \ref{introduction}. In particular, $Q$ is a $\downsegment e$-$\downsegment e$-bimodule under multiplication on both sides, and the multiplication of $Q$ factors (due to associativity) in the category of sup-lattices as
\begin{equation}\xymatrix{Q\otimes Q\ar@{->>}[r]&Q\otimes_{\downsegment e} Q\ar[r]^-\mu& Q}\;.\label{defmu}
\end{equation}
It is then crucial (and nontrivial) that the right adjoint $\mu_*$
preserves joins, a property that is referred to as \emph{multiplicativity} of $Q$. This means that $\mu_*$ is a frame homomorphism, and the multiplication of the groupoid \[m:G_1\times_{G_0} G_1\to G_1\] is defined by the condition $m^*=\mu_*$.

\paragraph{Balanced quantal frames.}

If the localic groupoid $G$ of (\ref{groupoiddiagram}) is open but not \'etale we still have an involutive quantale $\opens(G)$ as above, but this quantale is no longer unital (equivalently, the map $u$ is not open), and we cannot identify $\opens(G_0)$ with a subquantale of $\opens(G)$. However, there is still an isomorphism $\opens(G_0)\cong\rs(\opens(G))$, and this suggests an alternative way of defining $G_0$ in terms of $\opens(G)$ (of course, we could use the left side $\ls(\opens(G))$ instead). We shall use this fact as a motivation for the characterization of the quantales of the form $\opens(G)$ whose study we now begin.

From now on let $Q$ be an arbitrary but fixed involutive quantal frame. We shall denote by $\delta$ the frame inclusion $\rs(Q)\to Q$ and by $\gamma$ the restriction of the involution map $(-)^*:\rs(Q)\to Q$ (another frame homomorphism). Associated to $Q$ there is an obvious involutive localic graph
\begin{equation}\label{prelimgraphdiagram}
G\ \ =\ \ \xymatrix{
G_1\ar@(ru,lu)[]_i\ar@<1.2ex>[rr]^r\ar@<-1.2ex>[rr]_d&&G_0
}\;,
\end{equation}
defined by the conditions
$\opens(G_1)=Q$, $\opens(G_0)=\rs(Q)$, $d^*=\delta$, $r^*=\gamma$, and $i^*(a)=a^*$ for all $a\in Q$. Saying that $G$ is involutive means simply that $i\circ i=\ident$ and $d\circ i=r$ (and $r\circ i=d$).

Regarding $\rs(Q)$ as a subframe of $Q$ (rather than a subquantale),
we define on $Q$ the structure of an $\rs(Q)$-$\rs(Q)$-bimodule whose left and right action are given by, for $a\in Q$ and $z\in\rs(Q)$,
\begin{eqnarray}
z\cdot a &=& a \wedge z\;,\label{leftrsaction}\\
a\cdot z &=& a \wedge z^*\;.\label{rightrsaction}
\end{eqnarray}

\begin{lemma} The frame pushout of $\gamma$ and $\delta$
\[
\xymatrix{
Q\otimes_{\rs(Q)} Q&& Q\ar[ll]_{\iota_2}\\
Q\ar[u]^{\iota_1}&&\rs(Q)\ar[ll]^{\gamma}\ar[u]_{\delta}}
\]
coincides with the tensor product $Q\otimes_{\rs(Q)}Q$ of $Q$ with itself under the $\rs(Q)$-$\rs(Q)$-bimodule structure defined in (\ref{leftrsaction}) and (\ref{rightrsaction}).

\end{lemma}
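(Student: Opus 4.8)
The plan is to exhibit both the frame pushout and the bimodule tensor product as quotients of one and the same object, the frame coproduct $Q\otimes Q$, and then to check that the relations imposed by the two constructions generate the same congruence. As recalled in the introduction, the frame coproduct coincides with the sup-lattice tensor product $Q\otimes Q$; I write $j_1,j_2\colon Q\to Q\otimes Q$ for the coproduct injections, which are frame homomorphisms, so that $a\otimes b=j_1(a)\wedge j_2(b)$, the element $1\otimes1$ is the top, and $j_1,j_2$ preserve finite meets.

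First I would rewrite the defining bimodule relations in terms of the frame structure. Since $j_1$ and $j_2$ preserve meets,
\[(a\wedge z^*)\otimes b=(a\otimes b)\wedge(z^*\otimes1)\;,\qquad a\otimes(b\wedge z)=(a\otimes b)\wedge(1\otimes z)\;,\]
so the relation $(a\wedge z^*)\otimes b=a\otimes(b\wedge z)$ that defines $Q\otimes_{\rs(Q)}Q$ --- coming from the right action (\ref{rightrsaction}) on the first factor and the left action (\ref{leftrsaction}) on the second --- becomes $(a\otimes b)\wedge(z^*\otimes1)=(a\otimes b)\wedge(1\otimes z)$ for all $a,b\in Q$ and $z\in\rs(Q)$. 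In the presence of arbitrary joins and binary meets this whole family is equivalent to the single family $z^*\otimes1=1\otimes z$: taking $a=b=1$ gives these as a special case, and meeting $z^*\otimes1=1\otimes z$ with $a\otimes b$ recovers the rest. But $z^*\otimes1=1\otimes z$ is exactly the identity $\iota_1\gamma(z)=\iota_2\delta(z)$ imposed by the frame pushout, since $\gamma(z)=z^*$ and $\delta(z)=z$.

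It then remains to turn this equivalence of generating relations into an equality of quotients, and here lies the only real work. I would show that the sup-lattice congruence $\theta$ generated by the bimodule relations is already stable under binary meets, hence a frame congruence. Each generating pair has the form $\bigl((a\otimes b)\wedge(z^*\otimes1),\,(a\otimes b)\wedge(1\otimes z)\bigr)$, and meeting it with a basic element $c\otimes d$ produces $\bigl((a\wedge c)\otimes(b\wedge d)\bigr)\wedge(z^*\otimes1)$ against $\bigl((a\wedge c)\otimes(b\wedge d)\bigr)\wedge(1\otimes z)$, again a generating pair; since meets distribute over the joins out of which every element and every pair of $\theta$ is built, meet-stability on generators propagates to all of $Q\otimes Q$ (formally, the set of pairs that remain $\theta$-related after meeting with every element is a sup-lattice congruence containing the generators, hence contains $\theta$). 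Consequently $\theta$ is a frame congruence containing the pushout relation; conversely any frame congruence containing $z^*\otimes1=1\otimes z$ must contain every generator of $\theta$, so $\theta$ is precisely the congruence defining the pushout.

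The main obstacle is exactly this last passage: the bimodule tensor product is formed in the category of sup-lattices, whereas the pushout is formed in the category of frames, and one must verify that quotienting merely as sup-lattices does not lose the frame structure but reproduces it. Once $\theta$ is known to be a frame congruence, the frame quotient $Q\otimes Q\to(Q\otimes Q)/\theta$ and the sup-lattice quotient have the same underlying object, so $Q\otimes_{\rs(Q)}Q$ carries the frame structure of the pushout, with $\iota_1=q\circ j_1$ and $\iota_2=q\circ j_2$ for $q$ the quotient map; everything else follows directly from $j_1,j_2$ being frame homomorphisms together with the explicit forms $\gamma(z)=z^*$ and $\delta(z)=z$.
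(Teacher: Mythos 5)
Your proposal is correct and follows essentially the same route as the paper's proof: both exhibit the pushout and the bimodule tensor product as quotients of the frame coproduct $Q\otimes Q$ (which is the sup-lattice tensor product) and show that the pushout relation $z^*\otimes 1 = 1\otimes z$, once saturated under meets, generates exactly the middle-linearity relations $(a\wedge z^*)\otimes b = a\otimes(b\wedge z)$. The only difference is one of detail: where the paper simply says ``stabilizing under meets,'' you spell out the congruence-theoretic verification that the sup-lattice congruence generated by the bimodule relations is already meet-stable, hence a frame congruence coinciding with the one defining the pushout --- a worthwhile elaboration of the step the paper leaves implicit.
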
 

\begin{proof}
The frame $Q\otimes_{\rs(Q)} Q$ is a quotient of the coproduct $Q \otimes Q$, which coincides
with the tensor product of sup-lattices. The quotient is defined by the condition
\[ z^* \otimes 1 = \iota_1(\gamma(z)) = \iota_2(\delta(z)) = 1 \otimes z  \]
for $z \in \rs(Q)$.
Stabilizing under meets, we get, for $a,b \in Q$,
\[ (a\wedge z^*) \otimes b = a\otimes (b\wedge z)\;,\]
that is,
\[a\cdot z \otimes b = a\otimes z\cdot b\;,\]
which is the required condition defining the tensor product. \qed
\end{proof}

\begin{definition}\label{def:balanced:ineq}
We say that $Q$ is \emph{balanced}
if 
\[b(a1 \wedge c) =(b \wedge 1a^{*})c\]
for all $a,b,c\in Q$.
\end{definition}
[It suffices to impose $b(a1 \wedge c) \le(b \wedge 1a^{*})c$ for all $a,b,c\in Q$, due to the involution.]

\begin{lemma}
If $Q$ is balanced and $\rs(Q) = Q1$, the quantale multiplication $\mu:Q\otimes Q\to Q$ has the following factorisation in the category of sup-lattices, where we denote by $\pi$ the frame surjection 
$Q\otimes Q \rightarrow Q\otimes_{\rs (Q)} Q$:
\[
\xymatrix{
Q\otimes Q\ar[d]_{\pi}\ar[drr]^{\mu}\\
Q\otimes_{\rs(Q)} Q \ar[rr]_{\mu_0}&& Q}
\]
\end{lemma}

\begin{proof}
First we use the fact that $\rs(Q)=Q1$ in order to write every $z\in\rs(Q)$ in the form $z=c1$ for $c\in Q$.
By definition the multiplication $\mu:Q\otimes Q\to Q$ preserves joins in each variable, and furthermore it is ``middle-linear'' because, since $Q$ is balanced, we have
\[ \mu( a \otimes (c1\cdot b)) = a(c1 \wedge b) = (a \wedge 1c^*)b  = \mu( (a\cdot c1) \otimes b)\]
for all $a,b,c \in Q$.
The factorization follows from the definition of the tensor product. \qed
\end{proof}

Henceforth we shall use the following terminology:
\begin{definition}
If $Q$ is balanced, we refer to the homomorphism
\[\mu_0:Q\otimes_{\rs(Q)} Q\to Q\]
in the above factorization as the \emph{reduced multiplication} of $Q$.
By a \emph{multiplicative quantal frame} is meant a balanced quantal frame such that the right adjoint of the reduced multiplication preserves joins.
\end{definition}

\begin{remark}
Warning: this notation is at odds with the notation that was used above and in \cite{aim} for inverse quantal frames, since the multiplication $\mu_0$, which is now being called reduced, was previously denoted by $\mu$.
\end{remark}

It is immediate that if $Q$ is multiplicative the localic graph of (\ref {prelimgraphdiagram}) is equipped with a multiplication $m$,
\begin{equation}\label{graphdiagram}
G\ \ =\ \ \xymatrix{
G_2\ar[rr]^-{m}&&G_1\ar@(ru,lu)[]_i\ar@<1.2ex>[rr]^r\ar@<-1.2ex>[rr]_d&&G_0
}\;,
\end{equation}
where $G_2$ is the pullback of $d$ and $r$ and $m$ is defined as follows:
\begin{eqnarray}
m^*(a)&=& (\mu_0)_*(a)=\V_{xy\le a}\ x\otimes y\;.
\end{eqnarray}

It is straightforward to verify the following:

\begin{lemma}
Let $Q$ be a multiplicative quantal frame. The multiplication of the graph $G$ is associative and $i$ is an involution for it.
\end{lemma}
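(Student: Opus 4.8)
The plan is to verify both assertions at the level of direct-image homomorphisms, exploiting the identification $m_! = \mu_0$. Indeed, $m^* = (\mu_0)_*$ is by construction the right adjoint of the reduced multiplication, so $\mu_0$ is the left adjoint of $m^*$; thus $m$ is open and $m_! = \mu_0$. Combined with the identification $\opens(G_2) = Q\otimes_{\rs(Q)} Q$ furnished by the pushout lemma, and the facts that $\mu_0(x\otimes y) = xy$ on pure tensors, that pure tensors join-generate the tensor products in sight, and that all the maps involved preserve joins, this reduces every categorical identity to an identity on pure tensors, to be settled by the quantale structure of $Q$.

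For the involution, the equalities $i\circ i = \ident$, $d\circ i = r$ and $r\circ i = d$ are immediate on inverse images, since $i^*(a)=a^*$ gives $i^*\circ i^*=\ident$ from $(a^*)^*=a$, while $i^*\circ d^* = i^*\circ\delta$ sends $z\in\rs(Q)$ to $z^* = \gamma(z) = r^*(z)$, and symmetrically for the other equation. That $i$ is an involution \emph{for} $m$ is the anti-homomorphism law $i\circ m = m\circ\sigma$, where $\sigma\colon G_2\to G_2$ is the twist whose direct image acts by $x\otimes y\mapsto y^*\otimes x^*$. One first checks that this assignment respects the balancing relation of (\ref{leftrsaction})--(\ref{rightrsaction}) — using that $(-)^*$ is a frame isomorphism intertwining the two actions — so that $\sigma_!$ is well defined; the law then reads, on pure tensors, $(xy)^* = y^*x^*$, which holds because $(-)^*$ is an involution of the quantale $Q$.

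For associativity, I identify $\opens(G_3)$ with the triple tensor product $Q\otimes_{\rs(Q)} Q\otimes_{\rs(Q)} Q$, where $G_3$ is the iterated pullback of $d$ and $r$. Because the structure maps are open and direct image is functorial, the two composites $m\circ(m\times\ident)$ and $m\circ(\ident\times m)$ from $G_3$ to $G_1$ have direct images $\mu_0\circ(\mu_0\otimes\ident)$ and $\mu_0\circ(\ident\otimes\mu_0)$. Evaluated on a pure tensor $x\otimes y\otimes z$ these give $(xy)z$ and $x(yz)$, which coincide by associativity of the quantale multiplication of $Q$; join-generation then forces the two composites to agree, which is exactly associativity of $m$.

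The one point demanding care — and the likely main obstacle — is the passage between the localic statements (equalities of maps into $G_1$) and these algebraic tensor identities: one must confirm that $\opens(G_3)$ really is the triple tensor product over $\rs(Q)$ and that the direct images of $m\times\ident$ and $\ident\times m$ are $\mu_0\otimes\ident$ and $\ident\otimes\mu_0$. This rests on openness of the structure maps together with the compatibility of the direct-image assignment $(-)_!$ with pullbacks over $G_0$ (i.e.\ with tensor products over $\rs(Q)$). Granting the standard localic machinery already used for inverse quantal frames in \cite{aim}, these identifications are routine, which is precisely why the verification is ultimately straightforward.
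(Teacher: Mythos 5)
Your proposal is correct and takes essentially the same approach as the paper: the paper proves the involution law exactly as you do, by computing direct images on pure tensors via the twist $\chi_!(a\otimes b)=b^*\otimes a^*$ and the identity $(ab)^*=b^*a^*$, and it handles associativity by citing the entirely analogous argument of \cite[Th.\ 4.8]{aim}, which is precisely the triple-tensor-product computation you sketch. One minor caveat: from $m^*=(\mu_0)_*$ you may conclude only that $m$ is \emph{semiopen} with $m_!=\mu_0$ (in this paper ``open'' additionally requires the Frobenius condition), but the existence of $m_!$ together with uniqueness of adjoints is all your argument actually uses.
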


\begin{proof}
The proof of associativity is entirely analogous to the proof of associativity in \cite[Th.\ 4.8]{aim}. Saying that $i$ is an involution for $m$ means that it satisfies
\begin{eqnarray}
i\circ m &=& m\circ \chi \label{invol2}
\end{eqnarray}
where $\chi:G_2\to G_2$ is the isomorphism $\langle i\circ\pi_2, i\circ\pi_1\rangle$, whose direct image is
given by $\chi_!(a\otimes b)= b^*\otimes a^*$; and condition (\ref{invol2}) follows from $(i\circ m)_!(a\otimes b)=(ab)^*=b^* a^*=(m\circ\chi)_!(a\otimes b)$. \qed
\end{proof}

In fact, adding the mild condition $\rs(Q)=Q1$ we obtain:

\begin{theorem}\label{semicat}
Let $Q$ be a multiplicative quantal frame such that $\rs(Q)=Q1$. The graph $G$ is an involutive semicategory (\ie, an involutive ``category without units'').
\end{theorem}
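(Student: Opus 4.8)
The plan is to observe that, by the two preceding lemmas, the graph $G$ already carries an associative multiplication $m$ for which $i$ is an involution ($i\circ m = m\circ\chi$), and that the involutive-graph identities $i\circ i=\ident$, $d\circ i = r$ and $r\circ i = d$ hold by construction in (\ref{prelimgraphdiagram}). Hence the only thing still needed in order for $G$ to be an involutive semicategory is the compatibility of the composition with the source and target maps: the source of a composite should be the source of its first factor, and the target of a composite the target of its second factor. Writing $\pi_1,\pi_2:G_2\to G_1$ for the two projections of the pullback $G_2$ (so that $r\circ\pi_1 = d\circ\pi_2$), these conditions read
\[d\circ m = d\circ\pi_1\qquad\text{and}\qquad r\circ m = r\circ\pi_2\;.\]

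I would first reduce the two identities to one. Applying the relations $r=d\circ i$, $i\circ m=m\circ\chi$ and $\pi_1\circ\chi=i\circ\pi_2$ to the first identity yields $r\circ m = d\circ i\circ m = d\circ m\circ\chi = d\circ\pi_1\circ\chi = r\circ\pi_2$, so it suffices to prove $d\circ m = d\circ\pi_1$. Dualising to frames (recall $m^* = (\mu_0)_*$, $d^*=\delta$ and $\pi_1^* = \iota_1$), and using that $\delta$ is the inclusion $\rs(Q)\hookrightarrow Q$ while $\iota_1(a)=a\otimes 1$, this amounts to the single equation
\[(\mu_0)_*(z) = z\otimes 1\qquad\text{for all } z\in\rs(Q)\;,\]
an identity in the relative tensor product $Q\otimes_{\rs(Q)}Q$.

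To establish it I would prove the two inequalities separately. The inequality $z\otimes 1\le(\mu_0)_*(z)$ is immediate from the adjunction $\mu_0\dashv(\mu_0)_*$, since $\mu_0(z\otimes 1)=z1\le z$ because $z$ is right-sided. For the reverse inequality I would use the description $(\mu_0)_*(z)=\V_{xy\le z}x\otimes y$, so that it is enough to show
\[xy\le z\ \Longrightarrow\ x\otimes y\le z\otimes 1\]
for all $x,y\in Q$ and $z\in\rs(Q)$. Here the hypothesis $\rs(Q)=Q1$ enters: writing $z=c1$ and using $z1\le z$ we obtain $xy1\le z$, whence $xy1\otimes 1\le z\otimes 1$ by monotonicity in the first factor, and it remains to prove the absorption $x\otimes y\le(xy1)\otimes 1$. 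This last step is a computation inside $Q\otimes_{\rs(Q)}Q$, carried out with the defining relation $(a\wedge s^*)\otimes b = a\otimes(b\wedge s)$ (for $s\in\rs(Q)$) of the first lemma together with the balanced law of Definition \ref{def:balanced:ineq}; it is the algebraic shadow of the groupoid identity $d(xy)=d(x)$.

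The main obstacle is precisely this absorption inequality. It cannot be obtained by monotonicity alone (one does not have $x\le xy1$), so it must exploit the collapsing of basic tensors forced by the quotient $\pi:Q\otimes Q\twoheadrightarrow Q\otimes_{\rs(Q)}Q$; this is exactly where both $\rs(Q)=Q1$ (guaranteeing that right-sided elements have the form $Q1$, so that products can be reabsorbed into the top element $1$) and balancedness (which makes the reduced multiplication $\mu_0$ middle-linear in the first place) are indispensable. The manipulation itself is a routine but slightly delicate calculation in the relative tensor product, entirely analogous to the verification of the category axioms for inverse quantal frames in \cite[Th.\ 4.8]{aim}.
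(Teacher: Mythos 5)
Your reductions are all correct as far as they go: deriving $r\circ m=r\circ\pi_2$ from $d\circ m=d\circ\pi_1$ via $r=d\circ i$, $i\circ m=m\circ\chi$ and $\pi_1\circ\chi=i\circ\pi_2$ is valid (the paper simply proves the second square ``analogously''), and the dualization to the single frame identity $(\mu_0)_*(z)=z\otimes 1$ for $z\in\rs(Q)$, together with the easy inequality via the adjunction $\mu_0\dashv(\mu_0)_*$, matches the paper. But the proposal stops exactly where the theorem begins: the absorption inequality $x\otimes y\le xy1\otimes 1$, which you yourself single out as ``the main obstacle'', is never proved --- it is only asserted to be ``a routine but slightly delicate calculation'' analogous to \cite[Th.\ 4.8]{aim}. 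That inequality is the entire mathematical content of the statement; everything before it is formal bookkeeping, so what you have produced is a reduction of the theorem to its core, not a proof. The appeal to \cite{aim} carries no weight here either: the result cited is about associativity in the unital setting, where a multiplicative unit and a support are available, and its computations do not transfer to this step.

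Moreover, the missing step is not routine, because it rests on an auxiliary fact you never isolate: $q\le q1$ for all $q\in Q$. This is not among the hypotheses and fails in general quantales; it must be extracted from $\rs(Q)=Q1$. Indeed, since $1\in\rs(Q)=Q1$ one gets $1=p1$ for some $p$, whence $1\le 11$ and so $11=1$ and $(q1)1=q1$ for every $q$; then $q\vee q1$ is right-sided, hence lies in $Q1$, hence is fixed by $(-)1$, which forces $q\vee q1=(q\vee q1)1=q1$, i.e.\ $q\le q1$. With this lemma in hand the paper's computation goes through: $x\otimes y\le x\otimes y1=x\otimes(y1\wedge 1)=(x\wedge 1y^*)\otimes 1$ by the tensor relation for $y1\in\rs(Q)$; then $(x\wedge 1y^*)\otimes 1\le(x\wedge 1y^*)1\otimes 1$ (using $q\le q1$ again); and finally $(x\wedge 1y^*)1=x(y1\wedge 1)=xy1$ by the balanced law, giving $x\otimes y\le xy1\otimes 1$ and hence, with your monotonicity step, $x\otimes y\le z\otimes 1$ whenever $xy\le z\in\rs(Q)$. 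Until you supply this chain (or an equivalent one), including the derivation of $q\le q1$ from $\rs(Q)=Q1$, the proposal has a genuine gap.
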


\begin{proof}
All there is left to do is to prove that the following diagrams are commutative.
\begin{equation}\label{semicategory}
\xymatrix{
G_2\ar[r]^{\pi_1}\ar[d]_{m}&G_1\ar[d]^d\\
G_1\ar[r]_d&G_0
}\hspace*{2cm} \xymatrix{
G_2\ar[r]^{\pi_2}\ar[d]_{m}&G_1\ar[d]^r\\
G_1\ar[r]_r&G_0
}
\end{equation}
We verify the equation $d\circ m=d\circ\pi_1$ using inverse image homomorphisms. Indeed, for all $z=a1\in \rs(Q)$ we see that $\pi_1^*(d^*(a1))=a1\otimes 1\le m^*(d^*(a1))$ by taking $x=a1$ and $y=1$ in
\[m^*(d^*(a1))=\V_{xy \leq a1} x \otimes y\;,\]
and the converse inequality is proved as follows:
\begin{eqnarray*}
m^*(d^*(a1)) &=& \V_{xy \leq a1} x \otimes y = \V_{xy1 \leq a1} x \otimes y1 = \V_{xy1 \leq a1} x \otimes (y1 \wedge 1)\\
& =& \V_{xy1 \leq a1} (x \wedge 1y^{*}) \otimes 1 \le \V_{xy1 \leq a1} (x \wedge 1y^{*})1 \otimes 1\\
& =& \V_{xy1 \leq a1} x(1 \wedge y1) \otimes 1 = \V_{xy1 \leq a1} xy1 \otimes 1  \\
&\leq& a1 \otimes 1=\pi_1^*(d^*(a1)) \;.
\end{eqnarray*}
The condition $r\circ m=r\circ\pi_2$ is proved analogously. \qed
\end{proof}

Note that this proof did not use multiplicativity so that in fact the following slightly more general fact holds:

\begin{corollary}\label{cor:framehoms}
Let $Q$ be a balanced quantal frame satisfying $\rs(Q)=Q1$. Both $(\mu_0)_*\circ d^*$ and $(\mu_0)_*\circ r^*$ are frame homomorphisms and we have the equalities
\begin{equation}
(\mu_0)_*\circ d^* = \pi_1^*\circ d^*\hspace*{2cm}(\mu_0)_*\circ r^* = \pi_2^*\circ r^*\;.
\end{equation}
\end{corollary}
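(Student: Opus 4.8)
The plan is to reduce the statement to the computation already carried out in the proof of Theorem \ref{semicat}. First I would note that the two displayed equalities are nothing but the inverse-image form of the commutativity of the squares in (\ref{semicategory}), now written directly in terms of $(\mu_0)_*$ rather than $m^*$ (the latter is unavailable here, since without multiplicativity the map $m$ need not exist). Concretely, using $\rs(Q)=Q1$ to write an arbitrary element of $\rs(Q)$ as $z=a1$, the relevant chain of (in)equalities shows
\[
(\mu_0)_*(d^*(a1)) \;=\; \V_{xy\le a1} x\otimes y \;=\; a1\otimes 1 \;=\; \pi_1^*(d^*(a1))\;,
\]
and its involutive mirror gives the corresponding identity for $r$. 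The crucial observation, made in the preceding Note, is that this chain uses only the balanced identity together with $\rs(Q)=Q1$, and never the hypothesis that $(\mu_0)_*$ preserves joins; hence the two equalities hold for every balanced quantal frame with $\rs(Q)=Q1$.

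Granting the equalities, the frame-homomorphism assertion is then formal. I would observe that $\pi_1^*\circ d^*$ and $\pi_2^*\circ r^*$ are composites of frame homomorphisms and so are themselves frame homomorphisms; by the equalities the same holds for $(\mu_0)_*\circ d^*$ and $(\mu_0)_*\circ r^*$. Indeed $d^*=\delta$ and $r^*=\gamma$ are frame homomorphisms by the definition of the graph (\ref{prelimgraphdiagram}), while $\pi_1^*=\iota_1$ and $\pi_2^*=\iota_2$ are the two coprojections of the frame pushout which, by the Lemma identifying the frame pushout of $\gamma$ and $\delta$ with the tensor product $Q\otimes_{\rs(Q)}Q$, coincides with $Q\otimes_{\rs(Q)}Q$; being pushout coprojections in $\Frm$, they are frame homomorphisms.

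The one conceptual point worth flagging is precisely what keeps the statement from being trivial: $(\mu_0)_*$, as a right adjoint, always preserves meets but in general fails to preserve joins --- its preservation of joins is exactly multiplicativity, which is not assumed here. Thus $(\mu_0)_*\circ d^*$ is a frame homomorphism not because $(\mu_0)_*$ is one, but because precomposing with $d^*$ (equivalently, restricting attention to the right-sided elements $a1$) collapses the potentially ill-behaved factor $(\mu_0)_*$ onto the genuine frame homomorphism $\pi_1^*\circ d^*$. This is why essentially all of the work resides in the balanced computation borrowed from Theorem \ref{semicat}, and why nothing further is needed once those equalities are in hand.
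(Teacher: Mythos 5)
Your proposal is correct and takes essentially the same route as the paper: the paper's own proof of Corollary \ref{cor:framehoms} is precisely the observation (stated in the sentence preceding it) that the chain of (in)equalities proving $d\circ m=d\circ\pi_1$ and $r\circ m=r\circ\pi_2$ in Theorem \ref{semicat} uses only the balanced identity and $\rs(Q)=Q1$, never the join-preservation of $(\mu_0)_*$. Your extra remarks --- that $(\mu_0)_*$ exists as a right adjoint without multiplicativity, and that the frame-homomorphism claim then follows formally because $\pi_1^*\circ d^*$ and $\pi_2^*\circ r^*$ are composites of frame homomorphisms --- simply make explicit what the paper leaves implicit.
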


\paragraph{Open quantal frames.}

We have seen that a multiplicative quantal frame $Q$ (at least one satisfying the mild condition $\rs(Q)=Q1$) defines much of the structure which is necessary in order to obtain a localic groupoid $G$, but a crucial ingredient is missing, namely the inclusion of units $u:G_0\to G_1$. We address this now.

Let $Q$ be a balanced quantal frame and let us consider the map $\upsilon :  Q \rightarrow  Q$ given by
\[\upsilon(a)  = \V_{xy^{*}\leq a} x\wedge y \;. \]
As we shall see later, this is intended to play the role of $u^*:G_1\to G_0$, but for now we begin with a useful technical observation:

\begin{lemma}\label{xxstarvsxystar}
For all $a\in Q$ we have:
\begin{eqnarray}
\upsilon(a) &=& \V_{xx^{*}\leq a} x\;,\label{xxstarvsxystar1}\\
\upsilon(a^*) &=& \upsilon(a)\;.\label{xxstarvsxystar2}
\end{eqnarray}
\end{lemma}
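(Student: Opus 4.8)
The plan is to establish the first identity (\ref{xxstarvsxystar1}) directly, by showing that the two joins $\upsilon(a)=\V_{xy^*\le a}x\wedge y$ and $\V_{xx^*\le a}x$ dominate each other, and then to deduce the symmetry (\ref{xxstarvsxystar2}) as an immediate corollary. The inequality $\V_{xx^*\le a}x\le\upsilon(a)$ is the easy one: for any $x$ with $xx^*\le a$, the pair $(x,x)$ already satisfies $xy^*=xx^*\le a$ and contributes the term $x\wedge x=x$ to the defining join of $\upsilon(a)$, so every $x$ indexing the right-hand join occurs in $\upsilon(a)$.

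For the reverse inequality $\upsilon(a)\le\V_{xx^*\le a}x$ I would fix a pair $(x,y)$ with $xy^*\le a$ and put $w=x\wedge y$. The crux is the estimate $ww^*\le a$. Since the involution is a join-preserving bijection, hence an order-automorphism of the underlying frame, it also preserves binary meets, so $w^*=x^*\wedge y^*$. Then $w\le x$ and $w^*\le y^*$, and monotonicity of the quantale multiplication in each variable gives $ww^*\le xy^*\le a$. Consequently $w=x\wedge y$ is one of the elements indexing $\V_{xx^*\le a}x$, so $x\wedge y\le\V_{xx^*\le a}x$; joining over all admissible pairs $(x,y)$ yields $\upsilon(a)\le\V_{xx^*\le a}x$, and (\ref{xxstarvsxystar1}) follows.

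Once (\ref{xxstarvsxystar1}) is in hand, (\ref{xxstarvsxystar2}) is essentially free. Each element of the form $xx^*$ is self-adjoint, since $(xx^*)^*=x^{**}x^*=xx^*$; applying the involution to an inequality and using that it is order-preserving and involutive shows $xx^*\le a$ if and only if $xx^*\le a^*$. Hence the join $\V_{xx^*\le a}x$ is indexed by exactly the same set whether computed against $a$ or against $a^*$, giving $\upsilon(a^*)=\upsilon(a)$.

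I expect the only genuine subtlety to be the identity $w^*=x^*\wedge y^*$, that is, the fact that the involution commutes with meets; here I would be careful to justify it by noting that a join-preserving involution on a frame is an order-isomorphism and therefore preserves all infima, rather than assuming it tacitly. Everything else reduces to monotonicity of the multiplication and straightforward reindexing of the joins.
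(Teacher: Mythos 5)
Your proof is correct and follows essentially the same route as the paper's: the easy inequality via the diagonal pairs $(x,x)$, the hard inequality via $w = x\wedge y$ and the estimate $ww^*\le xy^*\le a$, and the symmetry statement from self-adjointness of $xx^*$. Your extra care in justifying $(x\wedge y)^* = x^*\wedge y^*$ (involution as an order-automorphism) is sound, though for the key estimate mere monotonicity of the involution, giving $(x\wedge y)^*\le y^*$, would already suffice.
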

\begin{proof}
We first show that $\upsilon(a) \leq \V_{xx^{*}\leq a}x $.
Indeed $\V_{xy^{*}\leq a} x \wedge y  \leq  \V_{xx^{*}\leq a}x$,
for $ xy^{*} \leq a$ implies that $ (x \wedge y)(x \wedge y)^{*} \leq a$
and hence $x \wedge y \leq \V_{xx^{*}\leq a}x$.
Now, for the other inequality, we have
\[ \V_{xx^{*}\leq a}x \leq  \V_{xx^{*}\leq a}x \wedge x \leq \V_{xy^{*}\leq a} x \wedge y\;,\]
which proves (\ref{xxstarvsxystar1}). And (\ref{xxstarvsxystar2}) is an obvious corollary, since $xx^*\le a$ is equivalent to $xx^*\le a^*$. \qed
\end{proof}

Note that we have:

\begin{lemma}
$\upsilon$ preserves finite meets.
\end{lemma}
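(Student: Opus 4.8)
The plan is to argue with the reformulation $\upsilon(a)=\V_{xx^{*}\le a}x$ supplied by Lemma~\ref{xxstarvsxystar} (equation~(\ref{xxstarvsxystar1})) rather than with the original definition, since it makes the relevant index sets directly comparable. Preserving finite meets means preserving the empty meet (the top element $1$) together with binary meets. The nullary case is immediate: every $x\in Q$ satisfies $xx^{*}\le 1$, so $\upsilon(1)=\V_{xx^{*}\le 1}x=\V_{x\in Q}x=1$.

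For binary meets, one inequality comes for free. From the reformulation it is clear that $\upsilon$ is monotone, because enlarging $a$ can only enlarge the index set $\{x\mid xx^{*}\le a\}$; applying this to $a\wedge b\le a$ and $a\wedge b\le b$ yields $\upsilon(a\wedge b)\le\upsilon(a)\wedge\upsilon(b)$.

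The substance is the reverse inequality $\upsilon(a)\wedge\upsilon(b)\le\upsilon(a\wedge b)$. Here I would first invoke frame distributivity to write $\upsilon(a)\wedge\upsilon(b)=\V\{x\wedge y\mid xx^{*}\le a,\ yy^{*}\le b\}$, which reduces the task to showing $x\wedge y\le\upsilon(a\wedge b)$ whenever $xx^{*}\le a$ and $yy^{*}\le b$. The key estimate is that, since both the involution $(-)^{*}$ and the quantale multiplication are order preserving, we have $(x\wedge y)(x\wedge y)^{*}\le xx^{*}$ and $(x\wedge y)(x\wedge y)^{*}\le yy^{*}$, and therefore $(x\wedge y)(x\wedge y)^{*}\le xx^{*}\wedge yy^{*}\le a\wedge b$. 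Hence $x\wedge y$ is among the elements $z$ with $zz^{*}\le a\wedge b$ that occur in $\upsilon(a\wedge b)=\V_{zz^{*}\le a\wedge b}z$, so $x\wedge y\le\upsilon(a\wedge b)$, as needed.

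I do not expect a genuine obstacle: the entire argument rests on the single elementary observation $(x\wedge y)(x\wedge y)^{*}\le xx^{*}\wedge yy^{*}$, which uses only monotonicity of the involution and of the product. In particular, neither the balanced condition nor the hypothesis $\rs(Q)=Q1$ is required, so the statement holds for an arbitrary involutive quantal frame $Q$.
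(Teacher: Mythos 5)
Your proof is correct and takes essentially the same route as the paper's: monotonicity of $\upsilon$ gives $\upsilon(a\wedge b)\le\upsilon(a)\wedge\upsilon(b)$, and the reverse inequality follows from frame distributivity together with the key estimate $(x\wedge y)(x\wedge y)^*\le xx^*\wedge yy^*\le a\wedge b$, exactly as in the paper. Your two small additions --- checking the empty meet $\upsilon(1)=1$ and noting that balancedness is never used --- are both correct and harmless.
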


\begin{proof}
It is obvious, by the definition of $\upsilon$, that $\upsilon$ is monotone, and this gives us the inequality $\upsilon(a\wedge b)\le\upsilon(a)\wedge\upsilon(b)$. 
For the other inequality,
\begin{eqnarray*}
\upsilon(a) \wedge \upsilon(b) &=& \left(\V_{xx^{*}\leq a} x\right)\wedge\left(\V_{yy^{*}\leq b}
y\right)\\
 &=&\V_{\scriptsize\begin{array}{rcl}xx^{*} &\leq& a\\ yy^{*} &\leq& b\end{array}} x \wedge y\;,
\end{eqnarray*}
but since
for $x$ and $y$ such that $xx^{*} \leq a$ and $yy^{*} \leq b$ we have
\[(x \wedge y)(x \wedge y)^* = (x \wedge y)(x^* \wedge y^*) \leq xx^* \wedge yy^* \leq a \wedge b\]
we get, using \ref{xxstarvsxystar},
$\upsilon(a) \wedge \upsilon(b) \leq \upsilon(a \wedge b)$. \qed
\end{proof}
 
Also, obviously:

\begin{lemma}\label{obvious}
If $Q$ is multiplicative then
 $\upsilon = [\ident_Q,i^*] \circ (\mu_0)_{*}$.
\end{lemma}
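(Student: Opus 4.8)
The plan is to evaluate both sides at a fixed $a\in Q$ and reindex. First I would unpack the copairing $[\ident_Q,i^*]$: it is the sup-lattice homomorphism $Q\otimes_{\rs(Q)}Q\to Q$ induced by the bimorphism $(x,y)\mapsto x\wedge y^*$ (a bimorphism because meet distributes over joins in each variable in the frame $Q$, and $i^*=(-)^*$ is a frame automorphism), so that on basic tensors
\[[\ident_Q,i^*](x\otimes y)=x\wedge y^*\;.\]
I would briefly check that this map is genuinely well defined on the tensor product over $\rs(Q)$, rather than merely on $Q\otimes Q$: for $z\in\rs(Q)$ the identity $(x\wedge z^*)\wedge y^*=x\wedge(y\wedge z)^*$ shows that the bimorphism respects the right and left actions (\ref{rightrsaction}) and (\ref{leftrsaction}), so it factors through the coequalizer defining $Q\otimes_{\rs(Q)}Q$.

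Next, since $Q$ is multiplicative I would invoke the formula already recorded in the definition of the graph multiplication, namely $(\mu_0)_*(a)=\V_{xy\le a}x\otimes y$, where multiplicativity is exactly what guarantees that $(\mu_0)_*$ preserves joins. Both maps in the composite are sup-lattice homomorphisms, so $[\ident_Q,i^*]$ can be pushed through the join, giving
\[[\ident_Q,i^*]\bigl((\mu_0)_*(a)\bigr)=\V_{xy\le a}[\ident_Q,i^*](x\otimes y)=\V_{xy\le a}x\wedge y^*\;.\]
Finally I would reindex using the involution: since $(-)^*$ is a bijection of $Q$, substituting $y\mapsto y^*$ converts the constraint $xy\le a$ into $xy^*\le a$ and the term $x\wedge y^*$ into $x\wedge y$, whence
\[\V_{xy\le a}x\wedge y^*=\V_{xy^*\le a}x\wedge y=\upsilon(a)\]
by the very definition of $\upsilon$ (alternatively using the reformulation in Lemma \ref{xxstarvsxystar}).

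I do not expect a serious obstacle here, as the argument is essentially bookkeeping. The only two points requiring genuine care are the well-definedness of $[\ident_Q,i^*]$ over $\rs(Q)$ (handled by the compatibility identity above) and the legitimacy of distributing $[\ident_Q,i^*]$ across the join expression for $(\mu_0)_*$ — which is exactly where the multiplicativity hypothesis enters, since it supplies both the explicit join formula for the right adjoint and its join-preservation. Once these are in place, the matching of the two indexed joins is a one-line change of variables through the involution.
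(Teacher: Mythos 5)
Your proof is correct, and it is precisely the computation the paper leaves implicit: Lemma \ref{obvious} is stated with no proof at all (it is prefaced by ``Also, obviously:''), and the intended argument is exactly yours --- the pure-tensor formula for $(\mu_0)_*$, join-preservation of the copairing $[\ident_Q,i^*]$, and the reindexing $y\mapsto y^*$ through the involution. One correction to your closing remark about where the hypothesis enters: multiplicativity is actually used nowhere in this computation, since the formula $(\mu_0)_*(a)=\V_{xy\le a}x\otimes y$ is the generic description of the right adjoint of a sup-lattice homomorphism out of a tensor product (it needs only that $\mu_0$ preserve joins, which holds by construction, not that $(\mu_0)_*$ do so), and the distribution step uses join-preservation of $[\ident_Q,i^*]$ rather than of $(\mu_0)_*$; so the identity in fact holds for any balanced quantal frame, multiplicativity being relevant only to the surrounding observation that $\upsilon$ is then a frame homomorphism.
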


Note that, although for multiplicative $Q$ the map $\upsilon$ is a frame homomorphism, it does not yet produce the desired splitting $u:G_0\to G_1$ of $d$ and $r$, to begin with because the image of $\upsilon$ does not necessarily lie in $\rs(Q)$. We include this requirement in the following definition:

\begin{definition}
A balanced quantal frame $Q$ is called \emph{semiopen} if it satisfies
\begin{equation}\label{axiomR}
\upsilon(Q)\subset\rs(Q)
\end{equation}
and, in addition, the following property holds for all $a\in Q$:
\begin{equation}\label{axiomU}
\V_{xx^{*}x\leq a} x = a\;.
\end{equation}
If, furthermore, the following condition is satisfied for all $a,b,c\in Q$,
\begin{equation}\label{axiomO}
(a1 \wedge b)c = a1 \wedge bc\;,
\end{equation}
$Q$ will be called \emph{open}.
Henceforth we shall usually refer to conditions (\ref{axiomR}), (\ref{axiomU}), and (\ref{axiomO}), respectively, as properties \emph{(R)}, \emph{(U)}, and \emph{(O)}. We shall also say that an involutive quantal frame satisfies \emph{(B)} if it is balanced.
\end{definition}

\begin{remark}\label{RQeqQ1}
Note that (U) implies that we
get, for $q \in Q$, 
\[q \leq qq^{*}q \leq q1 \]
and
\[ q \leq qq^*q \leq 1q\;.\]
To see this, take $a = qq^{*}q$ in (U). We obtain, making $x = q$, 
$q \leq qq^{*}q$.
Also,
\[\rs(Q) = Q1\;,\]
for if $z \in \rs(Q)$ then $z1 \leq z$ and, since  $z1 \geq z$, we
obtain $z= z1$. In particular, we remark that an involutive quantal frame satisfying (U) is necessarily a Gelfand quantale because for $z \in\rs(Q)$ we have \[z =  z1 \leq zz^*z1 = zz^*z \leq z1 =z\;.\]
\end{remark}

\begin{example} The axiom (R) is independent of (B) and (U), as the following 
example shows. Consider the commutative involutive quantal frame
 $Q = \mathcal{P}(X)$ for $X = \{a,b\}$ with the trivial involution and multiplication 
generated by \[\{a\}^{2} = \{a\}, \{b\}^{2} = \{b\}, \{a\}\{b\} = \{b\}\{a\} = X \;.\]
It is easy to verify that $Q$ satisfies  (B) (and (O)) and (U) but not
(R) because
\[\left(\bigcup_{WW \subset \{a\}}W\right)X = \{a\}X = X \neq \{a\} = \bigcup_{WW \subset \{a\}}W\;.\]
\end{example}

\begin{example}
Axiom (U) is in turn independent, for consider again the quantale $Q$ of the previous example
but with involution given by
\[i(\{a\}) = \{b\}, i(\{b\}) = \{a\}\;. \]
Then $Q$ continues to satisfy (B) (and (O)), and also (R) because in this case
\[\upsilon(\{a\}) = \upsilon (\{b\}) = \emptyset, \upsilon(X) = X\;, \]
but not (U) because
\[\bigcup_{WW^{*}W \subset \{a\}}W = \emptyset \neq \{a\}\;. \]
\end{example}

\begin{example} Finally, (U) and (O) can be separated even if both (B) and (R) hold because, as we shall see, if $G$ is a localic groupoid whose multiplication map is semiopen but not open the quantale $\opens(G)$ is semiopen but not open, whereas if $G$ is an open groupoid the quantale $\opens(G)$ is open.
\end{example}

\begin{lemma}\label{domain} Let $Q$ satisfy (U). Then $\upsilon(z) = z$ for all $z\in\rs(Q)$. 
\end{lemma}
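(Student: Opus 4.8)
The plan is to reduce everything to the description of $\upsilon$ given by \eqref{xxstarvsxystar1}, namely $\upsilon(z)=\V_{xx^*\le z}x$, and then to prove the two inequalities $\upsilon(z)\le z$ and $z\le\upsilon(z)$ separately. The only facts about $Q$ I intend to use are those recorded in Remark~\ref{RQeqQ1} as consequences of (U): that $q\le qq^*q$ for every $q\in Q$, and that every right-sided $z$ satisfies $z=z1$. I will also use freely that $1$ is the top of the frame, so that $x^*\le 1$ for all $x$, and that $(x^*)^*=x$.

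For the inequality $\upsilon(z)\le z$ I would argue pointwise on the join: if $x$ satisfies $xx^*\le z$, then by (U) we have $x\le xx^*x=(xx^*)x\le (xx^*)1\le z1=z$, where the middle steps use $x\le 1$ and $xx^*\le z$ and the last uses right-sidedness of $z$. Taking the supremum over all such $x$ gives $\upsilon(z)\le z$. This direction is routine.

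The reverse inequality $z\le\upsilon(z)$ is where the content lies. Here I would start from (U) in the form $z=\V_{xx^*x\le z}x$ and show that each index $x$ (with $xx^*x\le z$) in fact already satisfies $xx^*\le z$, so that it contributes to $\upsilon(z)$ and hence $x\le\upsilon(z)$; the claim then follows by taking the join. To prove the absorption $xx^*x\le z\Rightarrow xx^*\le z$, I would apply (U) to $x^*$ to get $x^*\le x^*(x^*)^*x^*=x^*xx^*$, and then use $x^*\le 1$ to obtain $x^*\le x^*x1$; left-multiplying by $x$ yields $xx^*\le x\,x^*x1=(xx^*x)1\le z1=z$, again using right-sidedness at the end. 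I expect this absorption step to be the main obstacle: it is the one place where one must combine (U) with right-sidedness, and the trick of first weakening $x^*xx^*$ to $x^*x1$ before multiplying by $x$ is what makes it go through. Combining the two inequalities gives $\upsilon(z)=z$.
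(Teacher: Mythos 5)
Your proof is correct, and your first inequality $\upsilon(z)\le z$ is exactly the paper's argument: for each $x$ with $xx^*\le z$ one chains $x\le xx^*x\le xx^*1\le z1=z$, using only the consequences of (U) recorded in Remark \ref{RQeqQ1}. For the reverse inequality, however, you take a genuinely different and considerably longer route. You decompose $z=\V_{xx^*x\le z}x$ via (U) and then prove the absorption property $xx^*x\le z\Rightarrow xx^*\le z$ by applying (U) to $x^*$ (obtaining $x^*\le x^*xx^*\le x^*x1$ and left-multiplying by $x$); this is valid, but it is work the paper avoids entirely. The paper simply exhibits $z$ itself as an index of the join $\upsilon(z)=\V_{xx^*\le z}x$: since $z=z1$, one has $zz^*=z1z^*\le z1=z$ (because $1z^*\le 1$), whence $z\le\upsilon(z)$ in one line. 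So the direction you flagged as ``where the content lies'' is in fact the immediate one. Your absorption lemma is a slightly more general fact --- applied at $x=z$ it recovers the paper's key inequality $zz^*\le z$ --- and it could conceivably be of independent use, but for this lemma it buys nothing over the direct witness argument.
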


\begin{proof}
Let $z\in \rs(Q)$.
We have $\upsilon(z) \leq z$ because if $xx^*\leq z1=z$ then
$x \leq xx^*1 \leq z11 = z$ (\cf\ \ref{RQeqQ1}). Also we have $\upsilon(z) \geq z$ because
\[ zz^* = z1z^* \leq z1\;. \qed\]
\end{proof}

\begin{remark} Hence, for $Q$ semiopen $\upsilon$ is surjective onto $\rs(Q)$, since $\rs(Q) = Q1$.
\end{remark}

\begin{lemma}\label{range} For $Q$ satisfying (U) we have $\upsilon \circ \gamma = \ident_{\rs(Q)}$.
\end{lemma}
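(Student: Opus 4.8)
The plan is to deduce the identity $\upsilon\circ\gamma=\ident_{\rs(Q)}$ directly from the two immediately preceding results, Lemma~\ref{xxstarvsxystar} and Lemma~\ref{domain}, without reopening the defining supremum of $\upsilon$. Fix $z\in\rs(Q)$. Since $\gamma$ is by definition the restriction of the involution, $\gamma(z)=z^*$, so the whole matter reduces to evaluating $\upsilon(z^*)$ and checking that it equals $z$.

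First I would use the symmetry of $\upsilon$ recorded in (\ref{xxstarvsxystar2}) of Lemma~\ref{xxstarvsxystar}, namely $\upsilon(a^*)=\upsilon(a)$ for all $a\in Q$. Applied to $a=z$ this gives $\upsilon(\gamma(z))=\upsilon(z^*)=\upsilon(z)$, so the involution built into $\gamma$ is harmless and I am left to evaluate $\upsilon$ on the right-sided element $z$ itself. Then, invoking Lemma~\ref{domain} --- which is exactly the place where hypothesis (U) is used --- I have $\upsilon(z)=z$ for every $z\in\rs(Q)$. Chaining the two equalities yields $\upsilon(\gamma(z))=z$ for all $z\in\rs(Q)$, which is the assertion.

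The only point deserving a moment's care, and the closest thing to an obstacle here, is one of typing rather than of substance. Because $z$ is right-sided, $\gamma(z)=z^*$ is left-sided, so one cannot apply Lemma~\ref{domain} to $\gamma(z)$ directly; it is precisely the symmetry (\ref{xxstarvsxystar2}) that transports the computation back to $z\in\rs(Q)$, where Lemma~\ref{domain} does apply. With both ingredients already established, I expect no genuine difficulty, and the final proof will be a two-line concatenation of these two lemmas.
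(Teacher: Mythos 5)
Your proof is correct, and it takes a genuinely cleaner route than the paper's. The paper does not cite Lemma \ref{xxstarvsxystar} at all: it proves the two inequalities $\upsilon(1z^*)\geq z1$ and $\upsilon(1z^*)\leq z1$ by hand from the formula $\upsilon(a)=\V_{xx^*\leq a}x$, re-deriving inline exactly the two facts you invoke --- the $*$-invariance of $\upsilon$ (via the observation that $xx^*\leq 1z^*$ forces $xx^*\leq z1$, because $xx^*$ is self-adjoint) and the behaviour of $\upsilon$ on right-sided elements (using $x\leq xx^*x$ and $z=z1$, both consequences of (U) recorded in \ref{RQeqQ1}). You instead factor the statement through the two preceding lemmas with no new computation: equation (\ref{xxstarvsxystar2}) removes the involution, reducing $\upsilon\circ\gamma$ to the restriction of $\upsilon$ to $\rs(Q)$, and Lemma \ref{domain} identifies that restriction with the identity. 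Your decomposition is more economical and makes the logical dependencies transparent --- in particular, that (U) enters only through Lemma \ref{domain}, the $*$-symmetry step being hypothesis-free --- whereas the paper's version is self-contained at the cost of duplicating work already done. Your closing remark about typing is also on point: since $\gamma(z)=z^*$ is left-sided rather than right-sided, Lemma \ref{domain} cannot be applied to it directly, and (\ref{xxstarvsxystar2}) is precisely what transports the computation back into $\rs(Q)$; the paper handles this same issue by the inline self-adjointness argument.
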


\begin{proof}
Let $z \in \rs(Q)$. Then $ \upsilon (\gamma(z)) = \upsilon(1z^{*}) \geq  z1 = \upsilon(z1) $ because
$zz^* = z1z^*  \leq 1z^*$.
On the other hand, $ \upsilon(1z^{*}) \leq z1 = \upsilon(z1)$ because
$ xx^* \leq 1z^*$ implies that $xx^* \leq z1$ and $xx^*1 \leq z1$. Hence, $x \leq xx^*x \leq xx^*1 \leq z1. \qed$
\end{proof}
 
The following lemma will be important:
 \begin{lemma}\label{U-lemma}
 (U) holds if and only if for all $a\in Q$ we have
 \[\V_{xy \leq a} \; \V_{pq^{*} \leq x} p \wedge q\wedge y = \V_{xy \leq a} \upsilon(x) \wedge y = a\;. \]
  \end{lemma}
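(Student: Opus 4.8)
The plan is to reduce the whole statement to a single \emph{unconditional} identity, namely that for every $a\in Q$ the middle expression equals the quantity appearing in axiom (U),
\[\V_{xy\le a}\upsilon(x)\wedge y \;=\; \V_{xx^*x\le a}x\;,\]
this being valid in any involutive quantal frame without assuming (U), (B) or (O). Granting this, the stated chain of equalities becomes $a=\V_{xx^*x\le a}x$ for all $a$, which is literally (U), so the equivalence follows at once. First I would dispose of the leftmost equality, which is pure frame distributivity: for fixed $x,y$ with $xy\le a$, the definition $\upsilon(x)=\V_{pq^*\le x}p\wedge q$ combined with distributivity of $(-)\wedge y$ over joins gives $\upsilon(x)\wedge y=\V_{pq^*\le x}(p\wedge q\wedge y)$, and joining over all such $x,y$ identifies the first two expressions. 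This step uses none of the axioms.

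The core is then to establish the displayed unconditional identity by proving both inequalities, where it is convenient to use the form $\upsilon(x)=\V_{pp^*\le x}p$ supplied by Lemma \ref{xxstarvsxystar}. For ``$\le$'', I fix $x,y$ with $xy\le a$ and $p$ with $pp^*\le x$, and set $w=p\wedge y$; since the involution preserves meets we have $ww^*\le pp^*\le x$ and $w\le y$, so by monotonicity of the product $ww^*w\le xw\le xy\le a$, whence $w$ is one of the terms of the right-hand side and $\upsilon(x)\wedge y=\V_{pp^*\le x}(p\wedge y)\le\V_{xx^*x\le a}x$. For ``$\ge$'', I fix $w$ with $ww^*w\le a$ and take $x=ww^*$, $y=w$: then $xy=ww^*w\le a$, while $\upsilon(ww^*)\ge w$ (witnessed by $p=w$, since $ww^*\le ww^*$), so $\upsilon(x)\wedge y\ge w\wedge w=w$, giving $w\le\V_{xy\le a}\upsilon(x)\wedge y$.

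I expect this pair of substitutions to be the main, though not deep, obstacle: one must hit on the correct witnesses ($w=p\wedge y$ in one direction, $x=ww^*$ and $y=w$ in the other) and carry out the support-style verification $(p\wedge y)(p\wedge y)^*(p\wedge y)\le a$, which relies on the involution being a meet-preserving order automorphism and on monotonicity of the quantale product. Once the unconditional identity $\V_{xy\le a}\upsilon(x)\wedge y=\V_{xx^*x\le a}x$ is in place, the final equality ``$=a$'' holds for all $a$ precisely when $\V_{xx^*x\le a}x=a$ for all $a$, i.e. precisely when (U) holds, which completes the proof of the equivalence.
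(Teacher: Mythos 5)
Your proof is correct and takes essentially the same route as the paper: both establish the unconditional identity $\V_{xy\le a}\upsilon(x)\wedge y=\V_{xx^*x\le a}x$ in two inequalities, using the same witnesses (a meet of the given elements for ``$\le$''; $x=ww^*$, $y=w$ for ``$\ge$''), and then observe that the remaining equality with $a$ is precisely axiom (U). The only cosmetic difference is that you route the ``$\le$'' direction through the form $\upsilon(x)=\V_{pp^*\le x}p$ of Lemma \ref{xxstarvsxystar}, taking $w=p\wedge y$, whereas the paper works directly with $pq^*\le x$ and takes $w=p\wedge q\wedge y$.
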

[Later we shall sometimes use the above alternative form for the
   axiom (U).]
   
   \begin{proof}
    Indeed,
    $ xy \leq a$ and $pq^{*} \leq x $ implies that \[(p\wedge q \wedge y)(p \wedge q \wedge y)^{*}
    (p \wedge q \wedge y) \leq pq^{*}y \leq xy \leq a\] and, hence,
       \[\V_{xy \leq a}\;\V_{pq^{*} \leq x} p \wedge q\wedge y \leq \V_{xx^{*}x \leq a} x\;. \]
      For the other inequality
     let $w$ be such that $ww^{*}w \leq a$. Then the particular instance
     $y = w$, $x = ww^{*}$, $p=q = w$ gives us
     \[w=w\wedge w\wedge w \leq  \V_{xy \leq a}\; \V_{pq^{*} \leq x} p \wedge q\wedge y\;,\]
and taking the supremum of such $w$'s we get:
      \[\V_{ww^{*}w \leq a} w \leq \V_{xy \leq a}\; \V_{pq^{*} \leq x} p \wedge q\wedge y\;.\]
      Hence,
      \[\V_{xx^{*}x \leq a} x = \V_{xy \leq a}\; \V_{pq^{*} \leq x} p \wedge q\wedge y\;,\]
      and the lemma follows. \qed
      \end{proof}

\begin{lemma}\label{upsilon}
Let $Q$ be an open quantal frame. Then for all $a,b\in Q$ we have
\[ \upsilon(a) \leq a1 \leq \upsilon(aa^{*}) \]
and
\[ \upsilon(a) \wedge b \leq ab\;.\]
\end{lemma}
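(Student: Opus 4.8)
The plan is to reduce everything to the single-variable description of $\upsilon$ supplied by Lemma~\ref{xxstarvsxystar}, namely $\upsilon(a)=\V_{xx^{*}\leq a}x$, together with the two consequences of (U) recorded in Remark~\ref{RQeqQ1}: that $q\leq qq^{*}q\leq q1$ for every $q\in Q$, and that $\rs(Q)=Q1$. With these in hand the three inequalities become short monotonicity arguments, and I would not expect to need axiom (O) at all.

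First I would prove the third inequality $\upsilon(a)\wedge b\leq ab$, since the first is merely a special case of it. By frame distributivity, $\upsilon(a)\wedge b=\V_{xx^{*}\leq a}(x\wedge b)$, so it suffices to show $x\wedge b\leq ab$ whenever $xx^{*}\leq a$. Writing $w=x\wedge b$, monotonicity of the product gives $ww^{*}\leq xx^{*}\leq a$ (using $w\leq x$ and $w^{*}\leq x^{*}$); then the chain $w\leq ww^{*}w\leq aw\leq ab$ does the job, its first step being exactly the estimate $q\leq qq^{*}q$ from Remark~\ref{RQeqQ1} and its last step using $w\leq b$. Taking the join over all admissible $x$ yields $\upsilon(a)\wedge b\leq ab$. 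Setting $b=1$ and observing that $\upsilon(a)\wedge 1=\upsilon(a)$ (as $1$ is the top element) immediately recovers the first inequality $\upsilon(a)\leq a1$.

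For the remaining inequality $a1\leq\upsilon(aa^{*})$, the key observation is that $a$ itself is an admissible index in $\upsilon(aa^{*})=\V_{xx^{*}\leq aa^{*}}x$, since trivially $aa^{*}\leq aa^{*}$; hence $a\leq\upsilon(aa^{*})$. Multiplying on the right by $1$ gives $a1\leq\upsilon(aa^{*})\,1$, and at this point I would invoke property (R), which places $\upsilon(aa^{*})$ in $\rs(Q)$, so that $\rs(Q)=Q1$ (Remark~\ref{RQeqQ1}) forces $\upsilon(aa^{*})\,1=\upsilon(aa^{*})$. This closes the gap.

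The only genuinely delicate point is this last step. The temptation is to try to verify $(a1)(a1)^{*}\leq aa^{*}$ directly so as to use $a1$ as an index, but that would require idempotency of $1$ under the product and need not hold; the right move is instead to establish the weaker bound $a\leq\upsilon(aa^{*})$ and then \emph{upgrade} it to $a1\leq\upsilon(aa^{*})$ by exploiting the right-sidedness of the target via (R). Everything else is routine monotonicity together with the $q\leq qq^{*}q$ estimate from (U).
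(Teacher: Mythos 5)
Your proof is correct and follows essentially the same route as the paper's: the inequality $a1\le\upsilon(aa^{*})$ is obtained exactly as in the paper (from $a\le\upsilon(aa^{*})$, then upgrading via (R) and right-sidedness), and $\upsilon(a)\le a1$ is likewise deduced by specializing $b=1$ in the product inequality. The only difference is cosmetic: where the paper proves $\upsilon(a)\wedge b\le ab$ in one line by citing the reformulation of (U) in Lemma~\ref{U-lemma}, you re-derive that step inline from frame distributivity, monotonicity, and $q\le qq^{*}q$ --- which is essentially the same computation that proves Lemma~\ref{U-lemma} itself.
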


\begin{proof}
Since $a \leq \upsilon(aa^*)$ and $\upsilon(aa^*) \in \rs(Q)$ we obtain $a1 \leq \upsilon(aa^*)$.
We have $\upsilon(a) \wedge b \leq \V_{pq \leq ab} \upsilon(p) \wedge q = ab$ using (U).
Hence, we also obtain $\upsilon(a) = \upsilon(a) \wedge 1 \leq a1$. \qed
\end{proof}

\begin{lemma}\label{semiopenlemma}
If $Q$ is a semiopen quantal frame the maps $d$ and $r$ of the associated graph (\ref{groupoiddiagram}),
\[
G\ \ =\ \ \xymatrix{
G_1\ar@(ru,lu)[]_i\ar@<1.2ex>[rr]^r\ar@<-1.2ex>[rr]_d&&G_0
}\;,
\]
are semiopen (\ie, $d^*$ and $r^*$ have left adjoints $d_!$ and $r_!$) and the direct image homomorphisms are defined by, for all $a\in Q$,
\begin{eqnarray*}
d_!(a) &=& a1\;,\\
r_!(a) &=& a^*1\;.
\end{eqnarray*}
In addition, if $Q$ is open then so are $d$ and $r$.
\end{lemma}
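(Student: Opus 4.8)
The plan is to treat the two assertions — semiopenness, and openness under (O) — separately, and in each case to reduce the map $r$ to the map $d$ by means of the involution. Recall that a locale map $f$ is semiopen exactly when $f^*$ admits a left adjoint $f_!$ (its direct image), and open when moreover the Frobenius reciprocity law $f_!(a\wedge f^*(b))=f_!(a)\wedge b$ holds. Since $Q$ is semiopen it satisfies (U), so I may freely use the consequences collected in Remark \ref{RQeqQ1}, namely $\rs(Q)=Q1$ and $q\le q1$ for all $q\in Q$.

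First I would establish semiopenness of $d$. As $d^*=\delta$ is the inclusion $\rs(Q)\hookrightarrow Q$, verifying that $a\mapsto a1$ is its left adjoint amounts to checking the adjunction $a1\le z\iff a\le z$ for all $a\in Q$ and $z\in\rs(Q)$. I would first note that $a1\in\rs(Q)$, since $(a1)1=a11\le a1$, so the candidate has the correct codomain. The backward direction is immediate: $a\le z$ gives $a1\le z1\le z$ by right-sidedness of $z$. The forward direction uses Remark \ref{RQeqQ1}: from $a\le a1\le z$ we get $a\le z$. This identifies $d_!$ with $a\mapsto a1$. For $r$ I would exploit the relation $r=d\circ i$ coming from the involutive graph structure: since $i$ is an isomorphism with $i^*(a)=a^*$ and $i\circ i=\ident$, its direct image is $i_!(a)=a^*$, whence $r_!=d_!\circ i_!$ sends $a$ to $a^*1$, as claimed.

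Next, assuming (O), I would verify the Frobenius law. For $d$, with $d^*(z)=z$ and $d_!(a)=a1$, the required identity is $(a\wedge z)1=a1\wedge z$ for all $a\in Q$ and $z\in\rs(Q)$. Writing $z=c1$ (possible since $\rs(Q)=Q1$), this is precisely the instance of (O), $(c1\wedge a)1=c1\wedge a1$, obtained by setting the rightmost variable equal to $1$. For $r$ I would again reduce to $d$: since $r^*(z)=z^*$ and $r_!(w)=w^*1$, applying the involution gives $r_!(a\wedge z^*)=(a^*\wedge z)1$, so the Frobenius identity for $r$ reads $(a^*\wedge z)1=a^*1\wedge z$, which is exactly the identity just proved for $d$ applied to $a^*$. (Alternatively, $i$ is open, being an isomorphism, and $r=d\circ i$ is then a composite of open maps.)

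The adjunction bookkeeping and the involution manipulations are routine. The single load-bearing step — and the only place where the defining hypotheses genuinely enter — is the identification of Frobenius reciprocity for $d$ with the instance of (O) whose third argument is $1$; correspondingly, the role of (U) is confined to supplying $\rs(Q)=Q1$ together with the inequality $a\le a1$. I would expect no real obstacle beyond consistently presenting right-sided elements in the form $c1$, so that axiom (O) applies verbatim.
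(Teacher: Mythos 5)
Your proof is correct and takes essentially the same approach as the paper: the adjunction $d_!\dashv d^*$ is verified from the consequences $\rs(Q)=Q1$ and $a\le a1$ of (U), and Frobenius reciprocity for $d$ is exactly the instance of (O) with third argument equal to $1$. The only cosmetic difference is that you handle $r$ by reduction to $d$ through the involution $r=d\circ i$, whereas the paper checks the adjunction for $r$ directly (and leaves its Frobenius condition implicit); both variants rest on the same facts.
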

\begin{proof}
Let $Q$ be semiopen and let $z\in\rs(Q)$. We have
$d_! \circ d^* (z) = z1 = z$ and
$d^* \circ d_!(a) = a1 \geq a$, whence $d_!\dashv d^*$.
Also, $r_!\dashv r^*$ because we have
$r_! \circ r^*(z) = z^{**}1 = z1 = z$
and
$r^* \circ r_!(a) = (a^{*}1)^{*} = 1a \geq a$. It is easy to see that if in addition $Q$ satisfies (O) then
 $d$ and $r$ are open, for the 
Frobenius reciprocity condition holds: for all $a\in Q$ and all $z\in\rs(Q)$ we have\[d_{!}(d^*(z)\wedge a) =(z\wedge a)1 =(z1\wedge a)1= z1 \wedge a1 = z\wedge d_{!}(a)\;. \qed\]
\end{proof}

\paragraph{Quantal groupoids.}

Recall that by a \emph{quantal groupoid} \cite{aim} is meant a localic groupoid $G$ whose multiplication map is semiopen --- in other words, such that the associated quantale $\opens(G)$ is defined \cite[Th.\ 5.2]{aim}.

Now we shall see that every multiplicative semiopen (resp.\ open) quantal frame $Q$ has an associated quantal (resp.\ open) groupoid $\groupoid(Q)$ and, conversely, that the associated quantale $\opens(G)$ of a quantal (resp.\ open) groupoid $G$ is necessarily a multiplicative semiopen (resp.\ open) quantal frame.

\begin{theorem}
Let $Q$ be a multiplicative semiopen quantal frame, and let $G$ be its associated involutive localic graph, as in (\ref{graphdiagram}),
\[
\xymatrix{
G_2\ar[rr]^{m}&&G_1\ar@(ru,lu)[]_i\ar@<1.2ex>[rr]^r\ar@<-1.2ex>[rr]_d&&G_0\ar[ll]|u
}\;,
\]
where the map $u:G_0\to G_1$ is defined by $u^*=\upsilon$. Then $G$ is a quantal groupoid. Furthermore, if $Q$ is open $G$ is an open groupoid.
\end{theorem}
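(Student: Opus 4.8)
The plan is to run through the defining diagrams of an internal groupoid one at a time, since by now nearly all the ingredients are in place. The maps $d$, $r$, $i$ of the involutive graph are given; associativity of $m$ and the fact that $i$ is an involution for $m$ have already been established; the source/target conditions $d\circ m=d\circ\pi_1$ and $r\circ m=r\circ\pi_2$ are Theorem~\ref{semicat} (applicable because (U) forces $\rs(Q)=Q1$ by Remark~\ref{RQeqQ1}); and $m$ is semiopen because $m^*=(\mu_0)_*$ is the right adjoint of the sup-lattice homomorphism $\mu_0$, so that $m_!=\mu_0$ with $m_!(x\otimes y)=xy$. What remains is to show that $u$ (with $u^*=\upsilon$) is a well-defined localic map making $G$ a groupoid; once this is done, $m$ being semiopen yields a quantal groupoid, and under (O) Lemma~\ref{semiopenlemma} makes $d$ (and $r$) open, so that $G$ is an open groupoid.

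First I would check that $u$ is a genuine localic map and a common section of $d$ and $r$. By the lemma that $\upsilon$ preserves finite meets, together with Lemma~\ref{obvious} (which exhibits $\upsilon=[\ident_Q,i^*]\circ(\mu_0)_*$ as a composite of join-preserving maps, the join-preservation of $(\mu_0)_*$ being exactly multiplicativity), $\upsilon$ is a frame homomorphism, and property (R) lets it corestrict to $\rs(Q)=\opens(G_0)$; thus $u^*=\upsilon$ does define $u\colon G_0\to G_1$. The section laws $d\circ u=\ident$ and $r\circ u=\ident$ are then precisely $\upsilon\circ\delta=\ident_{\rs(Q)}$ (Lemma~\ref{domain}) and $\upsilon\circ\gamma=\ident_{\rs(Q)}$ (Lemma~\ref{range}).

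The core of the argument is the unit and inverse laws, and here the plan is to convert each law into an identity of frame homomorphisms $Q\to Q$ by composing inverse images. For any $\langle f,g\rangle\colon G_1\to G_2$ into the pullback one has $\langle f,g\rangle^*(x\otimes y)=f^*(x)\wedge g^*(y)$, and feeding in $m^*(a)=\V_{xy\le a}x\otimes y$ reduces each law to a join identity. The inverse laws $m\circ\langle\ident,i\rangle=u\circ d$ and $m\circ\langle i,\ident\rangle=u\circ r$ become
\[\V_{xy\le a}x\wedge y^{*}=\upsilon(a)\qquad\text{and}\qquad\V_{xy\le a}x^{*}\wedge y=\upsilon(a)^{*},\]
which follow at once from the definition $\upsilon(a)=\V_{xy^{*}\le a}x\wedge y$ by reindexing $y\mapsto y^{*}$, the second also being the involutive image of the first (using $\upsilon(a^{*})=\upsilon(a)$ from Lemma~\ref{xxstarvsxystar}). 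The unit laws $m\circ\langle u\circ d,\ident\rangle=\ident$ and $m\circ\langle\ident,u\circ r\rangle=\ident$ become
\[\V_{xy\le a}\upsilon(x)\wedge y=a\qquad\text{and}\qquad\V_{xy\le a}x\wedge\upsilon(y)^{*}=a;\]
the first is exactly the reformulation of (U) supplied by Lemma~\ref{U-lemma}, and the second is its involutive image.

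The step I expect to be most delicate is bookkeeping rather than depth: one must pin down the composition convention at the outset (composable pairs satisfy $r(x)=d(y)$, with $\pi_1^{*}(b)=b\otimes 1$ and $\pi_2^{*}(b)=1\otimes b$) so that the correctly handed form of each unit law is matched with the correct orientation of $\upsilon$ in Lemma~\ref{U-lemma}; landing on $\upsilon(x)\wedge y$ rather than $x\wedge\upsilon(y)^{*}$ on the right side is precisely where (U) is used, and the involution is what halves the work by converting each left-handed identity into its right-handed partner. A secondary but real point is that join-preservation of $u^{*}=\upsilon$ is not visible from its definition and is exactly where multiplicativity enters for $u$. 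With all the diagrams commuting, $G$ is an involutive localic groupoid; since $m^{*}$ has the left adjoint $\mu_0$ it is a quantal groupoid, and if $Q$ is open then $d$ is open by Lemma~\ref{semiopenlemma}, so $G$ is an open groupoid.
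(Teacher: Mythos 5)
Your proposal is correct and follows essentially the same route as the paper's proof: well-definedness of $u$ via Lemma~\ref{obvious}, (R) and multiplicativity; the semicategory structure via Theorem~\ref{semicat} (with $\rs(Q)=Q1$ from Remark~\ref{RQeqQ1}); the section laws via Lemmas~\ref{domain} and~\ref{range}; the unit law via Lemma~\ref{U-lemma}; the inverse law via the definition of $\upsilon$; and semiopenness of $m$ plus Lemma~\ref{semiopenlemma} for the quantal/open conclusions. The only cosmetic difference is that you obtain the right-handed unit and inverse laws by taking involutive images of the join formulas directly, whereas the paper derives them from the left-handed ones by the categorical identities $d\circ i=r$, $i\circ i=\ident$, $i\circ u=u$ and $i\circ m=m\circ\chi$ --- the same idea executed at a different level.
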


\begin{proof}
First, $u$ is well defined because multiplicativity makes $\upsilon$ equal the frame homomorphism $[\ident_Q,i^*] \circ (\mu_0)_{*}$ (\cf\ \ref{obvious}), and its domain is indeed $G_0$ because $Q$ is semiopen and thus the image of $\upsilon$ is $\rs(Q)$. Since semiopen quantal frames satisfy $\rs(Q)=Q1$ (\cf\ \ref{RQeqQ1}) we conclude, by \ref{semicat}, that $G$ is an involutive semicategory, and the remaining properties to be checked are those that relate to the unit map $u:G_0\to G_1$. First, we note that the following properties of a reflexive graph hold:
\begin{itemize}
\item $d\circ u=\ident_{G_0}$ holds due to \ref{domain};
\item $r\circ u =\ident_{G_0}$ holds due to \ref{range}.
\end{itemize}
Now we prove the unit laws of an internal category, as illustrated by the following commutative diagram:
\begin{equation}
\vcenter{\xymatrix{
G_0\times_{G_0}G_1\ar[rr]^{u\times\ident}&&G_1\ptimes{G_0}G_1\ar[d]_m&&
G_1\ptimes{G_0}G_0\ar[ll]_{\ident\times u}\\
G_1\ar[u]^-{\langle d,\ident\rangle}\ar@{=}[rr]&&G_1&&G_1\ar[u]_-{\langle\ident,r\rangle}\ar@{=}[ll]
}}
\end{equation}
The commutativity of the left hand square can be proved in terms of inverse images. For all $a\in Q$ we have, using \ref{U-lemma}:
\begin{eqnarray*}
[d^*,\ident]\circ(u^*\otimes \ident)\circ{m}^{*}(a) &=& [d^*,\ident]\circ(u^*\otimes \ident)\left(\V_{xy \leq a} x \otimes y\right)\\
&=& \V_{xy \leq a} u^*(x) \wedge y=a\;.
\end{eqnarray*}
The commutativity of the right hand square follows from the left one using the involution laws $d\circ i=r$ and $i\circ i=\ident$ together with (\ref{invol2}) and $i\circ u=u$ [the latter is a consequence of (\ref{xxstarvsxystar2})]:
\begin{eqnarray*}
m\circ(\ident\times u)\circ\langle\ident,r\rangle&=&m\circ\langle\ident,u\circ r\rangle=m\circ\langle i\circ i,i\circ u\circ d\circ i\rangle\\
&=& m\circ( i\times i) \circ\langle\ident, u\circ d\rangle\circ i = m\circ\chi\circ\langle u\circ d,\ident\rangle\circ i\\
&=& i\circ m\circ \langle u\circ d,\ident\rangle\circ i=i\circ\ident\circ i =\ident\;.
\end{eqnarray*}
Finally, in order to see that $G$ is a groupoid we prove that the involution $i$ satisfies the inverse laws described by the commutativity of the following diagram:
\begin{equation}
\vcenter{\xymatrix{
G_1\ar[rr]^-{\langle \ident,i \rangle}\ar[d]_-{d}&&G_2\ar[d]_-m&&
G_1\ar[ll]_-{\langle i, \ident \rangle}\ar[d]^-{r}\\
G_0\ar[rr]_-{u}&&G_1&&G_0\ar[ll]^-{u}\;.
}}
\end{equation}
Again it is straightforward to see that the inverse laws make the commutativity of the two squares equivalent, so we prove only the commutativity of the left square, using inverse image homomorphisms: for all $a\in Q$ we have
\begin{eqnarray*} [\ident, i^*]\circ m^{*}(a) &=& [\ident, i]\left( \V_{xy \leq a} x \otimes y\right)\\
&=& \V_{ xy \leq a} x \wedge y^{*} = \V_{ xy^* \leq a} x \wedge y\\ &=& u^*(a) = d^*(u^*(a))\;,
\end{eqnarray*}
where the last step follows from the condition that $u^*(a)$ is right-sided and $d^*$ is just the inclusion of $\rs(Q)$ into $Q$.
Hence, $G$ is a groupoid, and it is semiopen because $m^*$ has the left adjoint $\mu_0$. If $Q$ is open then $d$ is open, by \ref{semiopenlemma}, and thus $G$ is open. \qed
\end{proof}

\begin{definition}\label{def:associated_groupoid}
Given a multiplicative semiopen quantal frame $Q$, we denote its associated quantal groupoid by $\groupoid(Q)$.
\end{definition}

Now let us show that from quantal (resp.\ open) groupoids one obtains semiopen (resp.\ open) quantal frames.
  
\begin{theorem}\label{groupoidquantale}
Let $G$ be a quantal groupoid. Then its associated quantale $\opens(G)$ is a semiopen quantal frame, and it is multiplicative. Furthermore, if $G$ is open so is $\opens(G)$.
\end{theorem}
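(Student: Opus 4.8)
The plan is to read off each defining property of a (semi)open quantal frame from the corresponding groupoid law, using the construction of $\opens(G)$ recalled in \cite{aim}, which already provides the underlying involutive quantal frame structure (frame, join-preserving product, involution $a^*=i^*(a)$) so that only the additional axioms need checking. First I would fix the standing identifications for a quantal groupoid $G$: $\opens(G_1)=Q$, the product $ab=m_!(\pi(a\otimes b))$ with $\pi\colon Q\otimes Q\twoheadrightarrow\opens(G_2)$ the canonical surjection, and the frame isomorphism $\opens(G_0)\cong\rs(Q)$ under which $d^*=\delta$ and $r^*=\gamma$. Since $G_2$ is the pullback of $d$ and $r$, the frame $\opens(G_2)$ is the pushout of $\delta$ and $\gamma$, which as shown above is the bimodule tensor product $Q\otimes_{\rs(Q)}Q$; hence $\pi$ is exactly the quotient map $Q\otimes Q\to Q\otimes_{\rs(Q)}Q$. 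I would also record $\rs(Q)=Q1$ together with $q\le q1$ and $q\le 1q$, which follow from the unit laws of $G$ (each arrow is fixed by composition with a unit); these supply, together with the balancedness established next, the hypotheses of Corollary \ref{cor:framehoms}.

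Balancedness is then immediate. Because $\pi$ is the bimodule quotient it identifies $a\cdot z\otimes b$ with $a\otimes z\cdot b$ for every $z\in\rs(Q)$, and applying $m_!$ gives $(a\wedge z^*)b=a(z\wedge b)$; specialising to $z=a'1\in\rs(Q)$ yields exactly the defining identity of Definition \ref{def:balanced:ineq}, so $Q$ satisfies (B). Multiplicativity is now essentially free: having (B), the reduced multiplication $\mu_0$ coincides with the direct image $m_!$, so its right adjoint is $m^*$, which is a frame homomorphism and therefore preserves all joins. This is the structural heart of the statement: the condition that is delicate in the abstract setting holds automatically once the product is recognised as $m_!$, and in particular the formula $m^*(a)=(\mu_0)_*(a)=\V_{xy\le a}x\otimes y$ becomes available.

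For (R) and (U) I would invoke the unit $u\colon G_0\to G_1$. By Lemma \ref{obvious} we have $\upsilon=[\ident_Q,i^*]\circ m^*=\langle\ident,i\rangle^*\circ m^*=(m\circ\langle\ident,i\rangle)^*$, and the inverse law $m\circ\langle\ident,i\rangle=u\circ d$ then gives $\upsilon=d^*\circ u^*$; since $u^*$ lands in $\opens(G_0)\cong\rs(Q)$ and $d^*=\delta$ is the inclusion, $\upsilon(Q)\subseteq\rs(Q)$, which is (R). For (U) I would use the left unit law $m\circ(u\times\ident)\circ\langle d,\ident\rangle=\ident_{G_1}$; passing to inverse images, using $\langle d,\ident\rangle^*(z\otimes y)=z\wedge y$ for $z\in\rs(Q)$ and the join formula for $m^*$, the composite computes to $\V_{xy\le a}\upsilon(x)\wedge y$, so the unit law forces this to equal $a$, which by Lemma \ref{U-lemma} is equivalent to (U). Both verifications are routine manipulations of inverse-image homomorphisms once the previous identifications are in place.

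Finally, assuming $G$ open, I would establish (O): for $z=a1\in\rs(Q)$ one has $z\wedge bc=z\wedge m_!(b\otimes c)=m_!\bigl(m^*(z)\wedge(b\otimes c)\bigr)$ by Frobenius reciprocity for $m$, while Corollary \ref{cor:framehoms} gives $m^*(z)=\pi_1^*(z)=z\otimes 1$, whence $m^*(z)\wedge(b\otimes c)=(z\wedge b)\otimes c$, and applying $m_!$ returns $(z\wedge b)c$. The hard part is justifying the Frobenius identity, since it requires $m$ to be \emph{open}, which is not assumed outright; I expect this to be the main obstacle. I would resolve it by the standard shear isomorphism $(g,h)\mapsto(g,gh)$ on $G_2$, under which $m$ becomes a projection $G_1\times_{G_0}G_1\to G_1$ and hence a pullback of $d$, so that openness of $d$ (the meaning of ``$G$ open'') forces $m$ open and Frobenius applies. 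In the merely semiopen case no openness of $d$ is available and, consistently, only the semiopen conclusion is claimed, $m$ being semiopen by the definition of quantal groupoid.
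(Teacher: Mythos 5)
Your overall strategy is the one the paper follows: read (R) off the inverse law via $\upsilon=d^*\circ u^*$, read (U) off the unit law via Lemma \ref{U-lemma}, obtain multiplicativity from the observation that the reduced multiplication is $m_!$ so that its right adjoint is the frame homomorphism $m^*$, and obtain (O) from Frobenius reciprocity for $m$, with openness of $m$ coming from the fact that the square formed by $m,\pi_1,d,d$ is a pullback; your shear-map argument is exactly the paper's justification of that fact (``because $G$ is a groupoid rather than just a category''). All of those individual steps are sound.

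The gap is the ``standing identification'' $\opens(G_0)\cong\rs(Q)$ with $d^*=\delta$ and $r^*=\gamma$ (hence $\opens(G_2)\cong Q\otimes_{\rs(Q)}Q$ as the pushout of $\delta$ and $\gamma$), which you fix at the outset and on which your proofs of (B) and of multiplicativity rest. For a general quantal groupoid this identification is not part of the construction recalled from \cite{aim}, and it is not free: $d^*$ is split mono (by $d\circ u=\ident$) and its image lies in $\rs(Q)$ (by $d\circ m=d\circ\pi_1$ and the adjunction $m_!\dashv m^*$), but surjectivity of $d^*$ onto $\rs(Q)=Q1$ amounts to $d^*(u^*(z1))=\upsilon(z1)=z1$, i.e.\ to Lemma \ref{domain}, whose proof requires (U). In your ordering, (U) is proved only after (B) and multiplicativity, and your derivations of (R) and (U) themselves invoke Lemma \ref{obvious} (stated for multiplicative $Q$) and the identification, so the plan is circular as written. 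The paper avoids this by proving (R) and (U) first, directly from the inverse and unit laws --- note that $m^*(a)=\V_{xy\le a}x\otimes y$ is available for any quantal groupoid, with no appeal to multiplicativity --- and only then proving (B); moreover it never needs the full identification, only the pushout relation over $\opens(G_0)$ at the single element $c=u^*(z1)$, where Lemma \ref{domain} gives $d^*(u^*(z1))=z1$ and the involution law gives $r^*(u^*(z1))=1z^*$. Your argument becomes correct if reordered the same way: inverse law $\Rightarrow$ $\upsilon=d^*\circ u^*$ and (R); unit law $\Rightarrow$ (U); then Lemma \ref{domain} justifies the identification (or just the instance of it that (B) needs), after which (B), multiplicativity and (O) go through as you describe. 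Only in the open case could you shortcut this, since there Beck--Chevalley for $d$ gives $a1=d^*(d_!(a))$ without (U); but the semiopen case is the main content of the theorem.
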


\begin{proof}
From \cite[Lemma 5.4]{aim} we obtain $\upsilon = d^*\circ u^*$
and from \cite[Lemma 5.3]{aim} it follows that $\upsilon(a)$ is right-sided for all $a\in\opens(G_0)$. Hence, axiom (R) holds. Furthermore, the unit law $m\circ \langle u\circ d,\ident\rangle=\ident$ of the groupoid gives us
\[a =[\upsilon,\ident] \circ m^{*}(a)
= [\upsilon,\ident] \left(\V_{xy \leq a} x \otimes y\right)= \V_{xy \leq a}\upsilon(x) \wedge y\:,\]
and thus, by \ref{U-lemma}, (U) holds. Now in order to see that $\opens(G)$ is semiopen it we must show that $\opens(G)$ satisfies (B). The frame $\opens(G_1\times_{G_0} G_1)$ is the pushout of $d^*$ and $r^*$ and thus it satisfies the equation
\[(a\wedge d_{*}(c)) \otimes b  = a \otimes (r_{*}(c) \wedge b)\]
for all $a,b\in\opens(G_1)$ and all $c\in\opens(G_0)$.
In particular, taking $c=u^*(z1)$ for some $z\in\opens(G)$ and recalling from \ref{domain} that $\upsilon(z1)=z1$ we obtain $d^*(u^*(z1))=z1$ and $r^*(u^*(z1))=1z^*$ and, hence, (B) holds:
\[(a\wedge z1)b=m_!(( a\wedge z1) \otimes b ) = m_!(a \otimes (1z^{*} \wedge b))=a(1z^*\wedge b)\;.\]
The multiplicativity of $\opens(G)$ is obvious, of course, because $(\mu_0)_*$ is the frame homomorphism $m^*$.
Now suppose furthermore that $G$ is an open groupoid and let us show that $\opens(G)$ is an open quantal frame; that is, we must show that $(a1\wedge b)c$ equals $a1\wedge bc$ for all $a,b,c\in \opens(G)$. First, since $\pi_1^*=q\otimes 1$ for all $q\in\opens(G)$, we have
\begin{eqnarray}
(a1 \wedge b)c &=& m_!((a1 \wedge b)\otimes c)=m_!((a1 \wedge b)\otimes (1\wedge c))\\
&=&m_!(a1\otimes 1\wedge b\otimes c)=m_!(\pi_1^*(a1)\wedge b\otimes c)
\;.\label{intermediate}
\end{eqnarray}
Now notice that the following is a pullback diagram because $G$ is a groupoid rather than just a category:
\begin{equation}\vcenter{\xymatrix{G_2\ar[r]^{\pi_1}\ar[d]_m&G_1\ar[d]^d\\
G_1\ar[r]_d&G_0}}\;.\label{BCdiagram}
\end{equation}
Hence, $m$ is open and the Beck--Chevalley condition gives us
\[m_! \circ \pi_1^*= d^*\circ d_!\;.\]
Equivalently, since $\pi_1^*(a)=a\otimes 1$, this states that
\begin{equation}\label{eqa1}
a1=d^*(d_!(a))
\end{equation}
for all $a\in\opens(G)$.
Then by (\ref{BCdiagram}) we get
\[\pi_1^*(a1)=\pi_1^*(d^*(d_!(a)))=m^*(d^*(d_!(a)))=m^*(a1)\;,\]
and thus the right-hand side of (\ref{intermediate}) equals
\begin{eqnarray*}
m_!(m^*(a1)\wedge b\otimes c)=a1\wedge m_!(b\otimes c)=a1\wedge bc
\;,
\end{eqnarray*}
where the first equality is the Frobenius reciprocity condition for $m$. \qed
\end{proof}

\begin{corollary}\label{inverseimpliesopen}
Inverse quantal frames are necessarily open quantal frames.
\end{corollary}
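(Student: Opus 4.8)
The plan is to show that every inverse quantal frame $Q$, in the sense of the definition recalled at the start of the section, satisfies each of the axioms (B), (R), (U), and (O), so that it qualifies as an open quantal frame. The key observation is that an inverse quantal frame is \emph{unital}, with multiplicative unit $e$, and possesses a stable support $\spp$ satisfying (\ref{spp1})--(\ref{spp4}). These extra features should make the verification of the open-quantal-frame axioms comparatively routine, so I would organise the proof around reducing each axiom to a known property of $\spp$, $e$, and the cover condition $\V\ipi(Q)=1$.

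First I would dispose of (U). Since $Q$ is unital with unit $e\le\spp(a)$-type relations available, I expect that for any $a\in Q$ the elements $x$ with $xx^*x\le a$ recover $a$ in the join $\V_{xx^*x\le a}x$; concretely, writing $a$ as a join of partial units (possible because $\V\ipi(Q)=1$ forces $Q$ to be generated under joins by $\ipi(Q)$) and noting that each partial unit $s$ satisfies $ss^*s=s$, one gets $s\le a$ contributing $s$ to the join, so the join recovers $a$. This establishes (U) via \ref{U-lemma} or directly. Next, using the characterisation of the stable support, the map $\upsilon$ should coincide with the support composed appropriately, and since $\spp(a)\le e$ together with the isomorphism $\rs(Q)\cong\downsegment e$ forces supports to land in $\rs(Q)$, axiom (R), namely $\upsilon(Q)\subset\rs(Q)$, follows.

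For (B) and (O) I would exploit the fact that in a unital quantale the right-sided elements are exactly those of the form $a1$, and that multiplication interacts with the unit $e$ cleanly. Axiom (O), the equality $(a1\wedge b)c=a1\wedge bc$, I expect to reduce to the Frobenius-type/distributivity identities that already hold in inverse quantal frames, using that $a1$ is right-sided and that the support behaves like a projection onto $\rs(Q)$; similarly (B), the balancing identity $b(a1\wedge c)=(b\wedge 1a^*)c$, should follow from the involution and the compatibility of left- and right-sided elements mediated by $\spp$ and $(-)^*$. In both cases the essential input is that the stable support lets one move the factor $a1$ (or $1a^*$) across the product, which is precisely the kind of manipulation that multiplicativity of inverse quantal frames guarantees.

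The main obstacle I anticipate is verifying the balancing axiom (B) and the open axiom (O) with full rigour, since these are genuine distributivity-over-meet conditions rather than mere order inequalities, and inverse quantal frames do not have them built into their definition verbatim. The cleanest route is probably to invoke \emph{multiplicativity} of $Q$ directly: an inverse quantal frame is multiplicative in the sense of \cite{aim}, and one can then appeal to Theorem \ref{groupoidquantale}, which says that the quantale of any open groupoid satisfies (O). Since the groupoid $\groupoid(Q)$ associated to an inverse quantal frame is étale, hence open (its domain map $d$ is a local homeomorphism, in particular open), the quantale $\opens(\groupoid(Q))\cong Q$ is an open quantal frame by that theorem. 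This lets me bypass the brute-force algebra: the substantive content is the already-established correspondence between open groupoids and open quantal frames, and the remark that étale implies open.
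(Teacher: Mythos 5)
Your final route is exactly the paper's (implicit) proof: the corollary is stated without further argument precisely because every inverse quantal frame is, by the results recalled from \cite{aim}, isomorphic to $\opens(G)$ for an \'etale (hence open) groupoid $G$, so Theorem \ref{groupoidquantale} immediately yields that it is an open quantal frame. The direct axiom-by-axiom verification you sketch first (and rightly abandon at (B) and (O)) is not needed, and your closing paragraph is the correct and complete argument.
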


It is straightforward to see that $\opens$ and $\groupoid$ establish a bijective correspondence, up to isomorphisms, between quantal groupoids and multiplicative semiopen quantal frames, and between open groupoids and multiplicative open quantal frames. This follows from the following result, whose proof we omit:

\begin{theorem} 
$\groupoid(\mathcal{Q}(G))\cong G$ and $\mathcal{Q}(\groupoid(Q))\cong Q$ for
any localic quantal groupoid $G$ and multiplicative semiopen quantal frame $Q$.
\end{theorem}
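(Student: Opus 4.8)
The plan is to show that the assignments $\opens$ and $\groupoid$ (writing $\mathcal Q=\opens$ for the notation of the statement) are mutually inverse up to isomorphism, by direct inspection rather than by any new computation. In each direction the underlying locale of arrows is preserved \emph{on the nose}, the locale of units is preserved up to the canonical isomorphism $\opens(G_0)\cong\rs(\opens(G))$, and every structure morphism is then checked to correspond under these two identifications. All the genuinely nontrivial ingredients have already been recorded in the preceding theorems and lemmas, so what remains is careful bookkeeping.

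First I would treat $\opens(\groupoid(Q))\cong Q$, which is almost immediate. By Definition \ref{def:associated_groupoid} the groupoid $\groupoid(Q)$ has $\opens(G_1)=Q$, $\opens(G_0)=\rs(Q)$, $\opens(G_2)=Q\otimes_{\rs(Q)}Q$ and $m^*=(\mu_0)_*$. Hence the multiplication of $\opens(\groupoid(Q))$ is the composite $m_!\circ\pi$, and since $m_!$ is the left adjoint of $m^*=(\mu_0)_*$ we have $m_!=\mu_0$; by the balanced factorisation of $\mu$ established earlier, this composite is exactly the multiplication $\mu$ of $Q$. Likewise the involution $a\mapsto i_!(a)$ of $\opens(\groupoid(Q))$ coincides with $a\mapsto a^*$, because $i^*(a)=a^*$ and $i$ is an involution, so that $i_!=(i^*)^{-1}=i^*$. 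Thus $\opens(\groupoid(Q))=Q$ as involutive quantal frames, and in particular their right-sided elements and graphs coincide.

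For the converse, $\groupoid(\opens(G))\cong G$, I set $Q=\opens(G)$, which is multiplicative and semiopen by Theorem \ref{groupoidquantale}. I take the arrow component of the candidate isomorphism to be the identity (both sides have $Q$ as frame of arrows) and build the unit component from $\phi:=d^*:\opens(G_0)\to Q$. Using that $u$ is a section of $d$ (so $u^*\circ d^*=\ident$) shows $\phi$ is injective; combined with the relation $\upsilon=d^*\circ u^*$ from the proof of Theorem \ref{groupoidquantale} it follows that $\phi=\upsilon\circ\phi$, whence $\phi$ corestricts to a frame isomorphism $\opens(G_0)\xrightarrow{\sim}\rs(Q)=\opens(\groupoid(Q)_0)$, surjectivity onto $\rs(Q)$ coming from the fact that $\upsilon$ maps onto $\rs(Q)$, i.e.\ property (R). It then remains to verify that $d$, $r$, $i$, $u$ and $m$ match under $\ident$ and $\phi$: the identities $\delta\circ\phi=d^*$ and $\gamma\circ\phi=r^*$ reduce to $r=d\circ i$ and $i^*(a)=a^*$; the unit map corresponds because $\upsilon=d^*\circ u^*=\phi\circ u^*$; and the multiplication corresponds because $(\mu_0)_*=m^*$ by Theorem \ref{groupoidquantale}, once one notes that the frame pushout computing $\opens(G_2)$ is identified with $Q\otimes_{\rs(Q)}Q$ by the pushout lemma of this section.

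I expect the main obstacle to be the object-of-units: establishing that $d^*$ corestricts to an isomorphism onto $\rs(Q)$ and that the unit morphism $u$ transports correctly, since these are the only places where the purely formal round-trip must be supplemented by the specific relations $\upsilon=d^*\circ u^*$, $\upsilon|_{\rs(Q)}=\ident$ (Lemma \ref{domain}), and the surjectivity of $\upsilon$ onto $\rs(Q)$. Everything else is formal, because $\opens$ is a contravariant equivalence between locales and frames and hence turns the defining pullback $G_2=G_1\times_{G_0}G_1$ into precisely the frame pushout used to define the reduced multiplication, so the identification of $G_2$ and of $m$ carries no content beyond Lemma \ref{groupoidquantale} and the pushout lemma.
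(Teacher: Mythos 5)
The first thing to note is that the paper offers no proof to compare against: this theorem is stated with the remark ``whose proof we omit'', so your argument can only be judged on its own merits, and on those merits it is correct --- it is precisely the bookkeeping the authors declare straightforward. For $\opens(\groupoid(Q))\cong Q$, uniqueness of adjoints gives $m_!=\mu_0$, so the quantale multiplication $m_!\circ\pi$ of $\opens(\groupoid(Q))$ is $\mu_0\circ\pi=\mu$, and $i_!=i^*=(-)^*$ recovers the involution, as you say. For $\groupoid(\opens(G))\cong G$, taking the identity on arrows and the corestriction $\phi$ of $d^*$ on units is the right construction: injectivity from $u^*\circ d^*=\ident$, surjectivity onto $\rs(\opens(G))$ from $\upsilon=d^*\circ u^*$ together with Lemma \ref{domain}, and compatibility of $d$, $r$, $i$, $u$, $m$ exactly as you describe, with the $m$-compatibility resting on the pushout lemma (pullbacks of locales are pushouts of frames) and on the identification $(\mu_0)_*=m^*$ asserted in the proof of Theorem \ref{groupoidquantale}.

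One slip deserves correction: in the second paragraph you attribute the surjectivity of $\upsilon$ onto $\rs(Q)$ to property (R). Property (R) gives only the containment $\upsilon(Q)\subset\rs(Q)$; that every element of $\rs(Q)$ is attained requires (U), via Lemma \ref{domain} ($\upsilon(z)=z$ for right-sided $z$) together with $\rs(Q)=Q1$ (Remark \ref{RQeqQ1}), as the remark following Lemma \ref{domain} records. Since your closing paragraph correctly lists $\upsilon\vert_{\rs(Q)}=\ident$ and the surjectivity of $\upsilon$ as separate needed inputs alongside $\upsilon=d^*\circ u^*$, this is a misattribution of a citation rather than a genuine gap in the argument.
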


\paragraph{Inverse quantal frames revisited.}

In order to conclude this section we shall show that, regardless of multiplicativity, open quantal frames are
good non-unital generalizations of inverse quantal frames:

\begin{theorem}
The class of unital open quantal frames coincides with the class of inverse quantal frames.
\end{theorem}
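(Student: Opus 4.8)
The statement asserts the equality of two classes, so I would prove two inclusions. The inclusion of inverse quantal frames into unital open quantal frames is immediate: every inverse quantal frame is unital by definition, and by Corollary \ref{inverseimpliesopen} it is an open quantal frame. Hence the whole content of the theorem is the reverse inclusion, and the plan is to show that an arbitrary unital open quantal frame $Q$, with multiplicative unit $e$, satisfies the defining conditions of an inverse quantal frame: it is already a unital involutive quantal frame, so what remains is to exhibit a support and to verify the covering condition $\V\ipi(Q)=1$.

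For the support I would take the candidate $\spp(a)=a1\wedge e$. This map preserves joins, being the composite of $a\mapsto a1$ (which preserves joins because the multiplication does) with $(-)\wedge e$ (which preserves joins by the frame distributive law), and condition (\ref{spp1}) is immediate since $\spp(a)\le e$. Condition (\ref{spp3}) follows at once from (O) and unitality: $(a1\wedge e)a=a1\wedge ea=a1\wedge a=a$, using $a\le a1$ from Remark \ref{RQeqQ1}. The one genuinely computational point is (\ref{spp2}), i.e. $a1\wedge e\le aa^{*}$; here I would use the inequality $a1\le\upsilon(aa^{*})$ from Lemma \ref{upsilon} together with $\upsilon(aa^{*})=\V_{xx^{*}\le aa^{*}}x$ from Lemma \ref{xxstarvsxystar}, so that, distributing the meet with $e$, it suffices to check $x\wedge e\le aa^{*}$ whenever $xx^{*}\le aa^{*}$. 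This holds because every $t\le e$ satisfies $t\le tt^{*}t\le tt^{*}e=tt^{*}$ (the first inequality by Remark \ref{RQeqQ1}), whence $x\wedge e\le(x\wedge e)(x\wedge e)^{*}\le xx^{*}\le aa^{*}$. Stability (\ref{spp4}) is then automatic from the uniqueness-and-stability clause of the definition once $Q$ is known to be an inverse quantal frame (alternatively it can be checked directly, as $(b1\wedge e)1=b1$ by (O)).

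The covering condition is the crux, and the main obstacle, because it is precisely where unitality (equivalently, the \'etale-ness of the underlying groupoid) must be used in an essential way. The key observation is that the unit forces the one-sided cover $\upsilon(e)=1$: indeed, using $e^{*}=e$, we get $1=e1\le\upsilon(ee^{*})=\upsilon(e)\le 1$ by Lemma \ref{upsilon} and unitality, so that $\V\{x\st xx^{*}\le e\}=\upsilon(e)=1$ by Lemma \ref{xxstarvsxystar}. Applying the involution gives the symmetric cover $\V\{y\st y^{*}y\le e\}=1$ as well. From these two one-sided covers I would obtain the two-sided one by frame distributivity: since $1=(\V_{xx^{*}\le e}x)\wedge(\V_{y^{*}y\le e}y)=\V_{xx^{*}\le e,\,y^{*}y\le e}(x\wedge y)$ and each meet $x\wedge y$ is a partial unit, because $(x\wedge y)(x\wedge y)^{*}\le xx^{*}\le e$ and $(x\wedge y)^{*}(x\wedge y)\le y^{*}y\le e$, we conclude $\V\ipi(Q)=1$.

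With a support and the covering condition in hand, $Q$ meets the definition of an inverse quantal frame, and so the two classes coincide; multiplicativity, together with the uniqueness and stability of the support, then come for free from the theory of \cite{aim}. The only steps I expect to require care are the verification of (\ref{spp2}) and, above all, the passage from the one-sided covers to $\V\ipi(Q)=1$, which is exactly what genuinely distinguishes the unital case from the general open one.
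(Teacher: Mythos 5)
Your proof is correct, and its first half runs along the same lines as the paper's: the same support candidate $\spp(a)=a1\wedge e$, the same use of (O) for $a\le\spp(a)a$, and essentially the same ingredients for $\spp(a)\le aa^*$ --- the paper derives $a1\le\upsilon(aa^*)$ from the definition of $\upsilon$ together with (R) and then invokes Lemma \ref{U-lemma} with $x=aa^*$ and $y=e$, whereas you quote Lemma \ref{upsilon} directly, distribute the meet with $e$ over $\upsilon(aa^*)=\V_{xx^*\le aa^*}x$, and finish with the subunit fact $t\le tt^*t\le tt^*$ for $t\le e$; both versions are a few lines and rest on the same axioms. The genuine divergence is in the covering condition. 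The paper disposes of it by citing \cite[Lemma 4.18]{aim}, which (given the involution and right-sidedness of $\upsilon(a)$) reduces $\V\ipi(Q)=1$ to the inequality $a\wedge e\le\upsilon(a)$, immediate from the definition of $\upsilon$ with $x=a$, $y=e$. You instead prove $\upsilon(e)=1$ from Lemma \ref{upsilon} and unitality, push this through the involution to get the two one-sided covers $\V\{x\st xx^*\le e\}=\V\{y\st y^*y\le e\}=1$, and intersect them by frame distributivity, noting that each $x\wedge y$ is a partial unit. Your route is self-contained --- it stays entirely within the lemmas of the present paper plus the frame axioms, and does not outsource the crux to an external result --- at the cost of being slightly longer; the paper's is shorter but only modulo the citation. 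Your handling of stability, namely that (\ref{spp4}) need not be checked because the definition of inverse quantal frame only demands existence of a support (which is then automatically stable and unique), matches the paper, which likewise never verifies it. Both arguments are sound.
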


\begin{proof}
Inverse quantal frames are necessarily open, as already stated in \ref{inverseimpliesopen}. In order to see the converse, consider an open quantal frame $Q$ with unit $e$.
Since $a \leq a1$, using (O) we obtain
$a = a1 \wedge a = a1 \wedge ea = (a1 \wedge e )a$. Hence, the sup-lattice endomorphism $\spp$ defined on $Q$ by $\spp(a)=a1\wedge e$ satisfies two of the axioms, (\ref{spp1}) and (\ref{spp3}), of a support:
\[\spp(a)\le e\hspace*{1cm}\textrm{and}\hspace*{1cm}\spp(a)a\le a\;.\]
Now note that from the definition of $\upsilon$ we obtain (make $x=y=a$)
\[\upsilon(aa^*)=\V_{xy\le aa^*}x\wedge y\ge a\wedge a=a\;,\]
and thus $a1\le\upsilon(aa^*)$ because by (R) $\upsilon(aa^*)$ is right-sided. Hence, using \ref{U-lemma} we obtain (make $x=aa^*$ and $y=e$)
\[aa^*=\V_{xy\le aa^*}\upsilon(x)\wedge y\ge\upsilon(aa^*)\wedge e\ge a1\wedge e\;,\]
which gives us the remaining axiom, (\ref{spp2}), of a support:
\[\spp(a)\le aa^*\;.\]
To conclude, in order to show that $Q$ is an inverse quantal frame we need only show that $ 1 = \V \mathcal{I}(Q)$. Due to the involution and the fact that $\upsilon(a)$ is right-sided we see, using \cite[Lemma 4.18]{aim}, that this condition is equivalent to
$a \wedge e \leq \upsilon(a)$. And this is easily seen to be true if we make $x=a$ and $y=e$ in the definition of $\upsilon$. \qed
\end{proof} 

\section{Local bisections}\label{sec:localbisections}

As we have mentioned in the introduction, there is loss of information, in general, in the passage from an open groupoid $G$ to its inverse semigroup $\sections(G)$ of local bisections, whereas, as we have seen in the previous section, the quantale $\opens(G)$ allows us to fully recover the groupoid $G$. However, the characterization of the quantales of the form $\opens(G)$ relies on the cumbersome multiplicativity axiom that in the case of \'etale groupoids is not needed, and this prompts us into seeking a friendlier replacement, or at least approximation, for this axiom. As we shall see, $\sections(G)$ plays an important role in this, and hence we begin by examining the local bisections of open localic groupoids and the extent to which they can be defined for arbitrary open quantal frames.

\paragraph{Local bisections of open quantal frames.}

Throughout this section, $Q$ denotes an arbitrary but fixed open quantal frame, and we retain the notation $d$, $r$, and $i$ of the previous section for the structure maps of its associated involutive localic graph, and in those cases where $\upsilon$ is preserves joins we write $u$ for the map of locales defined by $u^*=\upsilon$. We recall that this happens, for instance, when
$Q$ is assumed to be multiplicative, in which case we denote the multiplication of the associated groupoid by $m$.

We shall often denote the elements of $\rs(Q)$ by roman capitals $U$, $V$, etc., thinking of them metaphorically as the open sets of a space $G_0$. We shall also write $\ds U$ for $\downsegment U\cap\rs(Q)$ (both for the object in the category of locales and in the category of frames --- that is, without using the $\opens$ notation), and we write
$k_U:\ds U\to\rs(Q)$ for the inclusion of the open sublocale $\ds U$ into $\rs(Q)$, for any $U\in\rs(Q)$. Hence, $k_U^*(V) = V\wedge U$ for all $V\in\rs(Q)$.

\begin{definition}
By a \emph{local bisection} of $Q$ is meant a pair $(U,s)$, where $U\in\rs(Q)$ and
\[s:\ds U\to Q\]
is a map of locales such that:
\begin{enumerate}
\item $d\circ s = k_U$ ($s$ is a local section of $d$);
\item $r\circ s$ is an open regular monomorphism of locales.
\end{enumerate}
\end{definition}

The second condition is equivalent to imposing that the standard (epi,regular mono)-factorisation of $r\circ s$ is given by
\[r\circ s = k_V\circ\alpha\]
for an isomorphism of locales $\alpha:\ds U\to\ds V$, where $V\in\rs(Q)$ is the image of $r\circ s$ in $\rs(Q)$.
Then the map $t=s\circ\alpha^{-1}:\ds V\to Q$ is a local section of $r$, for
\[r\circ s\circ\alpha^{-1}=k_V\circ\alpha\circ\alpha^{-1}=k_V\;.\]

Of course, local bisections could have equally been defined in terms of $t$ rather than $s$.
We shall use the following terminology, where the notation is the same as above:

\begin{definition}
Let $\sigma=(U,s)$ be a local bisection of $Q$.
\begin{itemize}
\item $U$ is the \emph{domain} of $\sigma$;
\item $V$ is the \emph{codomain} of $\sigma$;
\item $s$ is the \emph{$d$-section} of $\sigma$, or the \emph{domain section};
\item $t$ is the \emph{$r$-section} of $\sigma$, or the \emph{codomain section};
\item $\alpha$ is the \emph{action} of $\sigma$ (on $\rs(Q)$).
\end{itemize}
When there is ambiguity we shall denote $U$, $s$, $\alpha$, etc., by
$U_\sigma$, $s_\sigma$, $\alpha_\sigma$, etc.
\end{definition}

It will be useful to keep in mind straightforward formulas such as the following.

\begin{lemma} Let $(s,U)$ be a local bisection of $Q$. Then we have
\begin{enumerate}
\item $(d\circ s)^*(a) = k_U^*(a) = a \wedge U$ for all $a \in \rs(Q)$
\item $(r\circ s)^*(a) = (r\circ s)^*(a\wedge V) = \alpha^*(a\wedge V)$ for all $a \in \rs(Q) $
\item $\alpha^*(a) = s^*(r^*(a))$ for all $a\in\ds V$
\item $U=(r\circ s)^*(V)=s^*(r^*(V))$
\item $U=(r\circ s)^*(1)=s^*(r^*(1))$
\item $U=\alpha^*(V)$
\item $V=\alpha_!(U)$
\item $V=(r\circ s)_!(U)$.
\end{enumerate}

\end{lemma}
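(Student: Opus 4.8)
The plan is to verify each of the eight formulas directly from the definitions of the local bisection $(U,s)$, the action $\alpha$, and the codomain section $t$, using the defining conditions $d\circ s=k_U$ and $r\circ s=k_V\circ\alpha$ together with the fact that $k_U^*(a)=a\wedge U$ for $a\in\rs(Q)$. These formulas are essentially bookkeeping identities, so the main task is organizing the chain of dependencies among them rather than overcoming any single difficult obstacle.

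First I would establish item (1) immediately: applying the inverse-image functor to $d\circ s=k_U$ gives $(d\circ s)^*=s^*\circ d^*=k_U^*$, and since $k_U^*(a)=a\wedge U$ by the standing convention recorded just before the definition of local bisection, item (1) follows. Next, from the (epi, regular mono)-factorisation $r\circ s=k_V\circ\alpha$ I would take inverse images to get $(r\circ s)^*=\alpha^*\circ k_V^*$, and since $k_V^*(a)=a\wedge V$ this yields $(r\circ s)^*(a)=\alpha^*(a\wedge V)$, proving item (2) (the first equality there, $(r\circ s)^*(a)=(r\circ s)^*(a\wedge V)$, is because $k_V^*$ factors through meeting with $V$). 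For item (3), I would note that on $\ds V$ the map $\alpha^*$ agrees with $s^*\circ r^*$ precisely because $r\circ s=k_V\circ\alpha$ restricts to $\alpha$ on the open sublocale $\ds V\hookrightarrow\rs(Q)$; more carefully, for $a\in\ds V$ we have $a\wedge V=a$, so item (2) already gives $\alpha^*(a)=(r\circ s)^*(a)=s^*(r^*(a))$.

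The remaining items follow by specialization and by passing to left adjoints. For item (4), I would take $a=V$ in item (2): since $V\wedge V=V$ and $\alpha$ is an isomorphism with $\alpha^*(V)=U$ (as $\alpha:\ds U\to\ds V$ sends the top element of $\ds V$, namely $V$, to the top of $\ds U$, namely $U$), I obtain $U=(r\circ s)^*(V)=s^*(r^*(V))$; this simultaneously yields item (6), $U=\alpha^*(V)$. Item (5) then follows from item (4) together with $r^*(1)=r^*(1)\wedge V$ restricted appropriately, or more directly because meeting with $V$ inside $(r\circ s)^*$ makes $(r\circ s)^*(1)=(r\circ s)^*(V)=U$. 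Finally, for items (7) and (8) I would apply the left adjoint $\alpha_!$ (which exists and equals the inverse-image of $\alpha^{-1}$ since $\alpha$ is an isomorphism) to item (6): $\alpha_!(U)=\alpha_!(\alpha^*(V))=V$, using $\alpha_!\circ\alpha^*=\ident$ for the iso $\alpha$; and since $(r\circ s)_!=k_{V!}\circ\alpha_!$ agrees with $\alpha_!$ after the identification of $\ds V$ as the image, item (8) reads $V=(r\circ s)_!(U)$.

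The only point requiring mild care, and the place I would expect to spend the most attention, is the bookkeeping around the open sublocales $\ds U$ and $\ds V$ and the identification of $\alpha^*(V)=U$ and $\alpha_!(U)=V$: one must keep straight that $V$ is the top element of the frame $\ds V$ and $U$ the top of $\ds U$, so that the frame isomorphism $\alpha^*:\ds V\to\ds U$ necessarily preserves tops, and dually for its inverse $\alpha_!$. Everything else is a routine application of functoriality of $(-)^*$ and the explicit description $k_W^*(a)=a\wedge W$, so the proof is short and the statement is, as the text says, straightforward to verify.
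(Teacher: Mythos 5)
Your proposal is correct and follows essentially the same route as the paper: item (1) from $d\circ s=k_U$, item (2) by taking inverse images of the factorisation $r\circ s=k_V\circ\alpha$, item (3) as a specialisation of (2), and items (4)--(8) from (2) together with the fact that $\alpha$ is an isomorphism of locales between $\ds U$ and $\ds V$ (so that $\alpha^*$ preserves tops and $\alpha_!=(\alpha^{-1})^*$). The paper's proof is just a terser statement of the same dependency structure, which you have fleshed out accurately.
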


\begin{proof}
Property 1 follows from the condition of the definition
\[d \circ s = k_U\]
and 2 follows from
\[r \circ s = k_V \circ \alpha\]
because  $(r\circ s)^*(a\wedge V) =  \alpha^*({k_V}^{*}(a\wedge V)) =  \alpha^*(a\wedge V)$.
Property 3 follows directly from 2. Properties
4--8 follow from 2 and the fact that $\alpha$ is an isomorphism of locales between
$\ds U$ and $\ds V$. \qed
\end{proof}

Also we have their ``codomain-duals'', which we state without proof:

\begin{lemma} Let $(s,U)$ be a local bisection of $Q$. Then we have
\begin{enumerate}
\item $(r\circ t)^*(a) = k_V^*(a) = a\wedge V$ for all $a \in \rs(Q)$
\item $(d\circ t)^*(a) = (d\circ t)^*(a\wedge U) = {(\alpha^{-1})}^*(a\wedge U) = \alpha_!(a\wedge U)$ for all $a \in \rs(Q)$
\item $\alpha_!(a) = t^*(d^*(a))$ for all $a\in \ds U$
\item $V=(d\circ t)^*(U)=t^*(d^*(U))$
\item $V=(d\circ t)^*(1)=t^*(d^*(1))$
\item $U=(d\circ t)_!(V)$.
\end{enumerate}
\end{lemma}

Taking into account the specific formulas $d^*(a)=a$ and $r^*(a)=a^*$ for $a\in\rs(Q)$ we further obtain:

\begin{lemma} Let $(s,U)$ be a local bisection of $Q$. Then we have the following
\begin{enumerate}
\item $s^*(a) = k_U^*(a) = a\wedge U$ for all $a\in\rs(Q)$
\item $U=s^*(V^*)$
\item $U=s^*(1)$
\item $s^*(x) = s^*(x)\wedge U = s^*(x\wedge V^*)$ for all $x\in Q$
\item $\alpha^*(a) = s^*(a^*)$ for all $a\in\ds V$
\item $t^*(a^*) = k_V^*(a) = a\wedge V$ for all $a\in\rs(Q)$
\item $V=t^*(U)$
\item $V=t^*(1)$
\item $t^*(x) = t^*(x)\wedge V = t^*(x\wedge U)$ for all $x\in Q$
\item $\alpha_!(a) = t^*(a)$ for all $a\in \ds U$.
\end{enumerate}
\end{lemma}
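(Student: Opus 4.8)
The plan is to obtain each of the ten formulas by combining the two preceding lemmas with the explicit descriptions $d^*(a)=a$ and $r^*(a)=a^*$ of the inverse image homomorphisms on $\rs(Q)$, together with the contravariant functoriality identity $(f\circ g)^*=g^*\circ f^*$. Since $s$ is a $d$-section and $t$ is an $r$-section of the associated localic graph, we have $s^*\circ d^*=(d\circ s)^*$ and $t^*\circ r^*=(r\circ t)^*$ (and similarly for the mixed composites $r\circ s$ and $d\circ t$), so every formula in the earlier two lemmas that is phrased in terms of the composite maps can be rewritten purely in terms of $s^*$ and $t^*$ by substituting the value of $d^*$ or $r^*$ on the relevant right-sided argument.

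Concretely, I would proceed as follows. For item~1, apply $s^*$ to $d^*(a)=a$ and invoke item~1 of the first lemma. For items~2 and~3, substitute $r^*(V)=V^*$ and $r^*(1)=1^*=1$ into items~4 and~5 of the first lemma. Item~5 is immediate from item~3 of the first lemma together with $r^*(a)=a^*$ for $a\in\ds V\subseteq\rs(Q)$. Dually, items~6, 7, 8 and~10 follow from items~1, 4, 5 and~3 of the second lemma upon substituting $r^*(a)=a^*$ and $d^*(a)=a$, using $d^*(U)=U$ and $d^*(1)=1$.

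The only items requiring a small extra argument are~4 and~9. The key observation is that the codomain of $s$ is the open sublocale $\ds U=\downsegment U\cap\rs(Q)$, whose top element is $U$; since $s^*$ is a frame homomorphism its values lie in $\ds U$, so $s^*(x)\le U$ and hence $s^*(x)=s^*(x)\wedge U$ for every $x\in Q$. Combining this with $U=s^*(V^*)$ (item~2) and the fact that $s^*$ preserves binary meets yields
\[s^*(x\wedge V^*)=s^*(x)\wedge s^*(V^*)=s^*(x)\wedge U=s^*(x)\;,\]
which is item~4. Item~9 is proved identically, using that the top element of $\ds V$ is $V$ and that $V=t^*(U)$ (item~7).

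I do not expect any genuine obstacle: each statement reduces to a substitution into an already-established identity. The only point deserving care is the bookkeeping in items~4 and~9, where one must remember that $s^*$ and $t^*$ take values in the bounded frames $\ds U$ and $\ds V$ rather than in all of $Q$, so that these values automatically lie below $U$ and $V$ respectively.
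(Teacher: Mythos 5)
Your proposal is correct and matches the paper's intent exactly: the paper states this lemma without a separate proof, presenting it as an immediate consequence of the two preceding lemmas once the specific formulas $d^*(a)=a$ and $r^*(a)=a^*$ are substituted, which is precisely your strategy. Your extra care on items 4 and 9 (noting that $s^*$ and $t^*$ land in the bounded frames $\ds U$ and $\ds V$, so their values lie below $U$ and $V$, and then using items 2 and 7 with meet-preservation) fills in the only step the paper leaves implicit, and it is done correctly.
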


\begin{lemma}
If $\sigma=(U,s)$ is a local bisection then so is $\sigma^{-1}=(V,i\circ t)$, and we have $\alpha_{\sigma^{-1}}=\alpha_\sigma^{-1}$. Hence, in terms of frame homomorphisms we have
\[s_{\sigma^{-1}}^*(a) = t_\sigma^*(a^*)\]
for all $a\in Q$.
\end{lemma}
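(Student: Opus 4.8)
The plan is to verify directly that $\sigma^{-1}=(V,i\circ t)$ satisfies the two defining conditions of a local bisection, exploiting the involution relations $d\circ i=r$ and $r\circ i=d$ of the associated graph, and then to read off both the action and the frame-homomorphism identity. Write $s'=i\circ t:\ds V\to Q$ for the candidate $d$-section of $\sigma^{-1}$.

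First I would check condition~1, namely that $s'$ is a local section of $d$ over $V$. Indeed
\[d\circ s'=d\circ i\circ t=r\circ t=k_V\;,\]
where the middle equality is $d\circ i=r$ and the last uses that $t=t_\sigma$ is the $r$-section of $\sigma$. For condition~2 I would compute, using $r\circ i=d$,
\[r\circ s'=r\circ i\circ t=d\circ t\;,\]
and then, since $t=s\circ\alpha^{-1}$ and $d\circ s=k_U$, conclude that $r\circ s'=k_U\circ\alpha^{-1}$. Here $\alpha^{-1}:\ds V\to\ds U$ is an isomorphism (hence epi) and $k_U$ is the inclusion of an open sublocale (a regular mono), so $k_U\circ\alpha^{-1}$ is precisely the $(\text{epi},\text{regular mono})$-factorisation of $r\circ s'$; in particular $r\circ s'$ is an open regular monomorphism with image $U$. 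Hence $\sigma^{-1}$ is a local bisection, with domain $V$, codomain $U$, and action $\alpha_{\sigma^{-1}}=\alpha^{-1}=\alpha_\sigma^{-1}$, as claimed.

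Finally, the frame-homomorphism identity is immediate. Since $s_{\sigma^{-1}}=i\circ t_\sigma$, taking inverse images gives $s_{\sigma^{-1}}^*=t_\sigma^*\circ i^*$, and as $i^*(a)=a^*$ for all $a\in Q$ we obtain $s_{\sigma^{-1}}^*(a)=t_\sigma^*(a^*)$. I do not expect any genuine obstacle in this argument: the whole content is the bookkeeping with the involution, and the only point requiring a moment's care is recognising that $k_U\circ\alpha^{-1}$ is already the required $(\text{epi},\text{regular mono})$-factorisation, so that the codomain of $\sigma^{-1}$ is correctly identified as $U$ and its action as $\alpha_\sigma^{-1}$.
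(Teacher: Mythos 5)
Your proof is correct and takes essentially the same route as the paper's: both verify the two defining conditions of a local bisection via the involution relations, computing $d\circ i\circ t=r\circ t=k_V$ and $r\circ i\circ t=d\circ t=d\circ s\circ\alpha^{-1}=k_U\circ\alpha^{-1}$, and then noting that the latter is an open regular monomorphism since $\alpha^{-1}$ is an isomorphism. The only difference is cosmetic: you make explicit the $(\text{epi},\text{regular mono})$-factorisation, the identification of the action $\alpha_{\sigma^{-1}}=\alpha_\sigma^{-1}$, and the frame-level identity $s_{\sigma^{-1}}^*=t_\sigma^*\circ i^*$, which the paper leaves implicit.
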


\begin{proof}

We have $d \circ i \circ t = r \circ t = k_V$.
Also $r \circ i \circ t = d \circ t =  d \circ s \circ \alpha^{-1} = k_U \circ \alpha^{-1}$.
Hence $r \circ i \circ t$ is a regular monomorphism of locales, since $\alpha^{-1}$ is
an isomorphism. \qed
\end{proof}

We shall write $\sigma^{-1}$ with the above meaning from here on.

\begin{example}\label{globalbisec}
In those cases where $\upsilon$ preserves joins an obvious example of local bisection is $\varepsilon = (u,1)$. 
Then we have $t_\varepsilon=s_\varepsilon=u$ and $\alpha_\varepsilon =\ident_{\rs(Q)}$, and the inverse $\varepsilon^{-1}$ coincides with $\varepsilon$.
\end{example}

\begin{definition}
We shall denote by $\sections(Q)$ the set of local bisections of $Q$. (Later we shall give conditions for this to be an inverse semigroup with 
inverse 
operation $(-)^{-1}$ and multiplicative unit $\varepsilon$.)
\end{definition}

\paragraph{Local bisections of open groupoids.}

We shall continue to denote by $Q$ an arbitrary but fixed open quantal frame, but now we further require it to be multiplicative. As we shall see, in this case
 there is also a multiplication of local bisections whose geometric meaning is the usual one for groupoids, namely 
the ``arrows in the image of $s_\sigma$ are composed with the arrows in the image of $s_\tau$''.

\begin{definition}\label{def:product}
Let $\sigma$ and $\tau$ be local bisections of $Q$. We define their \emph{product}
$\sigma\tau=(U_{\sigma\tau},s_{\sigma\tau})$ as follows:
\begin{enumerate}
\item $U_{\sigma\tau} = \alpha_\sigma^*(U_\tau\wedge V_\sigma) = s_\sigma^*(r^*(U_\tau))=s_\sigma^*(U_\tau^*)$
\item $s_{\sigma\tau} = m\circ\langle  s_\sigma\circ \iota, s_\tau\circ\beta \rangle$, where 
$\iota:\ds{ U_{\sigma\tau}}\to\ds{U_\sigma}$ is the open inclusion of the locale $\ds{U_{\sigma\tau}}$ into 
$\ds{U_\sigma}$ and
$\beta:\ds {U_{\sigma\tau}}\to \ds{U_\tau}$
is the pullback of $k_{V_{\sigma}} \circ \alpha_{\sigma} = r \circ s_\sigma$ along  $k_{U_\tau}$:
\[
\xymatrix{
\ds{U_{\sigma\tau}}\ar[rr]^{\beta}\ar@{_(->}[d]_{\iota}
&&\ds{U_\tau}\ar@{_(->}[d]_{k_{U_\tau}}\\
\ds{U_\sigma}\ar[rr]_{r\circ s_\sigma}&&\rs(Q)\;.
}
\]
\end{enumerate}
\end{definition}

We remark that the pairing in the definition of  $s_{\sigma\tau}$ arises from the definition of $Q\otimes_{\rs(Q)} Q$ as a pullback of $d$ and $r$ in the category of locales; that is, the pairing is well defined as a map
$\ds{U_{\sigma\tau}}\to Q\otimes_{\rs(Q)} Q$ because 
\[r\circ s_\sigma \circ \iota = k_{U_{\tau}}\circ \beta =  d \circ s_\tau \circ \beta\;.\]

\begin{lemma}\label{prop:localbisec}
$(U_{\sigma\tau},s_{\sigma\tau})$ in the above definition is a local bisection.
\end{lemma}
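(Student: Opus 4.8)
I need to verify that the pair $(U_{\sigma\tau}, s_{\sigma\tau})$ from Definition \ref{def:product} satisfies the two defining conditions of a local bisection, namely that $d \circ s_{\sigma\tau} = k_{U_{\sigma\tau}}$ and that $r \circ s_{\sigma\tau}$ is an open regular monomorphism. The plan is to compute the composites $d \circ s_{\sigma\tau}$ and $r \circ s_{\sigma\tau}$ directly from the formula $s_{\sigma\tau} = m \circ \langle s_\sigma \circ \iota, s_\tau \circ \beta \rangle$, exploiting the semicategory structure already established in Theorem \ref{semicat}. The key tools are the two commutative squares in (\ref{semicategory}), which give $d \circ m = d \circ \pi_1$ and $r \circ m = r \circ \pi_2$.

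\begin{proof}
We check the two conditions of the definition. For the first, using $d \circ m = d \circ \pi_1$ from (\ref{semicategory}) together with $\pi_1 \circ \langle s_\sigma \circ \iota, s_\tau \circ \beta \rangle = s_\sigma \circ \iota$, we obtain
\[
d \circ s_{\sigma\tau} = d \circ m \circ \langle s_\sigma \circ \iota, s_\tau \circ \beta \rangle = d \circ \pi_1 \circ \langle s_\sigma \circ \iota, s_\tau \circ \beta \rangle = d \circ s_\sigma \circ \iota = k_{U_\sigma} \circ \iota\;,
\]
where the last equality is the local-section condition for $\sigma$. Since $\iota$ is the open inclusion of $\ds{U_{\sigma\tau}}$ into $\ds{U_\sigma}$, we have $k_{U_\sigma} \circ \iota = k_{U_{\sigma\tau}}$, which is condition~1.

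For the second condition, the symmetric square $r \circ m = r \circ \pi_2$ of (\ref{semicategory}) gives
\[
r \circ s_{\sigma\tau} = r \circ m \circ \langle s_\sigma \circ \iota, s_\tau \circ \beta \rangle = r \circ \pi_2 \circ \langle s_\sigma \circ \iota, s_\tau \circ \beta \rangle = r \circ s_\tau \circ \beta\;.
\]
Now $r \circ s_\tau = k_{V_\tau} \circ \alpha_\tau$ by the definition of the action of $\tau$, so $r \circ s_{\sigma\tau} = k_{V_\tau} \circ \alpha_\tau \circ \beta$. The map $\beta$, being a pullback of the isomorphism-onto-its-image $r \circ s_\sigma = k_{V_\sigma} \circ \alpha_\sigma$ along the open inclusion $k_{U_\tau}$, is itself an open regular monomorphism onto an open sublocale of $\ds{U_\tau}$; composing with the isomorphism $\alpha_\tau$ and the open inclusion $k_{V_\tau}$ preserves this, so $r \circ s_{\sigma\tau}$ is an open regular monomorphism of locales. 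This establishes condition~2, and hence $(U_{\sigma\tau}, s_{\sigma\tau})$ is a local bisection. \qed
\end{proof}

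\textbf{Anticipated obstacle.} The routine part is the two composite computations, which follow cleanly from (\ref{semicategory}). The step requiring genuine care is the claim that $\beta$ is an open regular monomorphism: one must argue that open regular monomorphisms of locales are stable under pullback along arbitrary (or at least open) maps, so that pulling back $r \circ s_\sigma$ along $k_{U_\tau}$ yields a map of the required type with codomain the open sublocale $\ds{U_\tau}$. I would justify this by noting that open sublocales correspond to elements of the frame $\rs(Q)$ and that the pullback of an open sublocale inclusion is again an open sublocale inclusion; composing the isomorphism $\alpha_\tau$ on the image of $\beta$ then identifies the codomain $V_{\sigma\tau}$ as the relevant element of $\rs(Q)$, completing the verification that the (epi, regular mono)-factorisation has the required form.
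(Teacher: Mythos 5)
Your proof is correct and follows essentially the same route as the paper's: compute $r\circ s_{\sigma\tau}=r\circ s_\tau\circ\beta$ via the identity $r\circ m=r\circ\pi_2$, observe that $\beta$ is an open regular monomorphism by pullback stability, and conclude by closure of open regular monomorphisms under composition. The only difference is that you also verify the local-section condition $d\circ s_{\sigma\tau}=k_{U_{\sigma\tau}}$ explicitly (via $d\circ m=d\circ\pi_1$), which the paper leaves implicit --- a welcome bit of extra completeness rather than a different approach.
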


\begin{proof}
We verify that $r\circ s_{\sigma\tau}$ is an open regular monomorphism. Since $r\circ m=r\circ\pi_2$, we have
\begin{eqnarray*}
r\circ s_{\sigma\tau}&=&r\circ m\circ\langle  s_\sigma\circ \iota, s_\tau\circ\beta \rangle\\
&=& r\circ \pi_2\circ\langle  s_\sigma\circ \iota, s_\tau\circ\beta\rangle\\
&=& r\circ s_\tau\circ\beta\;.
\end{eqnarray*}
Open regular monomorphisms of locales are stable under pullback and thus $\beta$ is an open regular monomorphism 
because $r\circ s_\sigma$ is. Moreover, $r\circ s_\tau$ is an open regular monomorphism and thus $r\circ s_{\sigma\tau}$ is the composition of two open regular monomorphisms. \qed
\end{proof}

\begin{remark} $\iota' \circ \alpha_{\sigma\tau} = \alpha_{\tau} \circ \beta$ where $\iota'$ is
the inclusion $\iota': \ds{V_{\sigma\tau}} \rightarrow \ds{V_{\tau}}$.  Indeed, since
 \[k_{V_{\sigma\tau}}\circ \alpha_{\sigma\tau} = r \circ s_{\sigma\tau} = r \circ s_{\tau}\circ \beta\ = k_{V_\tau}\circ \alpha_{\tau}\circ \beta\]
and $k_{V_{\sigma\tau}} = k_{V_{\tau}}\circ \iota'$, we get our result because $k_{V_{\sigma\tau}}$ is mono.

\end{remark}

In terms of its inverse image, $\beta$ has the following simple alternative definitions:

\begin{lemma}
$\beta^*(a) = s_\sigma^*(r^*(a))=s_\sigma^*(a^*)=\alpha_\sigma^*(a\wedge V_\sigma)$ for all $a\in\ds{U_\tau}$.
\end{lemma}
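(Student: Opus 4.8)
The plan is to read off $\beta^*$ directly from the commuting square that defines $\beta$ as a pullback, and then to rewrite the resulting expression using only elementary identities: the factorisation $(r\circ s_\sigma)^*=s_\sigma^*\circ r^*$, the formula $r^*(a)=a^*$ for $a\in\rs(Q)$, and item~2 of the first lemma of formulas, which gives $(r\circ s_\sigma)^*(a)=\alpha_\sigma^*(a\wedge V_\sigma)$ for all $a\in\rs(Q)$. In this way all three claimed expressions will be seen to be literally the same frame element, so that the only real work lies in establishing the first equality $\beta^*(a)=s_\sigma^*(r^*(a))$.

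For that first equality I would apply the inverse-image functor to the commutativity $r\circ s_\sigma\circ\iota=k_{U_\tau}\circ\beta$ recorded just before the statement, obtaining the identity of frame homomorphisms with domain $\rs(Q)$
\[\iota^*\circ(r\circ s_\sigma)^*=\beta^*\circ k_{U_\tau}^*\;.\]
Evaluating at an element $a\in\ds{U_\tau}$, that is $a\in\rs(Q)$ with $a\le U_\tau$, the right-hand side is $\beta^*(a\wedge U_\tau)=\beta^*(a)$ since $k_{U_\tau}^*(a)=a\wedge U_\tau=a$, while the left-hand side is $(r\circ s_\sigma)^*(a)\wedge U_{\sigma\tau}$ because $\iota$ is the open inclusion of $\ds{U_{\sigma\tau}}$ into $\ds{U_\sigma}$ and hence $\iota^*(x)=x\wedge U_{\sigma\tau}$. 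Thus $\beta^*(a)=(r\circ s_\sigma)^*(a)\wedge U_{\sigma\tau}=s_\sigma^*(r^*(a))\wedge U_{\sigma\tau}$.

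Finally I would absorb the meet with $U_{\sigma\tau}$: from $a\le U_\tau$ and monotonicity we get $s_\sigma^*(r^*(a))\le s_\sigma^*(r^*(U_\tau))=U_{\sigma\tau}$, the last equality being the defining formula $U_{\sigma\tau}=s_\sigma^*(r^*(U_\tau))$ of Definition~\ref{def:product}(1). Hence the meet is redundant and $\beta^*(a)=s_\sigma^*(r^*(a))$. The remaining two equalities are then immediate: $s_\sigma^*(r^*(a))=s_\sigma^*(a^*)$ because $r^*(a)=a^*$ for $a\in\rs(Q)$, and $s_\sigma^*(a^*)=(r\circ s_\sigma)^*(a)=\alpha_\sigma^*(a\wedge V_\sigma)$ by item~2 of the earlier formulas lemma. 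The only point requiring a moment's care — and the nearest thing to an obstacle — is the bookkeeping that makes the meet with $U_{\sigma\tau}$ vanish; once one notes that $s_\sigma^*(r^*(a))$ already lies below $U_{\sigma\tau}$, everything else is routine rewriting.
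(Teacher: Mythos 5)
Your proposal is correct and follows essentially the same route as the paper: both arguments apply inverse images to the commutativity $r\circ s_\sigma\circ\iota=k_{U_\tau}\circ\beta$ of the defining square, evaluate at $a\le U_\tau$ to get $\beta^*(a)=U_{\sigma\tau}\wedge(r\circ s_\sigma)^*(a)$, absorb the meet using $U_{\sigma\tau}=(r\circ s_\sigma)^*(U_\tau)$, and finish with the general relations among $s_\sigma^*$, $r^*$, and $\alpha_\sigma^*$. The only cosmetic difference is that the paper absorbs the meet via meet-preservation of $(r\circ s_\sigma)^*$ while you invoke monotonicity, which is the same observation.
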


\begin{proof}
The diagram that defines $\beta$ yields the following condition in terms of frame homomorphisms, for all $a\in\rs(Q)$:
\[U_{\sigma\tau}\wedge (r\circ s_\sigma)^*(a) = \beta^*(a\wedge U_\tau)\;.\]
Hence, for $a\in\ds{U_\tau}$ we obtain
\[\beta^*(a) = \beta^*(a\wedge U_\tau) = U_{\sigma\tau}\wedge (r\circ s_\sigma)^*(a) = (r\circ s_\sigma)^*(U_{\tau})
\wedge (r\circ s_\sigma)^*(a) = (r\circ s_\sigma)^*(a)\;.\]
The rest follows from the general relations involving $\sigma^*$ and $\alpha^*$. \qed
\end{proof}

We have the following straightforward ``$t$-version'' of the definition \ref{def:product} of product of local bisections:

\begin{lemma}\label{prop:inverse1}
Let $\sigma$ and $\tau$ be local bisections. Then
\[ t_{\sigma\tau} = m \circ \langle t_{\sigma}\circ \beta', t_{\tau}\circ \iota'\rangle\;,\]
where $\iota': \ds{V_{\sigma\tau}} \rightarrow \ds{V_{\tau}}$ is the restriction and
$\beta':\ds{V_{\sigma\tau}} \rightarrow \ds{V_{\sigma}}$ is the pullback, in the category of locales,
of $d \circ t_{\tau}$ along $k_{V_{\sigma}}$ as in the diagram:
\[
\xymatrix{
\ds{V_{\sigma\tau}}\ar[rr]^{\beta'}\ar@{_(->}[d]_{\iota'}
&&\ds{V_\sigma}\ar@{_(->}[d]_{k_{V_{\sigma}}}\\
\ds{V_\tau}\ar[rr]_{d\circ t_{\tau}}&&\rs(Q)\;.
}
\]
\end{lemma}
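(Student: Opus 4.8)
The plan is to derive the formula directly from Definition \ref{def:product} together with the relation $t_\sigma=s_\sigma\circ\alpha_\sigma^{-1}$ that ties each $r$-section to its corresponding $d$-section, rather than redoing the pullback analysis from scratch. Morally this is just the statement obtained by interchanging the roles of $d$ and $r$ (equivalently, of $s$ and $t$) throughout Definition \ref{def:product}; but since the anti-homomorphism law $(\sigma\tau)^{-1}=\tau^{-1}\sigma^{-1}$ is not yet available, I would argue by explicit computation and transport of structure.

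First I would write $t_{\sigma\tau}=s_{\sigma\tau}\circ\alpha_{\sigma\tau}^{-1}$ and substitute the defining expression $s_{\sigma\tau}=m\circ\langle s_\sigma\circ\iota,\ s_\tau\circ\beta\rangle$, using that precomposition distributes over the pairing into the pullback $Q\otimes_{\rs(Q)}Q$:
\[t_{\sigma\tau}=m\circ\langle\, s_\sigma\circ\iota\circ\alpha_{\sigma\tau}^{-1},\ s_\tau\circ\beta\circ\alpha_{\sigma\tau}^{-1}\,\rangle\;.\]
For the second component I would invoke the preceding Remark, $\iota'\circ\alpha_{\sigma\tau}=\alpha_\tau\circ\beta$, which rearranges to $\beta\circ\alpha_{\sigma\tau}^{-1}=\alpha_\tau^{-1}\circ\iota'$ and hence gives $s_\tau\circ\beta\circ\alpha_{\sigma\tau}^{-1}=s_\tau\circ\alpha_\tau^{-1}\circ\iota'=t_\tau\circ\iota'$, which is exactly the second leg of the claimed pairing. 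For the first component I would set $\beta':=\alpha_\sigma\circ\iota\circ\alpha_{\sigma\tau}^{-1}$ and note that $s_\sigma\circ\iota\circ\alpha_{\sigma\tau}^{-1}=t_\sigma\circ\alpha_\sigma\circ\iota\circ\alpha_{\sigma\tau}^{-1}=t_\sigma\circ\beta'$, so it remains only to identify this $\beta'$ with the pullback described in the statement.

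The commutativity of the square for $\beta'$ is a short chain: using $k_{V_\sigma}\circ\alpha_\sigma=r\circ s_\sigma$, the pullback identity $(r\circ s_\sigma)\circ\iota=k_{U_\tau}\circ\beta$ from Definition \ref{def:product}, the relation $\beta\circ\alpha_{\sigma\tau}^{-1}=\alpha_\tau^{-1}\circ\iota'$, and finally $d\circ t_\tau=k_{U_\tau}\circ\alpha_\tau^{-1}$ (which follows from $t_\tau=s_\tau\circ\alpha_\tau^{-1}$ and $d\circ s_\tau=k_{U_\tau}$), I obtain $k_{V_\sigma}\circ\beta'=(d\circ t_\tau)\circ\iota'$.

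The step I expect to be the real content, as opposed to bookkeeping, is showing that this commuting square is genuinely a pullback, so that $\beta'$ is legitimately \emph{the} projection named in the statement. I would handle this by transport of structure: the square defining $\beta$ in Definition \ref{def:product} is a pullback of the cospan $(r\circ s_\sigma,\ k_{U_\tau})$, and the isomorphisms $\alpha_\sigma\colon\ds{U_\sigma}\to\ds{V_\sigma}$, $\alpha_\tau\colon\ds{U_\tau}\to\ds{V_\tau}$ and $\alpha_{\sigma\tau}\colon\ds{U_{\sigma\tau}}\to\ds{V_{\sigma\tau}}$, together with $\ident_{\rs(Q)}$, constitute an isomorphism of cospans onto $(k_{V_\sigma},\ d\circ t_\tau)$, precisely because $k_{V_\sigma}\circ\alpha_\sigma=r\circ s_\sigma$ and $(d\circ t_\tau)\circ\alpha_\tau=k_{U_\tau}$. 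Moreover the two identities $\iota'\circ\alpha_{\sigma\tau}=\alpha_\tau\circ\beta$ and $\beta'\circ\alpha_{\sigma\tau}=\alpha_\sigma\circ\iota$ say that $\alpha_{\sigma\tau}$ carries the projections $(\iota,\beta)$ to $(\iota',\beta')$. Since an isomorphism of cospans carries pullbacks to pullbacks, the square with legs $\iota',\beta'$ is a pullback, which completes the identification and hence the lemma. No use of multiplicativity beyond what is already packaged into the well-definedness of $m$ and of the product of local bisections is required.
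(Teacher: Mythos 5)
Your proposal is correct. The paper gives no proof of this lemma at all --- it is stated as the ``straightforward $t$-version'' of Definition \ref{def:product}, immediately after the Remark $\iota'\circ\alpha_{\sigma\tau}=\alpha_\tau\circ\beta$ --- and your argument is precisely the natural formalization of what the authors leave implicit: writing $t_{\sigma\tau}=s_{\sigma\tau}\circ\alpha_{\sigma\tau}^{-1}$, distributing the precomposition over the pairing, identifying the two legs via that Remark and via $t=s\circ\alpha^{-1}$, and transporting the pullback property along the isomorphism of cospans $(\alpha_\sigma,\ident_{\rs(Q)},\alpha_\tau)$. Every step checks out (and since $k_{V_\sigma}$ is mono, your explicitly defined $\beta'=\alpha_\sigma\circ\iota\circ\alpha_{\sigma\tau}^{-1}$ is indeed the unique map the statement refers to), so this is a faithful filling-in of the paper's omitted proof rather than a divergent route.
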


The following propositions state useful technical properties of the product of local bisections.

\begin{lemma}\label{prop:inverse2}
$(\sigma\tau)^{-1} = \tau^{-1}\sigma^{-1}$ for all local bisections $\sigma$ and $\tau$.
\end{lemma}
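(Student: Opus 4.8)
The plan is to show that the two local bisections $(\sigma\tau)^{-1}$ and $\tau^{-1}\sigma^{-1}$ coincide by verifying that they have the same domain and the same $d$-section, since a local bisection is nothing but the pair consisting of this data. Recall that $(\sigma\tau)^{-1}=(V_{\sigma\tau},\,i\circ t_{\sigma\tau})$, so its domain is $V_{\sigma\tau}$ and its $d$-section is $i\circ t_{\sigma\tau}$. On the other side, the inverses are $\tau^{-1}=(V_\tau,\,i\circ t_\tau)$ and $\sigma^{-1}=(V_\sigma,\,i\circ t_\sigma)$, so that $U_{\tau^{-1}}=V_\tau$, $U_{\sigma^{-1}}=V_\sigma$, $s_{\tau^{-1}}=i\circ t_\tau$ and $s_{\sigma^{-1}}=i\circ t_\sigma$. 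Applying Definition \ref{def:product} to the ordered pair $(\tau^{-1},\sigma^{-1})$ then expresses both the domain and the $d$-section of $\tau^{-1}\sigma^{-1}$ entirely in terms of $\sigma$, $\tau$ and the involution $i$, which is what makes the comparison possible.

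First I would identify the domains. In the product $\tau^{-1}\sigma^{-1}$, the map $\beta$ is defined as the pullback of $r\circ s_{\tau^{-1}}$ along $k_{U_{\sigma^{-1}}}=k_{V_\sigma}$, and since the graph is involutive we have $r\circ i=d$, whence $r\circ s_{\tau^{-1}}=r\circ i\circ t_\tau=d\circ t_\tau$. Thus the square defining $(\iota,\beta)$ for $\tau^{-1}\sigma^{-1}$ is exactly the pullback of $d\circ t_\tau$ along $k_{V_\sigma}$, which is precisely the square defining $(\iota',\beta')$ in the $t$-version of $\sigma\tau$ recorded in Lemma \ref{prop:inverse1}. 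By uniqueness of pullbacks these two squares agree, giving $U_{\tau^{-1}\sigma^{-1}}=V_{\sigma\tau}$ together with the identifications $\iota=\iota'$ and $\beta=\beta'$ under $\ds{U_{\tau^{-1}\sigma^{-1}}}=\ds{V_{\sigma\tau}}$.

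Next I would match the $d$-sections. By Definition \ref{def:product} we have $s_{\tau^{-1}\sigma^{-1}}=m\circ\langle s_{\tau^{-1}}\circ\iota,\,s_{\sigma^{-1}}\circ\beta\rangle=m\circ\langle i\circ t_\tau\circ\iota,\,i\circ t_\sigma\circ\beta\rangle$. On the other hand, starting from $t_{\sigma\tau}=m\circ\langle t_\sigma\circ\beta',\,t_\tau\circ\iota'\rangle$ of Lemma \ref{prop:inverse1} and applying the involution law $i\circ m=m\circ\chi$ of (\ref{invol2}), together with the evident identity $\chi\circ\langle f,g\rangle=\langle i\circ g,\,i\circ f\rangle$, I obtain $i\circ t_{\sigma\tau}=m\circ\langle i\circ t_\tau\circ\iota',\,i\circ t_\sigma\circ\beta'\rangle$. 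Using $\iota=\iota'$ and $\beta=\beta'$ from the previous step, this coincides with $s_{\tau^{-1}\sigma^{-1}}$, so both local bisections share the same domain and $d$-section and the lemma follows.

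The main obstacle is the bookkeeping in this last step: one must apply $i\circ m=m\circ\chi$ correctly, noting that $\chi$ simultaneously swaps the two legs of the pairing and applies $i$ to each, which is exactly what converts the $t$-version pairing $\langle t_\sigma\circ\beta',\,t_\tau\circ\iota'\rangle$ for $\sigma\tau$ into the $d$-section pairing $\langle i\circ t_\tau\circ\iota,\,i\circ t_\sigma\circ\beta\rangle$ for $\tau^{-1}\sigma^{-1}$. Checking that the swap in $\chi$ interchanges the roles of $\sigma$ and $\tau$ in precisely the manner demanded by the reversed order $\tau^{-1}\sigma^{-1}$, and that the pullback data line up accordingly under the identification of the two squares, is where the care is needed; the remaining verifications are purely formal.
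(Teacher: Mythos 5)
Your proof is correct and is essentially the paper's own argument: both hinge on Lemma \ref{prop:inverse1}, the involution law (\ref{invol2}) via $\chi$, and the identification $\iota=\iota'$, $\beta=\beta'$ coming from the fact that the two squares are pullbacks of the same cospan $d\circ t_\tau$, $k_{V_\sigma}$. The only cosmetic difference is the direction: the paper computes $s_{(\tau^{-1}\sigma^{-1})^{-1}}=i\circ t_{\tau^{-1}\sigma^{-1}}$ and matches it with $s_{\sigma\tau}$ (proving $(\tau^{-1}\sigma^{-1})^{-1}=\sigma\tau$ and then inverting), whereas you compute $s_{(\sigma\tau)^{-1}}=i\circ t_{\sigma\tau}$ and match it directly with $s_{\tau^{-1}\sigma^{-1}}$, which also spares you the auxiliary identity $t_{\sigma^{-1}}=i\circ s_\sigma$.
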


\begin{proof}
It is easy to see that $U_{(\sigma\tau)^{-1}} = U_{\tau^{-1}\sigma^{-1}}$:
\begin{eqnarray*}
U_{(\sigma\tau)^{-1}} &=& V_{\sigma\tau} = t^*_{\tau}(d^*(V_{\sigma})) = t^*_{\tau}(V_{\sigma})\\
&=& s^*_{\tau^{-1}}(V^*_{\sigma}) = s^*_{\tau^{-1}}(U^*_{\sigma^{-1}}) = U_{\tau^{-1}\sigma^{-1}}\;.
\end{eqnarray*}
Also, we have
\begin{eqnarray*}
s_{(\tau^{-1}\sigma^{-1})^{-1}} &=& i \circ t_{\tau^{-1}\sigma^{-1}}\\
&=& i \circ m \circ \langle t_{\tau^{-1}}\circ \beta', t_{\sigma^{-1}}\circ \iota'\rangle\\
&=& m \circ \langle i \circ t_{\sigma^{-1}}\circ \iota', i \circ  t_{\tau^{-1}}\circ \beta' \rangle\;.
\end{eqnarray*}
And, using \cite[Prop.\ 2.3]{aim}, the latter equals
$m \circ \langle  s_{\sigma}\circ \iota', s_{\tau}\circ \beta' \rangle = s_{\sigma\tau}$
because in this case we have obviously $\beta' = \beta$ and $\iota' = \iota$. \qed
\end{proof}

\begin{lemma}
Let $\sigma$ and $\tau$ be local bisections. For all $a\in \ds{U_\tau}$ we have $s^*_{\sigma\tau}(a^*)=s^*_{\sigma}(s^*_\tau(a^*)^*)$.
\end{lemma}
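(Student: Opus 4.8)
The plan is to prove the formula $s^*_{\sigma\tau}(a^*)=s^*_{\sigma}\bigl(s^*_\tau(a^*)^*\bigr)$ by unwinding the definition of $s_{\sigma\tau}$ and applying inverse image homomorphisms to $a^*$, where $a\in\ds{U_\tau}$. Recall from Definition~\ref{def:product} that $s_{\sigma\tau}=m\circ\langle s_\sigma\circ\iota,\,s_\tau\circ\beta\rangle$, so that in terms of frame maps we have $s^*_{\sigma\tau}=\langle s_\sigma\circ\iota,\,s_\tau\circ\beta\rangle^*\circ m^*$. Since $m^*(x)=(\mu_0)_*(x)=\V_{yz\le x}y\otimes z$ and the pairing into the pullback $Q\otimes_{\rs(Q)}Q$ sends a basic tensor $y\otimes z$ to $\iota^*(s_\sigma^*(y))\wedge\beta^*(s_\tau^*(z))$, I would first write
\[
s^*_{\sigma\tau}(a^*)=\V_{yz\le a^*}\iota^*\bigl(s_\sigma^*(y)\bigr)\wedge\beta^*\bigl(s_\tau^*(z)\bigr)\;.
\]

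**Simplifying the two factors.**
Next I would simplify each factor using the technical lemmas already established. The map $\iota$ is an open inclusion $\ds{U_{\sigma\tau}}\hookrightarrow\ds{U_\sigma}$, so $\iota^*$ is just meeting with $U_{\sigma\tau}$, and $s_\sigma^*(y)$ already lies in $\ds{U_\sigma}$; combined with property~1 of the relevant lemma (that $s_\sigma^*$ lands below $U_\sigma$ and satisfies $s_\sigma^*(x)=s_\sigma^*(x\wedge V_\sigma^*)$), the first factor should collapse to $s_\sigma^*(y)\wedge U_{\sigma\tau}$ or simply $s_\sigma^*(y)$ after using $U_{\sigma\tau}=s_\sigma^*(U_\tau^*)$. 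For the second factor, I would invoke the explicit description $\beta^*(b)=s_\sigma^*(r^*(b))=s_\sigma^*(b^*)=\alpha_\sigma^*(b\wedge V_\sigma)$ from the lemma computing $\beta^*$; applied to $b=s_\tau^*(z)\in\ds{U_\tau}$ this gives $\beta^*(s_\tau^*(z))=s_\sigma^*\bigl(s_\tau^*(z)^*\bigr)$. The goal is then to see that the supremum $\V_{yz\le a^*}s_\sigma^*(y)\wedge s_\sigma^*\bigl(s_\tau^*(z)^*\bigr)$, which by the frame homomorphism property of $s_\sigma^*$ equals $\V_{yz\le a^*}s_\sigma^*\bigl(y\wedge s_\tau^*(z)^*\bigr)$, reduces to $s_\sigma^*\bigl(s_\tau^*(a^*)^*\bigr)$.

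**Collapsing the join.**
The crux is therefore to show
\[
\V_{yz\le a^*}\,y\wedge s_\tau^*(z)^*=s_\tau^*(a^*)^*\;,
\]
after which one application of $s_\sigma^*$ finishes the argument. I expect this to be the main obstacle, and the idea is to use that $s_\tau$ is a local section of $d$ together with the defining formula $s_\tau^*(x)=s_\tau^*(x\wedge V_\tau^*)$ and property~(U) of the open quantal frame $Q$ in the alternative form of Lemma~\ref{U-lemma}. Concretely, the join over pairs $yz\le a^*$ is exactly the kind of expression that (U) governs, and the point is that $s_\tau^*$ interacts with the reduced multiplication so that $y\wedge s_\tau^*(z)^*$ is controlled by the largest "section value" below $a^*$; taking $z$ large and $y=a^*$ (or symmetric choices) should realize $s_\tau^*(a^*)^*$ as the supremum, while the reverse inequality follows because each $s_\tau^*(z)^*$ with $yz\le a^*$ is dominated by $s_\tau^*(a^*)^*$.

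**Concluding.**
Once the join collapses, combining the two displayed simplifications yields
\[
s^*_{\sigma\tau}(a^*)=s_\sigma^*\bigl(s_\tau^*(a^*)^*\bigr)\;,
\]
as required. I would keep in mind throughout that all maps $s_\sigma^*$, $s_\tau^*$, $\beta^*$, $\iota^*$ are frame homomorphisms (so they preserve finite meets and arbitrary joins), which licenses pushing $s_\sigma^*$ through both the binary meet and the supremum, and that the hypotheses of an open multiplicative quantal frame (properties (R), (U), (O) and balancedness (B)) are exactly what make these manipulations legitimate. The anticipated difficulty is purely in the join-collapse step, where the correct bookkeeping of the constraint $yz\le a^*$ against the section property of $\tau$ must be carried out carefully.
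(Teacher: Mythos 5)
Your setup is faithful to the definitions and matches the paper's own computational scheme: from Definition \ref{def:product} one indeed has $s^*_{\sigma\tau}=[\iota^*\circ s_\sigma^*,\beta^*\circ s_\tau^*]\circ m^*$, and simplifying the two factors as you do yields $s^*_{\sigma\tau}(a^*)=s_\sigma^*\bigl(\V_{yz\le a^*}y\wedge s_\tau^*(z)^*\bigr)$, which is formula (\ref{formula2}) of \ref{prop:product}. But your proof then stands or falls with the ``join-collapse'' identity $\V_{yz\le a^*}y\wedge s_\tau^*(z)^*=s_\tau^*(a^*)^*$, and that is exactly where there is a genuine gap: neither of your two hints for it works. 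For ``$\ge$'' your witnesses are backwards: with $y=a^*$ and $z$ large the constraint $yz\le a^*$ fails in general, since $a^*$ is left-sided but not right-sided; the correct choice is $y=1$ and $z=a^*$, using $1a^*=a^*$ (which holds because $a\in\rs(Q)$ gives $a=a1$) --- compare \ref{prop:map}(1). For ``$\le$'', the claim that each $s_\tau^*(z)^*$ with $yz\le a^*$ is dominated by $s_\tau^*(a^*)^*$ is false: the constraint puts no upper bound on $z$ (take $y=0$, $z=1$, so that $s_\tau^*(z)^*=U_\tau^*$, which need not lie below $s_\tau^*(a^*)^*$); only the meets $y\wedge s_\tau^*(z)^*$ are so dominated, and proving that is the entire difficulty. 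Nor can this come from (U) and the section property alone, as you suggest: in the form of \ref{U-lemma}, (U) amounts precisely to the case $\tau=\varepsilon$ of the desired identity. The missing ingredient is the compatibility of the multiplication with the range map, $r\circ m=r\circ\pi_2$, equivalently $m^*\circ r^*=(\mu_0)_*\circ r^*=\pi_2^*\circ r^*$ (\ref{cor:framehoms}), which rests on (B) and $\rs(Q)=Q1$ rather than on (U).

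Once this ingredient is brought in, the proof closes at once and the join expansion can be skipped entirely: since $a\in\ds{U_\tau}\subset\rs(Q)$ we have $a^*=r^*(a)$, so \ref{cor:framehoms} gives $m^*(a^*)=\pi_2^*(r^*(a))=1\otimes a^*$, whence
\[
s^*_{\sigma\tau}(a^*)=\iota^*(s_\sigma^*(1))\wedge\beta^*(s_\tau^*(a^*))=U_{\sigma\tau}\wedge s_\sigma^*\bigl(s_\tau^*(a^*)^*\bigr)=s_\sigma^*\bigl(s_\tau^*(a^*)^*\bigr)\;,
\]
using the formula $\beta^*(b)=s_\sigma^*(b^*)$ for $b\in\ds{U_\tau}$ (the lemma following \ref{prop:localbisec}) together with $s_\sigma^*(s_\tau^*(a^*)^*)\le s_\sigma^*(U_\tau^*)=U_{\sigma\tau}$. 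Your stronger, $\sigma$-free identity is in fact also true, but again only via \ref{cor:framehoms}: from $yz\le a^*$ one gets $y\otimes z\le(\mu_0)_*(a^*)=1\otimes a^*$ in $Q\otimes_{\rs(Q)}Q$, and applying the sup-lattice map $y\otimes z\mapsto y\wedge s_\tau^*(z)^*$ (which is well defined because it is middle-linear over $\rs(Q)$, as $s_\tau^*(z)^*\le U_\tau^*$) gives $y\wedge s_\tau^*(z)^*\le s_\tau^*(a^*)^*$. This is, in frame-theoretic form, exactly the paper's own proof, which argues one level up with locale maps: $r\circ s_{\sigma\tau}=r\circ s_\tau\circ\beta$ (established in \ref{prop:localbisec} from $r\circ m=r\circ\pi_2$) applied to $a^*=r^*(a)$, followed by the same formula for $\beta^*$.
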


\begin{proof}
This follows immediately from one of the above formulas for $\beta^*$ 
and from the proof of \ref{prop:localbisec}, where we have seen that
$r\circ s_{\sigma\tau} = r\circ s_\tau\circ\beta$. \qed
\end{proof}

We shall not prove further properties of $\sections(Q)$ now, since these will follow from the results at the end of this section in the more general setting of arbitrary open quantal frames. For now we shall just obtain formulas for $s_{\sigma\tau}$ in terms of its inverse image, which will be needed later:

\begin{lemma}\label{prop:product}
For all $a\in Q$:
\begin{eqnarray}
s_{\sigma\tau}^*(a) &=& s_\sigma^*\left(\V_{xy\le a}\ x\wedge r^*(s^*_\tau(y))\right) \label{formula1}\\
&=& s_\sigma^*\left(\V_{xy\le a}\ x\wedge s^*_\tau(y)^*\right)\label{formula2}\\
&=& \V_{xy\le a}\ s_\sigma^*(x)\wedge \alpha_\sigma^*(s^*_\tau(y)\wedge V_\sigma)\label{formula3}
\end{eqnarray}
\end{lemma}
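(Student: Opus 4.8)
The plan is to start from the definition of the product $\sigma\tau$, where $s_{\sigma\tau}=m\circ\langle s_\sigma\circ\iota,s_\tau\circ\beta\rangle$, and compute the inverse image homomorphism $s_{\sigma\tau}^*$ by composing the inverse images in reverse order. Applying the formula $m^*(a)=\V_{xy\le a}\,x\otimes y$ from the graph structure, I would write
\[
s_{\sigma\tau}^*(a)=\langle s_\sigma\circ\iota,s_\tau\circ\beta\rangle^*\circ m^*(a)
=\langle s_\sigma\circ\iota,s_\tau\circ\beta\rangle^*\left(\V_{xy\le a}x\otimes y\right)\;.
\]
The pairing into the pullback $Q\otimes_{\rs(Q)}Q$ sends a generator $x\otimes y$ to $(s_\sigma\circ\iota)^*(x)\wedge(s_\tau\circ\beta)^*(y)$, so the whole expression becomes $\V_{xy\le a}\,\iota^*(s_\sigma^*(x))\wedge\beta^*(s_\tau^*(y))$. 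This is the raw material from which all three displayed formulas must be extracted.

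To reach (\ref{formula3}) I would use the alternative expression $\beta^*(b)=\alpha_\sigma^*(b\wedge V_\sigma)$ from the lemma characterising $\beta^*$ (valid for $b\in\ds{U_\tau}$), together with property 4 of the $s$/$t$ lemma, $s_\tau^*(y)=s_\tau^*(y)\wedge U_\tau$, so that $s_\tau^*(y)$ indeed lies in $\ds{U_\tau}$ and $\beta^*$ applies; since $\iota$ is an open inclusion, $\iota^*(s_\sigma^*(x))$ can be identified with $s_\sigma^*(x)\wedge U_{\sigma\tau}=s_\sigma^*(x)$ after restriction, giving $\V_{xy\le a}s_\sigma^*(x)\wedge\alpha_\sigma^*(s_\tau^*(y)\wedge V_\sigma)$. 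For (\ref{formula1}) and (\ref{formula2}) I would instead use the other forms $\beta^*(b)=s_\sigma^*(r^*(b))=s_\sigma^*(b^*)$ from the same lemma, pull the $s_\sigma^*$ outside using that $s_\sigma^*$ preserves finite meets (it is a frame homomorphism), and absorb the $\iota^*$ restriction by the identity $s_\sigma^*(x)=s_\sigma^*(x\wedge V_\sigma^*)$ (property 4). This collapses $\iota^*(s_\sigma^*(x))\wedge s_\sigma^*(r^*(s_\tau^*(y)))$ into $s_\sigma^*\bigl(x\wedge r^*(s_\tau^*(y))\bigr)$, and then commuting $s_\sigma^*$ past the join yields the stated formulas, with (\ref{formula2}) differing from (\ref{formula1}) only by the substitution $r^*(c)=c^*$ for $c\in\rs(Q)$.

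The step I expect to be the main obstacle is the careful bookkeeping of the domains: verifying that the restriction $\iota^*$ and the meet with $U_{\sigma\tau}$ (equivalently $s_\sigma^*(U_\tau^*)$) can be silently absorbed so that the join over all $x,y$ with $xy\le a$ is unaffected. Concretely, one must check that $\iota^*\circ s_\sigma^*=s_\sigma^*(-\wedge\,?)$ restricts correctly and that the domain condition guaranteeing $\beta^*$ applies to $s_\tau^*(y)$ is met for every summand; this relies essentially on $s_\tau^*(y)\le U_\tau$ and on $U_{\sigma\tau}=s_\sigma^*(U_\tau^*)$ being right-sided. Once the identification of the pairing's inverse image with the meet of the two leg inverse images is justified (which is exactly the pullback property of $Q\otimes_{\rs(Q)}Q$ noted in the remark following Definition \ref{def:product}), the three equalities follow by routine frame-homomorphism manipulations and the substitutions for $\beta^*$.
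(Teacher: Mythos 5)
Your proposal is correct and takes essentially the same route as the paper's own proof: both decompose $s_{\sigma\tau}^* = [\iota^*\circ s_\sigma^*,\,\beta^*\circ s_\tau^*]\circ m^*$, evaluate the copairing on pure tensors $x\otimes y$, rewrite $\beta^*$ using the three expressions $s_\sigma^*(r^*(-))=s_\sigma^*((-)^*)=\alpha_\sigma^*((-)\wedge V_\sigma)$ from the lemma characterising $\beta^*$, and conclude via $m^*(a)=\V_{xy\le a}x\otimes y$. The domain bookkeeping you single out --- absorbing the $\iota^*$ restriction because $s_\tau^*(y)\le U_\tau$ forces $\beta^*(s_\tau^*(y))\le s_\sigma^*(U_\tau^*)=U_{\sigma\tau}$ --- is precisely the step the paper dismisses as ``straightforward,'' and your handling of it is sound.
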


\begin{proof}
The inverse image of $s_{\sigma\tau}$ is
$s^*_{\sigma\tau} = [\iota^*\circ s_\sigma^*,\beta^*\circ s_\tau^*Ê]Ê\circ m^*$.
For each $x\otimes y\in Q\otimes_{\rs(Q)} Q$ the copairing acts as follows:
\[[\iota^*\circ s_\sigma^*,\beta^*\circ s_\tau^*Ê](x\otimes y) = (\iota^*\circ s_\sigma^*)(x)\wedge (\beta^*\circ s_\tau^*)(y)\] and from the previous propositions it is straightforward to see that this coincides with the following three expressions:
\[s_\sigma^*(x\wedge r^*(s^*_\tau(y))) = s_\sigma^*(x\wedge s^*_\tau(y)^*)
= s_\sigma^*(x)\wedge\alpha_\sigma^*(s^*_\tau(y)\wedge V_\sigma)\;.\]
The rest follows from the formula for $m^*$ as the right adjoint of $\mu_0$:
\[m^*(a) = \V_{xy\le a} x\otimes y\;. \qed\]
\end{proof}

\paragraph{Local bisections of inverse quantal frames.}\label{section:Local Bisections and Partial Units}

Now let $Q$ be an inverse quantal frame. As already mentioned in section \ref{sec:groupoidquantales}, the partial units of $Q$ correspond bijectively with the local bisections of its associated \'etale groupoid $\groupoid(Q)$, hence with the local bisections of $Q$.
Let us make this correspondence, which is well known and obvious for topological \'etale groupoids, explicit in the case of localic \'etale groupoids, in particular showing that, as expected, there is an isomorphism of inverse semigroups $\ipi(Q)\cong\sections(Q)$.

\begin{lemma}\label{monoids}
There is a homomorphism of involutive monoids
\[\xi:\sections(Q) \to Q\]
defined by $(s,U)\mapsto s_!(U)$.
\end{lemma}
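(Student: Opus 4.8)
The plan is to verify the three defining properties of a homomorphism of involutive monoids for the assignment $\xi(\sigma)=(s_\sigma)_!(U_\sigma)$: that it sends the unit $\varepsilon$ to $e$, commutes with the involutions, and is multiplicative. Throughout I use that, since $Q$ is an inverse quantal frame, its associated groupoid is \'etale and every $d$-section $s_\sigma$ is an open map of locales, so the direct image $(s_\sigma)_!$ (left adjoint of $s_\sigma^*$) exists and satisfies Frobenius reciprocity $(s_\sigma)_!(s_\sigma^*(x)\wedge y)=x\wedge(s_\sigma)_!(y)$; in particular $(s_\sigma)_!(s_\sigma^*(x))=x\wedge\xi(\sigma)$, since $s_\sigma^*(x)\le U_\sigma$.

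The unit and the involution are quick. By Example \ref{globalbisec} we have $s_\varepsilon=u$ and $U_\varepsilon=1$, so $\xi(\varepsilon)=u_!(1)=e$ (recall $e=u_!(1)$ for inverse quantal frames). For the involution, $\sigma^{-1}=(V_\sigma,i\circ t_\sigma)$, hence $\xi(\sigma^{-1})=(i\circ t_\sigma)_!(V_\sigma)=i_!\big((t_\sigma)_!(V_\sigma)\big)=\big((t_\sigma)_!(V_\sigma)\big)^*$, using $i_!(a)=a^*$. Since $t_\sigma=s_\sigma\circ\alpha_\sigma^{-1}$ and $V_\sigma=(\alpha_\sigma)_!(U_\sigma)$ with $\alpha_\sigma$ an isomorphism, functoriality of direct images gives $(t_\sigma)_!(V_\sigma)=(s_\sigma)_!\big((\alpha_\sigma^{-1})_!((\alpha_\sigma)_!(U_\sigma))\big)=(s_\sigma)_!(U_\sigma)=\xi(\sigma)$, so $\xi(\sigma^{-1})=\xi(\sigma)^*$.

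The multiplicativity $\xi(\sigma\tau)=\xi(\sigma)\,\xi(\tau)$ is the substantial part. Writing $a=\xi(\sigma)$ and $b=\xi(\tau)$, I first record the auxiliary identities $U_\tau=d_!(b)=b1$ and $V_\sigma=r_!(a)=a^*1$ (from $d_!\circ(s_\tau)_!=(k_{U_\tau})_!$, $r_!\circ(s_\sigma)_!=(k_{V_\sigma})_!$, and \ref{semiopenlemma}). By Definition \ref{def:product}, $s_{\sigma\tau}=m\circ P$ with $P=\langle s_\sigma\circ\iota,\,s_\tau\circ\beta\rangle:\ds{U_{\sigma\tau}}\to G_2$, and since $m_!=\mu_0$ (the left adjoint of $m^*=(\mu_0)_*$) I obtain $\xi(\sigma\tau)=\mu_0\big(P_!(U_{\sigma\tau})\big)$. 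The geometric crux is the tensor identity
\[ P_!(U_{\sigma\tau}) = (s_\sigma)_!(U_{\sigma\tau}) \otimes (s_\tau)_!\big(U_\tau\wedge V_\sigma\big) \qquad\text{in } Q\otimes_{\rs(Q)}Q, \]
which I would deduce from the fact that $G_2$ is a pullback and that $\beta$ is, by construction, the pullback of $r\circ s_\sigma$ along $k_{U_\tau}$ (whence $\beta_!(U_{\sigma\tau})=U_\tau\wedge V_\sigma$): the projections $\pi_1,\pi_2$ identify $P_!(U_{\sigma\tau})$ with $\pi_1^*\big((s_\sigma)_!(U_{\sigma\tau})\big)\wedge\pi_2^*\big((s_\tau)_!(U_\tau\wedge V_\sigma)\big)$, which is the stated elementary tensor. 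Frobenius reciprocity then simplifies the two factors: $(s_\sigma)_!(U_{\sigma\tau})=(s_\sigma)_!\big(s_\sigma^*(U_\tau^*)\big)=U_\tau^*\wedge a=1b^*\wedge a$ and $(s_\tau)_!(U_\tau\wedge V_\sigma)=(s_\tau)_!\big(s_\tau^*(V_\sigma)\wedge U_\tau\big)=V_\sigma\wedge b=a^*1\wedge b$.

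Applying $\mu_0$ (which sends $x\otimes y$ to the quantale product $xy$) it remains to prove the purely algebraic identity $(a\wedge 1b^*)(a^*1\wedge b)=ab$. This is where the balanced axiom (B) does the work: by (B) (Definition \ref{def:balanced:ineq}) and the inequalities $q\le q1$, $q\le 1q$ valid under (U) (Remark \ref{RQeqQ1}), I compute $(a\wedge 1b^*)(a^*1\wedge b)=a(b1\wedge a^*1\wedge b)=a(a^*1\wedge b)=(a\wedge 1a)b=ab$. Hence $\xi(\sigma\tau)=ab=\xi(\sigma)\xi(\tau)$. The main obstacle is the tensor identity above: one must argue carefully, using that a local bisection meets each unit in at most one arrow and that $\beta$ is defined as a genuine pullback, that the image of the pairing $P$ is \emph{exactly} the fibred product of the images of its two legs rather than merely contained in it; once this is granted, the remainder is Frobenius reciprocity together with the single quantale identity settled by (B).
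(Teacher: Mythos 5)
Your proposal is correct and follows essentially the same route as the paper's own proof: the paper likewise passes through your ``tensor identity'' (it writes $\xi(\sigma\tau)=m_!\langle s_{\sigma!}\iota_!(U_{\sigma\tau}),\,s_{\tau!}\beta_!(U_{\sigma\tau})\rangle$ without further justification), computes the two factors by Frobenius reciprocity as $a\wedge 1b^*$ and $b\wedge a^*1$, and concludes with the same double application of (B), while your involution argument coincides with the paper's and your unit check is a harmless addition the paper omits. The only difference is expository: you explicitly flag the identification of the image of the pairing with the fibred product of the images of its legs as the delicate step and sketch why it holds, whereas the paper asserts it silently.
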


\begin{proof}
The local bisections of $Q$ are the local bisections of the \'etale groupoid $\groupoid(Q)$, which are local sections of the local homeomorphism $d$ and thus are open. Hence, there is a map
\[\xi:\sections(Q)\to Q\]
defined by $\xi(s,U)=s_!(U)$.
Let us prove that $\xi$ is a homomorphism of involutive monoids. Using the definition of $\beta$ we obtain:
\begin{eqnarray*}
\xi(\sigma\tau) &=& s_{\sigma\tau!}(U_{\sigma\tau})) = m_!\langle s_{\sigma!}\iota_{!}(U_{\sigma\tau}),s_{\tau!} \beta_!(U_{\sigma\tau})\rangle\\
&=& m_!\langle s_{\sigma!}(U_{\sigma\tau}),s_{\tau!}\beta_!(U_{\sigma\tau})\rangle\;.
\end{eqnarray*}
But, since
$U_{\sigma\tau} = {s_\sigma}^*(r^*(U_\tau))$,
we have
\[
s_{\sigma!}(U_{\sigma\tau}) =  s_{\sigma!}( {s_\sigma}^*(r^*(U_\tau))) \leq r^*(U_\tau) = r^*(d_! s_{\tau!}(U_\tau))
\]
and
\begin{eqnarray*}
s_{\tau!}\beta_!(U_{\sigma\tau})& \leq& d^* \circ d_!  s_{\tau!}\beta_!(U_{\sigma\tau}) = d^* \circ r_! \circ s_{\sigma!} \iota_!(U_{\sigma\tau})\\
&=& d^* \circ r_! \circ s_{\sigma!}(U_\sigma)\;.
\end{eqnarray*}
Furthermore,
\begin{eqnarray*} s_{\sigma!}(U_{\sigma\tau}) &=& s_{\sigma!}(U_{\sigma}) \wedge  r^*(d_! s_{\tau!}(U_\tau))\\
&=& s_{\sigma!}(U_{\sigma}) \wedge 1( s_{\tau!}(U_\tau))^*\;,
\end{eqnarray*}
because since $s_\sigma$ is open we can use the Frobenius condition for
$s_\sigma$ to obtain
\[s_{\sigma!}(U_{\sigma})\wedge r^*(U_{\tau}) = s_{\sigma!}(U_{\sigma}\wedge 
s^*_{\sigma}( r^*(U_{\tau}))) = s_{\sigma!}(s^*_{\sigma}( 
r^*(U_{\tau}))) =  s_{\sigma!}(U_{\sigma\tau})\]
and
\begin{eqnarray*} s_{\tau!}\beta_!(U_{\sigma\tau}) &=& s_{\tau!}(U_\tau) \wedge d^* \circ r_! \circ s_{\sigma!}(U_\sigma)\\
&=& s_{\tau!}(U_\tau) \wedge  (s_{\sigma!}(U_\sigma))^{*}1\;,
\end{eqnarray*}
using analogously $\beta_!(U_{\sigma\tau}) = s^*_{\tau}(d^*(V_{\sigma}))
= s^*_{\sigma}d^*(r_{!}s_{\sigma!}(U_{\sigma}))$.
Hence, using (B) we get
\begin{eqnarray*}
\xi(\sigma\tau) &=&  (s_{\sigma!}(U_{\sigma}) \wedge 1( s_{\tau!}(U_\tau))^*)(s_{\tau!}(U_\tau) \wedge  (s_{\sigma!}(U_\sigma))^{*}1)\\
&=& (s_{\sigma!}(U_{\sigma}) \wedge 1( s_{\tau!}(U_\tau))^* \wedge 1(s_{\sigma!}(U_\sigma)))s_{\tau!}(U_\tau)\;,
\end{eqnarray*}
and, using (B) again, this equals
\begin{eqnarray*}
(s_{\sigma!}(U_{\sigma}) \wedge 1( s_{\tau!}(U_\tau))^*)s_{\tau!}(U_\tau)&=& s_{\sigma!}(U_{\sigma})(s_{\tau!}(U_\tau)1 \wedge s_{\tau!}(U_\tau))\\
&=& s_{\sigma!}(U_{\sigma}) s_{\tau!}(U_\tau)\;.
\end{eqnarray*}
Finally, $\xi$ is involutive because
\begin{eqnarray*}
\xi(\sigma^{-1}) &=& s_{\sigma^{-1}!}(V) = i_! \circ t_{\sigma!}(V)  = i_! \circ s_{\sigma !} \circ {\alpha_!}^{-1}(V) \\
&=& s_{\sigma!}(U)^* = \xi(\sigma)^*\;. \qed
\end{eqnarray*}
\end{proof}

\begin{theorem}
The homomorphism $\xi:\sections(Q)\to Q$ restricts to an isomorphism of involutive monoids $\sections(Q)\to\ipi(Q)$.
\end{theorem}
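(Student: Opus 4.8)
Since Lemma~\ref{monoids} already establishes that $\xi$ is a homomorphism of involutive monoids, the task reduces to showing that $\xi$ is injective and that its image is exactly $\ipi(Q)$; the inverse bijection $\ipi(Q)\to\sections(Q)$ is then automatically an involutive-monoid homomorphism, which gives the asserted isomorphism. I would split the work into three claims: that $\xi(\sigma)\in\ipi(Q)$ for every local bisection $\sigma$, that $\xi$ is injective, and that $\xi$ is surjective onto $\ipi(Q)$. Throughout I would exploit the fact that, for an inverse quantal frame $Q$, the domain map $d$ of the associated graph is a local homeomorphism, so that $s$ is an open embedding for every local bisection $(U,s)$ and every direct image $s_!$ is available and well behaved.

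For the first claim I would use that $\xi$ is involutive, so that $\xi(\sigma)\xi(\sigma)^*=\xi(\sigma)\xi(\sigma^{-1})=\xi(\sigma\sigma^{-1})$ and, symmetrically, $\xi(\sigma)^*\xi(\sigma)=\xi(\sigma^{-1}\sigma)$. The plan is to identify $\sigma\sigma^{-1}$ with the ``local identity'' bisection on $U_\sigma$: its domain is $s_\sigma^*(r^*(V_\sigma))=s_\sigma^*(V_\sigma^*)=U_\sigma$ by the domain formula in Definition~\ref{def:product}, and its $d$-section is computed from that same definition to be the restriction $u\circ k_{U_\sigma}$, reflecting the geometric fact that an arrow of a bisection composed with its own inverse is a unit. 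Consequently $\xi(\sigma\sigma^{-1})=(u\circ k_{U_\sigma})_!(U_\sigma)=u_!(U_\sigma)\le e$, and likewise $\xi(\sigma^{-1}\sigma)\le e$, so that $\xi(\sigma)$ is a partial unit. (Alternatively one can argue directly that the sublocale named by $s_\sigma{}_!(U_\sigma)$ is a $d$-section, whence $d$ is mono on it and $s_\sigma{}_!(U_\sigma)\,s_\sigma{}_!(U_\sigma)^*\le e$.)

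For injectivity I would recover the data of $\sigma$ from $\xi(\sigma)$. Since $d\circ s_\sigma=k_{U_\sigma}$ and $d$ is a local homeomorphism, $d_!(\xi(\sigma))=d_!(s_\sigma{}_!(U_\sigma))=(d\circ s_\sigma)_!(U_\sigma)=U_\sigma$, so the domain is determined by $\xi(\sigma)$; and a local section of the local homeomorphism $d$ is uniquely determined by its image, which is precisely $\xi(\sigma)=s_\sigma{}_!(U_\sigma)$. Hence $\xi(\sigma)=\xi(\tau)$ forces $U_\sigma=U_\tau$ and $s_\sigma=s_\tau$, i.e.\ $\sigma=\tau$. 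For surjectivity, given a partial unit $a\in\ipi(Q)$ I would set $U=d_!(a)\in\rs(Q)$ and let $B$ be the open sublocale of $G_1$ named by $a$. The inequality $aa^*\le e$ says exactly that $d$ restricted to $B$ is a monomorphism; being a restriction of the local homeomorphism $d$, this restriction is then an open embedding of $B$ onto $\ds U$, and its inverse is a local section $s\colon\ds U\to G_1$ of $d$ with $s_!(U)=a$. The companion inequality $a^*a\le e$ guarantees symmetrically that $r\circ s$ is an open regular monomorphism, so $(U,s)$ is a local bisection with $\xi(U,s)=a$.

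I expect the first and third claims to be the genuine obstacles, since both hinge on translating the geometric slogans ``bisection times inverse bisection is a local identity'' and ``the bound $aa^*\le e$ makes $d$ injective on the corresponding sublocale'' into precise statements about direct and inverse image homomorphisms. A secondary nuisance is the clash of conventions $\opens(G_0)=\rs(Q)$ (used here) versus $\opens(G_0)=\downsegment e$ (used for the \'etale groupoid $\groupoid(Q)$), which must be mediated by the isomorphism $\downsegment e\cong\rs(Q)$ valid for inverse quantal frames; once that identification is in place, the remaining verifications --- injectivity and the homomorphism property --- are routine given the formulas already proved.
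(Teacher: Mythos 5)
Your decomposition (image lies in $\ipi(Q)$, injectivity, surjectivity onto $\ipi(Q)$) is sound and genuinely different from the paper's route: the paper never argues injectivity/surjectivity separately, but instead writes down the inverse map $\zeta:\ipi(Q)\to\sections(Q)$ explicitly and algebraically. Your first two claims are essentially complete. The identification $\sigma\sigma^{-1}=(U_\sigma,u\circ k_{U_\sigma})$ is indeed available, since an inverse quantal frame is a multiplicative open quantal frame and multiplicativity implies weak multiplicativity, so the corresponding lemma of section \ref{section4} applies; then $\xi(\sigma)\xi(\sigma)^*=\xi(\sigma\sigma^{-1})=u_!(U_\sigma)\le u_!(1)=e$ gives the partial-unit property cleanly (arguably more cleanly than the paper, which leaves this verification implicit). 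The injectivity argument via $U_\sigma=d_!(\xi(\sigma))$ and the fact that a section of the \'etale map $d$ is an open embedding determined by its image also goes through for locales (open $+$ mono $\Rightarrow$ open embedding, since open surjections are effective epimorphisms).

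The genuine gap is in the surjectivity step. You assert that ``$aa^*\le e$ says exactly that $d$ restricted to $B$ is a monomorphism,'' and you flag this as an obstacle, but you never discharge it --- and in the localic setting this is not a citable routine fact: it is the actual mathematical substance of the theorem. The pointwise argument (two arrows of $a$ with equal source have $xy^{-1}$ among the units, hence coincide) is unavailable for locales, so ``translating the slogan'' is the proof, not bookkeeping. The paper supplies exactly this content by support calculus: for a partial unit $a$, the restriction $\spp_a:\downsegment a\to\downsegment\spp(a)$ of the support is a frame isomorphism with inverse $b\mapsto ba$, which follows from the support axioms --- $\spp(ba)=\spp(b\spp(a))=b$ for $b\le\spp(a)$ by stability (\ref{spp4}), and $\spp(x)a\le xx^*a\le x(a^*a)\le x$ together with $x\le\spp(x)a$ for $x\le a$, using (\ref{spp2}) and $a^*a\le e$ --- and then the bisection is defined by $U=\spp(a)1$ and $s^*(x)=\spp(x\wedge a)1$. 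Note that this computation is precisely the statement that $d$ restricted to $\ds a$ is an isomorphism onto its image, i.e.\ it proves your slogan; alternatively you could run a kernel-pair argument (the map $m\circ(\ident\times i)$ on $B\times_{G_0}B$ has quantale-theoretic image $aa^*\le e$, hence factors through the open sublocale of units, and the unit and inverse laws then force the two projections to agree). As written, however, your surjectivity claim rests on an unproved assertion, so the proposal is incomplete at its crux.
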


\begin{proof}
Letting $a$ be a partial unit of $Q$, we obtain a local bisection as follows. First we remark that the restriction of the support to $\downsegment a$ defines a frame isomorphism $\spp_a:\downsegment a\to\downsegment\spp(a)$, for if $b\le\spp(a)$ we have $\spp(ba)=\spp(b\spp(a))=b$, and if $x\le a$ we have
$\spp(x)a=x$, hence showing that the monotone map $\downsegment\spp(a)\to\downsegment a$ given by $b\mapsto ba$ is the inverse of $\spp_a$. Then we obtain a map of locales $s:\tilde U\to Q$ by defining $U= \spp(a)1$ and $s^*(x)=\spp(x\wedge a)1$. (That is, $s^*$ is the composition
\[\xymatrix{
Q\ar[rr]^-{(-)\wedge a}&&\downsegment a\ar[rr]^-{\cong}&&\downsegment\spp(a)
\ar[rr]^-{(-)1}&&\tilde U
}\]
of frame homomorphisms.) It is straightforward to verify that the pair $(s,U)$ thus obtained is a local bisection, and thus we have obtained a map
\[\zeta:\ipi(Q)\to\sections(Q)\;.\]
Now it is also straightforward to verify that the image of $\xi$ lies in $\ipi(Q)$ and that $\zeta$ is inverse to $\xi$. \qed
\end{proof}

\paragraph{Back to general open quantal frames.}

Now let us drop multiplicativity and again assume that $Q$ is just an arbitrary open quantal frame. We no longer have the maps $u$ or $m$ but we can still write the expressions (\ref{formula1})--(\ref{formula3}) for $s^*_{\sigma\tau}$:

\begin{lemma}\label{prop:map}
Let $\sigma$ and $\tau$ be local bisections of $Q$, and define the function
\[f:Q\to Q\]
by
\begin{equation}f(a) = \V_{xy\le a} s^*_\sigma(x\wedge s^*_\tau(y)^*)\;. \label{formula4}\end{equation}
\begin{enumerate}
\item For all $a\in\rs(Q)$ we have
\begin{equation}f(a^*)\ge s^*_\sigma(s^*_\tau(a^*)^*)\;.\label{formula5}\end{equation}
\item $f$ preserves binary meets.
\item If $f$ preserves joins then it is the inverse image homomorphism of a locale map
$w:W \to Q$
where $W$ is given by the formula 
\[W= s^*_\sigma(r^*(U_\tau))=s_\sigma^*(U_\tau^*)\;.\]
In this case we can define a local bisection $\sigma\tau$ by $s^{*}_{\sigma\tau} = f$ and $U_{\sigma\tau} = W$.
\end{enumerate}
\end{lemma}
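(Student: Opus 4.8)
The plan is to verify the three assertions in order, using the technical lemmas about $\sigma^*$, $\tau^*$, and $\alpha_\sigma^*$ established earlier in the section. For part (1), I would simply restrict the supremum defining $f(a^*)$ to the single term obtained by taking $x = a^*$ and $y = 1$ (noting $a^* \cdot 1 \le a^*$ since $a \in \rs(Q)$ forces $a^* \in \ls(Q)$), which yields $f(a^*) \ge s_\sigma^*(a^* \wedge s_\tau^*(1)^*)$. Then I would use the formulas from the earlier lemmas ($s_\tau^*(1) = U_\tau$, and $s_\sigma^*$ absorbs intersection with $U_\sigma$) to simplify the bracketed term and recognise it as $s_\sigma^*(s_\tau^*(a^*)^*)$; the key point is that $s_\tau^*(a^*)^* \le s_\tau^*(1)^* = U_\tau^*$ already lies in the sublocale cut out by $U_\tau^*$, so intersecting with $s_\tau^*(1)^* = U_\tau^*$ changes nothing and the inequality is actually an equality in that term.

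For part (2), that $f$ preserves binary meets, I would argue that each $s^*$ is a frame homomorphism (hence preserves finite meets) and that the involution $(-)^*$ and the operations $x \wedge (-)$ are compatible with meets, so the real content is showing the join-indexed expression behaves well. Concretely, I would expand $f(a) \wedge f(b)$, distribute the two suprema against each other (using that $Q$ is a frame so finite meets distribute over arbitrary joins), and match the resulting double-indexed supremum with the supremum defining $f(a \wedge b)$. The crux is that a pair $(x,y)$ with $xy \le a$ together with a pair $(x',y')$ with $x'y' \le b$ produces the pair $(x \wedge x', y \wedge y')$ satisfying $(x\wedge x')(y \wedge y') \le a \wedge b$, and conversely any pair witnessing $a \wedge b$ witnesses both $a$ and $b$; combined with the fact that $s_\sigma^*$ and $s_\tau^*$ commute with the finite meets, this gives both inequalities.

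For part (3), assuming $f$ preserves joins, it automatically preserves all meets and joins (preservation of finite meets from part (2), preservation of the top element being a separate easy check via $f(1) = W$), hence $f$ is a frame homomorphism and so equals $w^*$ for a locale map $w : W \to Q$, where $W = f(1)$ is its domain. The main work is identifying $W = f(1)$ with the stated expression $s_\sigma^*(r^*(U_\tau)) = s_\sigma^*(U_\tau^*)$: taking $a = 1$ in (\ref{formula4}) and restricting to $x = 1$, $y = 1$ gives $W \ge s_\sigma^*(1 \wedge s_\tau^*(1)^*) = s_\sigma^*(U_\tau^*)$, while for the reverse I would use that $s_\sigma^*(x) \le s_\sigma^*(1) = U_\sigma$ and the monotonicity constraints to bound every term by $s_\sigma^*(U_\tau^*)$. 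Finally, the claim that the resulting pair $(W, w)$ is a genuine local bisection $\sigma\tau$ requires checking the two defining conditions $d \circ w = k_W$ and that $r \circ w$ is an open regular monomorphism; here I would observe that $f$ agrees formally with the inverse-image expression $s_{\sigma\tau}^*$ from Lemma \ref{prop:product} in the multiplicative case, so the same computations that verified Lemma \ref{prop:localbisec} and the $d$-section identities apply verbatim, now justified by the standing hypothesis that $f$ preserves joins rather than by multiplicativity of $Q$.

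The step I expect to be the main obstacle is part (3)'s verification that $(W, w)$ satisfies the local-bisection axioms purely from the join-preservation hypothesis, without the multiplication map $m$ and unit map $u$ of a groupoid being available. In the multiplicative setting these followed from honest locale-map pullbacks and the equation $r \circ m = r \circ \pi_2$; here one must re-derive the analogous identities directly at the level of the frame homomorphism $f = w^*$, so the care lies in checking that $d \circ w = k_W$ (equivalently $w^*(z) = z \wedge W$ for $z \in \rs(Q)$, which should follow from part (1) specialised appropriately) and that the image of $r \circ w$ is an open sublocale, i.e.\ that $w^*\!\circ r^*$ is surjective onto a down-segment $\ds{V}$ with $V = w^*(1)^*$-type data, entirely in terms of the abstract open-quantal-frame axioms (R), (U), (O), (B).
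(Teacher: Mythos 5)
Your part (2) is correct and is essentially the paper's argument, but parts (1) and (3) both contain genuine gaps. In part (1) the witness you extract from the join is inadmissible: taking $x=a^*$, $y=1$ requires $xy=a^*1\le a^*$, but $a\in\rs(Q)$ only gives $a=a1$, hence $a^*=1a^*$, i.e.\ $1a^*\le a^*$ ($a^*$ is \emph{left}-sided); the inequality $a^*1\le a^*$ you invoke genuinely fails (in the quantale of the pair groupoid of a space, $a^*1=1$ for every nonzero right-sided $a$). Moreover, even granting the witness, your subsequent identification of $s_\sigma^*(a^*\wedge U_\tau^*)$ with $s_\sigma^*(s_\tau^*(a^*)^*)$ is false: $s_\tau^*(a^*)^*$ depends on the section $s_\tau$ itself (it pulls $a^*$ back along $s_\tau$), whereas $a^*\wedge U_\tau^*$ depends only on the domain $U_\tau$, so two bisections with the same domain would have to give the same value, which they do not. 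The paper's proof makes the opposite choice of witness, $x=1$ and $y=a^*$: then $xy=1a^*=a^*\le a^*$ is admissible and the corresponding term is literally $s_\sigma^*(1\wedge s_\tau^*(a^*)^*)=s_\sigma^*(s_\tau^*(a^*)^*)$, with nothing left to simplify.

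In part (3) your computation of $W=f(1)=s_\sigma^*(U_\tau^*)$ is fine, but the substance of the claim --- that $w$ defined by $w^*=f$ satisfies $d\circ w=k_W$ and that $r\circ w$ is an open regular monomorphism --- is exactly the step you flag as ``the main obstacle'' and then leave undone: the multiplicative-case computations cannot ``apply verbatim'', since they rest on the groupoid multiplication $m$ (via $r\circ m=r\circ\pi_2$), which does not exist here, and you supply no replacement. The missing ingredient is Corollary \ref{cor:framehoms}, which holds for any balanced quantal frame with $\rs(Q)=Q1$ --- hence for any open quantal frame, with no multiplicativity --- and is precisely the quantale-level surrogate for those groupoid identities. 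Combining the factorization $f=[\iota^*\circ s_\sigma^*,\beta^*\circ s_\tau^*]\circ(\mu_0)_*$ of Lemma \ref{prop:product} (whose proof needs only the adjoint formula $(\mu_0)_*(a)=\V_{xy\le a}x\otimes y$ and the pullback $\beta$, not the map $m$) with the identities $(\mu_0)_*\circ d^*=\pi_1^*\circ d^*$ and $(\mu_0)_*\circ r^*=\pi_2^*\circ r^*$ gives $f\circ d^*=\iota^*\circ k_{U_\sigma}^*=k_W^*$ and $f\circ r^*=\beta^*\circ\alpha_\tau^*\circ k_{V_\tau}^*$; the first equality is $d\circ w=k_W$, and the second exhibits $r\circ w$ as the composite $k_{V_\tau}\circ\alpha_\tau\circ\beta$ of open regular monomorphisms. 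Note also that your hope that $d\circ w=k_W$ ``should follow from part (1)'' is misplaced: since $d^*$ is the inclusion of $\rs(Q)$ while $r^*(z)=z^*$, part (1) is a statement about $f\circ r^*$ (the codomain side), not about $f\circ d^*$.
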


\begin{proof}
If $a\in \rs(Q)$ we have, since $Q$ is open, $a=a1$, whence $a^*=1a^*$. Letting $x=1$ and $y=a^*$ we have $xy\le a$ and therefore we obtain
\[f(a^*)\ge s^*_\sigma(1\wedge s^*_\tau(a^*)^*)=s^*_\sigma(s^*_\tau(a^*)^*)\;.\]
In order to see that $f$ preserves binary meets we remark that $f(a\wedge b)\le f(a)\wedge f(b)$ because $f$ is monotone, and, for the converse inequality,
\begin{eqnarray*}
f(a)\wedge f(b) &=& \V_{\scriptsize\begin{array}{c}xy\le a\\ zw\le b\end{array}} s^*_\sigma(x\wedge s^*_\tau(y)^*)\wedge s^*_\sigma(z\wedge s^*_\tau(w)^*)\\
&=&\V_{\scriptsize\begin{array}{c}xy\le a\\ zw\le b\end{array}} s^*_\sigma(x\wedge z\wedge s^*_\tau(y\wedge w)^*)\\
&\le&\V_{\scriptsize\begin{array}{c}(x\wedge z)(y\wedge w)\le a\\ (x\wedge z)(y\wedge w)\le b\end{array}} s^*_\sigma(x\wedge z\wedge s^*_\tau(y\wedge w)^*)\\
&=&\V_{xy\le a\wedge b} s^*_\sigma(x\wedge s^*_\tau(y)^*)\\
&=&f(a\wedge b)\;.
\end{eqnarray*}
Also,
\[f(1) = \V_{xy\le 1} s^*_\sigma(x\wedge s^*_\tau(y)^*)
= s^*_\sigma(1\wedge s^*_\tau(1)^*)
=s^*_\sigma(U_\tau^*)=W\;,\]
and thus if $f$ preserves joins it defines a locale map
\[w:\ds{W}\to Q\]
by $w^*=f$. By \ref{prop:product} we have
$f = [\iota^* \circ s^*_{\sigma}, \beta^*\circ s^*_{\tau}] \circ (\mu_0)_*$.
Then using \ref{cor:framehoms} we get
\[ f \circ d^* = [\iota^* \circ s^*_{\sigma}, \beta^*\circ s^*_{\tau}] \circ (\mu_0)_* \circ d^* = \iota^* \circ s^*_{\sigma}\circ d^*
= \iota^*\circ k^*_{U_{\sigma}} = k^{*}_{U_{\sigma\tau}}\;,\]
and thus $d \circ w = k_{U_{\sigma\tau}}$. Also,
\[f \circ r^* = [\iota^* \circ s^*_{\sigma}, \beta^*\circ s^*_{\tau}] \circ (\mu_0)_* \circ r^* = \beta^*\circ s^*_{\tau}\circ r^*
= \beta^* \circ \alpha_{\tau}^*\circ k^*_{V_{\tau}}\;,\]
and thus $f \circ r^*$ is the frame homomorphism that determines the open regular monomorphism of locales $k_{V_\tau} \circ \alpha_\tau \circ \beta$.
\qed
\end{proof}

Looking at the proofs of \ref{prop:inverse1} and \ref{prop:inverse2}, and using the fact
that for a general open quantal frame we have $(\mu_0)_*(a^*) = \V_{xy \leq a} y^*\otimes x^*$, we get
the following generalization of \ref{prop:inverse2}:

\begin{lemma}\label{prop:inversion}
Let $\sigma$ and $\tau$ be local bisections such that $f$, as defined in \ref{prop:map}, preserves joins (and thus the product $\sigma\tau$ is well defined). Then the product
$\tau^{-1}\sigma^{-1}$ is well defined and we have
$(\sigma\tau)^{-1} = \tau^{-1}\sigma^{-1}$.
\end{lemma}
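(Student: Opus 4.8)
The plan is to imitate the proof of \ref{prop:inverse2}, replacing every appeal to the multiplication map $m$ by the corresponding inverse-image computation; the decisive new ingredient is the involution formula $(\mu_0)_*(a^*)=\V_{xy\le a}\ y^*\otimes x^*$ recorded just before the statement. Since $f$ preserves joins, \ref{prop:map}(3) guarantees that $\sigma\tau$ is a genuine local bisection, so $s_{\sigma\tau}$ and $t_{\sigma\tau}$ are honest locale maps and $(\sigma\tau)^{-1}=(V_{\sigma\tau},i\circ t_{\sigma\tau})$ is a well-defined local bisection whose domain section satisfies $s^*_{(\sigma\tau)^{-1}}(a)=t^*_{\sigma\tau}(a^*)$. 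Two local bisections coincide exactly when their domains and their $s$-maps coincide, so it suffices to compare $\tau^{-1}\sigma^{-1}$ with $(\sigma\tau)^{-1}$ on these two pieces of data, and in doing so to deduce that the defining function $g$ of $\tau^{-1}\sigma^{-1}$ (the function obtained from formula (\ref{formula4}) with $\sigma,\tau$ replaced by $\tau^{-1},\sigma^{-1}$) preserves joins.

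First I would make $g$ explicit: substituting $s^*_{\tau^{-1}}(c)=t^*_\tau(c^*)$ and $s^*_{\sigma^{-1}}(c)=t^*_\sigma(c^*)$ into (\ref{formula4}) and simplifying with the involution gives $g(a)=\V_{xy\le a}\ t^*_\tau(x^*\wedge t^*_\sigma(y^*))$. The matching of domains is then the same chain of equalities as in \ref{prop:inverse2}: by \ref{prop:map} the candidate domain is $U_{\tau^{-1}\sigma^{-1}}=g(1)=s^*_{\tau^{-1}}(U^*_{\sigma^{-1}})=t^*_\tau(V_\sigma)$, which is precisely the codomain $V_{\sigma\tau}=U_{(\sigma\tau)^{-1}}$. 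This uses only the basic identities relating $s^*$, $t^*$ and the codomain of a local bisection, and, since $g$ is monotone, it shows along the way that $g(a)\le V_{\sigma\tau}$ for all $a$.

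The heart of the argument is the $s$-map comparison, for which I would first establish the inverse-image form of the ``$t$-version'' \ref{prop:inverse1}, namely the factorisation $t^*_{\sigma\tau}=[\beta'^*\circ t^*_\sigma,\ \iota'^*\circ t^*_\tau]\circ(\mu_0)_*$, where $\iota'$ and $\beta'$ are the codomain-side restriction and pullback of \ref{prop:inverse1}. This is proved exactly as the $s$-version in \ref{prop:map}, using \ref{cor:framehoms} in place of the map $m$, and it yields $\iota'^*(c)=c\wedge V_{\sigma\tau}$ together with $\beta'^*(c)=V_{\sigma\tau}\wedge t^*_\tau(c)$ for $c\in\ds{V_\sigma}$. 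Evaluating this factorisation at $a^*$ and inserting $(\mu_0)_*(a^*)=\V_{xy\le a}\ y^*\otimes x^*$ gives $t^*_{\sigma\tau}(a^*)=\V_{xy\le a}\ V_{\sigma\tau}\wedge t^*_\tau(x^*\wedge t^*_\sigma(y^*))=V_{\sigma\tau}\wedge g(a)$, where I use that $t^*_\sigma$ lands in $\ds{V_\sigma}$ and that $t^*_\tau$ preserves meets. Since $g(a)\le V_{\sigma\tau}$ by the previous paragraph, the meet with $V_{\sigma\tau}$ is redundant and I obtain $g(a)=t^*_{\sigma\tau}(a^*)=s^*_{(\sigma\tau)^{-1}}(a)$.

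Finally, because $s^*_{(\sigma\tau)^{-1}}$ is the inverse-image homomorphism of an honest locale map it preserves joins; hence so does $g$, and by \ref{prop:map}(3) the product $\tau^{-1}\sigma^{-1}$ is well defined. As its domain and its $s$-map agree with those of $(\sigma\tau)^{-1}$, I conclude $(\sigma\tau)^{-1}=\tau^{-1}\sigma^{-1}$. The hard part will be establishing the inverse-image $t$-version factorisation in the absence of the map $m$, together with the careful bookkeeping of the meet-restrictions (the factors $\wedge V_{\sigma\tau}$ and the domains $\ds{V_\sigma}$, $\ds{U_\tau}$) needed to recognise that the $\wedge V_{\sigma\tau}$ appearing in $t^*_{\sigma\tau}(a^*)$ is absorbed.
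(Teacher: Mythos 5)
Your proposal is correct and takes essentially the same route as the paper: the paper's own proof consists precisely of the instruction to redo the proofs of \ref{prop:inverse1} and \ref{prop:inverse2} at the level of inverse-image homomorphisms, with the identity $(\mu_0)_*(a^*)=\V_{xy\le a}\ y^*\otimes x^*$ replacing the appeal to $i\circ m=m\circ\chi$, which is exactly what you carry out. The bookkeeping you flag as the hard part (the inverse-image form of \ref{prop:inverse1}, via the commutation relations $\iota'\circ\alpha_{\sigma\tau}=\alpha_\tau\circ\beta$ and $\beta'\circ\alpha_{\sigma\tau}=\alpha_\sigma\circ\iota$, both obtained by composing with the monomorphisms $k_{V_\tau}$ and $k_{V_\sigma}$) does go through as you anticipate.
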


\section{Weak multiplicativity}\label{section4}

Now we begin to study the extent to which the local bisections of an open quantal frame $Q$ act on $Q$. Along with this we introduce a condition, called \emph{weak multiplicativity}, which implies that $\sections(Q)$ has a well defined multiplication and therefore is an inverse semigroup.

\paragraph{Actions of inverse semigroups on quantales.}

We begin by presenting the definition of an involutive action of an inverse semigroup on
an involutive quantal frame.

\begin{definition} Let $S$ be an inverse semigroup and $Q$ an involutive quantal frame. An \emph{involutive action of $S$ on $Q$}, or simply an \emph{$S$-action on $Q$}, is a map
\[\cdot: S \times Q \rightarrow Q\]
such that, if for all $a\in Q$ and $s\in S$ we write
\[ a \cdot s = (s^{-1} \cdot a^*)^*\;,\]
we have, for all $s,t\in S$ and $a,b\in Q$:
\begin{itemize}
\item $s \cdot (-)$ preserves joins;
\item $s \cdot (t \cdot a) = (st)\cdot a$;
\item $a(s \cdot b) = (a \cdot s)b$.
\end{itemize}
\end{definition}

Here are some elementary properties:

\begin{lemma}\label{propinvo}
Consider an involutive action of an inverse semigroup $S$ on an inverse quantal frame $Q$. We have, for all $s,t\in S$ and $a,b\in Q$:
\begin{itemize}
\item $(-)\cdot s$ preserves joins;
\item $(a \cdot s) \cdot t = a \cdot (st)$;
\item $ (a \cdot s)(t \cdot b) = (a \cdot (st))b = a((st) \cdot b)$.
\end{itemize}
\end{lemma}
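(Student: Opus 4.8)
The plan is to prove the three asserted properties of Lemma~\ref{propinvo} by deriving them directly from the three defining axioms of an involutive action, exploiting the definition $a\cdot s=(s^{-1}\cdot a^*)^*$ together with the fact that the involution $(-)^*$ is an order-reversing bijection (hence converts joins into meets and vice versa, but more importantly is a self-inverse bijection). I would treat each of the three clauses in turn, and I expect the whole argument to be a routine but careful ``dualisation'' of the corresponding left-action axioms through the involution.

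First, for the claim that $(-)\cdot s$ preserves joins: by definition $a\cdot s=(s^{-1}\cdot a^*)^*$. Given a family $\{a_i\}$, I would compute $\left(\V_i a_i\right)\cdot s = \bigl(s^{-1}\cdot(\V_i a_i)^*\bigr)^*$. Since the involution is a sup-lattice anti-isomorphism, $(\V_i a_i)^*=\V_i a_i^*$ (involution preserves joins, being an isomorphism of the underlying sup-lattice); then $s^{-1}\cdot(-)$ preserves joins by the first action axiom, so $s^{-1}\cdot\V_i a_i^* = \V_i(s^{-1}\cdot a_i^*)$; applying the involution once more and using that it preserves joins gives $\V_i(s^{-1}\cdot a_i^*)^*=\V_i(a_i\cdot s)$, as required.

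Second, for $(a\cdot s)\cdot t=a\cdot(st)$: I would unfold the right action twice. We have $(a\cdot s)\cdot t=\bigl(t^{-1}\cdot(a\cdot s)^*\bigr)^*$, and $(a\cdot s)^*=(s^{-1}\cdot a^*)^{**}=s^{-1}\cdot a^*$ since the involution is self-inverse. Hence $(a\cdot s)\cdot t=\bigl(t^{-1}\cdot(s^{-1}\cdot a^*)\bigr)^*$. Now the second action axiom gives $t^{-1}\cdot(s^{-1}\cdot a^*)=(t^{-1}s^{-1})\cdot a^*=\bigl((st)^{-1}\bigr)\cdot a^*$, using that $(st)^{-1}=t^{-1}s^{-1}$ in any inverse semigroup. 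Applying the involution yields exactly $a\cdot(st)=\bigl((st)^{-1}\cdot a^*\bigr)^*$, closing the identity. The only subtlety here is remembering that $(st)^{-1}=t^{-1}s^{-1}$, which I would invoke as a standard inverse-semigroup fact.

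Third, for the mixed identity $(a\cdot s)(t\cdot b)=(a\cdot(st))b=a((st)\cdot b)$: the equality $(a\cdot(st))b=a((st)\cdot b)$ is immediately an instance of the third action axiom applied to the element $st$, so nothing is needed there. The real content is $(a\cdot s)(t\cdot b)=(a\cdot(st))b$. Here I would first rewrite $(a\cdot s)(t\cdot b)$ using the third action axiom in the form $a(s\cdot(t\cdot b))=(a\cdot s)(t\cdot b)$, i.e.\ apply the axiom with $b$ replaced by $t\cdot b$ to get $(a\cdot s)(t\cdot b)=a\bigl(s\cdot(t\cdot b)\bigr)$; then the second axiom collapses $s\cdot(t\cdot b)$ to $(st)\cdot b$, giving $a((st)\cdot b)$, which the third axiom (now applied with $st$ in place of $s$) turns into $(a\cdot(st))b$. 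I expect this last clause to be the main obstacle only in the bookkeeping sense: one must apply the axiom $a(s\cdot b)=(a\cdot s)b$ twice with carefully chosen substitutions and chain it through the associativity identity, but each individual step is a direct appeal to an axiom already in hand, so no genuinely new idea is required.
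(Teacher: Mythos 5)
Your proof is correct, and it is essentially the argument the paper has in mind: the paper states this lemma without proof as an ``elementary'' consequence of the definition $a\cdot s=(s^{-1}\cdot a^*)^*$, and your routine unfolding through the three axioms (using $(st)^{-1}=t^{-1}s^{-1}$ for clause two and chaining the middle-linearity axiom with associativity for clause three) is exactly the intended verification. One correction to your framing paragraph: the involution of an involutive quantale is \emph{order-preserving} and join-preserving (it is a sup-lattice isomorphism that reverses only the multiplication, $(ab)^*=b^*a^*$), not order-reversing as you first state --- fortunately your actual computation uses the correct property $(\bigvee_i a_i)^*=\bigvee_i a_i^*$, so nothing in the proof is affected.
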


\begin{example}\label{exm:IQaction}
Let $Q$ be an inverse quantal frame. Then it is clear that we have an $\ipi(Q)$-action
on $Q$ which is just the restriction of the multiplication of $Q$. Equivalently, there is a $\sections(Q)$-action on $Q$.
\end{example}

\begin{example}\label{exm:GQactiontopol}
Let $G$ be an open topological groupoid. A local bisection of $G$ is a pair $(s,U)$ consisting of an open set $U$ of $G_0$ and a continuous local section $s:U\to G_1$ of $d$ such that $r\circ s$ is injective (this is not the same as a local bisection of the quantale $\opens(G)$ unless $G$ is sober). Similarly to localic groupoids, we write $t:V\to G_1$ for the corresponding local section of $r$, and we denote the inverse semigroup of local bisections of $G$ by $\sections(G)$. Then an involutive action of $\sections(G)$ on the quantale $\opens(G)$ is defined, for all $\sigma=(s,U)\in\sections(G)$ and all $W\in\opens(G)$, by pointwise multiplication:
\[\sigma\cdot W = \{s(x)y\st x\in U,\ y\in W,\ r(s(x))=d(y)\}\;.\]
It is straightforward to verify that all the axioms of a $\sections(G)$-action are satisfied. In order to see that for each open set $W\subset G_1$ the set $\sigma\cdot W$ is indeed open consider the map $\lambda_\sigma:d^{-1}(V)\to G_1$ defined by
\[\lambda_\sigma(y)=t(d(y))y\;.\]
It is easy to see that this is an open map, and thus $\sigma\cdot W$, which equals $\lambda_\sigma(W)$, is open.
\end{example}

\paragraph{Actions of local bisections on open groupoids.}

Let $G$ be an arbitrary but fixed open localic groupoid, and let us write  $Q=\opens(G)$ for its multiplicative open quantal frame. Let also $\sigma$
be a local bisection of $Q$. The notation $U$, $V$, $s$, $t$, and $\alpha$, with or without subscripts, will be used as before.

Adapting the definition of the map $\lambda_\sigma$ of the example in \ref{exm:GQactiontopol} we get:

\begin{definition}
$\lambda_\sigma:\downsegment V \to G_1$ is the map of locales defined by
\[\lambda_\sigma = m\circ\langle t\circ d\vert_{V},\iota_{d^*(V)}\rangle\;,\]
where $\iota_{d^*(V)}$ is the inclusion of $d^*(V)$ into $G_1$ as an open sublocale, and $d\vert_{V}$ is the pullback of $d$ along $k_{V}$.
We call $\lambda_\sigma$ the \emph{left action} of $\sigma$ on $G$.
\end{definition}

\begin{definition}
Likewise we define $\rho_\sigma:\downsegment U^*\to G_1$ given by
\[\rho_\sigma = m\circ\langle \iota_{r^*(U)},s\circ r\vert_{U}\rangle\;,\]
which we shall call the \emph{right action} of $\sigma$ on $G$, where $ \iota_{r^*(U)}$ and $r\vert_{U}$ have
the obvious meaning.
\end{definition}

Analogously to \ref{exm:GQactiontopol}, it can be shown that:
\begin{lemma}
$\lambda_\sigma$ and $\rho_\sigma$ are open regular monomorphisms whose images are, respectively, the open sublocales $\downsegment U$ and $\downsegment V^*$.
\end{lemma}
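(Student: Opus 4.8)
The plan is to show that $\lambda_\sigma$ is an isomorphism of locales onto the open sublocale $\downsegment U$ of $G_1$; since the inclusion $\iota_{d^*(U)}:\downsegment U\hookrightarrow G_1$ is an open regular monomorphism, factoring $\lambda_\sigma$ as an isomorphism followed by $\iota_{d^*(U)}$ will at once give that $\lambda_\sigma$ is an open regular monomorphism and that its image is $\downsegment U$. The case of $\rho_\sigma$ is entirely symmetric, using $r\circ m=r\circ\pi_2$ in place of $d\circ m=d\circ\pi_1$ (or it follows from the $\lambda$-case by conjugating with the involution $i$). I would stress at the outset that, unlike in Lemma~\ref{prop:localbisec}, one cannot here simply invoke stability of open regular monomorphisms under pullback: for a non-\'etale $G$ the sections $s_\sigma$ and $t_\sigma$ need not be open maps, so the required isomorphism must instead be produced by exhibiting an explicit inverse.

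First I would check that $\lambda_\sigma$ factors through $\downsegment U$. Using the semicategory identity $d\circ m=d\circ\pi_1$ together with $d\circ t_\sigma=d\circ s_\sigma\circ\alpha_\sigma^{-1}=k_U\circ\alpha_\sigma^{-1}$, one computes
\[d\circ\lambda_\sigma=d\circ\pi_1\circ\langle t_\sigma\circ d\vert_V,\iota_{d^*(V)}\rangle=d\circ t_\sigma\circ d\vert_V=k_U\circ\alpha_\sigma^{-1}\circ d\vert_V\;,\]
which factors through $k_U:\ds U\to G_0$. Hence $\lambda_\sigma$ factors through the open sublocale $\downsegment U=d^{-1}(\ds U)$; write $\overline\lambda_\sigma:\downsegment V\to\downsegment U$ for its corestriction, so that $\lambda_\sigma=\iota_{d^*(U)}\circ\overline\lambda_\sigma$. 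Recording the pullback identity $d\circ\iota_{d^*(U)}=k_U\circ d\vert_U$ and cancelling the monomorphism $k_U$ also yields $d\vert_U\circ\overline\lambda_\sigma=\alpha_\sigma^{-1}\circ d\vert_V$.

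The core step is to show $\overline\lambda_\sigma$ is invertible, with inverse the corestriction $\overline\lambda_{\sigma^{-1}}$ of $\lambda_{\sigma^{-1}}:\downsegment U\to G_1$. Recalling $s_{\sigma^{-1}}=i\circ t_\sigma$ and hence $t_{\sigma^{-1}}=i\circ s_\sigma$, I would substitute the two identities from the previous paragraph to get $t_{\sigma^{-1}}\circ d\vert_U\circ\overline\lambda_\sigma=i\circ s_\sigma\circ\alpha_\sigma^{-1}\circ d\vert_V=i\circ t_\sigma\circ d\vert_V$ and $\iota_{d^*(U)}\circ\overline\lambda_\sigma=\lambda_\sigma$, whence
\[\lambda_{\sigma^{-1}}\circ\overline\lambda_\sigma=m\circ\bigl\langle i\circ t_\sigma\circ d\vert_V,\ m\circ\langle t_\sigma\circ d\vert_V,\iota_{d^*(V)}\rangle\bigr\rangle\;.\]
Writing $g=t_\sigma\circ d\vert_V$, the triple $\langle i\circ g,\,g,\,\iota_{d^*(V)}\rangle$ is composable, so associativity of $m$ rewrites the right-hand side as $m\circ\langle m\circ\langle i\circ g,g\rangle,\iota_{d^*(V)}\rangle$; the inverse law $m\circ\langle i,\ident\rangle=u\circ r$ collapses the inner product to $u\circ r\circ g=u\circ k_V\circ d\vert_V=u\circ d\circ\iota_{d^*(V)}$ (using $r\circ t_\sigma=k_V$ and $d\circ\iota_{d^*(V)}=k_V\circ d\vert_V$); and finally the unit law $m\circ\langle u\circ d,\ident\rangle=\ident$ gives $\lambda_{\sigma^{-1}}\circ\overline\lambda_\sigma=\iota_{d^*(V)}$. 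Since $\lambda_{\sigma^{-1}}=\iota_{d^*(V)}\circ\overline\lambda_{\sigma^{-1}}$ and $\iota_{d^*(V)}$ is monic, this forces $\overline\lambda_{\sigma^{-1}}\circ\overline\lambda_\sigma=\ident_{\downsegment V}$; interchanging the roles of $\sigma$ and $\sigma^{-1}$ gives the other composite, so $\overline\lambda_\sigma$ is an isomorphism and $\lambda_\sigma=\iota_{d^*(U)}\circ\overline\lambda_\sigma$ is an open regular monomorphism with image $\downsegment U$. I expect the only delicate point to be the bookkeeping of composability when invoking associativity and the two groupoid laws — all of which are precisely the identities already available because $G$ is a groupoid — rather than anything conceptually hard.
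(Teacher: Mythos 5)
Your proof is correct. Note that the paper itself gives no proof of this lemma: it only remarks that the result ``can be shown analogously to'' the topological computation of Example \ref{exm:GQactiontopol}, where $\lambda_\sigma(y)=t(d(y))y$ is inverted pointwise by $x\mapsto s(d(x))^{-1}x$. Your argument is exactly the point-free rendering of that analogy --- the corestriction $\overline{\lambda}_{\sigma^{-1}}$ plays the role of $x\mapsto s(d(x))^{-1}x$, with the groupoid unit, inverse and associativity laws replacing pointwise cancellation --- so it supplies precisely the argument the paper leaves to the reader; your observation that the pullback-stability method of Lemma \ref{prop:localbisec} is unavailable here (since $s_\sigma$ and $t_\sigma$ need not be open maps when $G$ is not \'etale) is also accurate.
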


\begin{definition} For all $a \in \opens(G)$ we define
\begin{eqnarray*}
\sigma\cdot a &=& {(\lambda_\sigma)}_!(a\wedge V)\;,\\
a\cdot \sigma &=& {(\rho_{\sigma})}_!(a\wedge U^*)\;.
\end{eqnarray*}
\end{definition}

We remark that, contrary to \ref{exm:GQactiontopol}, this does not necessarily define an involutive action of $\sections(G)$ on $\opens(G)$, and, indeed, much of what we shall do later in section \ref{sec5} has to do with conditions under which such an involutive action exists. For now let us record a useful property:

\begin{lemma}\label{futuredef}
For all $a\in\opens(G)$ we have
\begin{eqnarray*}
\sigma\cdot a &=& \V_{x^*y\le a}s^*(x)\wedge y\;,\\
a \cdot \sigma^{-1} &=& \V_{xy\le a} x \wedge s^*(y)^*\;.
\end{eqnarray*}
\end{lemma}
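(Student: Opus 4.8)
The plan is to compute both sides through inverse image homomorphisms, exploiting the fact established in the preceding lemma that $\lambda_\sigma$ and $\rho_\sigma$ are open regular monomorphisms. I would treat the formula for $\sigma\cdot a$ in detail, the one for $a\cdot\sigma^{-1}$ being entirely dual.

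First I would unfold the definition $\sigma\cdot a=(\lambda_\sigma)_!(a\wedge V)$ and compute the frame homomorphism $\lambda_\sigma^*$. Since $\lambda_\sigma=m\circ\langle t\circ d\vert_V,\iota_{d^*(V)}\rangle$, the copairing associated to a map into the pullback $G_2$ sends $x\otimes y$ to $t^*(x)\wedge y$ (the factor $\wedge V$ being absorbed because $t^*(x)\le V$); composing with $m^*=(\mu_0)_*$, for which $m^*(c)=\V_{xy\le c}x\otimes y$, yields $\lambda_\sigma^*(c)=\V_{xy\le c}t^*(x)\wedge y$. I would then record the key special value $\lambda_\sigma^*(s^*(x))=t^*(x)$: since $s^*(x)\in\rs(Q)$, Corollary~\ref{cor:framehoms} gives $m^*(s^*(x))=\pi_1^*(s^*(x))=s^*(x)\otimes 1$, whence $\lambda_\sigma^*(s^*(x))=t^*(s^*(x))=\alpha_!(s^*(x))=t^*(x)$, using $t^*=\alpha_!\circ s^*$.

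Because $\lambda_\sigma$ is an open regular monomorphism with image $\downsegment U$, its direct image $(\lambda_\sigma)_!$ is inverse to the frame isomorphism obtained by restricting $\lambda_\sigma^*$ to $\downsegment U$. Hence, to prove $\sigma\cdot a=R$ where $R:=\V_{x^{*}y\le a}s^*(x)\wedge y$, it suffices to check (i) $R\le U$, which is immediate since $s^*(x)\le U$, and (ii) $\lambda_\sigma^*(R)=a\wedge V$. For (ii) I would use that $\lambda_\sigma^*$ is a frame homomorphism together with the value above to reduce to $\lambda_\sigma^*(R)=\V_{x^{*}y\le a}t^*(x)\wedge\lambda_\sigma^*(y)=\V t^*(x\wedge p)\wedge q$, the last join taken over $x^{*}y\le a$ and $pq\le y$.

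The heart of the argument, and what I expect to be the main obstacle, is the inequality $t^*(x\wedge p)\wedge q\le a$ whenever $x^{*}y\le a$ and $pq\le y$ (the bound $t^*(x\wedge p)\le V$ being free). This is the quantale incarnation of the groupoid cancellation $z=(t(d z))^{-1}\bigl(t(d z)\cdot z\bigr)$, whose first factor is a unit; algebraically I would derive $t^*(x\wedge p)\wedge q\le (x^{*}p)q\le x^{*}y\le a$ from the openness axiom (O) and the involution relations between $s^*$, $t^*$ and $\alpha$. The reverse inequality $a\wedge V\le\lambda_\sigma^*(R)$ is a covering argument, exhibiting for each relevant part of $a\wedge V$ a pair $x,y$ with $x^{*}y\le a$ witnessing it; this is where the join defining $R$ is essential. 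Finally, the formula for $a\cdot\sigma^{-1}$ follows by running the same computation for $\rho_{\sigma^{-1}}$ — whose inverse image is $\rho_{\sigma^{-1}}^*(c)=\V_{xy\le c}x\wedge t^*(y^*)^*$ and whose image is the open sublocale $\downsegment U^*$ — or, alternatively, from the first formula by involution, using $(\mu_0)_*(a^*)=\V_{xy\le a}y^*\otimes x^*$ and Lemma~\ref{prop:inversion}.
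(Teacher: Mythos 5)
Your reduction is sound as far as it goes: the computation of $\lambda_\sigma^*$, the special value $\lambda_\sigma^*(s^*(x))=t^*(x)$ via Corollary \ref{cor:framehoms}, and the observation that it suffices to prove $R\le U$ and $\lambda_\sigma^*(R)=a\wedge V$ (since $(\lambda_\sigma)_!$ is inverse to the restriction of $\lambda_\sigma^*$ to $\downsegment U$) are all correct. The gap lies in the two steps you yourself call the heart of the argument, and it is not a gap of detail: the tools you propose do not contain the mechanism that makes them true. The relations you list for $s^*$, $t^*$ and $\alpha$ all come from $d\circ s=k_U$ and $r\circ s=k_V\circ\alpha$, so they constrain $t^*$ only through $t^*\circ d^*$ and $t^*\circ r^*$, i.e.\ on right- and left-sided elements, whereas the inequality $t^*(x\wedge p)\wedge q\le (x^*p)q$ pits $t^*$ at \emph{arbitrary} elements against the quantale product. (For a bundle of Lie groups $G_1=M\times K$, an open non-\'etale groupoid with $d=r$, distinct bisections share identical sided data $U=V$, $\alpha=\ident$, yet have different $t^*$ on general opens; the sided relations simply do not see the data the inequality is about.) The cancellation $z=t(dz)^{-1}\bigl(t(dz)z\bigr)$ that you correctly identify as the geometric content is a consequence of the groupoid unit and inverse laws together with openness of $m$ --- none of which appear among ``(O) and the involution relations''. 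Likewise your ``covering argument'' for $a\wedge V\le\lambda_\sigma^*(R)$ has no localic implementation as described: exhibiting a witnessing pair $x^*y\le a$ for each ``part'' of $a\wedge V$ is an \'etale-style, pointwise manoeuvre, and a non-\'etale groupoid has neither enough partial units nor points to run it.

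Both steps are in fact provable, but only by invoking the groupoid laws, which is exactly what the paper does and your proposal bypasses. From the inverse law $m\circ\langle i,\ident\rangle=u\circ r$, the unit law and associativity one obtains $\iota_{d^*(V)}=m\circ\langle i\circ t\circ d\vert_V,\lambda_\sigma\rangle$; applying inverse images to $m^*(a)=\V_{xy\le a}x\otimes y$ then gives $a\wedge V=\V_{x^*y\le a}t^*(x)\wedge\lambda_\sigma^*(y)$, which is your identity (ii) in both directions at once. But this is precisely the statement that $\lambda_\sigma$ and $\lambda_{\sigma^{-1}}$ are mutually inverse isomorphisms onto their images, i.e.\ $(\lambda_\sigma)_!(a\wedge V)=\lambda_{\sigma^{-1}}^*(a\wedge V)$ --- and that is the paper's whole proof: compute $\lambda_{\sigma^{-1}}^*$ by the general formula and substitute $t_{\sigma^{-1}}=i\circ s_\sigma$, with no inequality-chasing at all. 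So to close your gap you would in effect reconstruct the paper's key fact; as written, the proposal is missing its central mechanism.
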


\begin{proof}
The inverse images of $\lambda_\sigma$ and $\rho_\sigma$ are:
\begin{eqnarray*}
\lambda_\sigma^*(a) &=& \V_{xy\le a} t^*(x)\wedge y\;,\\
\rho_\sigma^*(a) &=& \V_{xy\le a} x \wedge s^*(y)^*\;.
\end{eqnarray*}
But we also have
\begin{eqnarray*}
\sigma\cdot a &=& {\lambda_{\sigma^{-1}}}^*(a\wedge V)\;,\\
a\cdot \sigma &=&  {\rho_{\sigma^{-1}}}^*(a\wedge U^*)\;,
\end{eqnarray*}
and thus using the formula $t_{\sigma^{-1}}=i\circ s_\sigma$ we ultimately obtain
\begin{eqnarray*}
\sigma\cdot a &=& \V_{x^*y\le a}s^*(x)\wedge y\;,\\
a \cdot \sigma^{-1} &=& \V_{xy\le a} x \wedge s^*(y)^*\;. \qed
\end{eqnarray*}
\end{proof}

\paragraph{Actions of local bisections on open quantal frames.}

Now we drop multiplicativity and consider $Q$ to be just an open quantal frame.
Inspired by \ref{futuredef} we are led to the following definition.

\begin{definition}\label{definition:action}
Let $\sigma$ be a local bisection of $Q$. We define
\begin{eqnarray*}
\sigma\cdot a &=& \V_{x^*y\le a}s^*(x)\wedge y\;,\\
a \cdot \sigma^{-1} &=& \V_{xy\le a} x \wedge s^*(y)^*\;.
\end{eqnarray*}
\end{definition}

We immediately obtain the following two technical conditions:

\begin{lemma}\label{lemma:star}  For all $\sigma,\tau\in\sections(Q)$ we have
$(\sigma \cdot a)^* = a^* \cdot \sigma^{-1}$.
\end{lemma}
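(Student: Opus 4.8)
The plan is to verify the identity by a direct computation that exploits the behaviour of the involution $(-)^*$. The key structural fact I would record first is that on an involutive quantal frame the involution is an order isomorphism of $Q$ onto itself (it is a sup-lattice homomorphism and is its own inverse), so it preserves arbitrary joins and binary meets, while reversing products. With this in hand the whole statement reduces to a relabelling of a join.

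Concretely, I would start from the defining expression $\sigma\cdot a=\V_{x^*y\le a} s^*(x)\wedge y$ and push the involution inside both the join and the meet, obtaining
\[(\sigma\cdot a)^* = \V_{x^*y\le a}\bigl(s^*(x)^*\wedge y^*\bigr)\;.\]
Next I would rewrite the indexing condition: since $(-)^*$ reverses products and is order-reversing-free (an order isomorphism), we have $x^*y\le a$ if and only if $y^*x=(x^*y)^*\le a^*$. Setting $p=y^*$ and $q=x$ therefore produces a bijection between the index set $\{(x,y)\mid x^*y\le a\}$ and $\{(p,q)\mid pq\le a^*\}$, with inverse $(p,q)\mapsto(q,p^*)$, under which the summand $s^*(x)^*\wedge y^*$ becomes $p\wedge s^*(q)^*$. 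Hence
\[(\sigma\cdot a)^* = \V_{pq\le a^*} p\wedge s^*(q)^*\;,\]
which is exactly the defining expression for $a^*\cdot\sigma^{-1}$ (up to renaming the bound variables), completing the argument.

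I expect no genuine obstacle here: once the action of the involution is recorded, the proof is a change of dummy variables in a join. The only points that require a little care are (i) justifying that $(-)^*$ preserves binary meets, which follows from its being an order isomorphism rather than merely join-preserving, and (ii) checking that the substitution $(x,y)\mapsto(y^*,x)$ is a genuine bijection of the two index sets, so that no terms are lost or duplicated. I also note that the quantifier over $\tau$ in the statement appears to be spurious, since only $\sigma$ and $a$ occur in the claimed identity.
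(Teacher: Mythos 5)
Your proof is correct and is essentially identical to the paper's: both push the involution (an order isomorphism, hence join- and meet-preserving) through the defining join and then re-index via $x^*y\le a \iff y^*x\le a^*$ to recover the defining expression for $a^*\cdot\sigma^{-1}$; your write-up merely makes the index bijection explicit. Your side remark is also accurate --- the quantification over $\tau$ in the statement is indeed spurious (and the statement implicitly quantifies over all $a\in Q$).
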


\begin{proof}
\begin{eqnarray*}
(\sigma \cdot a)^* &=&  \left(\V_{x^*y\le a}s^*(x)\wedge y\right)^*  =  \V_{x^*y\le a}s^*(x)^*\wedge y^* =  \V_{y^*x\le a^*}s^*(x)^*\wedge y^*\\
&=& a^* \cdot \sigma^{-1}\;. \qed
\end{eqnarray*}
\end{proof}

\begin{lemma}\label{prop:map2}
The map $f$ of \ref{prop:map} satisfies
\[f(a) = s^{*}_{\sigma}(a \cdot \tau^{-1})\;.\]
In particular, if $f$ preserves joins $\sigma\tau$ is defined and we have
\[s^*_{\sigma\tau}(a) = s^*_\sigma(a\cdot\tau^{-1})\;.\]
\end{lemma}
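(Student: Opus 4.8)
We must show that the map $f$ of Lemma~\ref{prop:map}, defined by $f(a) = \V_{xy\le a} s^*_\sigma(x\wedge s^*_\tau(y)^*)$, satisfies $f(a) = s^*_\sigma(a\cdot\tau^{-1})$, and consequently that when $f$ preserves joins the product $\sigma\tau$ is defined with $s^*_{\sigma\tau}(a) = s^*_\sigma(a\cdot\tau^{-1})$.

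**The plan.** The entire statement hinges on recognizing that the inner expression defining $f$ is, after applying $s^*_\sigma$, literally the formula for the right action $a\cdot\tau^{-1}$ from Definition~\ref{definition:action}. The plan is therefore to start from the definition of $f$ in~(\ref{formula4}) and factor $s^*_\sigma$ out of the join. First I would recall that $s^*_\sigma$ is the inverse image of a locale map and hence a frame homomorphism, so in particular it preserves arbitrary joins; this lets me write
\[
f(a) = \V_{xy\le a} s^*_\sigma\bigl(x\wedge s^*_\tau(y)^*\bigr) = s^*_\sigma\!\left(\V_{xy\le a} x\wedge s^*_\tau(y)^*\right).
\]
Then I would simply observe that by Definition~\ref{definition:action} the argument of $s^*_\sigma$ on the right is exactly $a\cdot\tau^{-1} = \V_{xy\le a} x\wedge s^*_\tau(y)^*$ (taking the local bisection in the definition to be $\tau$, so its $d$-section is $s_\tau$). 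This gives $f(a) = s^*_\sigma(a\cdot\tau^{-1})$ directly.

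**The second assertion.** Once the formula $f = s^*_\sigma\circ((-)\cdot\tau^{-1})$ is in hand, the claim about $\sigma\tau$ is essentially a citation of Lemma~\ref{prop:map}(3): if $f$ preserves joins then the product $\sigma\tau$ is well defined with $s^*_{\sigma\tau} = f$ and $U_{\sigma\tau} = W = s^*_\sigma(U_\tau^*)$. Combining this with the identity just established yields $s^*_{\sigma\tau}(a) = f(a) = s^*_\sigma(a\cdot\tau^{-1})$, which is the desired conclusion.

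**The main obstacle.** There is essentially no hard step here; the proof is a one-line manipulation resting on the frame-homomorphism property of $s^*_\sigma$ and on matching notation with Definition~\ref{definition:action}. The only point requiring a moment's care is to confirm that the defining formula for $a\cdot\tau^{-1}$ is being read with $s$ instantiated as $s_\tau$ (the domain section of $\tau$), so that $s^*_\tau(y)^*$ in $f$ coincides with the $s^*(y)^*$ appearing in the definition of the right action of $\tau$. Everything else is the routine observation that a frame homomorphism commutes with the supremum indexing over the pairs $(x,y)$ with $xy\le a$.
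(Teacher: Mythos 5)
Your proof is correct and follows essentially the same route as the paper's: the paper's own argument is precisely the one-line identification $f(a)=s^*_\sigma\bigl(\V_{xy\le a} x\wedge s^*_\tau(y)^*\bigr)=s^*_\sigma(a\cdot\tau^{-1})$, which is what you obtain by pulling the frame homomorphism $s^*_\sigma$ out of the join and matching the result with Definition~\ref{definition:action}. Your explicit appeal to join-preservation of $s^*_\sigma$ and to Lemma~\ref{prop:map} for the second assertion merely spells out steps the paper leaves implicit.
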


\begin{proof}
This follows from
\[f(a)=s^*_\sigma\left(\V_{xy\le a} x\wedge {s^*_\tau(y)}^*\right) = s^*_\sigma(a\cdot\tau^{-1})\;.\qed\]
\end{proof}

\paragraph{Weak multiplicativity.}

Continuing to consider $Q$ to be an arbitrary open quantal frame, not necessarily multiplicative, we now introduce a weak form of multiplicativity under which $\sections(Q)$ will be seen to have a well defined inverse semigroup structure.

\begin{definition} The open quantal frame $Q$ is called
\emph{weakly multiplicative} if the following conditions hold for all $\sigma,\tau,\nu \in \sections(Q)$:
\begin{enumerate}
\item $\upsilon$ preserves joins;
\item $\sigma \cdot(-)$ preserves joins;
\item $(\sigma\tau)\nu = \sigma(\tau\nu)$.
\end{enumerate}
\end{definition}

We remark that the notion of weak multiplicativity includes, by definition, the condition that the map $u$ exists, with inverse image $u^*=\upsilon$, along with the existence of the global bisection $\varepsilon=(u,1)$ (\cf\ \ref{globalbisec}). This condition alone has several consequences regarding the existence of well defined products of certain local bisections, as the following three lemmas illustrate.

\begin{lemma}
Let $Q$ be weakly multiplicative. For all $\sigma\in\sections(Q)$ we have the following well defined products:
\begin{eqnarray*}
\sigma\sigma^{-1} &=& (U_\sigma, u\circ k_{U_\sigma})\;,\\
\sigma^{-1}\sigma&=&(V_\sigma,u\circ k_{V_\sigma})\;.
\end{eqnarray*}
\end{lemma}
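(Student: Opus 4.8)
The plan is to prove two separate things: first, that under weak multiplicativity \emph{every} product of local bisections is defined, so that $\sigma\sigma^{-1}$ and $\sigma^{-1}\sigma$ make sense at all; and second, to identify them explicitly. For well-definedness I would invoke \ref{prop:map2}, which gives $s^*_{\sigma\tau}=s^*_\sigma\circ\bigl((-)\cdot\tau^{-1}\bigr)$, so by \ref{prop:map} the product $\sigma\tau$ exists as soon as the map $a\mapsto a\cdot\tau^{-1}$ preserves joins. By \ref{lemma:star} this map equals $a\mapsto(\tau\cdot a^*)^*$, a composite of the involution (a join-preserving bijection), the map $\tau\cdot(-)$, and the involution again; since weak multiplicativity forces $\tau\cdot(-)$ to preserve joins, so does $(-)\cdot\tau^{-1}$, and hence so does $f=s^*_\sigma\circ\bigl((-)\cdot\tau^{-1}\bigr)$, $s^*_\sigma$ being a frame homomorphism. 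Thus all products are defined, in particular $\sigma\sigma^{-1}$ and $\sigma^{-1}\sigma$.

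Next I would check that $(U_\sigma,u\circ k_{U_\sigma})$ is genuinely a local bisection: its $d$-section satisfies $d\circ u\circ k_{U_\sigma}=k_{U_\sigma}$ by \ref{domain}, and $r\circ u\circ k_{U_\sigma}=k_{U_\sigma}$ by \ref{range}, so the second defining condition holds with action $\ident$ and codomain $U_\sigma$. For the domain of the product, the formula $W=s^*_\sigma(U_\tau^*)$ of \ref{prop:map} applied to $\tau=\sigma^{-1}$ (for which $U_{\sigma^{-1}}=V_\sigma$) gives $U_{\sigma\sigma^{-1}}=s^*_\sigma(V_\sigma^*)=U_\sigma$, using the basic relation $s^*_\sigma(V_\sigma^*)=U_\sigma$ for local bisections. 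This already matches the claimed domain.

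The crux is the section. Starting from the formula (\ref{formula4}), $s^*_{\sigma\sigma^{-1}}(a)=\V_{xy\le a}s^*_\sigma\bigl(x\wedge s^*_{\sigma^{-1}}(y)^*\bigr)$ with $s^*_{\sigma^{-1}}(y)=t^*_\sigma(y^*)$, I would first collapse the inner term by exploiting that $\alpha_\sigma$ is an isomorphism: since $t_\sigma=s_\sigma\circ\alpha_\sigma^{-1}$ gives $t^*_\sigma=\alpha_{\sigma!}\circ s^*_\sigma$, and since $s^*_\sigma(b^*)=\alpha^*_\sigma(b)$ for $b\in\ds{V_\sigma}$ together with $\alpha^*_\sigma\circ\alpha_{\sigma!}=\ident$ on $\ds{U_\sigma}$, one obtains $s^*_\sigma\bigl(t^*_\sigma(y^*)^*\bigr)=s^*_\sigma(y^*)$. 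Hence $s^*_{\sigma\sigma^{-1}}(a)=\V_{xy\le a}s^*_\sigma(x\wedge y^*)=s^*_\sigma\bigl(\V_{xy\le a}x\wedge y^*\bigr)$. Recognising $\V_{xy\le a}x\wedge y^*=\upsilon(a)$ (this is the definition of $\upsilon$ after reindexing the defining join by $y\mapsto y^*$), and using $s^*_\sigma(V)=V\wedge U_\sigma$ for $V\in\rs(Q)$ together with $\upsilon(a)\in\rs(Q)$ by (R), this becomes $s^*_{\sigma\sigma^{-1}}(a)=\upsilon(a)\wedge U_\sigma=(u\circ k_{U_\sigma})^*(a)$, as required. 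I expect the one delicate point to be exactly this collapse of $s^*_\sigma\bigl(t^*_\sigma(y^*)^*\bigr)$ to $s^*_\sigma(y^*)$, where it is easy to slip on the domains of the $\alpha$-relations (naively one is tempted to write $t^*_\sigma(y^*)=\alpha_{\sigma!}(y^*\wedge U_\sigma)$, which is wrong because $y^*\wedge U_\sigma$ need not lie in $\ds{U_\sigma}$); everything else is bookkeeping. Finally, $\sigma^{-1}\sigma$ follows by applying the identity just proved to $\sigma^{-1}$ in place of $\sigma$: since $(\sigma^{-1})^{-1}=\sigma$ and $U_{\sigma^{-1}}=V_\sigma$, we get $\sigma^{-1}\sigma=(V_\sigma,u\circ k_{V_\sigma})$.
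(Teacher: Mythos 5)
Your proof is correct and its core is the same as the paper's: the identical computation of the domain $U_{\sigma\sigma^{-1}}=s_\sigma^*(V_\sigma^*)=U_\sigma$, and of the section via the collapse $s^*_\sigma(s^*_{\sigma^{-1}}(y)^*)=s^*_\sigma(y^*)$ through the $\alpha$-relations, followed by recognition of $\upsilon$ after reindexing and the use of axiom (R) to land in $\rs(Q)$. The only departures are cosmetic: you make well-definedness explicit up front (via \ref{lemma:star} and \ref{prop:map2}, using that $\tau\cdot(-)$ preserves joins under weak multiplicativity), whereas the paper obtains it implicitly by identifying $f$ with the join-preserving $(u\circ k_{U_\sigma})^*$; and you dispatch $\sigma^{-1}\sigma$ by substituting $\sigma^{-1}$ for $\sigma$ and using $(\sigma^{-1})^{-1}=\sigma$, where the paper simply says ``analogous''.
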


\begin{proof}
First we show that the domain of $\sigma\sigma^{-1}$ is what it should be:
\[U_{\sigma\sigma^{-1}} = s^*(U_{\sigma^{-1}}^*)=s^*(V_\sigma^*) = U_\sigma\;.\]
Now the domain section of $\sigma\sigma^{-1}$ (\cf\ formula for $f$ in \ref{prop:map}):
\begin{eqnarray*}
s^*_{\sigma\sigma^{-1}}(a) &=& \V_{xy\le a} s^*_\sigma(x)\wedge s^*_\sigma(s^*_{\sigma^{-1}}(y)^*)\\
&=&\V_{xy\le a} s^*_\sigma(x)\wedge s^*_\sigma(t^*_{\sigma}(y^*)^*)\\
&=&\V_{xy\le a} s^*_\sigma(x)\wedge \alpha^*_\sigma(t^*_{\sigma}(y^*))\\
&=& \V_{xy\le a} s^*_\sigma(x)\wedge s^*_\sigma(y^*)\\
&=&s^*_\sigma \left(\V_{xy^*\le a} x\wedge y\right)\\
&=& s^*_\sigma(d^*(u^*(a)))\\
&=& (u\circ k_{U_\sigma})^*(a)\;.
\end{eqnarray*}
Hence, $s_{\sigma\sigma^{-1}}=u\circ k_{U_\sigma}$.
For $\sigma^{-1}\sigma$ everything is analogous. \qed
\end{proof}

\begin{lemma}\label{prop:rightunit}
Let $Q$ be weakly multiplicative and let $\sigma$ be a local bisection. Then the product $\sigma\varepsilon$ is well defined and we have $\sigma\varepsilon = \sigma$.
\end{lemma}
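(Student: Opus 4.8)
The plan is to verify the claimed equality $\sigma\varepsilon = \sigma$ by first confirming that the product $\sigma\varepsilon$ is well defined, and then checking that its domain $U_{\sigma\varepsilon}$ and its domain section $s_{\sigma\varepsilon}$ coincide with those of $\sigma$. Since $Q$ is weakly multiplicative, $\upsilon$ preserves joins, so $\varepsilon=(u,1)$ exists as a local bisection (\cf\ \ref{globalbisec}), with $s_\varepsilon=u$, $U_\varepsilon=1$, $V_\varepsilon=1$, and $\alpha_\varepsilon=\ident_{\rs(Q)}$. To show the product is well defined I must check that the map $f$ associated to the pair $(\sigma,\varepsilon)$ in \ref{prop:map} preserves joins; I expect this to follow from the weak multiplicativity hypothesis, most directly via \ref{prop:map2}, which identifies $f(a)=s^*_\sigma(a\cdot\varepsilon^{-1})$, reducing the question to the behaviour of the action $(-)\cdot\varepsilon^{-1}$.

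First I would compute the domain. Using the formula $U_{\sigma\tau}=s^*_\sigma(U_\tau^*)$ from \ref{prop:map} (part 3) with $\tau=\varepsilon$, and recalling $U_\varepsilon=1$ so that $U_\varepsilon^*=1$, I get
\[U_{\sigma\varepsilon}=s^*_\sigma(U_\varepsilon^*)=s^*_\sigma(1)=U_\sigma\;,\]
where the last equality is the standard formula $U=s^*(1)$ for a local bisection. This confirms that $\sigma\varepsilon$ and $\sigma$ share the same domain $\ds{U_\sigma}$.

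Next I would compute the domain section $s^*_{\sigma\varepsilon}$ and show it equals $s^*_\sigma$. The natural route is \ref{prop:map2}, which gives $s^*_{\sigma\varepsilon}(a)=s^*_\sigma(a\cdot\varepsilon^{-1})$ once the product is defined; since $\varepsilon^{-1}=\varepsilon$, the crux is to show that $a\cdot\varepsilon^{-1}$ reduces correctly under $s^*_\sigma$. Unwinding the definition in \ref{definition:action},
\[a\cdot\varepsilon^{-1}=\V_{xy\le a}x\wedge s^*_\varepsilon(y)^*=\V_{xy\le a}x\wedge\upsilon(y)^*\;,\]
and I expect that applying $s^*_\sigma$ and using that $\upsilon(y)\in\rs(Q)$ together with the local-bisection identity $s^*_\sigma(w^*)=\alpha^*_\sigma(w)$ for $w\in\rs(Q)$ will collapse the inner join, via axiom (U) in the form of \ref{U-lemma} and the fact that $\upsilon$ fixes right-sided elements (\ref{domain}), down to $s^*_\sigma(a)$. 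The main obstacle is precisely this last reduction: matching the join over all factorizations $xy\le a$ against the unit law encoded in (U), taking care that the meet with $\upsilon(y)^*$ interacts correctly with $s^*_\sigma$. I anticipate this mirrors the unit-law computation already carried out for the groupoid case following Definition \ref{def:product}, so the cleanest presentation may well track that computation, invoking \ref{U-lemma} to recognize the resulting expression as $s^*_\sigma(a)$ and hence conclude $s_{\sigma\varepsilon}=s_\sigma$, giving $\sigma\varepsilon=\sigma$.
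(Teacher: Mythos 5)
Your proposal is correct and takes essentially the same route as the paper: the domain computation $U_{\sigma\varepsilon}=s^*_\sigma(U_\varepsilon^*)=s^*_\sigma(1)=U_\sigma$ is identical, and the domain section is handled by collapsing the expression $a\cdot\varepsilon^{-1}=\V_{xy\le a}x\wedge\upsilon(y)^*$ using axiom (U). The only difference is that the paper actually executes the step you leave as an expectation, proving directly in $Q$ (by expanding $\upsilon(y)=\V_{zz^*\le y}z$, re-indexing the join, and applying (U)) that this expression equals $a$ itself, so that $f=s^*_\sigma$ --- which gives well-definedness and $s_{\sigma\varepsilon}=s_\sigma$ in one stroke --- and none of the auxiliary identities you invoke (such as $s^*_\sigma(w^*)=\alpha^*_\sigma(w)$ or $\upsilon$ fixing $\rs(Q)$) are needed.
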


\begin{proof}
For the domain we have $U_{\sigma\varepsilon}=s^*_\sigma(U_\varepsilon^*)=s^*_\sigma(1)=U_\sigma$; and, for the domain section (\cf\ \ref{prop:map}):
\[
s^*_{\sigma\varepsilon}(a) =s_\sigma^*\left(\V_{xy\le a} x\wedge u^*(y)^*\right)\;.
\]
In order to show that $s_{\sigma\varepsilon}=s_\sigma$ we shall prove that the argument of $s^*_\sigma$ in the last expression equals $a$:
\begin{eqnarray*}
\V_{xy\le a} x\wedge u^*(y)^* &=& \V_{xy\le a} x\wedge \left(\V_{zz^*\le y} z\right)^*\\
&=&\V_{xzz^*\le a} x\wedge z^*\\
&=&\V_{xz^*z\le a} x\wedge z\\
&=&\V_{xx^*x\le a} x\\
& =& a\;.
\end{eqnarray*}
The last step is the axiom (U). \qed
\end{proof}

In a similar way we prove the following:

\begin{lemma}\label{prop:associative}
Let $Q$ be weakly multiplicative and let $\sigma$ be a local bisection of $Q$. Then the product $\sigma(\sigma^{-1}\sigma)$ is well defined and we have $\sigma(\sigma^{-1}\sigma)=\sigma$.
\end{lemma}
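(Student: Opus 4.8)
The plan is to mirror the proof of \ref{prop:rightunit}, replacing the global bisection $\varepsilon$ by the restricted-identity bisection $\nu=\sigma^{-1}\sigma$. From the lemma computing $\sigma^{-1}\sigma$ we already know that $\nu$ is a well defined local bisection with domain $V_\sigma$ and domain section $s_\nu=u\circ k_{V_\sigma}$, so that $s_\nu^*(y)=k_{V_\sigma}^*(\upsilon(y))=\upsilon(y)\wedge V_\sigma$ for all $y\in Q$ (here (R) is used, so that $\upsilon(y)\in\rs(Q)$). First I would dispose of the domain: by the standard formula $U=s^*(V^*)$ for local bisections we get $U_{\sigma\nu}=s_\sigma^*(U_\nu^*)=s_\sigma^*(V_\sigma^*)=U_\sigma$, as required.

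The heart of the argument is the computation of the domain section through the expression for $f$ in \ref{prop:map}. Since $s_\sigma^*$ is a frame homomorphism it commutes with the join, so
\[s_{\sigma\nu}^*(a)=\V_{xy\le a}s_\sigma^*\bigl(x\wedge s_\nu^*(y)^*\bigr)=s_\sigma^*\Bigl(\V_{xy\le a}x\wedge s_\nu^*(y)^*\Bigr)\;,\]
and I would substitute $s_\nu^*(y)^*=\upsilon(y)^*\wedge V_\sigma^*$. The inner expression then differs from the one in \ref{prop:rightunit} only by the spectator factor $V_\sigma^*$. Reducing as there — expanding $\upsilon(y)^*=\V_{zz^*\le y}z^*$, merging the joins, eliminating $y$ (so the constraint becomes $xzz^*\le a$), and renaming $z\mapsto z^*$ — one reaches $\V_{xz^*z\le a}x\wedge z\wedge V_\sigma^*$, with $V_\sigma^*$ riding unchanged through every reindexing. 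A two-sided comparison (take $z=x$ for one inequality, and $w=x\wedge z$, using $(x\wedge z)(x\wedge z)^*(x\wedge z)\le xz^*z$, for the other) then yields $\V_{xx^*x\le a}x\wedge V_\sigma^*$, which by distributivity and axiom (U) equals $a\wedge V_\sigma^*$.

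To finish, I would invoke the standard identity $s_\sigma^*(a)=s_\sigma^*(a\wedge V_\sigma^*)$ for local bisections to absorb the spectator factor, obtaining $s_{\sigma\nu}^*(a)=s_\sigma^*(a\wedge V_\sigma^*)=s_\sigma^*(a)$ for all $a$, hence $s_{\sigma\nu}=s_\sigma$ and therefore $\sigma\nu=\sigma$. Well-definedness of $\sigma\nu$ comes for free from this same computation: the map $f$ of \ref{prop:map} always preserves meets, and here it has been shown to coincide with the frame homomorphism $s_\sigma^*$, so it preserves joins, which is exactly the hypothesis of \ref{prop:map}(3); alternatively, weak multiplicativity gives that $\nu\cdot(-)$ preserves joins, and then \ref{prop:map2} together with \ref{lemma:star} (which yields $a\cdot\nu^{-1}=(\nu\cdot a^*)^*$) shows the same. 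The only real care needed is the bookkeeping of the extra $V_\sigma^*$ factor and the correct identification $s_\nu^*(y)=\upsilon(y)\wedge V_\sigma$; the rest is the verbatim reindexing already performed for $\sigma\varepsilon$, so I do not expect a genuine obstacle beyond this.
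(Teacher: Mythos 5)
Your proposal is correct and follows essentially the same route as the paper's proof: both compute the domain via $U_{\sigma(\sigma^{-1}\sigma)}=s_\sigma^*(V_\sigma^*)=U_\sigma$, substitute $s_{\sigma^{-1}\sigma}^*(y)^*=\upsilon(y)^*\wedge V_\sigma^*$ into the formula for $f$ from \ref{prop:map}, and reduce to the axiom (U) computation of \ref{prop:rightunit}, with well-definedness following because $f$ is thereby shown to equal the frame homomorphism $s_\sigma^*$. The only (immaterial) difference is bookkeeping order: the paper absorbs the spectator factor $V_\sigma^*$ into $s_\sigma^*$ before invoking (U), whereas you carry it through the reindexing and absorb it at the end.
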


\begin{proof}
For the domain we have $U_{\sigma(\sigma^{-1}\sigma)}=s^*_\sigma(V_\sigma^*)=U_\sigma$; so let us check the domain section (\cf\ \ref{prop:map}):
\begin{eqnarray*}
s^*_{\sigma(\sigma^{-1}\sigma)}(a) &=&
s_\sigma^*\left(\V_{xy\le a} x\wedge s^*_{\sigma^{-1}\sigma}(y)^*\right)\\
&=&s_\sigma^*\left(\V_{xy\le a} x\wedge V^*_\sigma\wedge u^*(y)^*\right)\\
&=&s_\sigma^*\left(\V_{xy\le a} x\wedge u^*(y)^*\right)\\
&=&s_\sigma^*(a)\;.
\end{eqnarray*}
The last step follows from (U). \qed
\end{proof}

Now we arrive at the main results of this section.

\begin{theorem}\label{lemma:properties}\label{lemma:pseudogroup}
If $Q$ is weakly multiplicative $\sections(Q)$ 
is a complete and infinitely distributive inverse semigroup (\ie, an abstract complete pseudogroup) and the following conditions are satisfied.
\begin{enumerate}
\item\label{cond1} We have $\epsilon\cdot a = a \cdot \epsilon = a$ for all $a\in Q$.
\item The natural order of $\sections(Q)$ is given by restriction.
\item\label{cond3} The semilattice of idempotents of $\sections(Q)$ is isomorphic to $\rs(Q)$.
\item\label{cond4} If $\sigma \leq \tau$ then $\sigma \cdot a \leq \tau \cdot a$ for all $a \in Q$.
\end{enumerate}
\end{theorem}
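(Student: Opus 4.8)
The plan is to show in turn that $\sections(Q)$ is a monoid under the product of local bisections, that this monoid carries an involution satisfying the four equational axioms of an inverse semigroup, and finally that it is complete and infinitely distributive. First I would check that the product $\sigma\tau$ is \emph{everywhere} defined: by \ref{prop:map2} the candidate $d$-section map is $f(a)=s_\sigma^*(a\cdot\tau^{-1})$, and by \ref{lemma:star} we have $a\cdot\tau^{-1}=(\tau\cdot a^*)^*$, so $a\mapsto a\cdot\tau^{-1}$ is a composite of join-preserving maps (the involution preserves joins, and $\tau\cdot(-)$ preserves joins by weak multiplicativity). Postcomposing with the frame homomorphism $s_\sigma^*$ shows $f$ preserves joins, so $\sigma\tau$ is a genuine local bisection by \ref{prop:map}. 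Associativity is precisely the third clause of weak multiplicativity, and $\varepsilon$ is a two-sided unit: $\sigma\varepsilon=\sigma$ is \ref{prop:rightunit}, while $\varepsilon\sigma=(\sigma^{-1}\varepsilon^{-1})^{-1}=(\sigma^{-1}\varepsilon)^{-1}=(\sigma^{-1})^{-1}=\sigma$ using \ref{prop:inversion}, \ref{globalbisec} (so $\varepsilon^{-1}=\varepsilon$), and \ref{prop:rightunit} again.

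Next I would invoke the standard equational characterisation of inverse semigroups, namely that a semigroup with a unary operation $(-)^{-1}$ satisfying (E1) $(\sigma^{-1})^{-1}=\sigma$, (E2) $\sigma\sigma^{-1}\sigma=\sigma$, (E3) $(\sigma\tau)^{-1}=\tau^{-1}\sigma^{-1}$, and (E4) commutativity of the projections $\sigma\sigma^{-1}$ is an inverse semigroup with inversion $(-)^{-1}$. Here (E1) is immediate from the definition of $\sigma^{-1}$, (E3) is \ref{prop:inversion}, and (E2) follows from associativity together with \ref{prop:associative}. The technical heart is (E4). Writing $\varepsilon_U:=(U,u\circ k_U)$ for $U\in\rs(Q)$, the earlier lemma computing $\sigma\sigma^{-1}=(U_\sigma,u\circ k_{U_\sigma})$ identifies every projection as some $\varepsilon_U$, so (E4) reduces to the single identity $\varepsilon_U\varepsilon_W=\varepsilon_{U\wedge W}$. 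For the domain I would use $s_{\varepsilon_U}^*(W^*)=\upsilon(W^*)\wedge U$, then \ref{xxstarvsxystar2} and \ref{domain} to get $\upsilon(W^*)=\upsilon(W)=W$, hence domain $U\wedge W$; for the section I would use \ref{prop:map2} and axiom (U). Since $U\wedge W=W\wedge U$, the projections commute, so $\sections(Q)$ is an inverse semigroup whose idempotents are exactly $\{\varepsilon_U\}_{U\in\rs(Q)}$.

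The four listed conditions then follow. Condition~\ref{cond1} is two applications of (U): $\varepsilon\cdot a=\V_{x^*y\le a}\upsilon(x)\wedge y=a$ via \ref{xxstarvsxystar2} and \ref{U-lemma}, and $a\cdot\varepsilon=a$ is exactly the computation $\V_{xy\le a}x\wedge u^*(y)^*=a$ already carried out inside the proof of \ref{prop:rightunit}. Condition~\ref{cond3} is the bijection $\varepsilon_U\mapsto U$ onto $(\rs(Q),\wedge)$ just obtained. For condition~2 I would unwind the natural order $\sigma\le\tau\iff\sigma=\sigma\sigma^{-1}\tau=\varepsilon_{U_\sigma}\tau$ using \ref{domain} and the section formula of \ref{prop:map2}, showing it amounts to $U_\sigma\le U_\tau$ with $s_\sigma=s_\tau\circ(\text{inclusion})$, i.e.\ restriction. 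Condition~\ref{cond4} is then immediate: restriction gives $s_\sigma^*(x)=s_\tau^*(x)\wedge U_\sigma\le s_\tau^*(x)$, so $\sigma\cdot a\le\tau\cdot a$ directly from Definition~\ref{definition:action}.

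The remaining, and in my view hardest, part is completeness and infinite distributivity. Given a pairwise compatible family $\{\sigma_i\}$ (meaning all $\sigma_i\sigma_j^{-1}$ and $\sigma_i^{-1}\sigma_j$ are idempotent), compatibility translates into the $d$-sections $s_{\sigma_i}$ agreeing on the overlaps $\ds{U_i\wedge U_j}$; since $\ds{\V_i U_i}$ is covered by the open sublocales $\ds{U_i}$ and locale maps into the fixed locale $Q$ satisfy the gluing condition for open covers, the $s_{\sigma_i}$ amalgamate to a single $d$-section $s:\ds{\V_i U_i}\to Q$, which one verifies is again a local bisection and serves as the join $\V_i\sigma_i$. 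Infinite distributivity $\sigma(\V_i\tau_i)=\V_i\sigma\tau_i$ then follows from $s^*_{\sigma\tau}(a)=s_\sigma^*(a\cdot\tau^{-1})$ together with the join-preservation of the action and of the involution. I expect this gluing argument to be the main obstacle, since it is the only step that genuinely leaves the algebra of the previous lemmas and relies on the descent properties of open sublocales in $\Loc$; by contrast the inverse-semigroup axioms and conditions~1--4 are essentially bookkeeping on top of the section formulas already established.
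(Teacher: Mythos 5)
Your proposal is correct and follows essentially the same route as the paper's proof: well-definedness of the product via \ref{prop:map2} together with join-preservation of the action and of the involution, associativity by hypothesis, the inverse-semigroup axioms via \ref{prop:inversion} and \ref{prop:associative} plus commutation of the projections $\sigma\sigma^{-1}=\varepsilon_{U_\sigma}$, the four listed conditions by the same section formulas, and completeness by gluing compatible sections. Your minor variants --- obtaining the left unit by the involution trick, proving the sharper identity $\varepsilon_U\varepsilon_W=\varepsilon_{U\wedge W}$ where the paper merely checks symmetry of the expression for $s^*_{(\sigma\sigma^{-1})(\tau\tau^{-1})}$, and verifying infinite distributivity directly from $s^*_{\sigma\tau}(a)=s^*_\sigma(a\cdot\tau^{-1})$ rather than appealing to the fact that $E(\sections(Q))\cong\rs(Q)$ is a frame --- are all sound and do not change the substance of the argument.
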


\begin{proof}

Let us assume that $Q$ is weakly multiplicative. We begin by observing that, by
\ref{prop:map2}, the multiplication is well defined, and it is associative by hypothesis.
That we have involutivity follows from \ref{prop:inversion}.
Since we have the equality of \ref{prop:associative},
in order to obtain an inverse semigroup we need only show that the idempotents of the form $\sigma\sigma^{-1}$ commute. 
We have
\[s_{(\sigma\sigma^{-1})(\tau\tau^{-1})}^*(a)  = \upsilon\left(\V_{xy\leq a} x \wedge \upsilon(y)^*\wedge U_{\tau}^*\right)\wedge U_{\sigma}\;,\]
since $\sigma\sigma^{-1} = (u \circ k_{U_{\sigma}},U_{\sigma})$ and  $\tau\tau^{-1} = (u \circ k_{U_{\tau}},U_{\tau})$.
It is then easy to see that
\begin{eqnarray*}
\upsilon\left(\V_{xy\leq a} x \wedge \upsilon(y)^*\wedge U_{\tau}^*\right)\wedge U_{\sigma} &=&  \upsilon\left(\V_{xy\leq a} x \wedge \upsilon(y)^*\wedge 
U_{\sigma}^*\right)\wedge U_{\tau}\\
& = & s_{(\tau\tau^{-1})(\sigma\sigma^{-1})}^*(a)\;,
\end{eqnarray*}
using the fact that $\upsilon(z^*) = \upsilon(z) = z$ for $z \in \rs(Q)$. 
It is obvious that $U_{(\sigma\sigma^{-1})(\tau\tau^{-1})} = U_{(\tau\tau^{-1})(\sigma\sigma^{-1})}$.

We have thus concluded that $\sections(Q)$ is an inverse semigroup. Let us prove conditions \ref{cond1}--\ref{cond4}. The condition $\epsilon \cdot a = a$ follows directly from (U).
Suppose $\sigma \leq \tau $. If $\sigma = (\rho\rho^{-1})\tau$ then
\begin{eqnarray*}
s_{\sigma}^*(a) &=& u^*\left(\V_{xy \leq a} x \wedge s_{\tau}^*(y)^*\right)\wedge U_{\rho}\\
& =& \left(\V_{xy \leq a} u^*(x) \wedge u^*s_{\tau}^*(y)^*\right)\wedge U_{\rho}\\
& =& \left(\V_{xy \leq a} s_{\tau}^*u^*(x) \wedge s_{\tau}^*(y)^*\right)\wedge U_{\rho}\\
& =&s_{\tau}^*\left(\V_{xy \leq a} u^*(x) \wedge y\right)\wedge U_{\rho}\\
& =& s_{\tau}^*(a) \wedge  U_{\rho} = (s_{\tau}\circ k_{U_{\rho}})^*(a)
\end{eqnarray*}
and $U_{\sigma} \leq U_{\tau}$.
Now notice that all the idempotents are necessarily of the form $\sigma\sigma^{-1}$.
In order to show that the naturally ordered set of idempotents of the form $\sigma\sigma^{-1}$ is order isomorphic to $\rs(Q)$
we use the monotone assignments
\[\sigma\sigma^{-1} \mapsto U\]
and
\[U \mapsto u \circ k_U\;. \]
For condition \ref{cond4} suppose $\sigma \leq \tau$. Then, as we have seen,
\[s_{\sigma}^*(w) =  s_{\tau}^*(w) \wedge  U_{\rho}\;.\]
Hence,
\begin{eqnarray*}
\sigma\cdot a &=& \V_{x^*y\le a}s_{\sigma}^*(x)\wedge y = \V_{x^*y\le a}s_{\tau}^*(x)\wedge y\wedge  U_{\rho}\\
&\leq&  \V_{x^*y\le a}s_{\tau}^*(x)\wedge y = \tau \cdot a\;.
\end{eqnarray*}

Finally, $\sections(Q)$ is infinitely distributive due to condition \ref{cond3}, which implies that $E(\sections(Q))$ is a frame, and completeness follows from a standard argument of gluing of local sections applied to bisections:
let $(\sigma_i)_{i\in J}$ be a family of compatible elements $(s_i,U_i)\in\sections(Q)$, that is, such that for all $i,j\in I$ both $\sigma_i\sigma_j^{-1}$ and $\sigma_i^{-1}\sigma_j$ are idempotents;
it follows from the previous results that for any $i,j \in I$ we have 
\begin{eqnarray*}
\sigma_i\vert_{U_i \wedge U_j} &=&\sigma_j\vert_{U_i \wedge U_j}\\
\sigma_i^{-1}\vert_{V_i \wedge V_j} &=& \sigma_j^{-1}\vert_{V_i \wedge V_j}\;,
\end{eqnarray*}
and thus
there is a gluing $\sigma=(s,U)$ of the family $(\sigma_i)$, which is the join $\V_i\sigma_i$ in the natural order of $\sections(Q)$ and is defined
by $s^* = \V_i s^*_i$ and $U=\V_i U_i$ (\cf\ \cite[pp.\ 90--92]{Borceux}). \qed
\end{proof}

We remark that it is unknown whether all open quantal frames
are weakly multiplicative or not. But at least, as the terminology suggests, multiplicativity implies weak multiplicativity:

\begin{theorem}
If $Q$ is multiplicative then it is weakly multiplicative.
\end{theorem}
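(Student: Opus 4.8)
The plan is to verify the three defining conditions of weak multiplicativity directly, exploiting the structure that multiplicativity provides. Recall that weak multiplicativity requires, for all $\sigma,\tau,\nu\in\sections(Q)$, that (1) $\upsilon$ preserves joins, (2) $\sigma\cdot(-)$ preserves joins, and (3) the product of local bisections is associative. For multiplicative $Q$ each of these should follow from the availability of the honest locale multiplication $m$ and its inverse image $m^*=(\mu_0)_*$, together with the already-established identity $\upsilon=[\ident_Q,i^*]\circ(\mu_0)_*$ from Lemma \ref{obvious}.

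\emph{Condition (1)} is the easiest: since $Q$ is multiplicative, $(\mu_0)_*$ is a frame homomorphism (this is precisely the content of multiplicativity), and $[\ident_Q,i^*]$ is a copairing of frame homomorphisms, hence itself a frame homomorphism. By Lemma \ref{obvious} we have $\upsilon=[\ident_Q,i^*]\circ(\mu_0)_*$, so $\upsilon$ is a composite of frame homomorphisms and in particular preserves joins. \emph{Condition (2)} should follow from the same observation applied to the formula in Lemma \ref{prop:map2}, or more directly by recognising $\sigma\cdot(-)$ as the inverse image of the locale map $\lambda_\sigma$ (via \ref{futuredef} and the discussion preceding it): in the multiplicative setting the left action $\lambda_\sigma$ genuinely exists as a map of locales, so $\sigma\cdot(-)=(\lambda_{\sigma^{-1}})^*(\,(-)\wedge V)$ is a join-preserving map, being a composite of an inverse-image frame homomorphism with the join-preserving meet $(-)\wedge V$.

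\emph{Condition (3)}, associativity of the product on $\sections(Q)$, is the step I expect to be the main obstacle. When $Q$ is multiplicative, the product $\sigma\tau$ is defined via the genuine groupoid multiplication $m$ (Definition \ref{def:product}), and associativity of composition of local bisections should be inherited from the associativity of $m$ itself, which holds because $\groupoid(Q)$ is a genuine localic groupoid (its multiplication is associative by the Lemma establishing that $G$ is an involutive semicategory, proved analogously to \cite[Th.\ 4.8]{aim}). The clean way to argue is to observe that under multiplicativity the formulas \eqref{formula1}--\eqref{formula3} for $s^*_{\sigma\tau}$ are literally the inverse-image description coming from $m$, so that composing three local bisections corresponds, on the level of the $d$-sections, to iterated composition along $m$; the equality $(\sigma\tau)\nu=\sigma(\tau\nu)$ then reduces to the associativity diagram for $m$ restricted to the appropriate pullback of locales. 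I would carry this out by computing both $s^*_{(\sigma\tau)\nu}$ and $s^*_{\sigma(\tau\nu)}$ using Lemma \ref{prop:map2} (so that each equals $s^*_\sigma$ applied to an iterated action) and then invoking the associativity relation $(a\cdot\nu^{-1})\cdot\tau^{-1}=a\cdot(\tau^{-1}\nu^{-1})=a\cdot(\nu\tau)^{-1}$ together with Lemma \ref{prop:inversion}.

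The principal care needed is bookkeeping: one must check that the domains agree, $U_{(\sigma\tau)\nu}=U_{\sigma(\tau\nu)}$, which is a routine computation using condition 1 of Definition \ref{def:product} and the formula $U_{\sigma\tau}=s_\sigma^*(U_\tau^*)$, and that the two $d$-sections coincide as frame homomorphisms by comparing their actions on all $a\in Q$. The essential mathematical input is that multiplicativity turns the merely partially-defined products of Lemma \ref{prop:map} into products defined uniformly by the associative groupoid multiplication, so that the semicategory structure already verified for $\groupoid(Q)$ transports directly to associativity in $\sections(Q)$. Once the three conditions are in hand, $Q$ is weakly multiplicative by definition.
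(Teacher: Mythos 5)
Your verification of conditions (1) and (2) is correct and coincides with what the paper does: multiplicativity makes $(\mu_0)_*$ join-preserving, so $\upsilon=[\ident_Q,i^*]\circ(\mu_0)_*$ preserves joins by \ref{obvious}, and $\sigma\cdot(-)=\lambda^*_{\sigma^{-1}}((-)\wedge V)$ preserves joins because $\lambda_{\sigma^{-1}}$ exists as a locale map once $m$ does. The genuine gap is in your execution of condition (3). Writing $s^*_{(\sigma\tau)\nu}(a)=s^*_\sigma\bigl((a\cdot\nu^{-1})\cdot\tau^{-1}\bigr)$ and $s^*_{\sigma(\tau\nu)}(a)=s^*_\sigma\bigl(a\cdot(\tau\nu)^{-1}\bigr)$ via \ref{prop:map2} is fine, but the identity you then invoke, $(a\cdot\nu^{-1})\cdot\tau^{-1}=a\cdot(\nu^{-1}\tau^{-1})$ (note also the order slip in your text: what is needed is $\nu^{-1}\tau^{-1}=(\tau\nu)^{-1}$, not $\tau^{-1}\nu^{-1}$), is precisely one of the axioms of an involutive $\sections(Q)$-action (\cf\ \ref{propinvo}), and the paper explicitly warns --- immediately after defining $\sigma\cdot a$ and $a\cdot\sigma$ for open localic groupoids --- that, contrary to the topological case \ref{exm:GQactiontopol}, these operations do \emph{not} necessarily define an involutive action. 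That identity is only established in section \ref{sec5} (\ref{lemma:propertiesgamma}-\ref{seven},\ref{eight}), under the hypotheses that $Q$ is weakly embeddable and $j$ is mono; both presuppose the inverse quantal frame $\cover Q=\lcc(\sections(Q))$, hence presuppose that $\sections(Q)$ is already an inverse semigroup --- exactly the weak multiplicativity you are trying to prove. So this step is either circular or rests on an unproved (and possibly false) claim. Note that you only need the weaker equality obtained after applying $s^*_\sigma$, but your proposal supplies no mechanism for proving even that.

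The paper avoids the action entirely and stays at the level of locale maps, which is where your (correct) intuition that ``associativity is inherited from $m$'' actually has to be cashed out. It introduces the restriction and pullback maps $\iota_1,\beta_1,\dots,\iota_4,\beta_4$ between the domains $U_{(\sigma\tau)\nu}$, $U_{\sigma\tau}$, $U_{\sigma(\tau\nu)}$, $U_{\tau\nu}$, proves the commutation $\beta_2\circ\iota_1=\iota_4\circ\beta_3$ by composing with the monomorphism $k_{U_\tau}$, and then re-brackets the nested pairings
\[
s_{(\sigma\tau)\nu}=m\circ\langle m\circ\langle s_\sigma\circ\iota_2\circ\iota_1,\ s_\tau\circ\beta_2\circ\iota_1\rangle,\ s_\nu\circ\beta_1\rangle
= m\circ\langle s_\sigma\circ\iota_3,\ m\circ\langle s_\tau\circ\iota_4,\ s_\nu\circ\beta_4\rangle\circ\beta_3\rangle=s_{\sigma(\tau\nu)}
\]
using the associativity of $m$. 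To repair your proof you would need to replace the action-associativity step by an argument of this diagrammatic kind; nothing shorter is available at this point in the development.
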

 
\begin{proof}
The first condition follows from the fact that $\sigma \cdot a = \lambda^*_{\sigma^{-1}}(a \wedge V)$.
We now check the associativity.
Consider the following frame homomorphisms, whose definitions are analogous to those of $\iota$ and $\beta$ in \ref{def:product}:
\begin{eqnarray*}
\iota_1&:& U_{(\sigma\tau)\nu} \rightarrow U_{\sigma\tau}\\
\beta_1&:& U_{(\sigma\tau)\nu} \rightarrow U_{\nu}\\
\iota_2&:& U_{\sigma\tau} \rightarrow U_{\sigma}\\
\beta_2&:& U_{\sigma\tau} \rightarrow U_{\tau}\\
\iota_3&:& U_{\sigma(\tau\nu)} \rightarrow U_{\sigma}\\
\beta_3&:& U_{\sigma(\tau\nu)} \rightarrow U_{\tau\nu}\\
\iota_4&:& U_{\tau\nu} \rightarrow U_{\tau}\\
\beta_4&:& U_{\tau\nu} \rightarrow U_{\nu}\;.
\end{eqnarray*}
First we show that
\[ \beta_2 \circ \iota_1 = \iota_4 \circ \beta_3\;, \]
by proving that
\[ k_{U_{\tau}} \circ \beta_2 \circ \iota_1 = k_{U_{\tau}}\circ \iota_4 \circ \beta_3\;, \]
which in turn follows from the following derivation:
\begin{eqnarray*}
k_{U_{\tau}} \circ \beta_2 \circ \iota_1 &=& r \circ s_{\sigma} \circ \iota_2 \circ \iota_1 = r \circ s_{\sigma} \circ \iota_3\\
&=& k_{U_{\tau\nu}}\circ \beta_3 = k_{U_{\tau}}\circ \iota_4 \circ \beta_3\;.
\end{eqnarray*}
Now we prove the associativity. First, we have
\begin{eqnarray*}
s_{(\sigma\tau)\nu} &=& m \circ \langle s_{\sigma\tau}\circ \iota_1, s_{\nu} \circ \beta_1 \rangle\\
& =& m \circ \langle m \circ \langle s_{\sigma} \circ \iota_2, s_{\tau}\circ \beta_2 \rangle \circ \iota_1, s_{\nu} \circ \beta_1 \rangle\\
& =& m \circ \langle m \circ \langle s_{\sigma} \iota_2 \circ \iota_1, s_{\tau}\beta_2 \circ \iota_1 \rangle, s_{\nu} \circ \beta_1 \rangle\;.
\end{eqnarray*}
And this equals
\begin{eqnarray*}
&&m \circ \langle m \circ \langle s_{\sigma} \iota_3, s_{\tau}\circ \iota_4 \circ \beta_3 \rangle, s_{\nu} \circ \beta_4\circ \beta_3 \rangle\\
&=& m \circ \langle  s_{\sigma} \iota_3, m \circ \langle s_{\tau}\circ \iota_4 , s_{\nu} \circ \beta_4 \rangle \circ \beta_3 \rangle\\
&=& m\circ  \langle  s_{\sigma} \iota_3, s_{\tau\nu}\circ \beta_3\rangle = s_{\sigma(\tau\nu)}\;. \qed
\end{eqnarray*}
\end{proof}

\paragraph{Weak multiplicativity revisited.}

We conclude this section by obtaining a sufficient condition for weak multiplicativity of an open quantal frame $Q$:

\begin{theorem}
Assume that  $\sigma \cdot(-)$ preserves joins for all $\sections(Q)$
 and that  the inequality $s^*_\sigma(a^*\cdot\tau^{-1}) \le  s^*_\sigma(s^*_\tau(a^*)^*)$ holds for all $a \in \rs(Q)$. Then $\sections(Q)$ has an associative multiplication. If in addition $\upsilon$ preserves joins $Q$ is weakly multiplicative.
\end{theorem}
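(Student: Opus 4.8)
The plan is to establish, in order: that the product $\sigma\tau$ is defined for every pair; the precise identity that the two hypotheses jointly encode; and then associativity, which I reduce to an associativity law for the action and whose hard half is the real work.

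First I would check that $\sigma\tau$ is everywhere defined. By \ref{lemma:star} we have $a\cdot\tau^{-1}=(\tau\cdot a^{*})^{*}$, so $(-)\cdot\tau^{-1}$ is the join-preserving map $\tau\cdot(-)$ composed with the involution, itself a sup-lattice automorphism; hence $(-)\cdot\tau^{-1}$ preserves joins. Since $f(a)=s^{*}_{\sigma}(a\cdot\tau^{-1})$ by \ref{prop:map2} and $s^{*}_{\sigma}$ preserves joins, $f$ preserves joins, and \ref{prop:map} then yields the local bisection $\sigma\tau$ with $s^{*}_{\sigma\tau}(a)=s^{*}_{\sigma}(a\cdot\tau^{-1})$ and $U_{\sigma\tau}=s^{*}_{\sigma}(U_{\tau}^{*})$. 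I would then record the \emph{key identity}: \ref{prop:map}(1) gives $f(a^{*})\ge s^{*}_{\sigma}(s^{*}_{\tau}(a^{*})^{*})$ for $a\in\rs(Q)$, the second hypothesis gives the opposite inequality, and $f(a^{*})=s^{*}_{\sigma\tau}(a^{*})$ by \ref{prop:map2}; hence
\[
s^{*}_{\sigma\tau}(a^{*})=s^{*}_{\sigma}\bigl(s^{*}_{\tau}(a^{*})^{*}\bigr)\qquad\text{for all }a\in\rs(Q),
\]
extending to all of $\rs(Q)$ the identity that, in the multiplicative case, was available only for $a\in\ds{U_{\tau}}$.

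To prove $(\sigma\tau)\nu=\sigma(\tau\nu)$ I would use $s^{*}_{\rho\kappa}=s^{*}_{\rho}\circ\bigl((-)\cdot\kappa^{-1}\bigr)$ to write
\[
s^{*}_{(\sigma\tau)\nu}(a)=s^{*}_{\sigma}\bigl((a\cdot\nu^{-1})\cdot\tau^{-1}\bigr),\qquad
s^{*}_{\sigma(\tau\nu)}(a)=s^{*}_{\sigma}\bigl(a\cdot(\tau\nu)^{-1}\bigr).
\]
Because a local bisection satisfies $U_{\rho}=s^{*}_{\rho}(1)$, equality of the two domain sections forces equality of the domains, so it suffices to establish the right-action law $(a\cdot\nu^{-1})\cdot\tau^{-1}=a\cdot(\tau\nu)^{-1}$; by \ref{lemma:star} and $(\tau\nu)^{-1}=\nu^{-1}\tau^{-1}$ (\ref{prop:inversion}) this is equivalent to the left-action law $\sigma\cdot(\tau\cdot a)=(\sigma\tau)\cdot a$, which I would attack directly from Definition \ref{definition:action}. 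Expanding $s^{*}_{\sigma\tau}$ and eliminating the auxiliary variable (its least value suffices) gives
\[
(\sigma\tau)\cdot a=\V_{q^{*}p^{*}y\le a}s^{*}_{\sigma}\bigl(p\wedge s^{*}_{\tau}(q)^{*}\bigr)\wedge y ,
\]
while distributing $\sigma\cdot(-)$ across the inner join (second hypothesis) gives $\sigma\cdot(\tau\cdot a)=\V s^{*}_{\sigma}(x)\wedge z$ ranging over $u^{*}v\le a$, $x^{*}z\le s^{*}_{\tau}(u)$, $x^{*}z\le v$. The inequality $(\sigma\tau)\cdot a\le\sigma\cdot(\tau\cdot a)$ is routine: for a left-hand term take $u=q$, $x=p\wedge s^{*}_{\tau}(q)^{*}$, $z=y$, $v=x^{*}z$, the containment $x^{*}z\le s^{*}_{\tau}(q)$ coming merely from right-sidedness of $s^{*}_{\tau}(q)$.

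The reverse inequality $\sigma\cdot(\tau\cdot a)\le(\sigma\tau)\cdot a$ is the crux, and this is where the hypothesised inequality becomes indispensable. Choosing the witnesses $p=x$, $q=u$, $y=z$ produces the right-hand term $s^{*}_{\sigma}\bigl(x\wedge s^{*}_{\tau}(u)^{*}\bigr)\wedge z$ (with $q^{*}p^{*}y=u^{*}x^{*}z\le u^{*}v\le a$), so everything comes down to restoring the omitted factor, i.e. to absorbing each partial product $s^{*}_{\tau}(u)^{*}$, for the one-sided data forced by the constraint $x^{*}z\le s^{*}_{\tau}(u)$, into the single section $s^{*}_{\sigma\tau}$ of the composite. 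My plan is to first use (U) to restrict the summation index to one-sided elements, so that the key identity actually applies and collapses $s^{*}_{\sigma}\circ s^{*}_{\tau}$ into $s^{*}_{\sigma\tau}$, and then to invoke (O) and (B) to slide the relevant right-sided factors across the quantale product during this collapse; I expect this reconciliation of the restricted term with the unrestricted one to be the main obstacle. Granting it, the multiplication is associative, and it is involutive by \ref{prop:inversion}, so $\sections(Q)$ carries an associative multiplication; if in addition $\upsilon$ preserves joins, then clauses (1)--(3) of weak multiplicativity all hold, giving the final assertion.
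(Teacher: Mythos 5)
Your preliminary steps are correct and agree with the paper's: the product is everywhere defined because $f=s^*_\sigma\circ\bigl((-)\cdot\tau^{-1}\bigr)$ preserves joins (\ref{lemma:star} plus \ref{prop:map2}), and the hypothesis combined with part 1 of \ref{prop:map} yields the key identity $s^*_{\sigma\tau}(a^*)=s^*_\sigma\bigl(s^*_\tau(a^*)^*\bigr)$ for all $a\in\rs(Q)$. The gap is in your treatment of associativity. You reduce it to the action law $(a\cdot\nu^{-1})\cdot\tau^{-1}=a\cdot(\tau\nu)^{-1}$ for all $a\in Q$, equivalently $\tau\cdot(\nu\cdot b)=(\tau\nu)\cdot b$ for all $b\in Q$, and the hard half of that law, $\sigma\cdot(\tau\cdot a)\le(\sigma\tau)\cdot a$, is precisely what you do not prove: you offer only a plan invoking (U), (O) and (B) and then proceed ``granting it''. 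This is not a routine detail. The hypothesised inequality controls the composite section $s^*_{\sigma\tau}$ only at arguments of the form $b^*$ with $b\in\rs(Q)$, whereas the full action law requires controlling $(\sigma\tau)\cdot a$ at arbitrary $a\in Q$. Indeed, in the paper that action law is item \ref{seven} of \ref{lemma:propertiesgamma}, and it is established only under the substantially stronger hypotheses of weak embeddability together with injectivity of $j$; nothing in the present theorem's hypotheses is known to deliver it, so your reduction overshoots what is available. (A small additional slip: distributing $\sigma\cdot(-)$ over the inner join uses the first hypothesis, not the second.)

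The paper's proof is engineered exactly so that the key identity is only ever needed at starred right-sided arguments. After checking $U_{(\sigma\tau)\nu}=U_{\sigma(\tau\nu)}$ directly from $U_{\rho\kappa}=s^*_\rho(U_\kappa^*)$ and the key identity, one expands both sections by applying the product formula of \ref{prop:map} twice, using that $s^*_\sigma$, $s^*_\tau$, $s^*_{\sigma\tau}$ and $(-)^*$ preserve finite meets and joins; this gives
\[
s^*_{\sigma(\tau\nu)}(a)=\V_{xyz\le a}\ s^*_\sigma(x)\wedge s^*_\sigma(s^*_\tau(y)^*)\wedge s^*_\sigma\bigl(s^*_\tau(s^*_\nu(z)^*)^*\bigr)
\]
and
\[
s^*_{(\sigma\tau)\nu}(a)=\V_{xyz\le a}\ s^*_\sigma(x)\wedge s^*_\sigma(s^*_\tau(y)^*)\wedge s^*_{\sigma\tau}(s^*_\nu(z)^*)\;.
\]
In these joins the composite $s^*_{\sigma\tau}$ occurs only at the elements $s^*_\nu(z)^*$, and since $s^*_\nu(z)\in\rs(Q)$ the key identity applies termwise, making the two joins equal. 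If you redirect your argument to this expansion --- rather than to the action law --- your first two steps already contain everything that is needed, and the final assertion about weak multiplicativity then follows as you state.
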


\begin{proof}
The existence of the multiplication follows from \ref{prop:map2}.
Let us verify that $U_{(\sigma\tau)\nu}=U_{\sigma(\tau\nu)}$ for all local bisections $\sigma$, $\tau$, and $\nu$, which is easy:
\[U_{(\sigma\tau)\nu} = s^*_{\sigma\tau}(U_\nu^*) = s^*_\sigma(s^*_\tau(U_\nu^*)^*)
=s^*_\sigma(U_{\tau\nu}^*)=U_{\sigma(\tau\nu)}\;.\]
Then using the definition of the product we obtain the following, for all $a\in Q$,
\begin{eqnarray*}
s^*_{\sigma(\tau\nu)}(a) &=& \V_{xw\le a} s^*_\sigma(x\wedge s^*_{\tau\nu}(w)^*)\\
&=& \V_{xy\le a} s^*_\sigma\left(x\wedge \V_{yz\le w}s^*_\tau(y\wedge s^*_\nu(z)^*)^*\right)\\
&=&\V_{xyz\le a} s^*_\sigma(x)\wedge s^*_\sigma(s^*_\tau(y)^*)
\wedge s^*_\sigma(s^*_\tau(s^*_\nu(z)^*)^*)\;,
\end{eqnarray*}
and also
\begin{eqnarray*}
s^*_{(\sigma\tau)\nu}(a) &=& \V_{wz\le a} s^*_{\sigma\tau}(w\wedge s^*_\nu(z)^*)\\
&=&\V_{wz\le a} s^*_{\sigma\tau}(w)\wedge s^*_{\sigma\tau}(s^*_\nu(z)^*)\\
&=&\V_{wz\le a}\left( \V_{xy\le w} s^*_\sigma(x)\wedge s^*_\sigma(s^*_\tau(y)^*) \right)\wedge s^*_{\sigma\tau}(s^*_\nu(z)^*)\\
&=&\V_{xyz\le a} s^*_\sigma(x)\wedge s^*_\sigma(s^*_\tau(y)^*)
\wedge s^*_{\sigma\tau}(s^*_\nu(z)^*)\;,
\end{eqnarray*}
and thus the associativity is a consequence of the equality
\[s^*_{\sigma\tau}(s^*_\nu(z)^*) = s^*_\sigma(s^*_\tau(s^*_\nu(z)^*)^*)\]
that follows directly from \ref{prop:map} and the hypothesis. \qed
\end{proof}

\begin{corollary} If $\rs(Q)$ is a $T_1$ locale and for all $\sigma\in\sections(Q)$ both $\upsilon$ and $\sigma\cdot (-)$ preserve joins then $Q$ is weakly multiplicative.
\end{corollary}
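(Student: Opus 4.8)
The plan is to deduce the Corollary directly from the preceding theorem. That theorem already reduces weak multiplicativity to three ingredients, of which two — that $\upsilon$ preserves joins and that each $\sigma\cdot(-)$ preserves joins — are handed to us by hypothesis. So the entire content of the Corollary is to establish the remaining ingredient, namely the inequality
\[ s^*_\sigma(a^*\cdot\tau^{-1}) \le s^*_\sigma(s^*_\tau(a^*)^*)\qquad(a\in\rs(Q)),\]
and the $T_1$ assumption on $\rs(Q)$ is precisely what I would use to upgrade the reverse inequality of \ref{prop:map}(1) into an equality.

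First I would package the two sides as frame homomorphisms into a common open sublocale. Put $\phi_1(a)=s^*_\sigma(a^*\cdot\tau^{-1})$ and $\phi_2(a)=s^*_\sigma(s^*_\tau(a^*)^*)$ for $a\in\rs(Q)$. Each operation occurring here — the involution $(-)^*$, the frame homomorphisms $s^*_\tau$ and $s^*_\sigma$, and, for $\phi_1$, the map $a\mapsto a\cdot\tau^{-1}$ — preserves arbitrary joins and finite meets: for $\phi_1$ this uses \ref{prop:map}(2) for binary meets, together with the fact that $a\mapsto a\cdot\tau^{-1}=(\tau\cdot a^*)^*$ preserves joins by \ref{lemma:star} and the hypothesis on $\tau\cdot(-)$; for $\phi_2$ it is immediate. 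Hence both $\phi_1$ and $\phi_2$ preserve finite meets and arbitrary joins. Moreover, by \ref{prop:map2} and \ref{prop:map}(3) we have $\phi_1(a)=f(a^*)=s^*_{\sigma\tau}(a^*)$ (the join-preservation of $f$ needed to define $\sigma\tau$ is exactly the join-preservation just noted), so $\phi_1(1)=s^*_{\sigma\tau}(1)=U_{\sigma\tau}$, while $\phi_2(1)=s^*_\sigma(s^*_\tau(1)^*)=s^*_\sigma(U_\tau^*)=U_{\sigma\tau}$. As both maps are monotone, these top values show that $\phi_1$ and $\phi_2$ take values in the open sublocale $\ds{U_{\sigma\tau}}$; taking $\ds{U_{\sigma\tau}}$ as codomain, both become \emph{frame} homomorphisms $\rs(Q)\to\ds{U_{\sigma\tau}}$, i.e. locale maps $\ds{U_{\sigma\tau}}\to\rs(Q)$ with common domain and codomain.

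Next I would record that these two maps are comparable: by \ref{prop:map}(1), $\phi_1(a)=f(a^*)\ge s^*_\sigma(s^*_\tau(a^*)^*)=\phi_2(a)$ for all $a\in\rs(Q)$, that is $\phi_1\ge\phi_2$ pointwise, which is comparability of the corresponding locale maps in the natural order. This is where the separation hypothesis enters: for a $T_1$ locale the natural order on locale maps having that locale as codomain is trivial, so any two comparable such maps coincide. Applying this to the $T_1$ locale $\rs(Q)$ and the maps in $\Loc(\ds{U_{\sigma\tau}},\rs(Q))$ forces $\phi_1=\phi_2$, whence $s^*_\sigma(a^*\cdot\tau^{-1})= s^*_\sigma(s^*_\tau(a^*)^*)$ for all $a\in\rs(Q)$; in particular the required inequality holds. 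Feeding this into the preceding theorem gives that $\sections(Q)$ has an associative multiplication, and since $\upsilon$ preserves joins, that $Q$ is weakly multiplicative.

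The step I expect to be the main obstacle is not invoking $T_1$ but the bookkeeping that makes it applicable: one must check carefully that $\phi_1$ and $\phi_2$ are genuine frame homomorphisms into the \emph{same} open sublocale $\ds{U_{\sigma\tau}}$. The delicate and least automatic point is top-preservation, i.e. the coincidence $\phi_1(1)=\phi_2(1)=U_{\sigma\tau}$ — note that the intermediate involution $\ds{U_\tau}\to Q$ in the definition of $\phi_2$ is not itself top-preserving, so top-preservation must be verified for the full composite rather than factor by factor — since only after identifying a common domain and codomain may the triviality of the natural order on $\Loc(\ds{U_{\sigma\tau}},\rs(Q))$ be brought to bear.
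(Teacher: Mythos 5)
Your proof is correct and follows essentially the same route as the paper's: reduce the Corollary to the single inequality $s^*_\sigma(a^*\cdot\tau^{-1}) \le s^*_\sigma(s^*_\tau(a^*)^*)$ via the preceding theorem, observe the reverse inequality from \ref{prop:map}, view both sides as frame homomorphisms out of $\rs(Q)$, and invoke the $T_1$ hypothesis to force equality of the two comparable locale maps. Your only deviation is a gain in precision: the paper calls $f,g$ frame homomorphisms $\rs(Q)\to Q$, whereas you correctly note that they preserve the top only when the codomain is taken to be the open sublocale $\ds{U_{\sigma\tau}}$, which is exactly the bookkeeping needed to apply the $T_1$ property cleanly.
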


\begin{proof}
As already seen above, we need only show that
\[s^*_\sigma(a^*\cdot\tau^{-1}) = s^*_\sigma(s^*_\tau(a^*)^*)\]
for all $a \in \rs(Q)$.
But we have, by \ref{prop:map}, that
\[s^*_\sigma(a^*\cdot\tau^{-1}) \geq s^*_\sigma(s^*_\tau(a^*)^*)\;.\]
Considering the frame homomorphisms
$f,g:\rs(Q) \rightarrow Q$ given by
$f(a) = s^*_\sigma(a^*\cdot\tau^{-1})$ (\cf\ \ref{prop:map2}) and $g(a) =  s^*_\sigma(s^*_\tau(a^*)^*)$,
we obtain $f(a) \geq g(a)$ for all $a \in \rs(Q)$ (\cf\ \ref{prop:map}). Then, since $\rs(Q)$ is $T_1$, we
have $f = g$. \qed
\end{proof}

\section{Embeddability}\label{sec5}

In this section we study sufficient conditions for a weakly multiplicative open quantal frame $Q$ to be multiplicative. These are based on embeddability properties of $Q$ into the inverse quantal frame $\lcc(\sections(Q))$ that arises as the quantale completion of $\sections(Q)$. These properties are not necessary, however, since they are not satisfied by all the multiplicative open quantal frames (in particular cases they imply localic spatiality). We also study the open groupoids $G$ whose quantales $\opens(G)$ satisfy the embeddability properties, relating this to the possibility of defining a notion of universal \'etale cover for open groupoids.

\paragraph{Ideals of inverse quantal frames.}
First we shall see how some multiplicative open quantal frames arise as ``ideals'' of inverse quantal frames. This is somewhat independent from what follows next, but the proof of the main theorem of this paper, \ref{maintheorem}, is modeled on the proof of the main theorem about ideals, \ref{ideal2}. We begin with a general definition for involutive quantal frames.

\begin{definition}
We say that a subframe $\ideal\subset Q$ of an involutive quantal frame $Q$ is an \emph{involutive ideal}
if $Q\ideal \subset \ideal$ and $\ideal^{*} \subset Q$. 
\end{definition}

An involutive ideal is in particular an involutive subquantale (not necessarily unital).

\begin{remark}
Note that since in an inverse quantal frame we have $\V \ipi(Q) = 1$ the condition $Q\ideal \subset Q$  in this case
is equivalent to
\[ \ipi(Q)\ideal \subset \ideal\;.\]
\end{remark}

\begin{remark}
When $Q$ is an inverse quantal frame
we shall want $\ideal$ to be seen as an $\rs(Q)$-module with the usual module operation
\[ z\cdot x = z \wedge x \]
for all $z \in \rs(Q)$ and $x \in \ideal$, so a natural condition to impose would be that
\[ a1 \wedge x \in \ideal\]
for all $x \in \ideal$. But this is saying precisely that
\[ \spp (a)x \in \ideal \]
for all $x \in \ideal$ and $a \in Q$. Also, $\spp( a) \in  {\downarrow}e = E(\ipi(Q)) \subset \ipi(Q)$.
So it actually follows from $\ideal$ being an involutive ideal (or $\ipi(Q)\ideal \subset \ideal$) that
$\ideal$ has the natural structure of both an $\spp( Q)$-module and an $\rs(Q)$-module.
\end{remark}

Let $Q$ be an inverse quantal frame. Henceforth we shall always denote by $\idinc :\ideal \rightarrow Q$ the inclusion monomorphism of an involutive ideal $\ideal$ into $Q$. It is obviously a homomorphism of $\rs(Q)$-modules. Now consider $\idinc \otimes \idinc = (\ident \otimes \idinc) \circ (\idinc \otimes \ident)$ given by the composition:
\begin{equation}
\vcenter{\xymatrix{
\ideal\otimes_{\rs(Q)}\ideal\ar[rr]^{\idinc \otimes \ident }&& Q
\otimes_{\rs(Q)}\ideal \ar[rr]^{\ident \otimes \idinc} && Q\otimes_{\rs(Q)}Q\;.
 }}
\end{equation}
In \cite{RR} it is shown that inverse quantal frames are projective $\spp( Q)$-modules and hence projective
$\rs(Q)$-modules (with the usual module structure). Hence, by \cite[Prop.\ II.4.1]{JT} we have that $Q$ is a flat $\rs(Q)$-module.

\begin{theorem}\label{ideal2}
Let $Q$ be an inverse quantal frame and $\ideal \subset Q$ an involutive ideal that is an
open quantal frame. Assume that $\idinc \otimes \ident$ is mono.
Then $\ideal$ is a multiplicative open quantal frame.
\end{theorem}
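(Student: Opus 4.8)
The plan is to transport the multiplicativity of $Q$ across the inclusion $\idinc$, exploiting the fact that the reduced tensor products of $\ideal$ and of $Q$ are formed over the \emph{same} base frame. First I would record that, since $\ideal$ is a subframe, its top coincides with $1=1_Q$, so $Q1\subseteq Q\ideal\subseteq\ideal$ and hence $\rs(Q)=Q1\subseteq\ideal$; consequently $\rs(\ideal)=\ideal\cap\rs(Q)=\rs(Q)$, and the actions (\ref{leftrsaction})--(\ref{rightrsaction}) on $\ideal$ are just the restrictions of those on $Q$. Thus $\ideal\otimes_{\rs(Q)}\ideal$ is simultaneously the reduced tensor product of $\ideal$ (which is defined, as $\ideal$ is balanced) and a subobject of $Q\otimes_{\rs(Q)}Q$; writing $\mu_0$ and $\mu_0^\ideal$ for the two reduced multiplications, the square with horizontal arrows $\idinc\otimes\idinc$ and $\idinc$ and vertical arrows $\mu_0$, $\mu_0^\ideal$ commutes, since both composites send $a\otimes b$ (with $a,b\in\ideal$) to the product $ab$ computed in $Q$.

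Next I would make the embedding explicit. Because $Q$ is a flat $\rs(Q)$-module, $\ident\otimes\idinc$ is mono; together with the hypothesis that $\idinc\otimes\ident$ is mono, this shows that $j:=\idinc\otimes\idinc$ is a monomorphism of frames. Taking right adjoints in the commuting square and precomposing with $\idinc$ (using $\idinc_*\circ\idinc=\ident$) yields the unconditional identity $(\mu_0^\ideal)_*=j_*\circ(\mu_0)_*\circ\idinc$. Since $\idinc$ and $(\mu_0)_*$ preserve joins ($Q$ being an inverse quantal frame, hence multiplicative), the whole problem reduces to showing that $(\mu_0^\ideal)_*$ preserves joins, and this in turn — because $j$ is mono — reduces to the single key identity
\[ j\big((\mu_0^\ideal)_*(x)\big)=(\mu_0)_*(\idinc(x))\qquad(x\in\ideal), \]
that is, to showing that the element $(\mu_0)_*(x)=\V_{ab\le x}a\otimes b$ of $Q\otimes_{\rs(Q)}Q$ lies in the image of $j$, equivalently that it can be rewritten as a join of tensors $a'\otimes b'$ with $a',b'\in\ideal$ and $a'b'\le x$.

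This last identity is the heart of the matter and the step I expect to be the main obstacle, precisely because $\ideal$ is \emph{not} down-closed (indeed $a\le a1\in\ideal$ for every $a\in Q$, so down-closedness would force $\ideal=Q$); hence the factors $a,b$ of a decomposition $ab\le x$ cannot simply be pushed into $\ideal$. My plan to overcome this is: (i) use the axiom (U) for $\ideal$ to write $x=\V\{w\mid ww^*w\le x,\ w\in\ideal\}$ and, since $(\mu_0)_*$ preserves joins, reduce to decompositions $ab\le w$ with $w\in\ideal$ and $ww^*w\le x$; (ii) use $\V\ipi(Q)=1$ to reduce further to partial units, i.e.\ to $s\otimes t$ with $s,t\in\ipi(Q)$ and $st\le w$; (iii) bring $s\otimes t$ into the ``composable normal form'' $s\otimes t=(stt^*)\otimes(s^*st)$ by two applications of the balancing relation (using $tt^*,s^*s\le e$); and (iv) dominate it by $(wt^*)\otimes(s^*w)$. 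Both factors lie in $\ideal$ because $\ideal$ is a two-sided involutive ideal, we have $stt^*\le wt^*$ and $s^*st\le s^*w$ (from $st\le w$), and the crucial point is that the product is controlled by the chosen decomposition of $x$:
\[ (wt^*)(s^*w)=w(st)^*w\le ww^*w\le x. \]
Thus each $s\otimes t$ lies below a tensor of $\ideal$-elements whose product is $\le x$, which establishes the key identity; the join-preservation of $(\mu_0^\ideal)_*$, and hence the multiplicativity of $\ideal$, then follows formally from $j$ being mono.
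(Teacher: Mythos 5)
Your proposal is correct and follows essentially the same route as the paper's own proof: the key identity $j\bigl((\mu_0^\ideal)_*(x)\bigr)=(\mu_0)_*(x)$ is exactly the paper's claim $\mu_*(x)=\imu_*(x)$, established there by the same three moves --- reduction via (U) to elements $w\in\ideal$ with $ww^*w\le x$, reduction via $\V\ipi(Q)=1$ to partial-unit tensors $s\otimes t$ with $st\le w$, and domination $s\otimes t\le wt^*\otimes s^*w$ with $(wt^*)(s^*w)=w(st)^*w\le x$ --- followed by the same use of flatness of $Q$ plus the hypothesis to make $\idinc\otimes\idinc$ mono and transfer join preservation from $(\mu_0)_*$ to $(\mu_0^\ideal)_*$. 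Your only departures are presentational (making $\rs(\ideal)=\rs(Q)$ explicit, the adjoint identity $(\mu_0^\ideal)_*=j_*\circ(\mu_0)_*\circ\idinc$, and the exact normal form $s\otimes t=stt^*\otimes s^*st$ in place of the paper's $s\otimes t\le(s\wedge wt^*)\otimes(t\wedge s^*w)$), none of which changes the argument.
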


\begin{proof}
In order to simplify our notation we shall denote the reduced multiplication of $Q$ by $\mu$ (rather than $\mu_0$), and we shall denote the reduced multiplication of $\ideal$ by $\imu$ (this is a restriction of $\mu$). We begin by showing that
\[\mu_{*}(x) = \imu_*(x)\]
for any $x \in \ideal$. The inequality $\mu_{*}(x) \geq \imu_*(x)$ is immediate, so we need only show
that
\[\mu_{*}(x) \leq \imu_*(x)\;.\]
Now, by hypothesis,
\[\V_{\scriptsize\begin{array}{c}y\in \ideal\\ yy^{*}y \leq x\end{array}}y \geq x\]
for all $x \in \ideal$, so that
\[\mu_{*}(x) \leq \mu_{*}\left(\V_{\scriptsize\begin{array}{c}y\in \ideal\\ yy^{*}y \leq x\end{array}}y\right) = \V_{\scriptsize\begin{array}{c}y\in \ideal\\ yy^{*}y \leq x\end{array}}\mu_{*}(y)\;,\]
using the fact that $\mu_*$ preserves joins because $Q$ is inverse.
Hence we need only show that
\[\mu_{*}(y) \leq  \imu_*(x)\]
for all $y\in\ideal$ such that $yy^{*}y \leq x$ and the result will follow by taking the supremum.
Let then $y$ be such an element.
We have
\[\mu_*(y) = \V_{ab \leq y} a \otimes b\;.\]
But since $\V \ipi(Q) = 1$ we also get that $\V \ipi(Q) \otimes \ipi(Q) = 1 \otimes 1$,
so that if we show that
\[s \otimes t \leq \imu_{*}(x)\]
for all $s,t \in \ipi(Q)$ such that $st \leq y$ the result will follow by taking the supremum of all such pure tensors.
Let $s$ and $t$ be such elements. Then we have that
\[s \otimes t = ss^{*}s \otimes t = s \otimes s^{*}st \leq s\otimes s^{*}y\]
and, analogously,
\[s \otimes t \leq yt^{*} \otimes t\;.\]
Hence,
\[s \otimes t \leq (s \wedge yt^{*}) \otimes (t \wedge s^{*}y) \leq yt^{*} \otimes s^{*}y\;.\]
But by hypothesis $\ipi(Q)\ideal \subset \ideal$ so that $yt^{*} \in \ideal$ and $s^{*}y \in \ideal$.
Moreover,
\[yt^{*}s^{*}y = y(st)^{*}y \leq yy^{*}y \leq x\;.\]
Hence, combining our expressions we get that
$s \otimes t \leq \imu_{*}(x)$.

Now we show that $\imu_*$ preserves joins. 
Since $Q$ is an inverse quantal frame we have, as we have seen, that $\ident \otimes \idinc$ is mono. Hence,
by hypothesis, $\idinc \otimes \idinc = (\idinc \otimes \ident)\circ (\ident \otimes \idinc)$ is mono.
Consider a join $\V_\alpha x_\alpha$ with $x_\alpha \in \ideal$. Applying what we proved we have, since $\V_\alpha x_\alpha \in \ideal$:
\[\imu_*\left(\V_\alpha x_\alpha\right) = \mu_*\left(\V_\alpha x_\alpha\right) = \V_\alpha \mu_*(x_\alpha) = \V_\alpha \imu_{*}(x_\alpha)\;.\]
Hence, $\ideal$ is multiplicative. \qed
\end{proof}

Also, we have:

\begin{theorem}\label{ideal}
Let $\ideal$ be an involutive ideal of an inverse quantal frame $Q$ such that $\idinc \otimes \ident$ is mono and
\[\V\limits_{\scriptsize\begin{array}{c}y\in \ideal\\ yy^{*}y \leq x\end{array}}y \geq x\]
for all $x \in \ideal$.
Then $\ideal$ is a multiplicative open quantal frame.
\end{theorem}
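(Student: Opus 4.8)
The plan is to deduce this from Theorem~\ref{ideal2}: once we know that $\ideal$ is an open quantal frame, the only remaining hypothesis of \ref{ideal2}, that $\idinc\otimes\ident$ be mono, is already assumed, so \ref{ideal2} yields multiplicativity at once. (Equivalently, one may observe that the proof of \ref{ideal2} invoked the open quantal frame hypothesis on $\ideal$ \emph{only} to obtain the inequality $\V_{y\in\ideal,\,yy^{*}y\le x}y\ge x$, which is now assumed outright, so that proof transfers verbatim.) Thus the whole task is to check that $\ideal$, equipped with the involution, multiplication, meets and joins inherited from $Q$ and with top $1_\ideal=\V\ideal$, satisfies (B), (R), (U), (O). Note first that $\ideal$ is a \emph{two-sided} involutive ideal: since $\ideal$ is involutive, $\ideal^{*}\subset\ideal$, and then $\ideal Q=(Q\ideal)^{*}\subset\ideal$ as well.

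The identities (B) and (O) will descend from those of $Q$ — which holds since an inverse quantal frame is open by \ref{inverseimpliesopen} — once I prove the key lemma that $a1_\ideal=a1_Q$ (and, by involution, $1_\ideal a=1_Q a$) for every $a\in\ideal$. The inequality $a1_\ideal\le a1_Q$ is clear; for the reverse, put $b=a^{*}a1_Q$, which lies in $\ideal$ because $a^{*}\in\ideal$ and $\ideal Q\subset\ideal$, and use $a\le aa^{*}a$ (\cf~\ref{RQeqQ1}) to get $a1_\ideal\ge ab=(aa^{*}a)1_Q\ge a1_Q$. Granting this, for $a,b,c\in\ideal$ the $Q$-identities $b(a1_Q\wedge c)=(b\wedge 1_Qa^{*})c$ and $(a1_Q\wedge b)c=a1_Q\wedge bc$ become, after substituting $a1_Q=a1_\ideal$ and $1_Qa^{*}=1_\ideal a^{*}$, precisely (B) and (O) for $\ideal$; every element occurring lies in $\ideal$ and the operations agree with those of $Q$, so the equalities hold inside $\ideal$. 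Axiom (U) for $\ideal$ is immediate: the inequality $\ge$ is the standing hypothesis, while $\le$ holds because $x\le xx^{*}x\le a$ whenever $xx^{*}x\le a$.

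The one step needing a genuine idea is (R), that $\upsilon_\ideal(a)\in\rs(\ideal)=\ideal\cap\rs(Q)$ for $a\in\ideal$, where $\upsilon_\ideal(a)=\V\{x\in\ideal\mid xx^{*}\le a\}$ by \ref{xxstarvsxystar} (valid since $\ideal$ is balanced). Membership in $\ideal$ is clear, so the point is right-sidedness in $Q$. The decisive observation is that the set $\{x\in\ideal\mid xx^{*}\le a\}$ is closed under right multiplication by partial units: if $x$ is in it and $s\in\ipi(Q)$, then $xs\in\ideal$ (right ideal) and $(xs)(xs)^{*}=xss^{*}x^{*}\le xex^{*}=xx^{*}\le a$, using $ss^{*}\le e$ and $xe=x$. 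Hence, since $\V\ipi(Q)=1_Q$ in the inverse quantal frame $Q$,
\[\upsilon_\ideal(a)\,1_Q=\Bigl(\V_{xx^{*}\le a,\,x\in\ideal}x\Bigr)\Bigl(\V_{s\in\ipi(Q)}s\Bigr)=\V_{x,s}\,xs\le\upsilon_\ideal(a),\]
so $\upsilon_\ideal(a)$ is right-sided, establishing (R). With (B), (R), (U), (O) all verified, $\ideal$ is an open quantal frame and Theorem~\ref{ideal2} applies to conclude that it is multiplicative. I expect (R) to be the main obstacle, precisely because $\ideal$ need not be downward closed in $Q$, so one cannot simply restrict $\upsilon$ from $Q$; the closure under partial units above is exactly what circumvents this difficulty.
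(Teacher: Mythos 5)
Your proof is correct, and its skeleton --- verify axioms (B), (O), (U), (R) for $\ideal$ and then feed the result into Theorem \ref{ideal2} --- is the same as the paper's; the interest lies in how the axioms are checked. For (B) and (O) the paper simply asserts that they ``hold also in $\ideal$'' because the inverse quantal frame $Q$ is open (\ref{inverseimpliesopen}); this glosses over the fact that the axioms for $\ideal$ are stated with the top $1_\ideal=\V\ideal$ rather than $1_Q$, and your lemma $a1_\ideal=a1_Q$ for $a\in\ideal$ (proved via $b=a^*a1_Q\in\ideal$ and $a\le aa^*a$) is precisely the patch that makes that descent rigorous. For (R) the two proofs genuinely diverge. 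The paper recycles the computation from the proof of \ref{ideal2} --- the identity $\mu_*\vert_\ideal=\imu_*$, which needs only the join hypothesis and the ideal property, not full openness of $\ideal$ --- to conclude $\upsilon_\ideal=[\ident,i]\circ\mu_*\vert_\ideal=\upsilon_Q\vert_\ideal$, so that (R) for $\ideal$ is inherited from (R) for $Q$. You instead prove right-sidedness directly: the set $\{x\in\ideal\mid xx^*\le a\}$ is stable under right multiplication by partial units (since $xs\in\ideal$ and $(xs)(xs)^*\le xex^*=xx^*\le a$), whence $\upsilon_\ideal(a)1_Q=\V_{x,s}xs\le\upsilon_\ideal(a)$ using $\V\ipi(Q)=1_Q$. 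The paper's route is shorter given \ref{ideal2} and conceptually identifies the unit-space data of $\ideal$ with the restriction of that of $Q$, but it obliges the reader to verify that the cited portion of \ref{ideal2}'s proof does not itself presuppose openness of $\ideal$; your argument is self-contained, needs no tensor-product machinery for (R), and runs on the same partial-unit calculus that drives \ref{ideal2}. Both are valid.
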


\begin{proof}
Since $Q$ is inverse it is in particular open, so that $(B)$ and $(O)$ are verified
and hence hold also in $\ideal$. Also, in $Q$ we have
\[a \leq aa^{*}a\;,\]
so that together with the inequality in the hypothesis we get $(U)$.
Note that $\rs(\ideal) \subset \rs(Q)$ because $\ideal$ is an involutive ideal of $Q$.
Finally for $(R)$ observe that
\[\upsilon_{\ideal}  = [\ident,i] \circ \imu_* = [\ident,i] \circ {\mu_{*}}\vert_{\ideal}\;,\] 
 using the proof of the last theorem. So  the fact that $\upsilon_{Q}(q) \in \rs(Q)$
(since $Q$ is open) implies $(R)$. \qed
\end{proof}

Hence, combining the two previous results, we get our main result about ideals:

\begin{theorem}
Let $Q$ be an inverse quantal frame  and $\ideal \subset Q$ an involutive ideal. Then $\ideal$ is
a multiplicative open quantal frame if and only if the following two conditions hold:
\begin{itemize}
\item $\idinc \otimes \ident$ is mono;
\item $\V\limits_{\scriptsize\begin{array}{c}y\in \ideal\\ yy^{*}y \leq x\end{array}}y \geq x$ for all $x \in \ideal$.
\end{itemize}
\end{theorem}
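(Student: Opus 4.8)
The statement is an equivalence and I would treat the two implications separately. The implication ``if'' --- that the two displayed conditions make $\ideal$ a multiplicative open quantal frame --- is exactly Theorem~\ref{ideal}, so I would simply cite it and spend no further effort there. All the work is in the converse. Assuming $\ideal$ is a multiplicative open quantal frame, the second condition is immediate: such a $\ideal$ is in particular an open quantal frame, so axiom~(U) holds in $\ideal$, which reads $\V_{yy^{*}y\le x}y=x$ (the join taken over $y\in\ideal$) for every $x\in\ideal$, giving in particular the required $\ge x$. Thus the whole of the remaining argument, and what I expect to be the main obstacle, is to show that $\idinc\otimes\ident$ is mono.

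My plan for the monomorphism is to pass to the associated localic groupoids. Since $\ideal$ is a multiplicative open quantal frame, the results of Section~\ref{sec:groupoidquantales} give it an associated \emph{open} localic groupoid $H=\groupoid(\ideal)$ with $\opens(H_1)=\ideal$ and $\opens(H_0)=\rs(\ideal)$; and since $Q$ is an inverse quantal frame, $G=\groupoid(Q)$ is an \'{e}tale groupoid with $\opens(G_1)=Q$. I would first note that the two unit locales agree: $\rs(Q)=Q1\subset Q\ideal\subset\ideal$ (using $1\in\ideal$ and $Q\ideal\subset\ideal$), while conversely $\rs(\ideal)=\ideal\cap\rs(Q)$, so $\rs(\ideal)=\rs(Q)$ and $G_0=H_0$. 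The inclusion $\idinc\colon\ideal\to Q$ preserves joins and finite meets, fixes $\rs(Q)$ (hence is compatible with $d^{*}$ and $r^{*}$), commutes with the involution, and is a homomorphism of quantales (the product of $\ideal$ being the restriction of that of $Q$); it therefore has the form $\idinc=J^{*}$ for a morphism of localic groupoids $J\colon G\to H$ that is the identity on unit locales. Being a subframe inclusion, $\idinc$ is injective, so $J$ is a \emph{surjection} of locales.

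The key step is then to recognise $\idinc\otimes\ident$ as a pullback of $J$. Under the identifications $\ideal\otimes_{\rs(Q)}\ideal=\opens(H_1\times_{G_0}H_1)$ and $Q\otimes_{\rs(Q)}\ideal=\opens(G_1\times_{G_0}H_1)$, the homomorphism $\idinc\otimes\ident$ is $(J\times_{G_0}\ident_{H_1})^{*}$, and $J\times_{G_0}\ident_{H_1}\colon G_1\times_{G_0}H_1\to H_1\times_{G_0}H_1$ is exactly the pullback of $J\colon G_1\to H_1$ along the first projection $\pi_1\colon H_1\times_{G_0}H_1\to H_1$. Now $\pi_1$ is the base change of $d_H$ along $r_H$, and since $H$ is open $d_H$ is an open surjection; hence $\pi_1$ is an open surjection as well. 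I would then invoke the fact that in the category of locales a surjection remains a surjection after pullback along an open surjection (a standard consequence of the descent theory of open surjections \cite{JT}): applied to the surjection $J$ and the open surjection $\pi_1$ it shows $J\times_{G_0}\ident_{H_1}$ is a surjection, i.e.\ that its inverse image $\idinc\otimes\ident$ is injective, hence mono.

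The delicate point, where I would concentrate, is precisely this last localic step. Multiplicativity of $\ideal$ enters here in an essential way: it is what makes $H$ an open groupoid, and hence what makes $\pi_1$ an open surjection, so the pullback-stability of surjections applies. I would be careful to set up the frame/locale identifications so that $\idinc\otimes\ident$ is literally the inverse image of the asserted pulled-back map, and to state the stability theorem for surjections along open surjections in the precise form I use. Finally, it is worth recording the symmetry with Theorem~\ref{ideal2}: there the monomorphism hypothesis was used to transport the join-preservation of the reduced multiplication from $Q$ down to $\ideal$, whereas here the multiplicativity of $\ideal$ is used, through the openness of $\groupoid(\ideal)$, to recover the monomorphism --- so the two results really are converse halves of the same equivalence.
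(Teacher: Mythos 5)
Your overall decomposition is sound and, as far as it goes, matches the paper: the ``if'' direction is exactly Theorem \ref{ideal}, and the second condition follows from axiom (U) since a multiplicative open quantal frame is in particular open. Be aware, though, that the paper's own ``proof'' consists only of combining Theorems \ref{ideal2} and \ref{ideal}, neither of which asserts that $\idinc\otimes\ident$ is mono when $\ideal$ is multiplicative and open; so the necessity of the mono condition is precisely where the paper is silent, and it is also precisely where your argument has a genuine gap.

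There are two problems, in increasing order of severity. First, the claim that $\idinc$ ``therefore has the form $J^{*}$ for a morphism of localic groupoids'' is unjustified: a quantale homomorphism is not automatically compatible with the groupoid multiplications, because $m_G^{*}$ and $m_H^{*}$ are the \emph{right adjoints} $(\mu_0)_*$ and $\imu_*$ of the reduced multiplications, and the identity $m_G^{*}\circ\idinc=(\idinc\otimes\idinc)\circ m_H^{*}$ is exactly the nontrivial computation that occupies the first half of the paper's proof of Theorem \ref{ideal2} (using the ideal property, the partial units of $Q$, and (U)). This gap is repairable, since your pullback identification only needs $J_1$ to be a morphism of the underlying localic graphs ($d_H\circ J_1=d_G$, $r_H\circ J_1=r_G$), which does follow from $\idinc$ fixing $\rs(Q)$ and commuting with the involution. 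Second, and fatally: the fact you invoke --- that in $\Loc$ the pullback of an arbitrary surjection along an open surjection is again a surjection --- is not a citable standard fact, and it does not follow formally from the descent theory of \cite{JT}. Effective descent along open surjections gives the \emph{opposite} direction: pullback along $\pi_1$ reflects isomorphisms and descends objects and morphisms, whereas you need it to \emph{preserve} surjectivity, i.e.\ that the mono $\idinc$ survives cobase change. That is a flatness/purity-type condition on the $\rs(Q)$-module $\ideal$; openness of $d_H$ only yields the Frobenius splitting $(d_H)_!$ of $d_H^{*}$, which suffices for descent but not for flatness (for rings, analogously, a split extension $R\to S$ is an effective descent morphism for modules, yet cobase change along it can destroy injectivity of ring maps). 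The paper's own practice confirms that no such general stability principle was available to its authors: mono-ness of maps of the form $j\otimes\ident$ is always either assumed outright (embeddability, Theorem \ref{maintheorem}) or proved by special means (projectivity of inverse quantal frames as $\rs(Q)$-modules via \cite{RR}, or spatiality plus local compactness in the final section). Indeed, if your cited fact were true as stated, then for every open groupoid having enough bisections ($j$ mono) would already imply embeddability ($j\otimes\ident$ mono) by your very pullback argument applied to the cover $\cover G\to G$, making the paper's entire discussion of locally compact and Lie groupoids superfluous --- strong evidence that this step is not ``standard'' but is, rather, the real open difficulty. So your instinct about where the work lies is correct, but as written the necessity of the mono condition remains unproved.
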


It is easy to see, using the definition of $T_1$ locale together with \ref{ideal}, that

\begin{corollary}
Let $Q$ be an inverse quantal frame and $\ideal \subset Q$ a $T_1$ involutive ideal such that
$\idinc \otimes \ident$ is mono.
Then $\ideal$ is open and multiplicative.
\end{corollary}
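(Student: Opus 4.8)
The plan is to reduce the statement to Theorem \ref{ideal} and then extract its one remaining hypothesis from the $T_1$ assumption. Theorem \ref{ideal} demands two things of the involutive ideal $\ideal\subset Q$: that $\idinc\otimes\ident$ be mono, which is given, and that $\V_{y\in\ideal,\ yy^*y\le x}y\ge x$ for all $x\in\ideal$. So the entire content is to derive this inequality from the fact that $\ideal$ is a $T_1$ locale. I first record that the reverse inequality is free: since $Q$ is an inverse quantal frame we have $y\le yy^*y$ for all $y$, so every $y$ occurring in the supremum already satisfies $y\le x$, whence $\V_{y\in\ideal,\ yy^*y\le x}y\le x$. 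Thus it suffices to prove that the two sides coincide.

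Write $j(x)=\V_{y\in\ideal,\ yy^*y\le x}y$. This $j$ is monotone and $j\le\ident_\ideal$, and, by the same meet computation used for $\upsilon$ (if $yy^*y\le a$ and $y'y'^*y'\le b$ then $(y\wedge y')(y\wedge y')^*(y\wedge y')\le yy^*y\wedge y'y'^*y'\le a\wedge b$, with $y\wedge y'\in\ideal$), $j$ preserves finite meets; it also preserves the top, by taking $y=1_\ideal$. I would then observe that $j$ is precisely the left-hand side of the reformulation of axiom (U) for $\ideal$ given in \ref{U-lemma}, i.e. $j(x)=\V_{x'y\le x}\upsilon_\ideal(x')\wedge y$. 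The target is to show that $j$ is a frame homomorphism; once that is established, $j\le\ident_\ideal$ together with the defining property of a $T_1$ locale---that any two comparable frame homomorphisms out of it must be equal, exactly as exploited in the earlier corollary deducing weak multiplicativity from $T_1$-ness of $\rs(Q)$---forces $j=\ident_\ideal$, which is the desired inequality, and Theorem \ref{ideal} then yields that $\ideal$ is open and multiplicative.

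The main obstacle is the one remaining frame-homomorphism property of $j$, namely preservation of arbitrary joins; this is where the hypotheses genuinely bite. It cannot come from $T_1$ alone, since a meet- and top-preserving map below the identity need not be the identity even on a $T_1$ locale, so join preservation must be supplied by the ambient algebraic structure. The plan here is to re-run the mechanism of the proof of Theorem \ref{ideal2}: because $Q$ is an inverse quantal frame it is multiplicative and flat as an $\rs(Q)$-module, and the mono hypothesis $\idinc\otimes\ident$ lets the reduced multiplication of $Q$ be transported to $\ideal$; realizing $\imu_*$ as a corestriction of the frame homomorphism $\mu_*$ of $Q$, and then expressing $j$ through $\upsilon_\ideal=[\ident,i]\circ\imu_*$, exhibits $j$ as a composite of join-preserving maps. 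In effect $T_1$-ness furnishes, in a single stroke, the very inequality that Theorems \ref{ideal2} and \ref{ideal} had taken as an explicit hypothesis, so the delicate point is to arrange this interplay without circularity---establishing frame-homomorphism status of $j$ before, not after, invoking multiplicativity of $\ideal$---rather than any isolated computation.
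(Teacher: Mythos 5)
Your skeleton is the one the paper itself points to (the paper gives no details, saying only that the corollary follows from the definition of $T_1$ locale together with \ref{ideal}): reduce to \ref{ideal}, set $j(x)=\V_{y\in\ideal,\ yy^*y\le x}\,y$, observe that $j\le\ident_\ideal$, that $j$ preserves finite meets and the top, and then invoke $T_1$-rigidity (comparable frame homomorphisms out of a $T_1$ frame coincide, exactly as in the corollary closing section \ref{section4}) to force $j=\ident_\ideal$. All of those individual observations are correct, including the identification of $j$ with the left-hand side of the reformulation in \ref{U-lemma}.

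The gap is precisely the step you defer to a ``plan'': you never prove that $j$ preserves arbitrary joins, and the route you sketch cannot be carried out in the order you propose. Realizing $\imu_*$ as a corestriction of $\mu_*$, i.e.\ the identity $(\idinc\otimes\idinc)\circ\imu_*=\mu_*\circ\idinc$, is the first half of the proof of \ref{ideal2}, and that argument consumes the inequality $\V_{y\in\ideal,\ yy^*y\le x}\,y\ge x$ as an input (it is what allows one to pass from $\mu_*(x)$ to $\V_y\mu_*(y)$); this is the very inequality the corollary is supposed to produce, so using it here is circular, not merely delicate. The mono hypothesis does not substitute for it: monicity of $\idinc\otimes\ident$ says $\idinc\otimes\idinc$ is injective, but gives no reason why $\mu_*(x)$, a join of tensors $a\otimes b$ with $a,b$ ranging over all of $Q$, should lie in the image of $\idinc\otimes\idinc$ --- squeezing those tensors into $\ideal\otimes_{\rs(Q)}\ideal$ is exactly what the control $yy^*y\le x$ is for in \ref{ideal2}. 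Likewise, factoring $j$ through $\upsilon_\ideal=[\ident,i^*]\circ\imu_*$ exhibits $j$ as a composite of join-preserving maps only if $\imu_*$ itself preserves joins, which is the multiplicativity of $\ideal$ --- the conclusion, not an available tool. What one does get non-circularly (the second half of the \ref{ideal2} computation needs no (U)) is the chain $\mu_*(\idinc(j(x)))\le(\idinc\otimes\idinc)(\imu_*(x))\le\mu_*(\idinc(x))$, but composing it with the unit-law frame homomorphism $\langle u\circ d,\ident\rangle^*$ of $\groupoid(Q)$ merely reproduces $j(x)\le x$, which you already have. That join-preservation of $j$ is not a formal consequence of the ideal structure is shown by a concrete example: in $Q=\mathcal{P}(\mathbb{Z})$ (the group $\mathbb{Z}$), the set $\ideal$ of cofinite subsets together with $\emptyset$ is an involutive ideal, and there $j(A)=\emptyset$ for every proper cofinite $A$ (since $y-y+y=\mathbb{Z}$ for any nonempty cofinite $y$) while $j(\mathbb{Z})=\mathbb{Z}$, so $j$ fails to preserve the join $(\mathbb{Z}\setminus\{0\})\cup(\mathbb{Z}\setminus\{1\})=\mathbb{Z}$. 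Hence the hypotheses ($T_1$ and/or the mono condition) must enter the proof of join-preservation itself, and your proposal contains no argument that makes them do so; as it stands it is a correct reduction with its central step missing.
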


\paragraph{Weak embeddability.}

From now on $Q$ will denote an arbitrary but fixed weakly multiplicative open quantal frame, and we shall address two embeddability properties which, jointly, are sufficient conditions for multiplicativity. Throughout the rest of this paper we shall denote by $\cover Q$ the inverse quantal frame $\lcc(\sections(Q))$ that arises as the quantale completion of the abstract complete pseudogroup $\sections(Q)$ in the sense of \cite{aim}.

\begin{lemma}\label{lemmajay}
For all $a\in Q$ the set
\[j(a)=\{\sigma \in \sections(Q)\mid s^{*}(a) = U\}\]
is a downwards closed subset of $\sections(Q)$, and it is also closed under the formation of joins of compatible subsets.
The mapping
\[j:Q\to\cover Q\]
thus defined is a homomorphism of frames.
\end{lemma}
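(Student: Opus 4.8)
The plan is to first record the elementary facts that $s_\sigma^*$ is a frame homomorphism with $s_\sigma^*(1)=U_\sigma$, so that $s_\sigma^*(a)\le U_\sigma$ for every $a$ and the defining condition $s_\sigma^*(a)=U_\sigma$ of $j(a)$ is equivalent to $U_\sigma\le s_\sigma^*(a)$; in particular $j$ is monotone, since $a\le b$ forces $s_\sigma^*(a)\le s_\sigma^*(b)\le U_\sigma$. Recall (from \cite{aim}) that the elements of $\cover Q=\lcc(\sections(Q))$ are exactly the downsets of $\sections(Q)$ that are closed under joins of compatible families, with inclusion as order, intersection as meet, and the join of a family obtained by closing the union under compatible joins. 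So the first task is to check that each $j(a)$ is such a downset, whence $j$ is well defined as a map into $\cover Q$.

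For downward closure I would use the description of the natural order established in \ref{lemma:pseudogroup} (condition 2 and the computation in its proof): if $\sigma\le\tau$ then $U_\sigma\le U_\tau$ and $s_\sigma^*(a)=s_\tau^*(a)\wedge U_\sigma$ for all $a$. Hence, when $\tau\in j(a)$, we get $s_\sigma^*(a)=U_\tau\wedge U_\sigma=U_\sigma$, so $\sigma\in j(a)$. For closure under compatible joins I would invoke the gluing construction in the same proof: a compatible family $(\sigma_i)$ has join $\sigma=\V_i\sigma_i$ with $U_\sigma=\V_i U_{\sigma_i}$ and $s_\sigma^*=\V_i s_{\sigma_i}^*$; if each $\sigma_i\in j(a)$ then $s_\sigma^*(a)=\V_i s_{\sigma_i}^*(a)=\V_i U_{\sigma_i}=U_\sigma$, so $\sigma\in j(a)$. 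This shows $j(a)\in\cover Q$.

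Next the frame homomorphism claims. Preservation of the top is immediate, since $s_\sigma^*(1)=U_\sigma$ for every $\sigma$ gives $j(1)=\sections(Q)$, the top of $\cover Q$. For binary meets, using that $s_\sigma^*$ preserves meets and that $s_\sigma^*(a),s_\sigma^*(b)\le U_\sigma$, the equality $s_\sigma^*(a)\wedge s_\sigma^*(b)=U_\sigma$ holds iff $s_\sigma^*(a)=U_\sigma$ and $s_\sigma^*(b)=U_\sigma$; hence $j(a\wedge b)=j(a)\cap j(b)$, which is the meet in $\cover Q$. For joins, the inclusion $\V_\alpha j(a_\alpha)\subseteq j(\V_\alpha a_\alpha)$ follows at once from monotonicity of $j$ together with the fact that $j(\V_\alpha a_\alpha)$ is an element of $\cover Q$ containing each $j(a_\alpha)$.

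The main obstacle is the reverse inclusion $j(\V_\alpha a_\alpha)\subseteq\V_\alpha j(a_\alpha)$, which I would handle by a restriction/gluing argument. Fix $\sigma\in j(\V_\alpha a_\alpha)$, so $\V_\alpha s_\sigma^*(a_\alpha)=s_\sigma^*(\V_\alpha a_\alpha)=U_\sigma$. Put $W_\alpha=s_\sigma^*(a_\alpha)\in\rs(Q)$ and let $\sigma_\alpha$ be the restriction of $\sigma$ to $W_\alpha$ (a local bisection, since $\ds{W_\alpha}\subseteq\ds{U_\sigma}$ is an open sublocale and open regular monomorphisms restrict to open regular monomorphisms). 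Then $s_{\sigma_\alpha}^*(a_\alpha)=s_\sigma^*(a_\alpha)\wedge W_\alpha=W_\alpha=U_{\sigma_\alpha}$, so $\sigma_\alpha\in j(a_\alpha)$. As all the $\sigma_\alpha$ lie below the single element $\sigma$, they form a compatible family, and their join has domain $\V_\alpha W_\alpha=U_\sigma$ and section $\V_\alpha s_{\sigma_\alpha}^*=\V_\alpha(s_\sigma^*(-)\wedge W_\alpha)=s_\sigma^*(-)\wedge U_\sigma=s_\sigma^*$, so $\V_\alpha\sigma_\alpha=\sigma$. Since $\V_\alpha j(a_\alpha)$ contains every $\sigma_\alpha$ and is closed under compatible joins, it contains $\sigma$. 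This yields the reverse inclusion and completes the proof that $j$ preserves arbitrary joins, hence is a homomorphism of frames.
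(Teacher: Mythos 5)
Your proposal is correct and follows essentially the same route as the paper's proof: downward closure via the restriction characterization of the natural order, closure under compatible joins via the gluing formulas $s^*=\V_i s_i^*$, $U=\V_i U_i$, finite meets from the fact that $s_\sigma^*$ preserves meets and lands below $U_\sigma$, and the reverse inclusion for joins by restricting $\sigma\in j(\V_\alpha a_\alpha)$ to the opens $s_\sigma^*(a_\alpha)$ and regluing. You merely spell out a few details the paper leaves implicit (that the restrictions are local bisections, that they form a compatible family whose join is $\sigma$), which is fine.
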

[These properties are easy to understand in the case of the quantale of a topological open groupoid $G$: for an open set $W\subset G_1$ the set $j(W)$ is the set of local bisections $(s,U)$ whose image $s(U)$ is contained in $W$.]

\begin{proof}
Recall that $\sections(Q)$ has its  natural order given by $(s,U) \leq (s',U')$ iff $s'\vert_{U} = s$.
The sets $j(a)$ are downwards closed, for if
$\sigma \in j(a)$ and $\tau \leq \sigma$ then, since
$s_{\sigma}\vert_{U_{\tau}} = s_\tau$, if $s^{*}_{\sigma}(a) = U_{\sigma}$ then $s^*_{\tau}(a) = s^{*}_{\sigma}(a) \wedge U_{\tau} = U_{\sigma} \wedge U_{\tau} 
= U_{\tau}$.
The sets $j(a)$ are also closed for joins of families of compatible elements $(\sigma_i)$
due to the properties of the gluing $(s,U)=\V_i\sigma_i$, since we have $s^* = \V_i s^*_i$ and $U=\V_i U_i$ (\cf\ \ref{lemma:pseudogroup}).
In order to see that $j$ is a homomorphism of frames we remark that we have
\begin{eqnarray*}
j(a \wedge b) &=& j(a) \cap j(b)=j(a)\wedge j(b)\;,\\
j(1) &=& \sections(Q)=1_{\cover Q}\;,
\end{eqnarray*}
and thus $j$ preserves finite meets.
Hence, $j$ is monotone and we have
\[\V_i j(a_i) \subset j\left(\V_i a_i\right)\;.\]
The condition $\sigma \in  j\left(\V_i a_i\right)$ implies that $\V_i s^{*}(a_i)  = U$ and thus $s^{*}(a_i) \leq U$.
Hence, if we define $\sigma_i$ to be the restriction of $\sigma$ to $s^*(a_i)$ we obtain $\sigma_i \in j(a_i)$ and
$\sigma = \V_i \sigma_i \in \V_i j(a_i)$. Hence, $j$ is a homomorphism of frames.
\qed
\end{proof}

The completion $\sections(Q) \rightarrow \cover Q$ is defined by $\sigma\mapsto\downsegment(\sigma)$, and in order to simplify notation we shall write $\cover\sigma$ instead of $\downsegment(\sigma)$. Note that we have
\[\cover{\sigma\tau} = \cover\sigma \cover\tau\;.\]

\begin{remark}
We remark that $\cover Q$ may be thought of as a generalized concept of
spectrum for weakly multiplicative open quantal frames (and for open groupoids), where the local bisections play the role of ``points''. In particular, if $\rs(Q)$ is the singleton locale $\{0,1\}$ (or, more generally, $\Omega$ in the underlying topos), it is easy to see that local bisections correspond to actual points and $j(a)$ is, for each $a\in Q$, the set of points ``in'' $a$.
\end{remark}

\begin{definition}
We say that $Q$ is \emph{weakly embeddable}, if
\[ \cover\sigma j(a) = j(\sigma\cdot a)\]
for all $a \in Q$ and $\sigma \in \sections(Q)$.
\end{definition}
 
Here $\sigma\cdot a$ denotes the action of $\sigma$ on $a$ as in \ref{definition:action}:

\[\sigma\cdot a = \V_{x^*y\le a}s^*(x)\wedge y\;.\]

\begin{remark}\label{mathfraki}
The completion $\cover {(-)}:\sections(Q)\to\cover Q$ defines an isomorphism $\sections(Q)\cong\ipi(\cover Q)$.
\end{remark}

\begin{lemma} \label{lemma:propertiesgamma}
If $Q$ is weakly embeddable the following properties hold for all $a,b \in Q$ and $\sigma,\tau \in \sections(Q)$:
\begin{enumerate}
\item\label{one} $j(a)j(b) \leq j(ab)$;
\item\label{two} $j(a^*) = j(a)^*$;
\item\label{three} $j(\sigma \cdot a) = \cover\sigma j(a)$ and $j(a\cdot \tau) = j(a)\cover\tau$;
\item\label{four} If $\cover\sigma  \leq j(a)$ then $\cover{\sigma\tau} \leq j(a\cdot \tau)$ and $\cover{\tau\sigma} \leq j(\tau \cdot a)$;
\item\label{five} If $j$ is mono and $\cover\sigma  \leq j(a)$ then $\sigma \cdot b \leq ab$;
\item\label{six} $j(\rs(Q)) = \rs(\cover Q) = j(E(\sections(Q))\cdot 1)$;
\item\label{seven} If $j$ is mono then $\sigma\cdot(\tau \cdot a) = (\sigma \cdot \tau)\cdot a$;
\item\label{eight} If $j$ is mono then $\sigma\cdot(a\cdot \tau) = (\sigma \cdot a)\cdot \tau$.
\end{enumerate}
\end{lemma}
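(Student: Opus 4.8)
The plan is to treat conditions \ref{two} and \ref{three} as the foundation and to derive everything else from them together with the structure of $\cover Q=\lcc(\sections(Q))$. Throughout I shall use three standing facts about this completion: every element $X\in\cover Q$ is the join $\V_{\sigma\in X}\cover\sigma$ of the principal downsets it contains; the quantale multiplication distributes over joins and satisfies $\cover\sigma\cover\tau=\cover{\sigma\tau}$; and the involution is $X^{*}=\{\sigma^{-1}\mid\sigma\in X\}$. I shall also use that $\cover\sigma\le j(a)$ is equivalent to $\sigma\in j(a)$, since $j(a)$ is downwards closed (\ref{lemmajay}).

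First I would prove \ref{two}. The key point is the equivalence $\sigma\in j(a)\iff\sigma^{-1}\in j(a^{*})$. Using $s_{\sigma^{-1}}^{*}(x)=t_\sigma^{*}(x^{*})$ we get $s_{\sigma^{-1}}^{*}(a^{*})=t_\sigma^{*}(a)$, and since $t_\sigma=s_\sigma\circ\alpha_\sigma^{-1}$ we have $t_\sigma^{*}=\alpha_{\sigma!}\circ s_\sigma^{*}$; as $\alpha_{\sigma!}\colon\ds{U_\sigma}\to\ds{V_\sigma}$ is an isomorphism carrying $U_\sigma$ to $V_\sigma$, the equation $t_\sigma^{*}(a)=V_\sigma$ holds iff $s_\sigma^{*}(a)=\alpha_\sigma^{*}(V_\sigma)=U_\sigma$. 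Hence $j(a^{*})=\{\tau\mid\tau^{-1}\in j(a)\}=\{\sigma^{-1}\mid\sigma\in j(a)\}=j(a)^{*}$. Condition \ref{three} then splits in two: the identity $j(\sigma\cdot a)=\cover\sigma j(a)$ is exactly weak embeddability, and the companion $j(a\cdot\tau)=j(a)\cover\tau$ follows by applying $(-)^{*}$ to the weak embeddability instance $\cover{\tau^{-1}}j(a^{*})=j(\tau^{-1}\cdot a^{*})$ and using \ref{two} together with $a\cdot\tau=(\tau^{-1}\cdot a^{*})^{*}$ and $(\cover{\tau^{-1}})^{*}=\cover\tau$.

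With \ref{two} and \ref{three} in hand the monotone conditions are short. For \ref{one}, I would expand $j(a)j(b)=\V_{\sigma\in j(a),\,\tau\in j(b)}\cover{\sigma\tau}$ and show $\sigma\tau\in j(ab)$ for each such pair: by \ref{prop:map2}, $s_{\sigma\tau}^{*}(ab)=s_\sigma^{*}\bigl((ab)\cdot\tau^{-1}\bigr)$, and taking $x=a$, $y=b$ in the defining join of $(ab)\cdot\tau^{-1}$ gives $(ab)\cdot\tau^{-1}\ge a\wedge s_\tau^{*}(b)^{*}=a\wedge U_\tau^{*}$; hence $s_{\sigma\tau}^{*}(ab)\ge s_\sigma^{*}(a)\wedge s_\sigma^{*}(U_\tau^{*})=U_\sigma\wedge U_{\sigma\tau}=U_{\sigma\tau}$, while the reverse inequality is automatic because the codomain of $s_{\sigma\tau}^{*}$ has top element $U_{\sigma\tau}$. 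Condition \ref{four} is then immediate from \ref{three} and monotonicity of the product, $\cover{\sigma\tau}=\cover\sigma\cover\tau\le j(a)\cover\tau=j(a\cdot\tau)$, and symmetrically for $\cover{\tau\sigma}$. For \ref{five}, \ref{seven}, and \ref{eight} I would apply $j$ to both sides and invoke the fact that a monomorphism of frames is order-reflecting: \ref{five} reads $j(\sigma\cdot b)=\cover\sigma j(b)\le j(a)j(b)\le j(ab)$ by \ref{one}; and \ref{seven}, \ref{eight} reduce to associativity of the multiplication of $\cover Q$, for instance $j(\sigma\cdot(\tau\cdot a))=\cover\sigma(\cover\tau j(a))=\cover{\sigma\tau}j(a)=j((\sigma\tau)\cdot a)$.

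The substantial step is \ref{six}. For $U\in\rs(Q)$ one computes $j(U)=\{\sigma\mid U_\sigma\le U\}$ from $s_\sigma^{*}(U)=U\wedge U_\sigma$, and such a set is right-sided in $\cover Q$: it is a right ideal because $U_{\sigma\tau}\le U_\sigma$, and in the inverse quantal frame $\cover Q$ every element $q$ satisfies $q\le q1$ (as in \ref{RQeqQ1}). Conversely I would show every right-sided $X$ has this form. A right ideal satisfies $\sigma\in X\iff\sigma\sigma^{-1}\in X$, so $X$ is determined by the idempotents it contains; since any two idempotents of $\sections(Q)$ are compatible and $E(\sections(Q))\cong\rs(Q)$ is a frame (\ref{lemma:pseudogroup}, \ref{mathfraki}), the idempotent part of $X$ is a principal downset $\downsegment U_X$, whence $X=\{\sigma\mid U_\sigma\le U_X\}=j(U_X)$. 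This yields $j(\rs(Q))=\rs(\cover Q)$. The third equality reduces to the identity $\epsilon_U\cdot 1=U$ for the idempotent $\epsilon_U=(u\circ k_U,U)$, which holds because $s_{\epsilon_U}^{*}(x)=\upsilon(x)\wedge U$ and $\upsilon(1)=1$, so $E(\sections(Q))\cdot 1=\rs(Q)$ and $j(E(\sections(Q))\cdot 1)=j(\rs(Q))$. The main obstacle I anticipate is precisely this converse in \ref{six}: pinning down that right-sidedness in $\cover Q$ corresponds exactly to being a domain-determined downset requires care with the explicit description of $\lcc(\sections(Q))$ and the idempotent--$\rs(Q)$ correspondence; the remaining conditions are essentially formal consequences of \ref{two}, \ref{three}, and the order-reflecting property of a monic $j$.
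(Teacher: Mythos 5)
Your proposal is correct, and on items (2)--(5), (7), and (8) it is essentially the paper's own argument: (2) via the equivalence $\sigma\in j(a)\Leftrightarrow\sigma^{-1}\in j(a^{*})$, proved through $s_{\sigma^{-1}}^{*}(x)=t_{\sigma}^{*}(x^{*})$ and the isomorphism $\alpha_{\sigma}$; (3) by applying the involution to an instance of weak embeddability together with \ref{lemma:star}; (4), (5), (7), (8) as formal consequences, using that a monic frame homomorphism reflects order. Your (1) is also the paper's computation: the paper bounds $U_{\sigma\tau}=s^{*}_{\sigma}(a)\wedge s^{*}_{\sigma}(s^{*}_{\tau}(b)^{*})\le s^{*}_{\sigma\tau}(ab)$ directly from \ref{prop:product}, while you route the same inequality through \ref{prop:map2} and the action $(ab)\cdot\tau^{-1}$; this is only a change of notation. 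The genuine difference is item (6). The paper obtains $j(\rs(Q))\subseteq\rs(\cover Q)$ from item (1) (namely $j(z)1=j(z)j(1_{Q})\le j(z1_{Q})=j(z)$) and the reverse inclusion by quoting the structure of the inverse quantal frame $\cover Q$ --- every right-sided element equals $\cover\sigma 1$ for some idempotent $\sigma$ --- followed by item (3); so the paper's proof of (6) genuinely uses weak embeddability. You instead work with the explicit description of $\lcc(\sections(Q))$: you identify $j(U)=\{\sigma\mid U_{\sigma}\le U\}$, prove that a right-sided $X$ satisfies $\sigma\in X\Leftrightarrow\sigma\sigma^{-1}\in X$ and hence is determined by its idempotent part, which (idempotents being pairwise compatible and $E(\sections(Q))\cong\rs(Q)$ complete) is a principal downset, giving $X=j(U_{X})$; and you finish with the direct computation $\epsilon_{U}\cdot 1=U$, where $\epsilon_{U}=(u\circ k_{U},U)$. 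All of these steps check out against \ref{lemma:pseudogroup} and \ref{lemmajay}. What your route buys is that item (6) is seen to hold for any weakly multiplicative open quantal frame, with no appeal to weak embeddability at all; what the paper's route buys is brevity, since it recycles items (1) and (3) and the structure theory of inverse quantal frames instead of re-opening the concrete construction of $\cover Q$.
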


Notice that \ref{four} above implies that $j(Q)$ is an involutive ideal of $\cover Q$.

\begin{proof}
\ref{one}. In order to see that we have $ j(a)j(b) \leq j(ab)$
let $\rho \in j(a)j(b)$. We can assume $\rho = \sigma\tau$  with $\sigma \in j(a)$ and
$\tau \in j(b)$, since a general element of $j(a)j(b)$ is the join of a compatible set of elements of this form. Then
\begin{eqnarray*}
U_{\sigma\tau} &=& s_{\sigma}^{*}(U_{\tau}^*)\\
&=& U_{\sigma} \wedge s_{\sigma}^{*}(U_{\tau}^*)\\
&=& s_{\sigma}^*(a) \wedge  s^{*}((s_{\tau}^{*}(b))^{*}) \leq s_{\sigma\tau}^{*}(ab)\;,
\end{eqnarray*}
and thus $\rho \in j(ab)$.

\ref{two}. For this property we show that $j(a^*) = (j(a))^{-1} = \{\sigma^{-1}\mid\sigma \in j(a)\}$.
Suppose that $\sigma \in j(a^*)$. Then $s^*(a^*) = U$.
We have $\sigma = (\sigma^{-1})^{-1}$. Hence we must show that $\sigma^{-1}\in j(a)$.
We have
\[
s_{\sigma^{-1}}^* (a) = t^*(a^*) = (\alpha^{-1})^*(s^*(a^*)) =  (\alpha^{-1})^*(U) = V\;,
\]
and thus  $\sigma^{-1}\in j(a)$.
On the other hand, suppose we have $\sigma \in j(a)$. Then $s^*(a) = U$.
Hence,
\[s_{\sigma^{-1}}^* (a^*) = t^*(a^{**}) =  (\alpha^{-1})^*( s^*(a)) =  \alpha^{-1*}(U) = V\;,\]
and thus $\sigma^{-1} \in j(a^*)$.

\ref{three}.  The first equality follows directly from weak embeddability. The second one follows from
\[j(a \cdot \tau) = j((\tau^{-1}\cdot a^*)^*) = (\tau^{*}j(a^*))^* = j(a^*)^*(\cover\tau ^{-1})^{-1} = j(a)\cover\tau \;,\]
using \ref{lemma:star} and \ref{two}.

\ref{four}. If $\cover\sigma  \leq j(a)$ then $\cover\sigma \cover\tau  \leq j(a)\cover\tau  = j(a \cdot \tau)$
using weak embeddability.

\ref{five}. If $\cover\sigma \leq j(a)$, then $j(\sigma \cdot b) = \cover\sigma j(b)  \leq j(a)j(b) \leq j(ab)$.
Since we are assuming that $j$ is mono, we get $\sigma \cdot b \leq ab$.

\ref{six}.  We have
\[j(z)1 = j(z)j(1_Q) \leq j(z1_Q) = j(z)\]
for $z \in \rs(Q)$. Hence $j(\rs(Q))$ consists of right-sided elements.
On the other hand, if $z \in \rs( \cover Q))$
 we get that $z = \cover\sigma 1 = \cover\sigma j(1_Q)$ for some $\sigma \in E(\sections(Q))$.
Then, using \ref{three}, we conclude that
\[z = \cover\sigma j(1_Q) = j(\sigma \cdot 1_Q) \in j(E(\sections(Q))\cdot 1_Q) \subset j(\rs(Q))\;.\]

\ref{seven}. We have $\cover\sigma (\cover\tau  j(a)) = j(\sigma\cdot(\tau\cdot a))$, by weak embeddability.
But also  $\cover\sigma (\cover\tau  j(a)) = (\cover\sigma  \cover\tau )j(a) = j((\sigma\tau)\cdot a)$.
Hence, since $j$ is assumed to be mono we get $\sigma\cdot(\tau\cdot a) = (\sigma\tau)\cdot a$.

\ref{eight}. From the fact that $(\cover\sigma j(a))\cover\tau  = \cover\sigma (j(a) \cover\tau )$,
and using analogous reasoning to the one used in \ref{seven}, we get the desired result. \qed
\end{proof}

The following lemma will be used further ahead.

\begin{lemma}\label{J}
Assume that
for all $\sigma,\tau \in \sections(Q)$ the condition $\cover\sigma  \leq j(a)$ implies
that $\cover{\tau\sigma} \leq j(\tau\cdot a)$, and that
$\sigma\cdot(\tau\cdot a) = (\sigma\tau)\cdot a$ holds. Then $Q$ is weakly embeddable.
\end{lemma}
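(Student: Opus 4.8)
The plan is to prove the two inclusions of the weak embeddability identity $\cover\sigma j(a) = j(\sigma\cdot a)$ separately, exploiting the fact that in the inverse quantal frame $\cover Q$ the quantale multiplication preserves all joins and that every element is the join of the principal downsets it contains. Regarded as an element of $\cover Q$, the frame element $j(a)$ is down-closed and closed under compatible joins, so $j(a) = \V_{\tau\in j(a)}\cover\tau$; since multiplication distributes over joins and $\cover{\sigma\tau}=\cover\sigma\cover\tau$, I would first record the basic identity
\[\cover\sigma j(a) = \V_{\tau\in j(a)}\cover{\sigma\tau}\;.\]
Throughout I would use that $\tau\in j(a)$ is equivalent to $\cover\tau\le j(a)$ (because $j(a)$ is down-closed), and that $\sections(Q)$ is an inverse semigroup by \ref{lemma:pseudogroup}.

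For the inclusion $\cover\sigma j(a)\le j(\sigma\cdot a)$, I would simply apply the first hypothesis of the lemma with the roles of the two bisections interchanged: for each $\tau\in j(a)$ we have $\cover\tau\le j(a)$, hence $\cover{\sigma\tau}\le j(\sigma\cdot a)$, and taking the join over all such $\tau$ yields the inequality via the identity above. This direction is essentially immediate; the work lies in the reverse inclusion.

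For the inclusion $j(\sigma\cdot a)\le \cover\sigma j(a)$, the idea is to ``divide by $\sigma$'' using $\sigma^{-1}$. Applying the inequality just proved to the pair $(\sigma^{-1},\sigma\cdot a)$ gives $\cover{\sigma^{-1}}j(\sigma\cdot a)\le j(\sigma^{-1}\cdot(\sigma\cdot a))$; the second hypothesis then rewrites $\sigma^{-1}\cdot(\sigma\cdot a)$ as $(\sigma^{-1}\sigma)\cdot a$, and a short computation using \ref{U-lemma} and \ref{xxstarvsxystar} shows $(\sigma^{-1}\sigma)\cdot a = V_\sigma\wedge a\le a$. Left-multiplying by $\cover\sigma$ and using monotonicity of the product gives $\cover{\sigma\sigma^{-1}}j(\sigma\cdot a)\le \cover\sigma j(a)$.

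It then remains to show that the left-hand side collapses to $j(\sigma\cdot a)$, and this is where I expect the only real subtlety. The key observation is that every $\tau\in j(\sigma\cdot a)$ has domain $U_\tau\le U_\sigma$: indeed $\sigma\cdot a\le U_\sigma$ straight from the definition of the action (each term $s_\sigma^*(x)\wedge y$ lies below $U_\sigma$), whence $U_\tau = s_\tau^*(\sigma\cdot a)\le s_\tau^*(U_\sigma)=U_\sigma\wedge U_\tau$. Since $\sigma\sigma^{-1}$ is the idempotent of $\sections(Q)$ associated with $U_\sigma$ and $\tau\tau^{-1}$ the one associated with $U_\tau\le U_\sigma$, the inverse-semigroup identity $(\sigma\sigma^{-1})\tau=\tau$ holds for each such $\tau$; distributing the product over $j(\sigma\cdot a)=\V_{\tau}\cover\tau$ then gives $\cover{\sigma\sigma^{-1}}j(\sigma\cdot a)=\V_\tau\cover{(\sigma\sigma^{-1})\tau}=\V_\tau\cover\tau=j(\sigma\cdot a)$. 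Combining the two inclusions yields weak embeddability.
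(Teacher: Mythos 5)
Your proposal is correct and takes essentially the same route as the paper's proof: the easy inclusion $\cover\sigma j(a)\le j(\sigma\cdot a)$ comes from the first hypothesis by distributing multiplication over the join $j(a)=\V_{\tau\in j(a)}\cover\tau$, and the reverse inclusion rests on the same two key facts the paper uses, namely that every $\tau\in j(\sigma\cdot a)$ satisfies $U_\tau\le U_\sigma$ (whence $\sigma\sigma^{-1}\tau=\tau$) and that $\cover{\sigma^{-1}}j(\sigma\cdot a)\le j(\sigma^{-1}\cdot(\sigma\cdot a))=j((\sigma^{-1}\sigma)\cdot a)\le j(a)$. The only cosmetic differences are that you argue at the level of whole elements of $\cover Q$ rather than elementwise, and that you compute $(\sigma^{-1}\sigma)\cdot a=V_\sigma\wedge a$ explicitly where the paper instead invokes monotonicity of the action and $\epsilon\cdot a=a$.
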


\begin{proof}
Suppose that we have
$\cover\tau  \leq j(\sigma \cdot a)$.
Then $\tau \in j(\sigma \cdot a)$. But this means that
\begin{eqnarray*} U_{\tau} &\leq& s_{\tau}^*(\sigma \cdot a) = s_{\tau}^*\left(\V_{x^*y \leq a} s_{\sigma}^*(x) \wedge y\right)\\
&=& \V_{x^*y \leq a} s_{\sigma}^*(x) \wedge U_{\tau} \wedge s_{\sigma}^*(y) \leq \V_{x^*y \leq a}  s_{\sigma}^*(x) \leq U_{\sigma}\;.
\end{eqnarray*}
Hence,
$U_{\tau} \leq U_{\sigma}$ and,
bearing in mind that $\sigma\sigma^{-1} = (U_\sigma, u\circ k_{U_\sigma})$, 
we have
\[\tau = \sigma\sigma^{-1}\tau\;.\]
But we also have
\[\cover{\sigma^{-1} \tau} \leq  \cover{\sigma^{-1}}j(\sigma \cdot a) \leq j(\sigma^{-1}\sigma \cdot a) \leq j(\epsilon \cdot a) = j(a)\;,\]
using \ref{lemma:properties}.
Hence, $\cover\tau   = \cover{\sigma\sigma^{-1}\tau} \leq \cover\sigma  j(a)$, so that
\[j(\sigma \cdot a) \leq \cover\sigma  j(a)\;.\]
The other inequality follows from the hypothesis. \qed
\end{proof}

The quantale $\cover Q$ has the induced structure of an $\rs(Q)$-module whose action
is given by $z\cdot a = j(z) \wedge a$, for $z \in \rs(Q)$ and $a \in  \cover Q$.
We see that this induced module structure is precisely the natural $\rs( \cover Q)$-module structure of
$\cover Q$ taking into account the identification
\[\eta: \rs(Q) \rightarrow E(\sections(Q))1 = \spp( \cover Q)1 = \rs(\cover Q)\]
given by
$\eta (z) = \sigma\sigma^{-1}1$ for any $\sigma = (s,z)$. It is well defined and it does not depend on $\sigma$.

We have that
\begin{lemma}
Considering
the usual $\rs(Q)$-module structure and $\cover Q$ with
the one described above, $j$ is a homomorphism of $\rs(Q)$-modules.
\end{lemma}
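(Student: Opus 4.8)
The plan is to notice that essentially all the work has already been carried out in Lemma~\ref{lemmajay}, where $j\colon Q\to\cover Q$ was shown to be a homomorphism of frames. Recall that a homomorphism of $\rs(Q)$-modules, in the ambient category of sup-lattices, is required to do two things: preserve arbitrary joins, and intertwine the two module actions. I would establish these two facts in turn, each as an immediate consequence of $j$ being a frame homomorphism.

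First, preservation of joins is simply the statement that $j$ preserves arbitrary suprema, which is built into $j$ being a frame map; the proof of Lemma~\ref{lemmajay} already exhibits $j$ as such, so in particular $j(\V_i a_i)=\V_i j(a_i)$ for every family in $Q$.

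Second, I would check that $j$ intertwines the actions. By~(\ref{leftrsaction}) the usual left $\rs(Q)$-action on $Q$ is $z\cdot a=a\wedge z$, while by the definition given just above the statement the induced action on $\cover Q$ is $z\cdot a=j(z)\wedge a$. Hence, for all $z\in\rs(Q)$ and $a\in Q$,
\[
j(z\cdot a)=j(a\wedge z)=j(a)\wedge j(z)=j(z)\wedge j(a)=z\cdot j(a)\;,
\]
where the second equality is the preservation of finite meets by the frame homomorphism $j$ (Lemma~\ref{lemmajay}) and the last two equalities use commutativity of the meet and the definition of the action on $\cover Q$. This is precisely the condition for $j$ to be a morphism of $\rs(Q)$-modules.

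There is no genuine obstacle here: the entire content is contained in Lemma~\ref{lemmajay}, and the only point worth emphasising is that the two module structures have been set up so that the respective actions are given by meeting with $z$ (on $Q$) and with $j(z)$ (on $\cover Q$); this is exactly what converts the meet-preservation of $j$ into action-preservation. Should one wish to treat the structures as $\rs(Q)$-$\rs(Q)$-bimodules and verify the right action as well, the computation is symmetric, since the meet is commutative.
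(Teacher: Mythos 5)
Your computation is formally valid, but it proves only a tautology and misses what the lemma is actually asserting. You read the $\rs(Q)$-action on $\cover Q$ as $z\cdot b:=j(z)\wedge b$, in which case ``$j$ is a module homomorphism'' reduces to $j(z\wedge a)=j(z)\wedge j(a)$, i.e.\ to the meet-preservation already contained in Lemma \ref{lemmajay}. But the paragraph preceding the lemma identifies that induced action with the \emph{natural} $\rs(\cover Q)$-module structure of $\cover Q$, transported along $\eta:\rs(Q)\to\rs(\cover Q)$, $\eta(z)=\cover{\sigma\sigma^{-1}}\,1$ for any $\sigma=(s,z)$, and it is with respect to this structure that the lemma is meant (and proved in the paper): the paper's proof shows
\begin{align*}
j(z\wedge a)&=j(\sigma\sigma^{-1}\cdot 1_Q\wedge a)=j(\sigma\sigma^{-1}\cdot 1_Q)\wedge j(a)\\
&=\cover{\sigma\sigma^{-1}}\,j(1_Q)\wedge j(a)=\eta(z)\wedge j(a)=\eta(z)\cdot j(a)\;,
\end{align*}
where the third equality is weak embeddability (\cf\ \ref{lemma:propertiesgamma}-\ref{three}) applied to the idempotent bisection $\sigma\sigma^{-1}$ and to $1_Q$. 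Taking $a=1_Q$ this yields precisely the identification $j(z)=\eta(z)$, which is the genuine content of the statement. Your proof cannot produce it, since it uses nothing beyond Lemma \ref{lemmajay}: neither weak embeddability nor the inverse semigroup structure of $\sections(Q)$ ever enters.

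The distinction matters for everything that follows. The module structure on $\cover Q$ must be recognized as the intrinsic one (meet with right-sided elements of $\cover Q$) because the subsequent argument invokes \cite{RR} to conclude that $\cover Q$, being an inverse quantal frame, is a flat $\rs(Q)$-module, hence that $\ident\otimes j$ is mono and $j\otimes j$ can be analysed in the proof of \ref{maintheorem}. Flatness there is a statement about the natural action of right-sided elements of $\cover Q$; it says nothing about an action defined circularly through $j$ itself, which is all your reading provides. So the concrete missing step is the equality $j(z)=\eta(z)$ for $z\in\rs(Q)$ (equivalently, that $j$ intertwines the usual action on $Q$ with the natural action on $\cover Q$), and establishing it requires the weak embeddability hypothesis, or at least a direct computation with the products $\sigma\sigma^{-1}\tau$ in $\sections(Q)$ --- not merely the fact that $j$ is a frame homomorphism.
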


\begin{proof}
\begin{eqnarray*}
j( z \wedge a) &=& j(\sigma\sigma^{-1}\cdot 1_Q \wedge a) = j(\sigma\sigma^{-1}\cdot 1_Q) \wedge j(a)\\
&=& \sigma\sigma^{-1}j(1_Q)\wedge j(a) = \sigma\sigma^{-1}1 \wedge j(a)\\
& =& \eta(z)\wedge j(a) = \eta(z)\cdot j(a)\;. \qed
\end{eqnarray*}
\end{proof}

\paragraph{(Strong) embeddability.} From now on we shall assume that $Q$ is not only weakly multiplicative but also weakly embeddable, and we shall address another condition on $j$ which, as we shall see, implies multiplicativity. First, an approximation of the envisaged condition is the following:

\begin{definition}
$Q$
is said to \emph{have enough bisections} if $j$ is mono.
A semiopen groupoid $G$ is said to \emph{have enough bisections} if $\opens(G)$ has enough
bisections. 
\end{definition}

\begin{example}
If $\rs(Q)= \{0,1\}$, having enough bisections means being spatial as a locale.
\end{example}

Now we introduce the slightly stronger
condition that we need.
Notice that $j$, being a homomorphism of $\rs(Q)$-modules, induces a homomorphism
$j \otimes j = (\ident \otimes j) \circ (j \otimes \ident)$ given as the composition:
\begin{equation}
\vcenter{\xymatrix{
Q\otimes_{\rs(Q)}Q\ar[rr]^{j \otimes \ident }&& \cover Q
\otimes_{\rs(Q)}Q \ar[rr]^{\ident \otimes j} &&\cover Q\otimes_{\rs(Q)}\cover Q \;.
 }}
\end{equation}

Since $\cover Q$ is an inverse quantal frame, it is, by \cite{RR}, a flat $\rs(Q)$-module.
Hence, we conclude that $\ident \otimes j$ is mono, and thus if $j \otimes \ident$ is mono $j \otimes j$ is also mono.

\begin{definition}
We say that $Q$ is \emph{embeddable} if it is weakly embeddable and $j\otimes \ident$ is mono.
A semiopen groupoid $G$ is said to be \emph{embeddable} if $\opens(G)$ is
embeddable.
\end{definition}

\begin{lemma}\label{lemma:enough}
If $Q$ is embeddable it has enough bisections.
\end{lemma}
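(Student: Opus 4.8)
The plan is to reduce injectivity of $j$ to that of $j\otimes\ident$, which is part of the hypothesis of embeddability, by factoring $j$ through the frame coprojection $\iota_1\colon Q\to Q\otimes_{\rs(Q)}Q$, $a\mapsto a\otimes 1$, and proving that $\iota_1$ is split monic. Recall that $\iota_1$ is exactly $\pi_1^*$, the inverse image of the first projection of the pullback $G_2=G_1\times_{G_0}G_1$ of $d$ and $r$. The whole argument then rests on the observation that, although $Q$ need not be multiplicative, the underlying involutive graph already provides a ``right unit'' section of $\pi_1$.

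First I would verify that $\pi_1^*$ is mono. Since $Q$ is weakly multiplicative, the unit map $u\colon G_0\to G_1$ is defined, with $u^*=\upsilon$, and \ref{domain} gives $d\circ u=\ident_{G_0}$. Consequently the pairing $\langle\ident,u\circ r\rangle\colon G_1\to G_2$ is a well-defined map of locales, for the compatibility condition $r\circ\ident=d\circ(u\circ r)$ required in order to land in the pullback reduces to $r=(d\circ u)\circ r=r$. As the first leg of a pairing into a pullback it satisfies $\pi_1\circ\langle\ident,u\circ r\rangle=\ident_{G_1}$, so passing to inverse images yields
\[\langle\ident,u\circ r\rangle^*\circ\pi_1^*=\ident_Q\;.\]
Thus $\pi_1^*$ has a left inverse and is monic. (Concretely the retraction $\langle\ident,u\circ r\rangle^*=[\ident,r^*\circ\upsilon]$ sends $a\otimes b\mapsto a\wedge\upsilon(b)^*$, and on $a\otimes1$ it returns $a\wedge\upsilon(1)^*=a\wedge1=a$, again using $\upsilon(1)=1$ from \ref{domain}.)

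The conclusion is then a short chase. Suppose $j(a)=j(b)$. Applying $j\otimes\ident$ to the tensors $a\otimes1$ and $b\otimes1$ we get
\[(j\otimes\ident)(a\otimes1)=j(a)\otimes1=j(b)\otimes1=(j\otimes\ident)(b\otimes1)\;,\]
and since embeddability makes $j\otimes\ident$ mono this gives $a\otimes1=b\otimes1$ in $Q\otimes_{\rs(Q)}Q$. As $\pi_1^*$ is mono we conclude $a=b$, so $j$ is injective, i.e.\ $Q$ has enough bisections.

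I expect the only genuine obstacle to be the first step: checking that $\langle\ident,u\circ r\rangle$ is a legitimate cone into the pullback $G_2$ and that it splits $\pi_1$. The key point is that this section of $\pi_1$ lives at the level of the involutive graph and never invokes the multiplication $m$, so it remains available even though $Q$ is only assumed weakly multiplicative; everything after that is purely formal.
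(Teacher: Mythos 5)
Your proof is correct, and its skeleton is the same as the paper's: both arguments show that $j$ is mono by factoring it through the coprojection $\ident\otimes 1=\pi_1^*\colon Q\to Q\otimes_{\rs(Q)}Q$, checking that this coprojection is (split) mono, and then invoking injectivity of a tensored version of $j$. The two ingredients are realized differently, though, and your choices are worth comparing. First, you use only $j\otimes\ident$, which is mono by the very definition of embeddability; the paper passes instead to $j\otimes j=(\ident\otimes j)\circ(j\otimes\ident)$, which additionally requires that $\ident\otimes j$ be mono, i.e.\ that $\cover Q$ be a flat $\rs(Q)$-module (a fact imported from \cite{RR}). Your route avoids that extra input. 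Second, your retraction of $\ident\otimes 1$ is the inverse image $[\ident,r^*\circ\upsilon]\colon a\otimes b\mapsto a\wedge\upsilon(b)^*$ of the unit section $\langle\ident,u\circ r\rangle$ of $\pi_1$; this needs $\upsilon$ to preserve joins (so that $u$ exists) together with \ref{domain}, both available since embeddability is only defined for weakly multiplicative $Q$, and your verifications of the cone condition $r=d\circ u\circ r$ and of the well-definedness of the copairing (namely $\ident\circ\gamma=(r^*\circ\upsilon)\circ\delta$, which holds because $\upsilon(z)=z$ on $\rs(Q)$) are sound. The paper instead invokes the hypothesis-free retraction $[\ident,\ident]\colon a\otimes b\mapsto a\wedge b$, which as written is not actually well defined on $Q\otimes_{\rs(Q)}Q$: middle-linearity over $\rs(Q)$ would force $a\wedge z^*\wedge b=a\wedge z\wedge b$ for all $z\in\rs(Q)$. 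The intended map is presumably $[\ident,i^*]\colon a\otimes b\mapsto a\wedge b^*$, which is middle-linear and still restricts to the identity on elements of the form $a\otimes 1$. So your splitting, besides being geometrically natural (it is the unit section of the pullback), sidesteps a small slip in the paper's own argument, at the harmless cost of using the standing weak multiplicativity assumption; the corrected version of the paper's retraction buys a splitting that needs no hypotheses on $Q$ at all.
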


\begin{proof}
Embeddability implies that $j\otimes j$ is mono. The result then follows 
from the fact that we have the commutative diagram
\begin{equation}
\vcenter{\xymatrix{
Q\otimes_{\rs(Q)}Q\ar[rr]^{j \otimes j }&& \cover Q 
\otimes_{\rs(Q)}\cover Q\\
Q\ar[u]_{\ident \otimes 1}\ar[rr]_{j}&& \cover Q\ar[u]_{\ident \otimes 1}
 }}
\end{equation}
and that $\ident \otimes 1$ on both sides is split mono because we have 
$[\ident,\ident]\circ \ident \otimes 1 = \ident_Q$. \qed
\end{proof}

\begin{lemma}
Let $Q$ have enough bisections. Then $\sigma\cdot(-)$ defines an involutive
$\sections(Q)$-action on $Q$.
\end{lemma}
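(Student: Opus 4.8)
The plan is to verify, for the map $(\sigma,a)\mapsto\sigma\cdot a$ of \ref{definition:action}, the three defining conditions of an involutive $\sections(Q)$-action. First I would record that the derived right action $a\cdot\sigma=(\sigma^{-1}\cdot a^*)^*$ required by the definition agrees with the formula for $a\cdot\sigma$ already given in \ref{definition:action}; this is exactly \ref{lemma:star}, so the involutive bookkeeping is automatic. The condition that $\sigma\cdot(-)$ preserve joins is then immediate, since it is built into weak multiplicativity (its condition (2)), which we are assuming. The associativity condition $\sigma\cdot(\tau\cdot a)=(\sigma\tau)\cdot a$ is precisely item \ref{seven} of \ref{lemma:propertiesgamma}, whose proof uses that $j$ is mono; this is available here because $Q$ has enough bisections. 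Thus two of the three axioms are already in hand.

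The remaining and genuinely substantive condition is middle linearity, $a(\sigma\cdot b)=(a\cdot\sigma)b$ for all $a,b\in Q$, and my plan for it is a direct computation. Expanding $\sigma\cdot b=\V_{x^*y\le b}(s^*(x)\wedge y)$ and distributing $a$ over the join, each summand $a(s^*(x)\wedge y)$ can be rewritten by balancedness: since $s^*(x)\in\rs(Q)$, axiom (B) gives $a(s^*(x)\wedge y)=(a\wedge s^*(x)^*)y$, so $a(\sigma\cdot b)=\V_{x^*y\le b}(a\wedge s^*(x)^*)y$. Dually, expanding $a\cdot\sigma$ and applying (B) in the other direction yields a supremum of the shape $\V_{pq\le a}\,p\,(t^*(q^*)\wedge b)$, now involving the codomain section $t$. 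It then remains to identify these two suprema, for which I would use the relations between the domain and codomain sections of $\sigma$, namely $s^*=\alpha^*\circ t^*$, $t^*=\alpha_!\circ s^*$, and $r\circ s=k_V\circ\alpha$, to match the index sets and the individual terms. As a sanity check that the pattern is right, for an idempotent bisection $\sigma=(U,u\circ k_U)$ one computes (using \ref{U-lemma}) that $\sigma\cdot b=U\wedge b$ and $a\cdot\sigma=a\wedge U^*$, so that middle linearity collapses to the balancedness identity $a(U\wedge b)=(a\wedge U^*)b$; the general case should be exactly this identity transported along the isomorphism $\alpha$.

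The step I expect to be the main obstacle is precisely this reconciliation of the two suprema, because the left action of $\sigma$ is expressed through the $d$-section $s$ whereas the right action is expressed through the $r$-section $t$, and bridging them forces careful bookkeeping with $\alpha$ and its direct image $\alpha_!$ (which is not a frame homomorphism in the convenient direction). A tempting shortcut is to pass to $\cover Q$: since $\cover Q$ is an inverse quantal frame, associativity of its multiplication together with the equivariance identities $\cover\sigma\,j(b)=j(\sigma\cdot b)$ and $j(a)\,\cover\sigma=j(a\cdot\sigma)$ (item \ref{three} of \ref{lemma:propertiesgamma}) yields $j(a)\,j(\sigma\cdot b)=j(a\cdot\sigma)\,j(b)$ in $\cover Q$. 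However, $j$ is only a frame homomorphism, with $j(x)j(y)\le j(xy)$ by item \ref{one} rather than equality, so this identity in $\cover Q$ does not by itself descend to $Q$ through the mono $j$; hence I would treat the direct computation as the reliable route and regard the $s$--$t$ matching as the crux of the argument.
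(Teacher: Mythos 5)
Your reduction is fine as far as it goes, and for two of the three axioms it matches the paper: join preservation of $\sigma\cdot(-)$ is part of weak multiplicativity (a standing assumption where this lemma lives), the involutive bookkeeping is \ref{lemma:star}, and action associativity is \ref{lemma:propertiesgamma}-\ref{seven}, applicable because ``enough bisections'' means $j$ is mono. The genuine gap is the middle-linearity axiom $a(\sigma\cdot b)=(a\cdot\sigma)b$, which you correctly single out as the substantive condition but never actually prove. After your (correct) applications of (B) the claim becomes
\[
\V_{x^*y\le b}\,(a\wedge s_\sigma^*(x)^*)\,y\;=\;\V_{pq\le a}\,p\,(t_\sigma^*(q^*)\wedge b)\;,
\]
and your plan is to ``match the index sets and the individual terms'' via the relations between $s$, $t$ and $\alpha$. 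That matching is not bookkeeping; it is the whole difficulty, and there is a structural reason to doubt it can be done with those tools. For a groupoid the identity is proved pointwise by inserting a unit at the composition point, $\alpha\beta=(\alpha s(w))(s(w)^{-1}\beta)$. To imitate this abstractly one must convert a pair $(x,y)$ with $x^*y\le b$ into a pair $(p,q)$ with $pq\le a$, and the natural $q$ is a restriction of the \emph{image} of $\sigma^{-1}$ as an element of $Q$. In an inverse quantal frame that element exists (the partial unit $\xi(\sigma)^*$ of section \ref{sec:localbisections}), but in a general open quantal frame $s_\sigma$ need not be open and $Q$ has no unit, so no such element is available; the only substitutes expressible through the action, e.g.\ $p=(a\wedge s_\sigma^*(x)^*)\cdot\sigma$ and $q=\sigma^{-1}\cdot c$, lead to verifying $(d\cdot\sigma)(\sigma^{-1}\cdot c)\le dc$, which is a consequence (\ref{propinvo}) of the very middle linearity being proved --- the argument turns circular. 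Your idempotent sanity check succeeds only because in that degenerate case the insertion collapses to axiom (B) itself.

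It is also worth noting that the paper's own proof takes precisely the route you dismissed: it is a one-line appeal to \ref{lemma:propertiesgamma}-\ref{seven} and \ref{lemma:propertiesgamma}-\ref{eight}, i.e.\ the identities are established upstairs in the inverse quantal frame $\cover Q$ --- where $\cover\sigma$ \emph{is} a partial unit and the insertion trick is legitimate --- and transported back along the mono $j$ using $j(\sigma\cdot a)=\cover\sigma\, j(a)$ and $j(a\cdot\sigma)=j(a)\,\cover\sigma$. Your objection to that route (that $j$ is only lax multiplicative, $j(x)j(y)\le j(xy)$) is a legitimate observation, and indeed the two cited items yield the composition identities $\sigma\cdot(\tau\cdot a)=(\sigma\tau)\cdot a$ and $\sigma\cdot(a\cdot\tau)=(\sigma\cdot a)\cdot\tau$ rather than literally the third axiom, so the paper is extremely terse on exactly this point. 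But whichever way one assesses the paper's economy, your write-up establishes the third axiom by neither route: the proposal is incomplete at the step you yourself flagged as the crux, and the method you outline for closing it is not one that can be carried out in a general open quantal frame.
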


\begin{proof}
This follows from \ref{lemma:propertiesgamma}-(\ref{seven},\ref{eight}). \qed
\end{proof}

\begin{remark}
Let $Q$ be embeddable. Then, by the above lemma  $\sigma\cdot(-)$ defines
an involutive $\sections(Q)$-action on $Q$. Consider $\cover Q$ with the
natural involutive $\sections(Q) \cong \ipi(Q)$-action given by $\sigma\cdot a = \cover\sigma  a$. Weak embeddability says that $j$ is a homomorphism of $\sections(Q)$-actions.
\end{remark}

We now come to our main result.

\begin{theorem}\label{maintheorem}
If $Q$ is embeddable it is multiplicative.
\end{theorem}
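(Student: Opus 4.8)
The plan is to exploit the frame embedding $j:Q\to\cover Q$ in order to realise $Q$ as (an isomorphic copy of) an involutive ideal of the inverse quantal frame $\cover Q$, and then to run the argument of \ref{ideal2} almost verbatim. Write $\mu$ for the reduced multiplication of $\cover Q$ and keep $\mu_0$ for that of $Q$; since $\cover Q$ is inverse it is multiplicative, so $\mu_*$ preserves joins. By \ref{lemma:enough}, embeddability gives that $j$ is mono (enough bisections) and that $j\otimes j$ is mono, the latter landing in $\cover Q\otimes_{\rs(\cover Q)}\cover Q$ via the identification $\rs(Q)\cong\rs(\cover Q)$. The entire proof then reduces to the single identity
\[
(j\otimes j)\big((\mu_0)_*(x)\big)=\mu_*\big(j(x)\big)\qquad(x\in Q),
\]
which is the exact analogue of the equality $\imu_*(x)=\mu_*(x)$ established in the proof of \ref{ideal2}.

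Once this identity is in hand, multiplicativity of $Q$ follows formally. For any family $(x_\alpha)$ in $Q$, using that $j$, $\mu_*$ and $j\otimes j$ all preserve joins,
\[
(j\otimes j)\big((\mu_0)_*(\V_\alpha x_\alpha)\big)=\mu_*\big(\V_\alpha j(x_\alpha)\big)=\V_\alpha\mu_*\big(j(x_\alpha)\big)=(j\otimes j)\big(\V_\alpha(\mu_0)_*(x_\alpha)\big).
\]
Since $j\otimes j$ is mono, it reflects this equality, giving $(\mu_0)_*(\V_\alpha x_\alpha)=\V_\alpha(\mu_0)_*(x_\alpha)$; that is, $(\mu_0)_*$ preserves joins and $Q$ is multiplicative.

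For the identity, the inequality $\mu_*(j(x))\ge(j\otimes j)((\mu_0)_*(x))$ is immediate: expanding $(\mu_0)_*(x)=\V_{ab\le x}a\otimes b$ and using $j(a)j(b)\le j(ab)\le j(x)$ from \ref{lemma:propertiesgamma}-(\ref{one}) shows that each $j(a)\otimes j(b)$ occurs among the pure tensors below $j(x)$ summed by $\mu_*$. The reverse inequality is the crux and is modelled on \ref{ideal2}. Using axiom (U) write $x=\V_{yy^*y\le x}y$, whence $j(x)=\V_{yy^*y\le x}j(y)$; since $\mu_*$ preserves joins it suffices to bound $\mu_*(j(y))$ for each $y$ with $yy^*y\le x$. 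Because $\V\ipi(\cover Q)=1$ and $\ipi(\cover Q)\cong\sections(Q)$ (\ref{mathfraki}), $\mu_*(j(y))$ is a join of pure tensors $\cover\sigma\otimes\cover\tau$ with $\cover{\sigma\tau}\le j(y)$, and for each such tensor the manipulations of \ref{ideal2} (moving the idempotents $\cover{\sigma^{-1}\sigma}$ and $\cover{\tau\tau^{-1}}$ across the balanced tensor) yield
\[
\cover\sigma\otimes\cover\tau\le\big(j(y\cdot\tau^{-1})\wedge\cover\sigma\big)\otimes\big(j(\sigma^{-1}\cdot y)\wedge\cover\tau\big)\le j(y\cdot\tau^{-1})\otimes j(\sigma^{-1}\cdot y),
\]
where $\cover\sigma^{-1}j(y)=j(\sigma^{-1}\cdot y)$ and $j(y)\cover\tau^{-1}=j(y\cdot\tau^{-1})$ by \ref{lemma:propertiesgamma}-(\ref{three}). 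Finally, combining the action laws (\ref{lemma:star}, \ref{lemma:propertiesgamma}-(\ref{eight}), and $(\sigma\tau)^{-1}=\tau^{-1}\sigma^{-1}$) with \ref{lemma:propertiesgamma}-(\ref{five}) applied to $\cover{\sigma\tau}\le j(y)$ gives
\[
(y\cdot\tau^{-1})(\sigma^{-1}\cdot y)=\big(y\cdot(\sigma\tau)^{-1}\big)\,y\le yy^*y\le x,
\]
so that $j(y\cdot\tau^{-1})\otimes j(\sigma^{-1}\cdot y)\le(j\otimes j)((\mu_0)_*(x))$; taking joins yields $\mu_*(j(y))\le(j\otimes j)((\mu_0)_*(x))$ and hence $\mu_*(j(x))\le(j\otimes j)((\mu_0)_*(x))$.

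I expect the reverse inequality to be the main obstacle. The delicate points are that all the tensor computations must be carried out correctly in $\cover Q\otimes_{\rs(\cover Q)}\cover Q$, and that the map $\sigma\mapsto\sigma\cdot(-)$ is a genuine, associative involutive $\sections(Q)$-action on $Q$: this is exactly where enough bisections—guaranteed by embeddability through \ref{lemma:enough}—is indispensable, since the properties \ref{lemma:propertiesgamma}-(\ref{five},\ref{seven},\ref{eight}) invoked above all require $j$ to be mono. The role played by the hypothesis $yy^*y\le x$ in \ref{ideal2} is here taken over verbatim by axiom (U) for the open quantal frame $Q$, and the role of $\V\ipi(Q)=1$ is taken over by the corresponding covering property of the inverse quantal frame $\cover Q$.
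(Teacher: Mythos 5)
Your proof is correct and follows essentially the same route as the paper's own proof: the paper likewise reduces everything to the identity $\mu_*(j(x)) = (j\otimes j)((\mu_0)_*(x))$, proves the hard inequality by combining axiom (U), the covering $\V\ipi(\cover Q)=1$ with $\sections(Q)\cong\ipi(\cover Q)$, the tensor manipulations of \ref{ideal2}, and properties \ref{lemma:propertiesgamma}-(\ref{two},\ref{three},\ref{five}) together with \ref{propinvo}, and then concludes by join-preservation and injectivity of $j\otimes j$ exactly as you do. The only caveat is your opening framing: since $j$ need only satisfy $j(a)j(b)\le j(ab)$, the image $j(Q)$ is an involutive ideal of $\cover Q$ but not necessarily isomorphic to $Q$ as a quantale, so the argument cannot literally invoke \ref{ideal2} but must (as you in fact do) re-run its proof with $j$ inserted throughout.
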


\begin{proof}
The proof of this result follows the same lines as that of \ref{ideal2}, whose notation we partly imitate. Let $\mu$ be the reduced multiplication of $\cover Q$ and $\imu$ the reduced multiplication of $Q$ (which plays a role analogous to that of the involutive ideal $\ideal$ in \ref{ideal2}). We begin by showing that
\[\mu_{*}(j(x)) = j \otimes j\circ\imu_*(x)\]
for any $x \in Q$. The inequality $\mu_{*}(j(x)) \geq j \otimes j \circ\imu_*(x)$ is immediate, since $j(a)j(b) \leq j(ab)$ for all $a,b\in Q$, so we need only show
that
\[\mu_{*}(j(x)) \leq  j\otimes j \circ\imu_*(x)\;.\]
Now, by hypothesis,
\[\V_{\scriptsize\begin{array}{c}y\in Q\\ yy^{*}y \leq x\end{array}}y \geq x\]
for all $x \in Q$, so that
\[\mu_{*}(j(x)) \leq \mu_{*}\left(\V_{\scriptsize\begin{array}{c}y\in Q\\ yy^{*}y \leq x\end{array}}j(y)\right) = \V_{\scriptsize\begin{array}{c}y\in Q\\ yy^{*}y \leq x\end{array}}\mu_{*}(j(y))\;,\]
using the fact that $\mu_*$ preserves joins because $\cover Q$ is inverse.
Hence we need only show that
\[\mu_{*}(j(y)) \leq  \imu_*(j(x))\]
for all $y\in Q$ such that $yy^{*}y \leq x$ and the result will follow by taking the supremum.
Let then $y$ be such an element.
We have
\[\mu_*(j(y)) = \V_{ab \leq j(y)} a \otimes b\;.\]
But since $\V \ipi(\cover Q) = 1$ we also get that $\V \ipi(\cover Q) \otimes \ipi(\cover Q) = 1 \otimes 1$,
so that if we show that
\[s \otimes t \leq \imu_{*}(j(x))\]
for all $s,t \in \ipi(\cover Q)$ such that $st \leq j(y)$ the result will follow by taking the supremum of all such pure tensors.
Let $s,t\in\ipi(\cover Q)$ be such elements. Then we have that
\[s \otimes t = ss^{*}s \otimes t = s \otimes s^{*}st \leq s\otimes s^{*}j(y)\]
and, analogously,
\[s \otimes t \leq j(y)t^{*} \otimes t\;.\]
Hence,
\[s \otimes t \leq (s \wedge yt^{*}) \otimes (t \wedge s^{*}j(y)) \leq j(y)t^{*} \otimes s^{*}j(y)\;.\]
Now, taking into account the isomorphism $\sections(Q)\cong\ipi(\cover Q)$ (\cf\ \ref{mathfraki}), there must be $\sigma,\tau\in\sections(Q)$ such that $s=\cover\sigma$ and $t=\cover\tau$,
and thus by \ref{lemma:propertiesgamma}-\ref{three} we have
\begin{equation}\label{eqjyt}
j(y)t^*=j(y)\cover{\tau^{-1}} = j(y\cdot \tau^{-1}) \in j(Q)
\end{equation}
and
\begin{equation}\label{eqsjy}
s^*j(y)=\cover{\sigma^{-1}}j(y) = j(\sigma^{-1}\cdot y) \in j(Q)\;.
\end{equation}
In addition, by \ref{lemma:propertiesgamma}-\ref{two} we get
$\cover{\tau^{-1}\sigma^{-1}}=(st)^* \leq j(y^*)$.
Since by \ref{lemma:enough} $j$ is
mono, \ref{lemma:propertiesgamma}-\ref{five} gives us $y \cdot (\tau^{-1} \sigma^{-1}) \leq yy^*$
and we get
\[(y\cdot\tau^{-1})(\sigma^{-1}\cdot y) = (y\cdot(\tau^{-1} \sigma^{-1}))y \leq yy^{*}y \leq x\;,\]
using the properties of involutive actions of \ref{propinvo}.
Hence, using (\ref{eqjyt}) and (\ref{eqsjy}) we obtain
\[(j(y)t^*)(s^*j(y))=j(y\cdot\tau^{-1})j(\sigma^{-1}\cdot y)\le j(x)\;,\]
and also
\[s \otimes t \leq j(y)t^*\otimes s^*j(y) \le j\otimes j \circ \imu_{*}(x)\;.\]
Taking the supremum of the pure tensors $s\otimes t$ we conclude $\mu_{*}(j(x)) \leq  j\otimes j \circ\imu_*(x)$ as desired.

Now we show that $\imu_*$ preserves joins. 
Consider a join $\V_\alpha x_\alpha$ with $x_\alpha \in Q$. Applying what we proved we have, since $\V_\alpha x_\alpha \in Q$:
\begin{eqnarray*}
j\otimes j\left(\imu_*\left(\V_\alpha x_\alpha\right)\right) &=& \mu_*\left(j\left(\V_\alpha x_\alpha\right)\right) = \V_\alpha \mu_*(j(x_\alpha))\\
&=& \V_\alpha j\otimes j(\imu_{*}(x_\alpha))
=j\otimes j\left(\V_\alpha \imu_{*}(x_\alpha)\right)\;.
\end{eqnarray*}
Since $j \otimes j$ is mono, $Q$ is multiplicative. \qed
\end{proof}

\paragraph{Coverable open groupoids.}

As we have seen, weak multiplicativity plus embeddability is a sufficient, but not necessary, condition for multiplicativity. Hence, the quantale $\opens(G)$ of an open groupoid $G$ is not necessarily embeddable. In those examples where it is, we may regard the embedding $j:\opens(G)\to\cover{\opens(G)}$ dually as some kind of cover of $G$. We conclude this paper by briefly studying such groupoids. We shall see
that these include many of the examples that occur
in practice, in particular Lie groupoids.

\begin{definition}
Let $G$ be an open (localic) groupoid. We denote by $\cover G$ the localic \'etale groupoid $\groupoid(\cover{\opens(G)})$.
The groupoid $G$ is said to be \emph{coverable} (resp.\ \emph{weakly coverable}) if its quantale $\opens(G)$ is embeddable (resp.\ \emph{weakly embeddable}). We also say that $G$ \emph{has enough (local) bisections} if $\opens(G)$ does.
\end{definition}

Our terminology is justified because (weakly) coverable groupoids are covered by \'etale groupoids in the following sense:

\begin{theorem}
Let $G$ be a weakly coverable open (localic) groupoid with enough bisections (for instance, a coverable groupoid). A functor of localic groupoids
\[J:\cover G\to G\]
is defined where $J_0$ is the canonical isomorphism $\cover G_0\to G_0$ and $J_1:\cover G_1\to G_1$ is given by $J_1^* = j$.
Moreover, $J$ is an epimorphism, and it is an isomorphism if and only if $G$ is \'etale.
\end{theorem}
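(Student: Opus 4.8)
Write $Q=\opens(G)$, which by \ref{groupoidquantale} is a multiplicative open quantal frame, so that its reduced multiplication $\imu$ has a frame-homomorphism right adjoint $m^*=\imu_*$. The completion $\cover Q=\lcc(\sections(Q))$ (\cf\ \ref{lemmajay}) is an inverse quantal frame, whence $\cover G=\groupoid(\cover Q)$ is \'etale and its multiplication satisfies $\cover m^*=\mu_*$, where $\mu$ denotes the reduced multiplication of $\cover Q$. Recall that ``weakly coverable with enough bisections'' means exactly that $Q$ is weakly embeddable and that $j\colon Q\to\cover Q$ is mono. The plan is to set $J_1^*=j$ and $J_0^*=\spp\circ(j|_{\rs(Q)})\colon\rs(Q)\to\downsegment e$, to verify the five functoriality squares (for $d$, $r$, $i$, $u$, $m$), and then to read off the ``epimorphism'' and ``isomorphism iff \'etale'' assertions.

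\textbf{The easy squares and $J_0$.} First I would note that $j$ restricts to a frame isomorphism $\rs(Q)\to\rs(\cover Q)$: it is injective by hypothesis and surjective onto $\rs(\cover Q)$ by \ref{lemma:propertiesgamma}-\ref{six}. Composing with the canonical isomorphism $\rs(\cover Q)\cong\downsegment e$ given by $\spp$ (with inverse $b\mapsto b1$) shows that $J_0$ is an isomorphism of locales, as asserted. The involution square is precisely $j(a^*)=j(a)^*$, i.e.\ \ref{lemma:propertiesgamma}-\ref{two}. The domain and range squares, $\cover d^*\circ J_0^*=j\circ d^*$ and $\cover r^*\circ J_0^*=j\circ r^*$, then follow by unwinding $d^*=\delta$, $r^*=\gamma$ in $Q$ and $\cover d^*(b)=b1$, $\cover r^*(b)=1b$ in $\cover Q$, using $\spp(w)1=w$ and $1\spp(w)=w^*$ for right-sided $w$ (the latter because elements below $e$ are self-adjoint) together with \ref{lemma:propertiesgamma}-\ref{two}.

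\textbf{The unit and multiplication squares.} For the unit square I must check $\cover u^*(j(a))=J_0^*(\upsilon(a))$ for all $a\in Q$, that is $j(a)\wedge e=j(\upsilon(a))\wedge e$. Both sides can be computed directly: since $\downsegment e$ consists of the idempotents $u\circ k_U$ with $U\in\rs(Q)$, and $s^*_{u\circ k_U}=k_U^*\circ\upsilon$, one finds that each side equals the down-set $\{u\circ k_U\mid U\le\upsilon(a)\}$, using $\upsilon(z)=z$ for $z\in\rs(Q)$ from \ref{domain}. The multiplication square is the heart of the matter: it demands $\cover m^*\circ j=(J_1\times J_1)^*\circ m^*$, which, under the identification $\rs(Q)\cong\rs(\cover Q)$ by which $(J_1\times J_1)^*$ becomes $j\otimes j$, is exactly the identity
\[\mu_*(j(x))=(j\otimes j)(\imu_*(x))\qquad(x\in Q).\]
This is precisely what the first half of the proof of \ref{maintheorem} establishes. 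The point I would emphasize is that that computation uses only weak embeddability together with $j$ being mono (via \ref{lemma:propertiesgamma}-\ref{five} and the action associativity \ref{lemma:propertiesgamma}-\ref{seven},\ref{eight}); the stronger hypothesis that $j\otimes j$ be mono is invoked in \ref{maintheorem} only at the very end, to deduce multiplicativity of $Q$, which here is already granted since $G$ is open. The main obstacle is thus the bookkeeping of matching this identity to the functor's multiplication square, in particular verifying that $(J_1\times J_1)^*$ really is $j\otimes j$ once the base locales are identified through $J_0$.

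\textbf{Epimorphism and the isomorphism criterion.} Since $J_0$ is an isomorphism and $J_1^*=j$ is mono, $J_1$ is an epimorphism of locales, so $J$ is an epimorphism of groupoids. Finally, $J$ is an isomorphism iff $J_1$ is, iff $j$ is a frame isomorphism. If $G$ is \'etale then $Q$ is an inverse quantal frame, $\sections(Q)\cong\ipi(Q)$, and $\cover Q=\lcc(\sections(Q))\cong Q$ with $j$ the canonical comparison $a\mapsto\downsegment a\cap\ipi(Q)$, which is an isomorphism because $\V\ipi(Q)=1$; hence $J$ is an isomorphism. Conversely, if $j$ is an isomorphism then $Q\cong\cover Q$ is an inverse quantal frame, so $Q$ is unital and therefore (being an open quantal frame) itself an inverse quantal frame, whence $G\cong\groupoid(Q)$ is \'etale.
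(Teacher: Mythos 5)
Your proposal is correct and follows essentially the same route as the paper's proof: the substance of functoriality is the identity ${\cover m}^*\circ j = j\otimes j\circ m^*$, obtained exactly as in the first part of the proof of Theorem \ref{maintheorem} (which, as you rightly observe, uses only weak embeddability together with $j$ being mono), while the epimorphism claim comes from $j$ being a monomorphism and the \'etale case from $j$ being the canonical isomorphism $\opens(G)\to\lcc(\ipi(\opens(G)))$. The additional bookkeeping you supply --- the $d$, $r$, $i$, $u$ squares, the identification of $(J_1\times J_1)^*$ with $j\otimes j$, and the converse direction (where the quickest route is simply that an isomorphism $J$ forces $G\cong\cover G$, which is \'etale by construction) --- is detail the paper compresses into the word ``essentially'' or leaves implicit.
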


\begin{proof}
Let $\cover m$ and $m$ be the multiplications of $\cover G$ and $G$, respectively. In order to show that $J$ is a functor we essentially need to show that
\[ {\cover m}^{*}\circ j = j \otimes j \circ m^{*}\;,\]
which follows analogously to the first part of the proof of Theorem 
\ref{maintheorem} (with ${\cover m}^*$ and $m^*$ playing the role of $\mu_*$ and $\imu_*$, respectively).
If $G$ is \'etale, $\opens(G)$ is an inverse quantal frame. Hence, in this case $J$ is an isomorphism because, identifying $\sections(\opens(G))$ with $\ipi(\opens(G))$, the homomorphism $j$ is the canonical isomorphism
\[\opens(G)\stackrel{\cong}\to\lcc(\ipi(\opens(G)))\;.\]
Finally, $J_1$ is an epimorphism of locales, and $J$ is an epimorphism of groupoids, because $G$ has enough sections, which means that $j$ is a monomorphism. \qed
\end{proof}

We remark that $j$ is not necessarily a homomorphism of quantales. In other words, the covering of $G$ is better behaved than the embedding of $\opens(G)$, and provides an example of a situation where the dual of a homomorphism of groupoids is not a homomorphism of quantales (\cf\ \cite{aim}).  (However, if $j$ is a homomorphism --- for which it suffices to require the condition $j(ab)\le j(a)j(b)$ --- \cf\ \ref{lemma:propertiesgamma}-\ref{one} --- then $\opens(G)$ is isomorphic as a quantale to $j(\opens(G))$, which is an involutive ideal of $\cover{\opens(G)}$.)

In order to find examples of coverable groupoids we shall look at sober topological groupoids. For such a groupoid $G$ we shall write $\cover G$ for the spectrum of $\groupoid(\cover{\opens(G)})$; that is, $\cover G$ is the groupoid of germs of local bisections of $G$ or, in other words, the \'etale groupoid associated to the abstract complete pseudogroup $\sections(G)$ as in \cite{MatsnevResende}. Some remarks are immediate:
\begin{enumerate}
\item $\cover G$ is sober;
\item $G$ has enough bisections if and only if for every $x \in G_1$
there is a local bisection $(s,U)$ such that $x\in s(U)$;
\end{enumerate}

We remark that a sober topological open groupoid $G$ is not ``the same'' as a localic open groupoid because the quotient of frames \[\topology(G_1)\otimes_{\topology(G_0)}\topology(G_1)\to\topology(G_1\times_{G_0} G_1)\] is not necessarily an isomorphism. However, a sufficient condition for this quotient to be an isomorphism is to have $G_1$ locally compact (\cf\ \cite[p.\ 61]{Johnstone}), and this includes many examples in practice. For instance,
the usual notion of locally compact groupoid stems from harmonic analysis and carries more information than mere local compactness of $G_1$. (A locally compact groupoid $G$ is usually also open, either by definition or as a consequence of measure-theoretic constraints, it is often second-countable, $G_0$ is usually assumed to be Hausdorff, and in those cases where $G_1$ is not Hausdorff it is required to satisfy a condition which in fact is stronger than local compactness.)

For the purposes of this paper it suffices to adopt the following very general definition:

\begin{definition}
A topological groupoid $G$ will be said to be \emph{locally compact} if it is open and $G_1$ is a locally compact space.
\end{definition}

\begin{example}
Lie groupoids are sober because they are Hausdorff, they are locally compact because they are manifolds, and they have enough bisections due to the local triviality of $d$. See \cite{MoerdijkMrcun,Paterson}.
\end{example}

\begin{theorem}
Any sober locally compact groupoid with enough bisections is coverable.
\end{theorem}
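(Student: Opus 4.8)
The plan is to establish the two ingredients in the definition of embeddability for $Q=\opens(G)$, namely weak embeddability and the monicity of $j\otimes\ident$; the three hypotheses on $G$ are tailored precisely to what each step needs. First I would set up the dictionary between the topological and the quantalic data. Since $G$ is open and $G_1$ is locally compact, the comparison morphism $\topology(G_1)\ootimes{\topology(G_0)}\topology(G_1)\to\topology(G_2)$ is an isomorphism, so $G$ behaves as a localic open groupoid and $Q=\opens(G)$ is, by \ref{groupoidquantale}, a multiplicative open quantal frame, hence weakly multiplicative. Sobriety of $G$ identifies the topological local bisections $\sections(G)$ with the localic ones $\sections(Q)$, and under this identification the quantale action of \ref{definition:action} coincides with the pointwise action of \ref{exm:GQactiontopol}; this is exactly what \ref{futuredef} records in the multiplicative case. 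Finally, that $G$ has enough bisections says precisely that $j$ is mono; equivalently, writing $J_1:\cover G_1\to G_1$, $[s,x]\mapsto s(x)$, for the underlying map of germ spaces, every point of $G_1$ lies in some image $s(U)$, so $J_1$ is surjective.

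Next I would prove weak embeddability through \ref{J}, which reduces it to two statements. The associativity $\sigma\cdot(\tau\cdot a)=(\sigma\tau)\cdot a$ is one of the $\sections(G)$-action axioms, available from \ref{exm:GQactiontopol} once the two actions have been matched as above. The remaining hypothesis, that $\cover\sigma\le j(a)$ implies $\cover{\tau\sigma}\le j(\tau\cdot a)$, I would check pointwise: $\cover\sigma\le j(a)$ means $s_\sigma(U_\sigma)\subseteq a$, and since $s_{\tau\sigma}$ composes the arrows of $\tau$ with those of $\sigma$, its image is the $\tau$-translate of $s_\sigma(U_\sigma)$ and so lands inside the $\tau$-translate $\tau\cdot a$ of $a$, a routine computation with germs. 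With both hypotheses verified, \ref{J} yields weak embeddability.

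It then remains to show that $j\otimes\ident:Q\ootimes{\rs(Q)}Q\to\cover Q\ootimes{\rs(Q)}Q$ is mono, and this is where local compactness does its real work. The idea is to recognise $j\otimes\ident$ as the inverse image homomorphism of the map of fibre products $J_1\times\ident:\cover G_1\ptimes{G_0}G_1\to G_2$, which is surjective because $J_1$ is: given $(g,h)\in G_2$, choose $[s,x]$ with $s(x)=g$, and observe that $\cover r([s,x])=r(g)=d(h)$, so $([s,x],h)$ maps to $(g,h)$. A surjective continuous map induces an injective frame homomorphism, since $f(f^{-1}(V))=V$ for surjective $f$, whence $j\otimes\ident$ is mono. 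To make this rigorous one must identify the two tensor products with the topologies of the relevant fibre product spaces: for $Q\ootimes{\rs(Q)}Q\cong\topology(G_2)$ this is exactly the local compactness of $G_1$ invoked above, whereas for $\cover Q\ootimes{\rs(Q)}Q$ one instead uses that $\cover G$ is étale, so $\cover r$ is a local homeomorphism and the fibre product along it is computed without any compactness hypothesis. Carrying out these two identifications, so that the abstract tensor products genuinely are the topologies of spaces on which the surjectivity argument can be run, is the main obstacle; everything else is either a hypothesis or a pointwise check. Combining weak embeddability with the monicity of $j\otimes\ident$ shows that $\opens(G)$ is embeddable, that is, that $G$ is coverable.
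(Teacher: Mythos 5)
Your proposal is correct, and its first two stages coincide with the paper's own proof: you use local compactness to identify $\topology(G_1)\ootimes{\topology(G_0)}\topology(G_1)$ with $\topology(G_2)$ so that \ref{groupoidquantale} makes $Q=\opens(G)$ a multiplicative (hence weakly multiplicative) open quantal frame, and you then verify the two hypotheses of \ref{J} exactly as the paper does --- associativity of the action via \ref{exm:GQactiontopol}, and the pointwise germ computation showing that $\cover\sigma\le j(a)$ implies $\cover{\tau\sigma}\le j(\tau\cdot a)$. Where you genuinely diverge is the monicity step. You attack $j\otimes\ident$ directly, identifying $\cover Q\ootimes{\rs(Q)}Q$ with $\topology(\cover G_1\ptimes{G_0}G_1)$ on the grounds that $\cover r$ is a local homeomorphism, and then reading $j\otimes\ident$ as the inverse image of the surjection $J_1\times\ident$. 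The paper instead proves that $j\otimes j$ is mono (which suffices, since $j\otimes j=(\ident\otimes j)\circ(j\otimes\ident)$ and a first factor of a mono is mono): it places $j\otimes j$ in a commutative square whose left edge is the local-compactness isomorphism onto $\topology(G_2)$, whose bottom edge is the mono $(k\times k)^{-1}$ coming from surjectivity of the germ-evaluation map $k\times k:\cover G_2\to G_2$ (the paper's $k$ is your $J_1$), and whose right edge is the canonical frame surjection $\topology(\cover G_1)\ootimes{\topology(G_0)}\topology(\cover G_1)\to\topology(\cover G_2)$; crucially, that surjection need not be injective for the cancellation argument to work. This is the trade-off between the two routes: the paper never needs any spatiality statement on the $\cover G$ side, whereas your argument rests on the fact that locale pullbacks along \'etale maps of spatial locales are spatial and agree with the topological pullbacks. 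That fact is true (\'etale locales over a space correspond to sheaves, the pullback corresponds to the inverse-image sheaf, and its \'etale space is the topological fibre product), so your proof does close, but it is the one nontrivial lemma you leave unproved --- exactly the ``main obstacle'' you flag --- and the paper's diagram is engineered precisely to avoid it. In exchange, your version establishes exactly the condition $j\otimes\ident$ mono demanded by the definition of embeddability, and makes its geometric content (surjectivity of $J_1\times\ident$ on composable pairs) transparent.
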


\begin{proof}
Let $G$ be a locally compact groupoid with enough bisections which is also sober (\ie, $G_1$ is a sober space). We prove that $\opens(G)$ is weakly embeddable by verifying the two hypotheses of 
\ref{J}.
We know that $\sigma\cdot(-)$ induces an involutive action
on $\opens(G)$ (\cf\ \ref{exm:GQactiontopol}) so we need only show that the first hypothesis of \ref{J} holds.
Let $\tau \in j(W)$ for some open set $W$ of $G_1$. This means that $s_{\tau}(U) \subset W$.
Now
\[s_{\sigma\tau}(x) =  s_{\sigma}(x) s_{\tau}(r(s_{\sigma}(x))) \subset s_{\sigma}(x) W\;,\]
for all $x \in s_{\sigma}^{-1}r^{-1}(U)$.
But, for the same $x$,  $s_{\sigma}(x)W$ consists of elements $s_{\sigma}(x)  y$
such that $y \in W$ and $r(s_{\sigma}(x)) = d(y)$. That is to say
\[y \in d^{-1}(\{r(s_{\sigma}(x))\}) \subset d^{-1}(V)\]
and, since $r(s_{\sigma}(x)) = \alpha_{\sigma}(x)$, we
obtain $x = \alpha_{\sigma}^{-1}(d(y))$, so that  $s_{\sigma}(x)W$ consists of elements
$s_{\sigma}( \alpha_{\sigma}^{-1}(d(y)))  y = t_{\sigma}(d(y)) y$ with
\[y \in d^{-1}(\{r(s_{\sigma}(x))\})\subset d^{-1}(V)\;.\]
Since $t_{\sigma}(d(y)) y\in\sigma\cdot W$, we conclude that
$\cover{\sigma\tau} \leq j(\sigma\cdot W)$,
and weak embeddability follows from \ref{J}.

Now we show that $j \otimes j$ is mono.
We have the continuous map of topological spaces
\[k: \cover G_1 \to G_1\]
given by $k(s_y) = s(y)$, where $s_y$ denotes the germ at $y \in U$ of the local bisection $\sigma = (s,U)$. Having enough bisections implies that $k$ is surjective.
It is clear that the inverse image frame homomorphism $k^{-1}$ is $j$,
and thus we have the following commutative diagram of frame homomorphisms where the left vertical arrow is an isomorphism because $G_1$ is locally compact:
\[
\xymatrix{
\topology(G_1)\otimes_{\topology(G_0)}\topology(G_1)\ar@{->>}[d]_{\cong}\ar[rr]^-{j\otimes j}&&
\topology(\cover G_1)\otimes_{\topology(G_0)}\topology(\cover G_1)\ar@{->>}[d]\\
\topology(G_2)\ar[rr]^-{(k\times k)^{-1}}&&\topology(\cover G_2)\;.
}
\]
Hence, $j\otimes j$ is mono because $(k\times k)^{-1}$ is. \qed
\end{proof}

\begin{corollary}
Every Lie groupoid is coverable.
\end{corollary}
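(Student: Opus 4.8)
The plan is to deduce the corollary directly from the immediately preceding theorem, which asserts that any sober locally compact groupoid with enough bisections is coverable, by checking its three hypotheses for an arbitrary Lie groupoid $G$. These three facts are precisely the ones recorded in the Example above, so the work amounts to recalling why each of them holds and then invoking the theorem.

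First I would observe that $G$ is an open groupoid in the sense adopted here. By definition a Lie groupoid is internal to the category of smooth manifolds with source map $d$ (and hence $r=d\circ i$) a surjective submersion; since submersions are open maps, $d$ is open, so $G$ is open. Next, $G_1$ is a smooth manifold and hence a locally compact Hausdorff space. Local compactness of $G_1$, together with the openness just established, gives that $G$ is locally compact in the sense of the definition above, while the Hausdorff property guarantees that $G_1$ is sober, so $G$ is a sober topological groupoid.

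The remaining hypothesis to verify is that $G$ has enough bisections, that is, that through every point of $G_1$ there passes a local bisection. This is the standard local existence of bisections for Lie groupoids: because $d$ is a submersion it admits, in a neighbourhood of any arrow $g$, a smooth local section $s$ with $s(d(g))=g$, and by shrinking the domain one may arrange that $r\circ s$ is an open embedding, so that $(s,U)$ is a local bisection with $g\in s(U)$ (see \cite{MoerdijkMrcun,Paterson}). Since $G$ is sober, this topological notion of enough bisections coincides with the one for $\opens(G)$, so $\opens(G)$ has enough bisections.

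Having checked all three conditions, I would simply apply the preceding theorem to conclude that $G$ is coverable. There is no genuine obstacle: the content of the corollary is entirely subsumed by that theorem, and the only point requiring a little care is to match the differential-geometric data of a Lie groupoid with the topological hypotheses of the theorem---in particular, reading the submersion property of $d$ both as openness (for \emph{open} and hence \emph{locally compact}) and as the source of smooth local sections (for \emph{enough bisections}).
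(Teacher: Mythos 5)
Your proposal is correct and follows the paper's own route exactly: the corollary is an immediate consequence of the preceding theorem, with the three hypotheses (sobriety via Hausdorffness, local compactness via the manifold structure, and enough bisections via the submersion property of $d$) verified for Lie groupoids precisely as in the paper's example. The only minor quibble is that producing a local bisection through a given arrow requires choosing the local section of $d$ so that its image is also transverse to the fibres of $r$ (merely shrinking the domain of an arbitrary section need not make $r\circ s$ an open embedding), but this is the standard fact that the paper itself dispatches by citing \cite{MoerdijkMrcun,Paterson}.
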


~\\
\noindent {\sc
Faculdade de Engenharia e Ci\^{e}ncias Naturais\\
Universidade Lus\'ofona\\
Campo Grande 376, 1749-024 Lisboa, Portugal\\
{\it E-mail:} {\sf cprotin@sapo.pt}\\
~\\
Departamento de Matem\'{a}tica, Instituto Superior T\'{e}cnico\\
Universidade T\'{e}cnica de Lisboa\\
Av.\ Rovisco Pais 1, 1049-001 Lisboa, Portugal\\
{\it E-mail:} {\sf pmr@math.ist.utl.pt}\\
}

\end{document}